\newtheorem{theorem}{Theorem}[section]
\newtheorem{lemma}{Lemma}[section]
\newtheorem{corollary}{Corollary}[section]
\newtheorem{proposition}{Proposition}[section]
\theoremstyle{definition}
\newtheorem{definition}{Definition}[section]
\newtheorem{example}{Example}[section]
\theoremstyle{remark}
\newtheorem{remark}[theorem]{Remark}
\numberwithin{equation}{section}
\title{On cosymplectic dynamics}
\author{\scshape 
	S. Tchuiaga	$^{}$\thanks{tchuiagas@gmail.com,\ Department of Mathematics,  
		The University of Buea, South West Region,Cameroon}, F. Houenou	$^{}$\thanks{rdjeam@gmail.com, \ Department of Mathematics,  
		The University of Abomey-Calavi, Benin},  and P. Bikorimana	$^{}$\thanks{pierrebikorimana@gmail.com, \ Institut de Mathematiques et des Science Physiques, Benin} }
\definecolor{couleurliens}{rgb}{1.0,0.,0.} 
\definecolor{couleurliensref}{rgb}{0.,0.,1.} 
\definecolor{couleurliensurl}{rgb}{.3,.4,.3} 
\begin{document}
	\maketitle\large 
\begin{abstract} 
Cosymplectic geometry can be viewed as an odd dimensional counterpart of symplectic geometry. Likely in the symplectic case, a related property which is preservation of closed forms $\omega$ and $\eta$,  
refers to the theoretical possibility of further understanding a cosymplectic manifold $(M, \omega, \eta)$ from its group of diffeomorphisms. 
In this  paper we study the structures of the group of cosymplectic diffeomorphisms and the group of almost cosymplectic diffeomorphisms of a  cosymplectic manifold $(M, \omega, \eta)$ in threefold: first of all, we study  cosymplectics counterpart of the Moser 
isotopy method, a proof of a cosymplectic version of Darboux theorem follows, and we define and present the features of the space of almost cosymplectic vector fields (resp. cosymplectic vector fields), this set forms a Lie group whose Lie 
algebra is the group of all almost cosymplectic diffeomorphisms (resp. all cosymplectic diffeomorphisms); $(II)$ we prove by a direct method that the identity component in the group of all cosymplectic diffeomorphisms is $C^0-$closed in the group $Diff^\infty(M)$ (a rigidity result: without appealing to similar result from symplectic geometry), while in the almost cosymplectic case, we prove that the Reeb vector field determines the almost cosymplectic nature of the $C^0-$limit $\phi$  of a sequence of almost cosymplectic diffeomorphisms (a rigidity result). A sufficient condition (based on Reeb's vector field) which guarantees that $\phi$ is a cosymplectic diffeomorphism is given (a flexibility condition), and also an attempt to the study
 almost cosymplectic (resp. cosymplectic) counterpart of flux geometry follows: this gives rise to a group homomorphism whose  
kernel is path connected; and $(III)$ we study the almost cosymplectic (resp. cosymplectic)  analogues of Hofer geometry and Hofer-like geometry: the group of almost co-Hamiltonian diffeomorphisms (resp. co-Hamiltonian diffeomorphisms) carries two bi-invariant norms, the cosymplectic analogues of the usual symplectic capacity-inequality theorem are derived and the cosymplectic analogues of a result that was proved by Hofer-Zehnder follow. This includes further interesting results that will be useful for a subsequent development of the theory. 
\end{abstract}

{\bf2010 MSC:} 53C24, 57S05, 58D05, 57R50.

{\bf Key Words : } Cosymplectic invariants, Isotopic methods, Cosymplectic Rigidity, Cosymplectic diffeomorphisms, Cosymplectic Energy-capacity inequalities, Cosymplectic Hofer-like metrics.
\vspace{1cm}
\section{Introduction}
The study of cosymplectic manifolds can be motivated by the seek  of a mathematical framework in which an odd dimensional counterpart of the usual symplectic geometry can be studied. 
Cosymplectic manifolds have been at the center of interest 
of several authors starting from  the works of Liberman \cite{P.L} to the works of Li \cite{H-L}.  
An interesting result due to Li \cite{H-L} shows how from a cosymplectic structure, one can always generate a symplectic structure, vise-versa. This seems to suggest that some cosymplectic properties can be derived from the usual symplectic geometry. But one can count with  fingertips the cosympletic analogues of some deep results from symplectic topology while many of them still unknown. For instance, there is any cosymplectic formulation to  neither Arnold conjecture nor  energy-capacity-inequality in symplectic geometry. 
Besides,  let us remark that the existence of an isomorphism between  vector fields space  and  the space of $1-$forms is a fundamental aspect which is common to both  symplectic and cosymplectic  geometries.  In symplectic geometry context, the corresponding isomorphism is used to investigate various structures of the group of symplectomorphisms and rigidity results from the space of closed $1-$forms of a symplectic manifold.

  As far as we know, almost nothing is known about the structure of the group of cosymplectic 
diffeomorphisms of a cosymplectic manifold. In fact,  one doesn't know how to fragment an arbitrary  co-Hamiltonian vector field of a cosymplectic manifold with respect to an open cover of the  manifold;   how to explain or describe accurately  the dynamics of a cosymplectic manifold from its group of cosymplectic diffeomorphisms ; how to quantify the energy needed by a cosymplectic diffeomorphism in order to displace a given open subset of the corresponding manifold;  how rigid is the cosymplectic  structure with respect to the $C^0-$limit and so on.\\ 
The goal of the present paper is to use the methods from symplectic topology to characterize, and then to study several subgroups of diffeomophisms of a cosymplectic manifold: several well-known results from symplectic topology have their cosymplectic counterparts, and structures of various transformation groups of a cosymplectic manifold follows.\\  

We organize this paper as follows. \\ 
  
   In Section \ref{SC0},  we recall the definition of cosymplectic vector space, cosymplectic linear group, and cosymplectic manifolds. In Subsection  \ref{SC00},  we claim and prove the following weak stability 
   result in cosymplectic geometry setting: Theorem \ref{thm00}, Theorem \ref{thm01}, Theorem \ref{thm02}, and Theorem \ref{thm4}.  In Subsection  \ref{Darboux-lem}, we prove Lemma \ref{Darboux1}, and use it to prove the cosymplectic Darboux theorem (Theorem  \ref{thm1}).\\ 
   Section \ref{SC00-1} deals with vector fields of a cosymplectic manifold. In Subsection \ref{SC011}, we define and study cosymplectic vector fields. In Subsection \ref{SC011-A}, we introduce and study the algebra of almost cosymplectic vector fields, prove Proposition \ref{ADecom-0} which is the almost cosymplectic analogue of  Proposition \ref{Decom-0}.\\ 
   Section \ref{SC00-2} deals with the study of almost cosymplectic diffeomorphisms, cosymplectic diffeomorphisms, almost cosymplectic isotopies, and cosymplectic isotopies. Lemma \ref{pushfoward} shows how the Reeb vector field of a cosymplectic manifold transforms under the push forward by a cosymplectic diffeomorphism (resp. by an almost cosymplectic diffeomorphism), while Lemma \ref{Gpushfoward} is a slight generalization of Lemma \ref{pushfoward} between two arbitrary 
   cosympletic manifolds, and Proposition \ref{pro4-A} characterizes the Lie algebras of some cosympletic and  almost cosymplectic subspaces. In Subsection \ref{rem1} we show how does cosymplectic  (resp. almost cosymplectic) geometry varies under the composition and inversion cosymplectic  (resp. almost cosymplectic) isotopies. In Subsection \ref{Trans}, we compare cosymplectic isotopies with symplectic isotopies, and characterize periodic orbits in co-Hamiltonian dynamical systems. In Subsection \ref{Transs}, we compare almost cosymplectic isotopies with symplectic isotopies 
   (Proposition \ref{Trasit-1}, Proposition \ref{Trasit-2}, Proposition \ref{Trasit-3}, and Proposition \ref{Trasit-4}), 
   prove a  theorem showing that the Reeb vector field determines the almost cosymplectic nature of a uniform limit of a sequence of almost cosymplectic diffeomorphisms (Theorem \ref{Theo-Al1}), and characterize periodic orbits in almost co-Hamiltonian dynamical systems.\\  Section \ref{Flux} is devoted to the introduction of cosymplectic flux geometries. Here we define and study two group homomorphisms with respect to cosymplectic isotopies and almost cosymplectic 
   isotopies: two topological invariants.  \\ 
    In Section \ref{Hofer-N}, we define and study the cosymplectic setting of Hofer and Hofer-like geometries with respect to the group of all cosymplectic diffeomorphisms (resp. almost cosymplectic diffeomorphisms) isotopic to the identity map. Here,  we first start from  a comparison of the uniform sup norm of a closed $1-$form and that of its pull-back with respect to a projection map, next we  study the co-Hofer norms,  co-Hofer-like norms, establish the cosymplectic setting of the energy-capacity-inequality from symplectic geometry, discuss on displacement energies in the fibers of a certain Cartesian product, and prove that the group of all cosympletic diffeomorphisms isotopic to the identity map is $C^0-$closed inside the group of all smooth diffeomorphisms (Theorem \ref{thm2}, Theorem \ref{maint1}, Theorem \ref{thm3}, and Theorem \ref{closure}). Also, we  study the almost co-Hofer norms,  almost co-Hofer-like norms, establish the almost cosymplectic setting of the energy-capacity-inequality from symplectic geometry, discuss on the displacement energy in fibers  (Theorem \ref{thm5A}, Theorem \ref{thm6A}, and Theorem \ref{maint1-A}). 
  Section \ref{Illus} contains some illustrations of our constructions.

\section{Preliminaries}\label{SC0}
\subsection{Cosymplectic vector spaces}
A bilinear form on a vector space $V$ is a map $b : V\times V \longrightarrow \mathbb{R}$ which is linear in each variable. When $b(u, v) = - b(v,u)$, for all $u, v\in V$, then $b$ is called antisymmetric (or skew symmetric). 
\begin{theorem}\label{stand-0}({\bf Standard form for antisymmetric bilinear maps})
Let $b$ be an skew-symmetric bilinear map on $V$. Then there is a basis
$u_1,\cdots, u_k, e_1,\cdots, e_n, f_1,\cdots, f_n$ of $V$ such that
\begin{itemize}
\item $b(u_i, v) = 0$, for all $i$ and for all $v$ ;
\item $b(e_i, e_j) = 0 = b(f_i, f_j)$, for all $i, j$ ; 
\item $b(e_i, f_j) = \delta_{ij}$, for all $i, j$.
\end{itemize}
\end{theorem}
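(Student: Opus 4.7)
The plan is to split $V$ into a radical plus a non-degenerate piece, and then build the hyperbolic basis on the non-degenerate piece by a standard induction on dimension.

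First I would isolate the kernel of $b$. Set $U=\{u\in V : b(u,v)=0\text{ for all }v\in V\}$ and pick any basis $u_1,\dots,u_k$ of $U$. Choose a vector-space complement $W$ of $U$ in $V$, so that $V=U\oplus W$. The first mini-step is to verify that the restriction $b|_{W\times W}$ is non-degenerate: if $w\in W$ satisfies $b(w,w')=0$ for every $w'\in W$, then for an arbitrary $v=u+w'\in V$ one has $b(w,v)=b(w,u)+b(w,w')=0$ (the first term vanishes because $u\in U$ and $b$ is skew), forcing $w\in U\cap W=\{0\}$.

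Next I would construct the $e_i,f_i$ on $W$ by induction on $\dim W=2n$ (even-ness will fall out of the construction). If $W\neq\{0\}$, pick any non-zero $e_1\in W$. Since $b|_W$ is non-degenerate, there exists $f_1'\in W$ with $b(e_1,f_1')\neq 0$; set $f_1=f_1'/b(e_1,f_1')$, so $b(e_1,f_1)=1$. Let $W_1=\mathrm{span}(e_1,f_1)$ and
\[
W_1^{\perp_b}=\{w\in W : b(w,e_1)=b(w,f_1)=0\}.
\]
The decomposition $W=W_1\oplus W_1^{\perp_b}$ is obtained by writing, for any $w\in W$,
\[
w=\bigl(b(w,f_1)\,e_1-b(w,e_1)\,f_1\bigr)+\bigl(w-b(w,f_1)\,e_1+b(w,e_1)\,f_1\bigr),
\]
and checking that the second summand lies in $W_1^{\perp_b}$ (direct computation, using $b(e_1,f_1)=1$ and skew-symmetry), while the intersection $W_1\cap W_1^{\perp_b}$ is trivial.

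The remaining step is to observe that $b$ restricted to $W_1^{\perp_b}$ is again skew-symmetric and non-degenerate: if $w\in W_1^{\perp_b}$ annihilates all of $W_1^{\perp_b}$, then the decomposition above shows $w$ annihilates all of $W$, hence $w=0$ by non-degeneracy on $W$. Applying the induction hypothesis to $W_1^{\perp_b}$ yields vectors $e_2,\dots,e_n,f_2,\dots,f_n$ with the required relations, and concatenating with $e_1,f_1,u_1,\dots,u_k$ produces the desired basis. The only delicate point — and the one I would write out carefully — is the direct-sum decomposition $W=W_1\oplus W_1^{\perp_b}$, since everything else (the existence of $f_1$, the inductive descent, the vanishing relations) is then automatic from skew-symmetry and the normalization $b(e_1,f_1)=1$.
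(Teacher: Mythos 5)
Your proof is correct. The paper itself states Theorem \ref{stand-0} without any proof, treating it as the classical linear-algebra fact underlying symplectic/cosymplectic structures, so there is nothing in the text to compare against; your argument (splitting off the radical $U$, checking non-degeneracy of $b$ on a complement $W$, and then the induction via the explicit projection $w\mapsto b(w,f_1)e_1-b(w,e_1)f_1$ onto $W_1=\mathrm{span}(e_1,f_1)$) is exactly the standard proof, and all the steps you flag as needing verification do check out. The only micro-point left implicit is that $e_1$ and $f_1$ are linearly independent, which follows immediately from $b(e_1,e_1)=0\neq b(e_1,f_1)$.
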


Given any non-trivial linear map $\psi:V\longrightarrow \mathbb{R}$, together with a bilinear map\\ $b : V\times V \longrightarrow \mathbb{R}$, one defines a linear map 
\begin{eqnarray}\begin{array}{ccclccc}
\widetilde{I}_{\psi, b}: & V  & \longrightarrow &V^*\\
 &Y&\longmapsto &\widetilde{I}_{\psi, b}(Y):=\imath_Y b + \psi(Y) \psi
\end{array}\nonumber\end{eqnarray}
so that $\widetilde{I}_{\psi, b}(Y)(X) =b(Y,X) + \psi(Y)\psi(X)$, for all $X, Y \in V$.
\begin{definition}
\begin{enumerate}
\item A pair $(b, \psi)$ consisted of an  antisymmetric bilinear map\\ $b : V\times V \longrightarrow \mathbb{R}$ and a non-trivial linear map $\psi:V\longrightarrow \mathbb{R}$ is a called cosymplectic couple if the map  $ \tilde{I}_{\psi, b}$ is a bijection. 
\item A cosymplectic vector space is a triple $(V, b, \psi)$ where  $ V$ is a vector space and $(b, \psi)$ is a  
	cosymplectic   structure on $V$. 
\end{enumerate}
\end{definition}
\begin{proposition}\label{Dim}
	Let $(V, b, \psi)$ be a cosymplectic vector space. Then, $\dim (V) = 2n + 1$. 
\end{proposition}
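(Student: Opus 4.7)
The plan is to combine Theorem \ref{stand-0} with the bijectivity of $\widetilde{I}_{\psi,b}$ in order to pin down the dimension of the radical $\mathrm{Rad}(b) := \{v \in V : \imath_v b = 0\}$. Theorem \ref{stand-0} immediately yields the decomposition $\dim V = k + 2n$, where $k := \dim \mathrm{Rad}(b)$ and $2n$ is the rank of $b$; the proposition therefore reduces to proving $k = 1$.

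First I would compute $\ker \widetilde{I}_{\psi,b}$. A vector $Y \in \ker \widetilde{I}_{\psi,b}$ satisfies $b(Y,X) + \psi(Y)\psi(X) = 0$ for every $X \in V$. Substituting $X = Y$ and using $b(Y,Y) = 0$ forces $\psi(Y)^2 = 0$, hence $\psi(Y) = 0$; plugging back gives $\imath_Y b = 0$, so $Y \in \mathrm{Rad}(b) \cap \ker \psi$. The reverse inclusion is immediate, so $\ker \widetilde{I}_{\psi,b} = \mathrm{Rad}(b) \cap \ker \psi$. Injectivity of $\widetilde{I}_{\psi,b}$ then forces $\mathrm{Rad}(b) \cap \ker \psi = \{0\}$, and since $\psi$ is non-trivial, $\ker \psi$ is a hyperplane in $V$; this gives the upper bound $k \leq 1$.

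For the reverse bound $k \geq 1$ I would exploit surjectivity to produce $\xi \in V$ with $\widetilde{I}_{\psi,b}(\xi) = \psi$. Rewriting, $b(\xi,X) = (1 - \psi(\xi))\psi(X)$ for all $X$, and the choice $X = \xi$ produces the quadratic relation $\psi(\xi)(1 - \psi(\xi)) = 0$. Taking the Reeb-normalization root $\psi(\xi) = 1$ gives $\imath_\xi b = 0$ with $\xi \neq 0$, so $\xi$ is a non-zero element of $\mathrm{Rad}(b)$ and $k \geq 1$. Combined with the upper bound, $k = 1$ and $\dim V = 2n + 1$ as claimed.

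The step I expect to be the main obstacle is the dichotomy $\psi(\xi) \in \{0, 1\}$ arising from the self-pairing $X = \xi$: only the root $\psi(\xi) = 1$ produces a Reeb-type vector inside the radical, whereas the root $\psi(\xi) = 0$ would be compatible with a non-degenerate $b$ (and hence $k = 0$). Singling out the Reeb branch amounts to invoking the non-degeneracy condition $\psi \wedge b^n \neq 0$ implicit in the cosymplectic structure; this is what ensures $\mathrm{Rad}(b) \neq \{0\}$ and thus forces $\dim V$ to be odd, of the form $2n + 1$.
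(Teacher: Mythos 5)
Your argument is correct as far as it goes, and for the upper bound $k\leq 1$ it takes a genuinely different route from the paper: you compute $\ker\widetilde{I}_{\psi,b}=\mathrm{Rad}(b)\cap\ker\psi$ (using $b(Y,Y)=0$ to kill $\psi(Y)$) and combine injectivity with the fact that $\ker\psi$ is a hyperplane, whereas the paper uses surjectivity to write each radical basis vector as $u_i=\psi(u_i)\,\widetilde{I}_{\psi,b}^{-1}(\psi)$ and invokes linear independence of the $u_i$. Your kernel computation is the cleaner of the two and isolates exactly which half of bijectivity is doing the work.

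The substantive point is the lower bound $k\geq 1$, and here you have correctly identified a gap that the paper's proof silently shares. From $\widetilde{I}_{\psi,b}(\xi)=\psi$ one only gets $\psi(\xi)\bigl(1-\psi(\xi)\bigr)=0$, and the branch $\psi(\xi)=0$ (equivalently $\imath_\xi b=\psi$, $b$ nondegenerate, $k=0$, $\dim V$ even) is genuinely consistent with the definition as stated: on $V=\mathbb{R}^2$ with the standard symplectic form $b$ and any nonzero covector $\psi$, the map $\widetilde{I}_{\psi,b}$ has determinant $1$ in the standard bases and is a bijection. So one cannot simply ``take the root $\psi(\xi)=1$''; the value of $\psi(\xi)$ is determined by the data. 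Your proposed repair --- invoking $\psi\wedge b^n\neq 0$ --- is the standard extra axiom, but within this paper it is circular: that nonvanishing is only derived in Remark \ref{Dim2} as a consequence of Proposition \ref{Dim}, and the definition of a cosymplectic couple here contains nothing beyond bijectivity of $\widetilde{I}_{\psi,b}$. The paper's own proof jumps from ``every $u_i$ is a multiple of $\xi$'' to ``$k=1$'' without excluding $k=0$ at all, so your write-up, which names the missing hypothesis explicitly, is the more careful of the two; but neither argument closes the case $k=0$ from the stated definition, and the statement needs the volume condition (or an explicit requirement that $b$ be degenerate) added to the hypotheses for the lower bound to go through.
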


\begin{proof} 
	 By Theorem \ref{stand-0}, we have $\dim (V) = 2n + k$. So, it is enough to show that $ k = 1$. Let 
	 $u_1,\dots, u_k, e_1,\dots, e_n, f_1,\dots, f_n$ be a basis of $V$ as in Theorem \ref{stand-0}. Since, 
	 $b(u_i, v) = 0$, for all $1\leq i \leq k$, and for all $v$, we derive that  $ \tilde{I}_{\psi, b}(u_i) = \psi(u_i)\psi$, for each $i$. This implies that 
	 \begin{equation}\label{reduc}
	 u_i = \tilde{I}_{\psi, b}^{-1}(\psi(u_i)\psi) = \psi(u_i)\tilde{I}_{\psi, b}^{-1}(\psi) =: \psi(u_i)\xi,
	 \end{equation}
	 for each $i$, where $\xi:= \tilde{I}_{\psi, b}^{-1}(\psi)$. Since  $u_1,\dots, u_k, e_1,\dots, e_n, f_1,\dots, f_n$ is a basis of $V$, then (\ref{reduc}) implies that 
	 $ k = 1$.  This completes the proof.
\end{proof}

\begin{remark}\label{Dim1}	Let $(V, b, \psi)$ be a cosymplectic vector space. From the proof of Proposition \ref{Dim}, one can always assume that $ \psi(\xi) = 1$ (of course after normalization, if necessary), and we also have $b(\xi, v) = 0$, for all $v\in V$. We shall call the vector $\xi$, the Reeb vector of  $(V, b, \psi)$. 
\end{remark}
\begin{remark}\label{Dim2}	Let $(V, b, \psi)$ be a cosymplectic vector space of dimension $(2n +1)$. Then, the $(2n +1)-$multi-linear map $B$, defined by 
	$B:=\psi\wedge \underbrace{b\wedge\cdots \wedge b}_{n-factors},$ is non-trivial. Indeed, by Proposition \ref{Dim}, let 
	$\xi, e_1,\cdots, e_n, f_1,\cdots, f_n$ be a basis of $V$ as in Theorem \ref{stand-0}, and set 
	$v_1 = \xi,$ $v_{i +1} = e_i$, and $v_{n + i + 1} = f_i$,   \qquad $ 1\leqslant i\leqslant n$. We have 
\begin{eqnarray}
B( v_1,\cdots, v_n, v_1,\cdots, v_{2n + 1}) &=&  (\psi\wedge b\wedge\cdots \wedge b)(v_1,\cdots, v_{2n + 1})\nonumber\\
&=& \sum_{\sigma\in S_{(2n +1)}}sgn(\sigma)\psi(v_{\sigma(1)})(b\wedge\cdots \wedge b)(v_{\sigma(2)},\cdots v_{\sigma(2n + 1)})\nonumber\\
&=& \sum_{\sigma\in S_{(2n +1)}, \sigma(1) = 1}sgn(\sigma)(b\wedge\cdots \wedge b)(v_{\sigma(2)},\cdots v_{\sigma(2n + 1)})\nonumber\\
&=& \varepsilon \cdot1\cdot 2\cdots (n-1)\cdot n
\end{eqnarray}
with $\varepsilon \in \{-1, 1\}$, where $S_{(2n +1)}$ stands for the set of all permutations of $ (2n +1)$ elements.  
\end{remark}

\subsection{The cosymplectic group} 
Let $F$ be a linear map form a  vector space $E$ into another vector space $V$ and let $\Omega$ be any $k-$multi-linear map on $V$. The pullback of
$\Omega$ by $F$, often denoted $F^\ast\Omega$, is the $k$-form on $E$ defined by:
$F^\ast\Omega(X_1,\cdots,X_k) = \Omega\big(F(X_1),\cdots,F(X_k)\big),$
for all $X_1,\cdots, X_k\in E$. \\ 

Given any cosymplectic vector space $(V, b, \psi)$ we shall denote by $\rm{Cosymp}(V, b, \psi)$ the following set: 
$$\rm{Cosymp}(V, b, \psi) := \big\{F\in GL(V): F^\ast b = b \ \text{and}\  F^\ast(\psi) = \psi\big\}.$$
Note that, under the composition of maps, $\big( \rm{Cosymp}(V, b, \psi), \circ \big)$ is a group  called the linear cosymplectic group. 
\subsection{Cosymplectic Manifolds}
Let $M$ be a smooth  manifold. An almost cosymplectic structure on $M$ is a pair $( \omega,\eta)$ consisting of a $2-$form  $\omega$ and a  $1-$form $\eta$  such that for each $x\in M$, the triple $ (T_xM, \omega_x, \eta_x)$ is an almost cosymplectic vector space.  Therefore, a cosymplectic structure on $M$ is any almost cosymplectic structure $( \omega,\eta)$ on $M$ such that $d\eta = 0$ and $d\omega = 0$. We shall write  $(M, \omega, \eta)$ to mean that $M$ has a cosymplectic structure $(\omega,\eta)$. For further details, we refer to the 
 \cite{P.L, H-L} and references therein.\\ 

In particular, Remark \ref{Dim2} tells us that any cosymplectic manifold $(M,\omega,\eta)$ is oriented with respect to the volume form 
$\eta\wedge\omega^n$, while by Remark \ref{Dim1} any cosymplectic manifold $(M,\omega,\eta)$ admits a vector field $\xi$ called the 
Reeb vector field such that $ \eta(\xi) = 1$, and $\imath_\xi\omega = 0$.

\begin{remark}
Not all odd dimensional manifolds has a cosymplectic structure. For instance, let $M^{(2k +1)}$ be any $(2k +1)-$dimensional closed manifold with $k\neq 0$ such that $H^\ast(M^{(2k +1)},\mathbb{R})$ 
denotes its $\ast-$th de Rham group with real coefficients. Therefore, if $H^1(M^{(2k +1)},\mathbb{R}) = 0$, or  $H^2(M^{(2k +1)},\mathbb{R})  = 0$, then  
$M^{(2k +1)} $ has no cosymplectic structure. In particular, since  $H^1(S^{(2k +1)},\mathbb{R}) = 0$, then the unit spheres $S^{(2k +1)} $ have no cosymplectic  structures, for any integer  $k$: A consequence of the usual Stockes' theorem. 
\end{remark}

\subsection{Cosymplectic Structure}\label{SC00} 

In order to  well describe in more details some features of  cosymplectic manifolds we shall need the following result due to H. Li \cite{H-L}. 

\begin{lemma}\label{lem-3}(\cite{H-L})
	Let $M$ be a manifold and $\eta$, $\omega$ be two differential forms on $M$ with degrees $1$ and $2$ respectively. Consider
	$\widetilde M = M\times \mathbb{R}$ equipped with the $2-$form\\ $\tilde\omega: = p^\ast(\omega) + p^\ast(\eta)\wedge \pi_2^\ast (du)$  where $u$ is the coordinate function on $\mathbb{R}$ and $p: \widetilde M\longrightarrow M,$ and $\pi_2: \widetilde M\longrightarrow \mathbb{R},$ are canonical projections. Then,  $(M,\omega, \eta)$ is a  cosymplectic manifold if  and only if  $(\widetilde M, \tilde\omega)$ is a  symplectic manifold.
\end{lemma}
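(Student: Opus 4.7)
The plan is to reduce both implications to two independent algebraic facts about $\tilde\omega$: that closedness of $\tilde\omega$ is equivalent to the simultaneous closedness of $\omega$ and $\eta$, and that nondegeneracy of $\tilde\omega$ is equivalent to $\eta\wedge\omega^n$ being a volume form on $M$, which by Remark~\ref{Dim2} is in turn equivalent to $(\omega,\eta)$ being a cosymplectic structure pointwise. One direction will use Proposition~\ref{Dim} to conclude $\dim M = 2n+1$, so that $\dim\widetilde M = 2n+2$ is the right parity to admit a symplectic form; the other uses the evenness of $\dim \widetilde M$ (forced by symplecticity) to conclude $\dim M$ is odd.

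For the closedness, I would compute directly, using $d(\pi_2^*du) = \pi_2^*(d(du)) = 0$:
\begin{equation*}
d\tilde\omega \;=\; p^*(d\omega) \;+\; p^*(d\eta)\wedge \pi_2^*du.
\end{equation*}
The two summands lie in the complementary subspaces of $\Omega^3(\widetilde M)$ distinguished by the absence or presence of a $\pi_2^*du$ factor. Since $p$ is a submersion and $\pi_2^*du$ is nowhere zero, $d\tilde\omega = 0$ is equivalent to $d\omega = 0$ and $d\eta = 0$ on $M$.

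For the volume form, set $\alpha = p^*\omega$ and $\beta = p^*\eta \wedge \pi_2^*du$. Since $p^*\eta$ is a $1$-form, $\beta\wedge\beta = 0$, so the binomial expansion collapses to $\tilde\omega^{n+1} = \alpha^{n+1} + (n+1)\,\alpha^n\wedge\beta$. Once $\dim M = 2n+1$ is known, $\omega^{n+1}$ vanishes as a form of degree exceeding the dimension of $M$, whence $\alpha^{n+1} = 0$, and
\begin{equation*}
\tilde\omega^{n+1} \;=\; (n+1)\, p^*(\eta\wedge\omega^n) \wedge \pi_2^*du,
\end{equation*}
which is a volume form on $\widetilde M$ if and only if $\eta\wedge\omega^n$ is a volume form on $M$.

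The last step is to identify the pointwise condition ``$\eta_x \wedge \omega_x^n \neq 0$ for every $x\in M$'' with the condition that $\widetilde I_{\eta_x,\omega_x}$ is a bijection at every $x$. The implication from the latter to the former is exactly Remark~\ref{Dim2}; for the reverse, nonvanishing of $\eta\wedge\omega^n$ forces $\omega_x$ to have maximal rank $2n$, so $\ker \omega_x$ is one-dimensional, and choosing a generator $\xi_x$ normalized by $\eta_x(\xi_x)=1$, injectivity of $\widetilde I_{\eta_x,\omega_x}$ follows by evaluating $\iota_Y\omega_x + \eta_x(Y)\eta_x = 0$ at $Y=\xi_x$, which yields first $\eta_x(Y) = 0$ and then $Y \in \ker\omega_x$, forcing $Y=0$. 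I expect this pointwise algebraic equivalence to be the main delicate point, since the differential-form computations (the splitting of $d\tilde\omega$ and the collapse of the binomial expansion) are otherwise routine.
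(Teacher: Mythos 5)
The paper does not actually prove Lemma \ref{lem-3}: it is imported verbatim from Li \cite{H-L} and used as a black box, so there is no in-paper argument to compare against. Your proposal supplies a correct, self-contained proof, and the structure is the natural one: the splitting $d\tilde\omega = p^\ast(d\omega) + p^\ast(d\eta)\wedge\pi_2^\ast(du)$ together with contraction by $\partial_u$ and surjectivity of the submersion $p$ handles closedness, and the collapse of the binomial expansion (using $\bigl(p^\ast\eta\wedge\pi_2^\ast du\bigr)^2 = 0$ and $\omega^{n+1}=0$ once $\dim M = 2n+1$ is in hand) reduces nondegeneracy of $\tilde\omega$ to the nonvanishing of $\eta\wedge\omega^n$, with the dimension bookkeeping in each direction handled correctly via Proposition \ref{Dim} and the parity of $\dim\widetilde M$. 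The only step you pass over too quickly is in the pointwise equivalence: before you can ``choose a generator $\xi_x$ of $\ker\omega_x$ normalized by $\eta_x(\xi_x)=1$'' you must know that $\eta_x$ does not vanish on $\ker\omega_x$. This does follow from $\eta_x\wedge\omega_x^n\neq 0$, by the same evaluation you already use implicitly: expanding $(\eta\wedge\omega^n)(\xi,e_1,\dots,e_{2n})$ over a basis adapted to $\ker\omega_x$, every term in which $\omega^n$ receives the argument $\xi$ dies because $\iota_\xi\omega=0$, leaving only $\eta(\xi)\,\omega^n(e_1,\dots,e_{2n})$; nonvanishing of the left-hand side therefore forces $\eta(\xi)\neq 0$. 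With that one sentence added, the proof is complete; the rest of the pointwise argument (evaluating $\iota_Y\omega_x+\eta_x(Y)\eta_x=0$ at $\xi_x$ to get injectivity, hence bijectivity by equality of dimensions) is exactly right, and it also confirms the consistency of the lemma with Remark \ref{Dim2}, which only gives the forward implication.
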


\subsubsection{Cosymplectic Stability}
Gathering Lemma \ref{lem-3}  with the usual Moser stability result in symplectic geometry  setting, we claim and prove the following weak stability 
result in cosymplectic geometry setting. 
\begin{theorem}\label{thm00}
	Let $M$ be a smooth compact  $(2n +1)$-dimensional  manifold  admitting two cosymplectic structures  $( \omega_0,\eta_0)$ and $(\omega_1,\eta_1)$ with 
	$\eta_0$ (resp. $\omega_0$) cohomologous to $\eta_1$ (resp. $\omega_1$). Assume that for each $t\in [0,1]$, the map $I_{\eta_t,\omega_t}$ is an isomorphism over the module $C^\infty(M)$,  where $\eta_t = (1-t)\eta_0 + t\eta_1$ and $\omega_t = (1-t)\omega_0 + t\omega_1$. Then, there exist a smooth family of maps $\{\phi_t\}$ from $M$ to itself and a smooth family of real functions $\{f_t\}$ with $\phi_0 = id_M$ and $f_0 = 1$ such that 
	$\phi_t^\ast(\eta_t)= f_t\eta_0 $ and  $\phi_t^\ast(\omega_t)= \omega_0$, for all $t\in [0,1]$.
\end{theorem}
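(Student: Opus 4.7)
The plan is to adapt Moser's deformation argument to the cosymplectic setting. I seek the isotopy $\{\phi_t\}$ as the flow of a smooth time-dependent vector field $X_t$ on $M$; compactness of $M$ guarantees that such a flow exists on the whole of $[0,1]$. The scaling family $\{f_t\}$ is then recovered simultaneously through a first-order linear ODE driven by $X_t$.

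First, I would differentiate the two desired identities in $t$. Applying Cartan's magic formula and using $d\omega_t = 0$ and $d\eta_t = 0$, the relation $\phi_t^\ast \omega_t = \omega_0$ becomes
\[
d(\iota_{X_t}\omega_t) + \dot\omega_t = 0,
\]
while $\phi_t^\ast \eta_t = f_t\,\eta_0$ becomes
\[
d(\eta_t(X_t)) + \dot\eta_t = g_t\,\eta_t
\]
for some smooth family of functions $\{g_t\}$, with $f_t$ recovered from the linear ODE $\dot f_t = (g_t \circ \phi_t)\,f_t$ subject to $f_0 = 1$. Using the cohomology hypotheses, I would then choose a smooth $1$-form $\beta$ and a smooth function $h$ on $M$ with $\omega_1 - \omega_0 = d\beta$ and $\eta_1 - \eta_0 = dh$, so that $\dot\omega_t = d\beta$ and $\dot\eta_t = dh$ are exact. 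Note that $\beta$ and $h$ may be chosen independent of $t$ because the interpolation is linear.

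Third, I would exploit the standing nondegeneracy hypothesis, namely that $I_{\eta_t,\omega_t}$ is a $C^\infty(M)$-module isomorphism for each $t$, to define $X_t$ as the unique vector field solving
\[
I_{\eta_t,\omega_t}(X_t) = -\beta - h\,\eta_t.
\]
Pairing this identity against the Reeb vector field $\xi_t$ of $(\omega_t,\eta_t)$ and using $\iota_{\xi_t}\omega_t = 0$ together with $\eta_t(\xi_t) = 1$, one reads off both $\eta_t(X_t)$ and $\iota_{X_t}\omega_t$ explicitly; this reduces the verification of the two Moser equations to identities among $\beta$, $h$, $\omega_t$, $\eta_t$, and $\xi_t$. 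Integrating $X_t$ on the compact manifold $M$ produces $\phi_t$ with $\phi_0 = \mathrm{id}_M$, and solving the ODE for $f_t$ completes the construction.

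The main obstacle, and the reason the conclusion admits a scaling function $f_t$ rather than the stronger identity $\phi_t^\ast \eta_t = \eta_0$, will be the compatibility of the two differentiated equations. The $\omega$-equation constrains $\iota_{X_t}\omega_t$, the $\eta$-equation constrains $\eta_t(X_t)$ together with $g_t$, and both must be satisfied simultaneously by a single $X_t$. The delicate point is that a Reeb-direction contribution is forced by $\beta$ and cannot in general be removed by modifying the primitive $1$-form alone; the extra degree of freedom carried by $f_t$ is precisely what absorbs this contribution and makes the coupled system solvable under the cosymplectic isomorphism hypothesis.
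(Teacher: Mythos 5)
Your route is genuinely different from the paper's: you run Moser's argument directly on $M$, whereas the paper lifts the whole problem to the associated symplectic manifold $\widetilde M = M\times \mathbb{S}^1$ with $\tilde\omega_t = p^\ast\omega_t + p^\ast\eta_t\wedge\pi_2^\ast(du)$, applies the classical symplectic Moser stability theorem there (the class $[\tilde\omega_t]$ is constant because $[\omega_t]$ and $[\eta_t]$ are), and then projects the resulting symplectic isotopy back to $M$ through a section $S_l$ of $p$; the scaling $f_t$ then falls out of the circle component of the lifted flow. Unfortunately, your direct approach has a genuine gap at exactly the point you flag in your last paragraph, and the extra degree of freedom $f_t$ does not repair it. If you set $\widetilde I_{\eta_t,\omega_t}(X_t) = -\beta - h\,\eta_t$ and pair with the Reeb field $\xi_t$, you get $\eta_t(X_t) = -\beta(\xi_t) - h$ and hence
\begin{equation*}
\iota_{X_t}\omega_t = -\beta + \beta(\xi_t)\,\eta_t,
\qquad
d(\iota_{X_t}\omega_t) + d\beta = d\bigl(\beta(\xi_t)\bigr)\wedge\eta_t .
\end{equation*}
The right-hand side must vanish for $\phi_t^\ast\omega_t = \omega_0$ to hold, and this forces the extra condition $d(\beta(\xi_t))\wedge\eta_t = 0$, which is not among the hypotheses of the theorem (it is precisely the additional assumption $d(\alpha_t(\xi_t))=0$ that the paper imposes in its Stability Theorem II). Note that the term $\beta(\xi_t)\,\eta_t$ in $\iota_{X_t}\omega_t$ is unavoidable: $\iota_{X_t}\omega_t$ always annihilates $\xi_t$, so no choice of the $\eta_t$-component of the right-hand side of your defining equation can remove it.

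The reason $f_t$ cannot absorb this obstruction is that the relaxation $\phi_t^\ast\eta_t = f_t\eta_0$ only loosens the $\eta$-equation (where it lets you tolerate a term $g_t\eta_t$), while the contamination $\beta(\xi_t)\eta_t$ lands in the $\omega$-equation, which the conclusion requires to hold on the nose. To make your strategy work you would either have to add the hypothesis $d(\beta(\xi_t))\wedge\eta_t = 0$, or find a primitive $\beta' = \beta + dG$ with $\beta'(\xi_t) = 0$ for all $t$ (a transport equation along the Reeb flow that has no global solution in general). The paper's detour through $(\widetilde M,\tilde\omega)$ sidesteps the obstruction entirely, at the cost that the projected maps $\phi_t = p\circ\psi_t\circ S_l$ are only guaranteed to be smooth maps rather than diffeomorphisms --- which is why the theorem is stated for a ``smooth family of maps'' rather than an isotopy.
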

\begin{proof}  The proof of Theorem \ref{thm00}  is the adaptation of  the Moser path method in symplectic geometry  and  strongly rests on  Lemma \ref{lem-3}. 
	Consider {\color{cyan}$\widetilde M = M\times \mathbb{S}^1$} equipped with the $2$ form $\tilde\omega: = p^\ast(\omega) + p^\ast(\eta)\wedge \pi_2^\ast (du)$ where $u$ is the coordinate function on $\mathbb{R}$ and $p: \widetilde M\rightarrow M,$ and {\color{green}$\pi_2: \widetilde M\rightarrow \mathbb{S}^1$} 
	are the canonical projections on each factor of $\widetilde M$ respectively. Then, by  Lemma \ref{lem-3}, we have that $(\widetilde M, \tilde\omega)$ is a  symplectic manifold. Since the family  of closed $2-$forms $\tilde\omega_t: = p^\ast(\omega_t) + p^\ast(\eta_t)\wedge \pi_2^\ast (du)$ satisfies the assumption of the symplectic Moser's stability theorem (namely the forms $\tilde \omega_t $ defines the same cohomology class), then there exists an symplectic isotopy $\Psi=\{\psi_t\}$ such that $\psi_t^{\ast}(\tilde\omega_t) = \tilde\omega_0$, for each $t$. Now, since $p$ is surjective, if we {\color{blue} fix $l\in \mathbb{S}^1$} and denote by $S_l$ a smooth section of $p$ such that $S_l(x) = (x,l)\in\widetilde M $, for all $x\in M$ we can set  $\phi_t := p\circ \psi_t\circ S_l$ for each $t$. 
	Then $\phi_t$ is a smooth  map from $M$ into itself for each $t$. Compute, 
	\begin{eqnarray}
	\phi_t^\ast(\omega_t)  &= &S^\ast_l \big[\psi_t^\ast (p^\ast(\omega_t))\big] = S^\ast_l \big[\psi_t^\ast \big(\tilde\omega_t -p^\ast(\eta_t)\wedge \pi_2^\ast (du) \big)\big] \nonumber\\
	&= &S^\ast_l \big[\tilde\omega_0 - \psi_t^\ast \big(p^\ast(\eta_t)\wedge \pi_2^\ast (du) \big)\big] ,\nonumber\\
	&= &S^\ast_l(\tilde\omega_0) - S^\ast_l \big[\psi_t^\ast \big(p^\ast(\eta_t)\wedge \pi_2^\ast (du) \big)\big] ,\nonumber\\
	& =&  \omega_0, \quad \text{for each } \ t,
	\end{eqnarray}
	because  we have 
	\begin{eqnarray}
	S^\ast_l (\tilde \omega_0)  &= &
	S^\ast_l \big[p^\ast(\omega_0) + p^\ast(\eta_0)\wedge \pi_2^\ast (du) \big]  \nonumber\\
	&= & \omega_0 + \eta_0\wedge d(u\circ S_l) ,\nonumber\\
	&= & \omega_0+0,
	\end{eqnarray}
	and $ S^\ast_l\big[\psi_t^\ast (p^\ast(\eta_t)\wedge \pi_2^\ast (du) )\big]   = 
	S^\ast_l\psi_t^\ast (p^\ast(\eta_t))\wedge 0 = 0$. 
	On the other hand,   since   $p_\ast(\partial_u) = 0$, where $\partial_u := \frac{\partial}{\partial u}$, we derive from
	$ - p^\ast(\eta_t) = \imath_{\partial_u}\tilde\omega_t$, that \\
	$-\phi_t^\ast(\eta_t) :=( S_l^\ast\circ \psi_t^\ast \circ p^\ast)(\eta_t) =  S_l^\ast(\psi_t^\ast( \imath_{\partial_u}\tilde\omega_t))$ 
	and  compute, 
	\begin{eqnarray}
	\psi_t^\ast( \imath_{\partial_u}\tilde\omega_t) &=& \imath_{(\psi_t^{-1})_\ast(\partial_u)}\psi_t^\ast\tilde\omega_t\nonumber\\ &=& \imath_{(\psi_t^{-1})_\ast(\partial_u)}\tilde\omega_0,\nonumber\\
	& = & (\imath_{(\psi_t^{-1})_\ast(\partial_u)}p^\ast\omega_0) + 
	(\imath_{(\psi_t^{-1})_\ast(\partial_u)}p^\ast\eta_0)\wedge \pi_2^\ast (du)- p^\ast (\eta_0)\wedge  (\imath_{(p\circ\psi_t^{-1})_\ast(\partial_u)} \pi_2^\ast (du)),\nonumber
	\end{eqnarray}
	i.e., 
	$\psi_t^\ast( \imath_{\partial_u}\tilde\omega_t)  = (\imath_{(\psi_t^{-1})_\ast(\partial_u)}p^\ast\omega_0) + (\imath_{(\psi_t^{-1})_\ast(\partial_u)}p^\ast\eta_0)\wedge \pi_2^\ast (du) 
	- p^\ast (\eta_0)\wedge  (\imath_{(p\circ\psi_t^{-1})_\ast(\partial_u)}\pi_2^\ast (du)).$ 
	Composing  the above equality with $ S_l^\ast$ gives \\
	$ \phi_t^\ast(\eta_t) =	S_l^\ast (\imath_{(\psi_t^{-1})_\ast(\partial_u)}\pi_2^\ast (du)) \eta_0 - S_l^\ast(\imath_{(\psi_t^{-1})_\ast(\partial_u)}p^\ast\omega_0)$. 
	Hence, defining two smooth families of mappings $\{h_t\}$ and $\{k_t\}$ such that  $h_t\circ p = p\circ \psi_t^{-1}$, and $k_t\circ \pi_2 = \pi_2\circ \psi_t^{-1}$ for each $t$, then we derive that 
	\begin{eqnarray}
	-\phi_t^\ast(\eta_t) &=& S_l^\ast(\imath_{(h_t\circ p)_\ast(\partial_u)}\omega_0) -S_l^\ast (\imath_{(\psi_t^{-1})_\ast(\partial_u)} \pi_2^\ast (du)) \eta_0,\nonumber\\
	& = & 0 - (\pi_2\circ S_l)^\ast (\imath_{(k_t)_\ast(\partial_u)} du) \eta_0,\nonumber\\
	&= & - (\imath_{(k_t)_\ast(\partial_u)} du)(l) \eta_0.
	\end{eqnarray}
	Take $ f_t:= \imath_{(k_t)_\ast(\partial_u)} du(l)$,  for each $t$. 
	It is clear that $f_0 =  (\imath_{(k_0)_\ast(\partial_u)} du)(l) = 1$. 
\end{proof}
\begin{theorem}[$\omega-$Stability theorem]\label{thm01} 
	Let $M$ be a smooth compact manifold  of dimension $(2n +1)$, admitting two cosymplectic structures  $(\eta, \omega_0)$ and $(\eta, \omega_1)$ with 
	$\omega_0$ cohomologous to  $\omega_1$. Assume that for each $t\in [0,1]$, $I_{\eta,\omega_t}$ is an isomorphism 
	over the module $C^\infty(M)$ 
	where $\omega_t = (1-t)\omega_0 + t\omega_1$. Then, there exists a smooth isotopy
	$\Phi=\{\phi_t\}$  such that $\phi_t^\ast(\omega_t)= \omega_0$ and $\phi_t^\ast(\eta)= \eta$, 
	for all $t\in [0,1]$.
\end{theorem}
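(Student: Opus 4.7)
The plan is to adapt the Moser isotopy method, mirroring the proof of Theorem \ref{thm00} but taking advantage of the simplification that $\eta$ is held fixed. First, by Lemma \ref{lem-3}, I would pass to the symplectification $\widetilde M = M\times\mathbb{S}^1$ equipped with the family $\tilde\omega_t := p^*\omega_t + p^*\eta\wedge\pi_2^*du$; each $(\widetilde M, \tilde\omega_t)$ is symplectic. Since $\eta$ is $t$-independent and $[\omega_0] = [\omega_1]$, the cohomology classes $[\tilde\omega_t]\in H^2(\widetilde M;\mathbb{R})$ are all equal, so the usual symplectic Moser stability theorem yields a smooth isotopy $\psi_t$ of $\widetilde M$ with $\psi_0 = \mathrm{id}$ and $\psi_t^*\tilde\omega_t = \tilde\omega_0$ for every $t\in[0,1]$.

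Next, fixing $l\in\mathbb{S}^1$ and the section $S_l(x)=(x,l)$ of $p$, I would set $\phi_t := p\circ\psi_t\circ S_l$, so that $\phi_0 = \mathrm{id}_M$. The identity $\phi_t^*\omega_t = \omega_0$ then transcribes the corresponding computation in the proof of Theorem \ref{thm00} essentially verbatim: split $\tilde\omega_t = p^*\omega_t + p^*\eta\wedge\pi_2^*du$, apply $S_l^*\psi_t^*$, exploit $\psi_t^*\tilde\omega_t = \tilde\omega_0$, and use $S_l^*\pi_2^*du = d(u\circ\pi_2\circ S_l) = 0$ because $u\circ\pi_2\circ S_l$ is the constant $l$. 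For the identity $\phi_t^*\eta = \eta$, I would retrace the $\imath_{\partial_u}$ computation in the proof of Theorem \ref{thm00}, which starts from the relation $-p^*\eta = \imath_{\partial_u}\tilde\omega_t$ and produces the general formula $\phi_t^*\eta_t = f_t\eta_0$ with $f_t = (\imath_{(k_t)_*\partial_u}du)(l)$. Since $\eta_0 = \eta_1 = \eta$ is constant in $t$, the auxiliary maps $k_t$ determined by $k_t\circ\pi_2 = \pi_2\circ\psi_t^{-1}$ collapse so that $f_t$ stays equal to its initial value $f_0 = 1$ throughout the isotopy, yielding $\phi_t^*\eta = \eta$ for every $t$.

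The main obstacle I anticipate is the careful bookkeeping of the pushforwards $(\psi_t^{-1})_*\partial_u$ appearing when one computes $\psi_t^*\imath_{\partial_u}\tilde\omega_t$ on the product $\widetilde M$, and in particular showing that the scalar factor $f_t$ reduces to $1$ precisely because $\eta$ is fixed along the deformation. This is a bookkeeping exercise in pullbacks and interior products on a trivial $\mathbb{S}^1$-bundle, made tractable by the product structure of $\widetilde M$; once this collapse of $f_t$ is established, both identities $\phi_t^*\omega_t=\omega_0$ and $\phi_t^*\eta=\eta$ hold simultaneously and the proof concludes.
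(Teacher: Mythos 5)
Your route is genuinely different from the paper's. The paper proves Theorem \ref{thm01} directly on $M$ by a cosymplectic Moser trick: it writes down the infinitesimal equations $\imath_{v_t}\omega_t+\alpha=0$ and $\eta(v_t)=0$, observes that (once $\alpha(\xi)=0$) they combine into the single equation $\widetilde I_{\eta,\omega_t}(v_t)=-\alpha$, solves this using the assumed invertibility of $\widetilde I_{\eta,\omega_t}$, and integrates. No passage to $\widetilde M$ is involved. Your proposal instead runs the symplectic Moser theorem on $\widetilde M=M\times\mathbb{S}^1$ and pushes the resulting isotopy down via $\phi_t:=p\circ\psi_t\circ S_l$, as in the proof of Theorem \ref{thm00}.

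That descent is where the gap lies. The symplectic Moser theorem, invoked as a black box, produces an isotopy $\psi_t$ of $\widetilde M$ with no reason to preserve the product structure. Without fiber-preservation: (i) $\phi_t=p\circ\psi_t\circ S_l$ is merely a smooth map of $M$ to itself, not necessarily a diffeomorphism --- note that Theorem \ref{thm00} accordingly only asserts ``a smooth family of maps,'' whereas Theorem \ref{thm01} requires a genuine isotopy; (ii) the auxiliary maps $k_t$ with $k_t\circ\pi_2=\pi_2\circ\psi_t^{-1}$, on which your ``collapse of $f_t$'' rests, need not exist; and (iii) the cancellation $S_l^\ast\psi_t^\ast\pi_2^\ast(du)=d\left(u\circ\pi_2\circ\psi_t\circ S_l\right)=0$ needed for $\phi_t^\ast\omega_t=\omega_0$ fails, since $\pi_2\circ\psi_t\circ S_l$ is constant only when $\psi_t$ carries the slice $M\times\{l\}$ into a single slice. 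The assertion that ``the auxiliary maps $k_t$ collapse so that $f_t$ stays equal to $1$'' is precisely what must be proved and is not supplied. To repair the argument you must open up the Moser construction upstairs and choose the primitive of $\frac{d}{dt}\tilde\omega_t=p^\ast(\omega_1-\omega_0)$ to be $p^\ast\alpha$; the Moser field then splits as $X_t=Y_t+(\alpha(\xi_t)\circ p)\,\partial_u$ with $Y_t$ projectable, $\eta(Y_t)=0$ and $\imath_{Y_t}\omega_t=-\alpha+\alpha(\xi_t)\eta$, and one still has to kill the residual term $\phi_t^\ast\big(d(\alpha(\xi_t))\wedge\eta\big)$ in $\frac{d}{dt}(\phi_t^\ast\omega_t)$ --- that is, one is driven back to exactly the condition $\alpha(\xi_t)=0$ that the paper's direct argument isolates. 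The detour through $\widetilde M$ does not avoid the essential point; it obscures it.
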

\begin{proof}
	We shall adapt the proof of similar result from symplectic geometry. Suppose that there exists a smooth isotopy
	$\{\phi_t\}$ from $M$ to itself such that $\phi_t^\ast(\omega_t)= \omega_0$  and $\phi_t^\ast(\eta)= \eta$, 
	for all $t\in [0,1]$. This implies that if $v_t(\phi_t(x)):= \frac{d}{dt}\phi_t(x)$, for each $x\in M$, then we must have 
	$\mathcal{L}_{v_t}(\omega_t) + \frac{d}{d t}\omega_t = 0$ and $ \mathcal{L}_{v_t}(\eta) = 0$. This implies that, for all $t\in [0,1]$ 
	\begin{equation}\label{Stab-0}
	\imath_{v_t}\omega_t + \alpha = 0,
	\end{equation}
	where $\alpha$ is such that $\omega_1- \omega_0 = d\alpha$ and 
	\begin{equation}\label{Stab-1}
	\eta(v_t) = 0.
	\end{equation}
	Conversely, suppose that one can find a smooth family of vector fields $ \{v_t\}$ which satisfies (\ref{Stab-0}) and (\ref{Stab-1}), then its generating isotopy 
	$\{\psi_t\}$ will satisfy $\psi_t^\ast(\omega_t)= \omega_0$, and $\psi_t^\ast(\eta)= \eta$, 
	for all $t\in [0,1]$. So, it will be enough to solve (\ref{Stab-0}) and (\ref{Stab-1}). To that end, we observe that  (\ref{Stab-0}) suggests that
	if $\xi$ is the Reeb vector field, then $\alpha(\xi) = 0$ and with this information,  the equations (\ref{Stab-0}) and (\ref{Stab-1}) are equivalent to :
	\begin{equation}\label{Stab-2}
	\widetilde  I_{\eta, \omega_t}(v_t) = -\alpha,
	\end{equation}
	for all $t\in [0,1]$. From the non-degeneracy of $\widetilde  I_{\eta, \omega_t} $, one easily  solves (\ref{Stab-2}) to obtain $  \{v_t\}$. 
\end{proof}

\begin{theorem}[$\eta-$Stability theorem]\label{thm02}
	Let $M$ be a $(2n +1)$-dimensional  smooth compact manifold  admitting two cosymplectic structures  $(\omega,\eta_0)$ and $(\omega,\eta_1)$ with 
	$\eta_0$ cohomologous to  $\eta_1$. Assume that for each $t\in [0,1]$, $\widetilde I_{\eta_t,\omega}$ is an isomorphism over the module $C^\infty(M)$ 
	where $\eta_t = (1-t)\eta_0 + t\eta_1$. Then, there exists a smooth isotopy $\{\phi_t\}$  such that $\phi_t^\ast(\omega)= \omega$ and 
	$\phi_t^\ast(\eta_t)= \eta_0$, for all $t\in [0,1]$.
\end{theorem}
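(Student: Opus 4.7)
The plan is to adapt the Moser path method used for Theorem~\ref{thm01}, interchanging the roles of $\omega$ and $\eta$. Since $\eta_0$ and $\eta_1$ are cohomologous closed $1$-forms, their difference is exact: there exists $\beta\in C^\infty(M)$ with $\eta_1-\eta_0=d\beta$, so that $\frac{d}{dt}\eta_t = d\beta$ for every $t\in[0,1]$.

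I seek a smooth time-dependent vector field $v_t$ whose flow will provide the desired isotopy $\{\phi_t\}$. Differentiating the target identities $\phi_t^{\ast}\omega=\omega$ and $\phi_t^{\ast}\eta_t=\eta_0$, applying Cartan's magic formula, and using $d\omega=0$ and $d\eta_t=0$, reduces matters to solving, for each $t\in[0,1]$,
\begin{equation*}
\imath_{v_t}\omega=0\qquad\text{and}\qquad \eta_t(v_t)=-\beta.
\end{equation*}
(The second condition comes from $\mathcal{L}_{v_t}\eta_t = d(\eta_t(v_t))$ and the requirement $d(\eta_t(v_t))+d\beta=0$, which we strengthen from ``closed'' to a pointwise identity.) These two pointwise linear conditions can be repackaged as the single equation
\begin{equation*}
\widetilde{I}_{\eta_t,\omega}(v_t)=\imath_{v_t}\omega+\eta_t(v_t)\,\eta_t=-\beta\,\eta_t,
\end{equation*}
which, by the assumed invertibility of $\widetilde{I}_{\eta_t,\omega}$ over $C^\infty(M)$, admits a unique solution $v_t$, smooth in $t$.

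To verify that this $v_t$ actually satisfies the two conditions separately, I would contract the equation with the Reeb vector field $\xi_t$ of $(\omega,\eta_t)$ (characterized by $\imath_{\xi_t}\omega=0$ and $\eta_t(\xi_t)=1$, whose existence is guaranteed by Remark~\ref{Dim1}): contraction with $\xi_t$ kills the $\imath_{v_t}\omega$ term and yields $\eta_t(v_t)=-\beta$, after which substituting back into the Moser equation forces $\imath_{v_t}\omega=0$. Since $M$ is compact, $v_t$ integrates to a smooth isotopy $\{\phi_t\}_{t\in[0,1]}$ starting at the identity, and running the Moser derivation in reverse confirms $\phi_t^{\ast}\omega=\omega$ and $\phi_t^{\ast}\eta_t=\eta_0$ for all $t$.

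The main delicate point, exactly as in Theorem~\ref{thm01}, is that one has two scalar-valued conditions to enforce with a single vector-valued Moser equation; the Reeb field $\xi_t$ plays the decisive role of decoupling them, and this is where the isomorphism hypothesis on $\widetilde{I}_{\eta_t,\omega}$ is genuinely used (both to produce $\xi_t$ and to invert the Moser equation). Everything else is a direct transcription of the standard symplectic argument.
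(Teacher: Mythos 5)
Your proof is correct and takes essentially the same route as the paper: the paper's (one-line) proof of Theorem~\ref{thm02} simply sets $v_t := \widetilde I_{\eta_t,\omega}^{-1}(-f\eta_t)$ with $\eta_1-\eta_0 = df$, which is exactly your Moser equation $\widetilde{I}_{\eta_t,\omega}(v_t)=-\beta\,\eta_t$. Your contraction with the Reeb field $\xi_t$ to recover the two conditions $\imath_{v_t}\omega=0$ and $\eta_t(v_t)=-\beta$ separately is the decoupling step the paper leaves implicit, and it is carried out correctly.
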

\begin{proof}We shall adapt the proof of Theorem \ref{thm01}. In fact here, $v_t := \widetilde I_{\eta_t, \omega}^{-1}(-f\eta_t)$, with\\ $\eta_1 -\eta_0 = df$ \, for some smooth function $f$. \end{proof}
\begin{theorem}[Stability theorem $I$]\label{thm03} 
	Let $M$ be a smooth compact manifold  of dimension $(2n +1)$, admitting two cosymplectic structures  $( \omega_0,\eta_0)$ and $( \omega_1,\eta_1)$ with 
	$\eta_0$ (resp. $\omega_0$) cohomologous to $\eta_1$ (resp. $\omega_1$). Assume,  for each $t\in [0,1]$,that  $\widetilde I_{\eta_t,\omega_t}$ is an isomorphism 
	over the module $C^\infty(M)$ where $\eta_t = (1-t)\eta_0 + t\eta_1$ and $\omega_t = (1-t)\omega_0 + t\omega_1$. Then, there exists a smooth isotopy
	$\{\phi_t\}$ such that $\phi_t^\ast(\omega_t)= \omega_0$ and $\phi_t^\ast(\eta_t)= \eta_0$, for all $t\in [0,1]$. 
\end{theorem}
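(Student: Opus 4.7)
The plan is to adapt the Moser path method used in the proofs of Theorem \ref{thm01} and Theorem \ref{thm02}, now handling the simultaneous deformation of both closed forms. Since $\eta_0$ is cohomologous to $\eta_1$ and $\omega_0$ to $\omega_1$, I fix a smooth function $f$ and a 1-form $\alpha$ on $M$ with $\eta_1-\eta_0 = df$ and $\omega_1-\omega_0 = d\alpha$. I then seek a time-dependent vector field $v_t$ whose flow $\{\phi_t\}$ satisfies $\phi_t^{\ast}\omega_t = \omega_0$ and $\phi_t^{\ast}\eta_t = \eta_0$. Differentiating these identities in $t$ and invoking $\phi_t^{\ast}$-injectivity together with Cartan's formula (and the closedness of $\omega_t,\eta_t$) reduces the problem, as in Theorems \ref{thm01} and \ref{thm02}, to solving the pointwise system
$$\imath_{v_t}\omega_t = -\alpha, \qquad \eta_t(v_t) = -f.$$

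These two relations package into a single equation $\widetilde I_{\eta_t,\omega_t}(v_t) = -\alpha - f\eta_t$. Since $\widetilde I_{\eta_t,\omega_t}$ is a $C^\infty(M)$-module isomorphism at every $t\in[0,1]$ by hypothesis, I would simply define
$$v_t := \widetilde I_{\eta_t,\omega_t}^{-1}(-\alpha - f\eta_t),$$
which is the natural combination of the vector fields $\widetilde I_{\eta,\omega_t}^{-1}(-\alpha)$ from Theorem \ref{thm01} and $\widetilde I_{\eta_t,\omega}^{-1}(-f\eta_t)$ from Theorem \ref{thm02}. Compactness of $M$ ensures that $v_t$ integrates on the whole of $[0,1]$ to a smooth isotopy $\{\phi_t\}$. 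The usual Moser argument then yields $\frac{d}{dt}(\phi_t^{\ast}\omega_t) = \phi_t^{\ast}(\mathcal L_{v_t}\omega_t + d\alpha) = 0$ and $\frac{d}{dt}(\phi_t^{\ast}\eta_t) = \phi_t^{\ast}(\mathcal L_{v_t}\eta_t + df) = 0$, giving the required equalities $\phi_t^{\ast}\omega_t = \omega_0$ and $\phi_t^{\ast}\eta_t = \eta_0$ after integrating from $0$ to $t$.

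The main obstacle, as in the proof of Theorem \ref{thm01}, is verifying that the single equation $\widetilde I_{\eta_t,\omega_t}(v_t) = -\alpha - f\eta_t$ actually encodes both separate conditions $\imath_{v_t}\omega_t = -\alpha$ and $\eta_t(v_t) = -f$. Testing on the Reeb vector field $\xi_t$ of $(\omega_t,\eta_t)$ (which now genuinely depends on $t$) forces the compatibility $\alpha(\xi_t) = 0$ for every $t$, the exact analogue of the remark made in the proof of Theorem \ref{thm01}. My plan for this is to allow a $t$-dependent primitive $\alpha_t$ and decompose the unknown as $v_t = Y_t + g_t\xi_t$ with $\eta_t(Y_t)=0$: set $g_t := -f$ directly to handle the second equation, and solve $\imath_{Y_t}\omega_t = -\alpha_t$ using the symplectic nondegeneracy of $\omega_t$ restricted to $\ker\eta_t$, after replacing $\alpha$ by $\alpha_t := \alpha - \alpha(\xi_t)\eta_t$ (whose differential differs from $d\alpha$ only by $d(\alpha(\xi_t))\wedge\eta_t$, a term that must be absorbed carefully using the closedness of $\eta_t$). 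This decomposition-based strategy keeps all the algebra within the cosymplectic framework and should close the proof along the same lines as Theorem \ref{thm01}.
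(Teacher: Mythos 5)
Your proposal follows the paper's proof of Theorem \ref{thm03} essentially step for step: the Moser path method, differentiation of $\phi_t^\ast\omega_t=\omega_0$ and $\phi_t^\ast\eta_t=\eta_0$ to reduce to the pair $\imath_{v_t}\omega_t=-\alpha$, $\eta_t(v_t)=-f$, the packaging of the pair into the single equation $\widetilde I_{\eta_t,\omega_t}(v_t)=-\alpha-f\eta_t$, and the inversion of $\widetilde I_{\eta_t,\omega_t}$ to produce $v_t$. So the core of your argument is the paper's argument.

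The one place you go beyond the paper is also the one place to be careful. You correctly identify that the single equation recovers the two separate conditions only if $\alpha(\xi_t)=0$ for all $t$ (apply $\xi_t$ to both sides: one gets $\eta_t(v_t)=-f-\alpha(\xi_t)$ and $\imath_{v_t}\omega_t=-\alpha+\alpha(\xi_t)\eta_t$). The paper derives $\alpha(\xi_t)=0$ only as a \emph{necessary} condition from the assumed existence of a solution and then asserts the equivalence; it does not verify that condition in the converse direction, which is the direction actually needed. Your proposed repair --- replacing $\alpha$ by $\alpha_t:=\alpha-\alpha(\xi_t)\eta_t$ and splitting $v_t=Y_t-f\xi_t$ --- does not close this gap: with that choice one gets $\tfrac{d}{dt}(\phi_t^\ast\omega_t)=\phi_t^\ast\bigl(d(\alpha(\xi_t))\wedge\eta_t\bigr)$, and this term does not vanish in general; it is precisely to kill it that the paper's Theorem \ref{thm4} imposes the extra hypothesis $d(\alpha_t(\xi_t))=0$. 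So your write-up is faithful to the paper and more candid about the weak point, but the "absorbed carefully" step at the end is not an argument, and without an additional hypothesis of the type in Theorem \ref{thm4} neither your version nor the paper's establishes the stated conclusion.
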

\begin{proof} 
	Suppose that there exists a smooth isotopy $\{\phi_t\}$ from $M$ to itself such that\\ $\phi_t^\ast(\omega_t)= \omega_0$, and $\phi_t^\ast(\eta_t)= \eta_0$, 
	for all $t\in [0,1]$. This implies that if $v_t(\phi_t(x)):= \frac{d}{dt}\phi_t(x)$, for each $x\in M$, then we must have $\mathcal{L}_{v_t}(\omega_t) + \frac{d}{d t}\omega_t = 0$, and $ \mathcal{L}_{v_t}(\eta_t) +  \frac{d}{d t}\eta_t = 0$. This implies that, 
	\begin{equation}\label{Stab-12}
	\eta_t(v_t) + f = 0,
	\end{equation}
	and 
	\begin{equation}\label{Stab-02}
	\imath_{v_t}\omega_t + \alpha = 0,
	\end{equation}
	where $\omega_1- \omega_0 = d\alpha$, for all $t\in [0,1]$. 
	Conversely, suppose that one can find a smooth family of vector fields $ \{v_t\}$ which satisfies (\ref{Stab-02}) and (\ref{Stab-12}), then its generating isotopy 
	$\{\psi_t\}$ will satisfy $\psi_t^\ast(\omega_t)= \omega_0$ and $\psi_t^\ast(\eta_t)= \eta_0$, for all $t\in [0,1]$. So, it will be enough to solve (\ref{Stab-02}) and (\ref{Stab-12}). First of all, note that from (\ref{Stab-02}), it follows that :  if $\xi_t := \widetilde I_{\eta_t, \omega_t}^{-1}(\eta_t)$, then $\alpha(\xi_t) = 0$, for each $t$. This condition suggests that (\ref{Stab-02}) and (\ref{Stab-12}) are equivalent to :
	\begin{equation}\label{Stab-22}
	\widetilde I_{\eta_t, \omega_t}(v_t) = -\alpha - f\eta_t,
	\end{equation}
	for all $t\in [0,1]$. From the non-degeneracy of $ \widetilde I_{\eta_t, \omega_t} $, one can solve (\ref{Stab-22}) to obtain $  \{v_t\}$. 
\end{proof}
\begin{theorem}[Stability theorem $II$]\label{thm4}
	Let $M$ be a smooth closed manifold  of dimension $(2n +1)$, admitting two cosymplectic structures  $( \omega_0,\eta_0)$ and $(\omega_1,\eta_1)$. Assume that  $\{\eta_t\}$ (resp. $\{\omega_t\}$) is a  smooth family of closed $1-$forms (resp. closed $2-$forms) with endpoints $\eta_0$ and $\eta_1$ 
	(resp. $\omega_0$ and $\omega_1$ )  making $(M, \omega_t,\eta_t)$  a cosymplectic manifold for each $t$, and satisfying
	\begin{itemize}
		\item $\dfrac{\partial}{\partial t}[\omega_t ] =\left[\dfrac{\partial}{\partial t}\omega_t \right] =  [d\alpha_t ] = 0$, 
		\item $\dfrac{\partial}{\partial t}[\eta_t ] =\left[\dfrac{\partial}{\partial t}\eta_t\right]  =[df_t] = 0$, and 
		\item $d(\alpha_t(\xi_t))  = 0$,  
	\end{itemize}
	for each $t$, with $\xi_t: = \widetilde  I_{\eta_t,\omega_t}^{-1}(\eta_t)$. 
	Then,  there exists a smooth isotopy
	$\{\phi_t\}$ such that\\ $\phi_t^\ast(\omega_t) = \omega_0$, and $\phi_t^\ast(\eta_t)= \eta_0$, 
	for all $t\in [0,1]$.
\end{theorem}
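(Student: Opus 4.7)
The plan is to adapt the Moser path method as in the proof of Theorem~\ref{thm03}, with the essential new ingredient being that the hypothesis $d(\alpha_t(\xi_t)) = 0$ compensates for a correction term that naturally appears in the odd-dimensional cosymplectic setting and which is invisible in the symplectic case.

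First I would look for $\{\phi_t\}$ as the flow of a time-dependent vector field $\{v_t\}$. Differentiating the desired identities $\phi_t^\ast(\omega_t)=\omega_0$ and $\phi_t^\ast(\eta_t)=\eta_0$ in $t$, using Cartan's formula together with $d\omega_t = 0$, $d\eta_t = 0$, $\partial_t\omega_t = d\alpha_t$, and $\partial_t\eta_t = df_t$, reduces the problem to finding $v_t$ such that $\iota_{v_t}\omega_t + \alpha_t$ and $\eta_t(v_t) + f_t$ are both \emph{closed} (not necessarily zero).

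The natural ansatz, by analogy with Theorem~\ref{thm03}, is to define $v_t$ algebraically by
\[
v_t := \widetilde{I}_{\eta_t,\omega_t}^{-1}\bigl(-\alpha_t - f_t\,\eta_t\bigr),
\]
which is well-defined by the non-degeneracy of $\widetilde{I}_{\eta_t,\omega_t}$. Evaluating the defining identity $\iota_{v_t}\omega_t + \eta_t(v_t)\,\eta_t = -\alpha_t - f_t\,\eta_t$ on the Reeb vector field $\xi_t$ (which satisfies $\iota_{\xi_t}\omega_t = 0$ and $\eta_t(\xi_t)=1$ by the analysis following Remark~\ref{Dim1}) yields
\[
\eta_t(v_t) \;=\; -\alpha_t(\xi_t) - f_t.
\]
Substituting this back into the defining identity produces $\iota_{v_t}\omega_t = -\alpha_t + \alpha_t(\xi_t)\,\eta_t$. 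Taking $d$ gives $d(\iota_{v_t}\omega_t) = -d\alpha_t + d(\alpha_t(\xi_t))\wedge\eta_t + \alpha_t(\xi_t)\,d\eta_t$, which collapses to $-d\alpha_t = -\partial_t\omega_t$ by the third and first hypotheses. Likewise $d(\eta_t(v_t) + f_t) = -d(\alpha_t(\xi_t)) = 0$ by the third hypothesis. Hence both Moser equations $\mathcal{L}_{v_t}\omega_t + \partial_t\omega_t = 0$ and $\mathcal{L}_{v_t}\eta_t + \partial_t\eta_t = 0$ hold.

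Finally, since $M$ is closed, the smooth time-dependent vector field $\{v_t\}$ integrates to a global smooth isotopy $\{\phi_t\}$ with $\phi_0 = \mathrm{id}_M$, and $\frac{d}{dt}(\phi_t^\ast\omega_t) = \phi_t^\ast(\mathcal{L}_{v_t}\omega_t + \partial_t\omega_t) = 0$, together with the analogous identity for $\eta_t$, delivers the conclusion. The main obstacle — and the only place where all three hypotheses combine — is the verification that the correction term $\alpha_t(\xi_t)\,\eta_t$ appearing in $\iota_{v_t}\omega_t$ contributes nothing upon differentiation; this is exactly what the hypothesis $d(\alpha_t(\xi_t)) = 0$ is designed to ensure.
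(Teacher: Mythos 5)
Your proposal is correct and follows essentially the same route as the paper: you define $v_t := \widetilde{I}_{\eta_t,\omega_t}^{-1}(-\alpha_t - f_t\eta_t)$, evaluate the defining identity on the Reeb field $\xi_t$ to obtain $\eta_t(v_t) = -\alpha_t(\xi_t) - f_t$, and then use $d(\alpha_t(\xi_t)) = 0$ together with $d\eta_t = 0$ to verify both Moser equations before integrating the vector field on the closed manifold. Your write-up is in fact slightly more explicit than the paper's about where the hypothesis $d(\alpha_t(\xi_t)) = 0$ enters, but the argument is the same.
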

\begin{proof} From $\dfrac{\partial}{\partial t}\left[\eta_t \right] =\left[\dfrac{\partial}{\partial t}\eta_t \right] = 0$, we derive that $ \dfrac{\partial}{\partial t}\eta_t  = df_t$, where $f_t$, is a smooth family of smooth functions on $M$, whereas  $\dfrac{\partial}{\partial t}\omega_t =  d\alpha_t$, for each $t$. So, set $\xi_t: = \widetilde I_{\eta_t,\omega_t}^{-1}(\eta_t)$, for each $t$, and from the non-degeneracy of $ \widetilde I_{\eta_t, \omega_t}$, 
	there exists a unique family of vector field $\{v_t\}$ such that 
	\begin{equation}\label{Stab-33}
	\widetilde I_{\eta_t, \omega_t}(v_t) + \alpha_t + f_t\eta_t= 0,
	\end{equation}
	for each $t$. Note that with this assumption, it is not hard to see that for such vector field $\{v_t\}$, applying $ \xi_t$ in both side of  (\ref{Stab-33}) implies that 
	$ \eta_t(v_t) + \alpha_t(\xi_t) + f_t = 0 .$  So, 
	if $\{\rho_t\}$ is the generating isotopy of $\{v_t\}$, then we have\\ 
	$\frac{d}{dt} (\rho_t^\ast(\eta_t)) = \rho_t^\ast(d(\eta_t(v_t)) + df_t) = -\rho_t^\ast(d(\alpha_t(\xi_t))) = 0,$ and\\  
		$\frac{d}{dt} (\rho_t^\ast(\omega_t)) = \rho_t^\ast(d\imath_{v_t}\omega_t + d\alpha_t) = -\rho_t^\ast(d(\alpha_t(\xi_t))\wedge\eta_t)  = 0,$
	for each $t$. 
\end{proof}
\subsection{Cosymplectic Darboux theorem}\label{Darboux-lem}
\begin{lemma}\label{Darboux1}
	Let $M$ be a smooth manifold  of dimension $(2n +1)$. Let $Q$ be a compact sub-manifold of  $M$, let $\eta_0$, $\eta_1\in \mathcal{Z}^1(M)$ and $\omega_0$, $\omega_1\in \mathcal{Z}^2(M)$ such that $(\omega_0,\eta_0)$ and $(\omega_1,\eta_1)$ induce two cosymplectic structures on a neighborhood of $Q$ with $\omega_0 (q) =\omega_1(q)$ and $\eta_0 (q) =\eta_1(q)$, for all $q\in Q$. Then, there exist open neighborhoods $\mathcal{U}_0$ and  $\mathcal{U}_1$ of $Q$ and a local diffeomorphism $\psi:\mathcal{U}_0\longrightarrow\mathcal{U}_1$  such that $\psi^\ast(\omega_1) = \omega_0$, 
	$\psi^\ast(\eta_1) = \eta_0$ and $\psi_{|Q} = id$. 
\end{lemma}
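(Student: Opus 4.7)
The plan is a Moser-style isotopy argument, adapting the strategy of Theorems \ref{thm03}--\ref{thm4} to the relative setting. By compactness of $Q$ and the pointwise agreement $\omega_0|_Q=\omega_1|_Q$, $\eta_0|_Q=\eta_1|_Q$, a tubular neighborhood $\mathcal U$ of $Q$ together with the relative Poincar\'e lemma yields a $1$-form $\alpha$ and a function $f$ on $\mathcal U$ with $\omega_1-\omega_0=d\alpha$, $\eta_1-\eta_0=df$, and $\alpha|_Q=0=f|_Q$. Form the linear interpolations $\omega_t:=\omega_0+t\,d\alpha$ and $\eta_t:=\eta_0+t\,df$; both are closed, they coincide with $(\omega_0,\eta_0)$ on $Q$, and since the cosymplectic condition (non-degeneracy of $\widetilde I_{\eta,\omega}$) is open, we may shrink $\mathcal U$ so that $(\omega_t,\eta_t)$ is cosymplectic on $\mathcal U$ for every $t\in[0,1]$, with Reeb vector field $\xi_t$.

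The goal is then to produce a smooth time-dependent vector field $v_t$ on $\mathcal U$, vanishing on $Q$, whose flow $\phi_t$ satisfies $\phi_t^{\ast}\omega_t=\omega_0$ and $\phi_t^{\ast}\eta_t=\eta_0$. Differentiating and using closedness of $\omega_t,\eta_t$, these conditions reduce to the pointwise system $\iota_{v_t}\omega_t=-\alpha$ and $\eta_t(v_t)=-f$. As in the proof of Theorem \ref{thm4}, this system is equivalent to $\widetilde I_{\eta_t,\omega_t}(v_t)=-\alpha-f\eta_t$, which can be solved uniquely from the non-degeneracy of $\widetilde I_{\eta_t,\omega_t}$, \emph{provided} the compatibility condition $\alpha(\xi_t)=0$ holds for each $t\in[0,1]$.

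The main obstacle is securing this compatibility, since the $\alpha$ supplied by the relative Poincar\'e lemma is not generally $\xi_t$-horizontal. I would resolve it by a gauge adjustment: replace $\alpha$ by $\alpha+dh$ where $h$ is to be chosen with $h|_Q=0$; the compatibility then becomes the family of first-order equations $\xi_t(h)=-\alpha(\xi_t)$ on $\mathcal U$. Using the flow-box theorem for the nowhere-vanishing vector field $\xi_0$ near $Q$ (and the smooth $t$-dependence of $\xi_t$), one constructs $h$ by prescribing $h=0$ on a local transverse slice through $Q$ and integrating along Reeb trajectories, uniformly in $t$ on a possibly smaller neighborhood. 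An alternative, more conceptual route is to pass to the symplectic manifold $\widetilde M=M\times\mathbb R$ from Lemma \ref{lem-3}: the $2$-forms $\widetilde\omega_0$ and $\widetilde\omega_1$ agree pointwise on the compact submanifold $Q\times\{0\}$, the classical relative symplectic Moser theorem yields a symplectomorphism fixing that slice, and the $u$-translation invariance of both $\widetilde\omega_i$ permits a choice of Moser vector field that descends to $M$ with the desired properties.

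Finally, since $\alpha$, $f$ and (the adjusted) $dh$ vanish on $Q$, the resulting $v_t$ satisfies $v_t|_Q=0$, so by compactness of $Q$ the flow $\phi_t$ is defined for every $t\in[0,1]$ on a (possibly smaller) neighborhood $\mathcal U_0\subset\mathcal U$ of $Q$ and fixes $Q$ pointwise. Setting $\psi:=\phi_1$ and $\mathcal U_1:=\phi_1(\mathcal U_0)$ then yields the required local diffeomorphism $\psi:\mathcal U_0\to\mathcal U_1$ with $\psi^{\ast}\omega_1=\omega_0$, $\psi^{\ast}\eta_1=\eta_0$, and $\psi|_Q=\mathrm{id}$.
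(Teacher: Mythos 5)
Your overall strategy coincides with the paper's: a relative Poincar\'e lemma on a tubular neighborhood of $Q$ producing $\alpha$ and $f$ whose restrictions to $T_qM$ vanish for $q\in Q$, linear interpolation $(\omega_t,\eta_t)$, reduction of the Moser equations to $\widetilde I_{\eta_t,\omega_t}(v_t)=-\alpha-f\eta_t$, and the compactness argument giving a flow that fixes $Q$. You also correctly isolate the one nontrivial point, namely the compatibility condition $\alpha(\xi_t)=0$. The gap lies in your resolution of that point. The equation $\xi_t(h)=-\alpha(\xi_t)$ is a one-parameter \emph{family} of transport equations for a single unknown $h$: at each $x$ the covector $\alpha_x+d_xh$ must annihilate every vector $\xi_t(x)$, $t\in[0,1]$, and since these vectors in general span a subspace of dimension greater than one, the system is overdetermined; integrating along the trajectories of $\xi_0$, as you propose, solves only the $t=0$ equation. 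A $t$-dependent gauge $h_t$ would be admissible in the Moser scheme (any smooth family of primitives of $\dot\omega_t$ will do), but then you must still produce $h_t$ smooth in $t$ with $d h_t$ vanishing on $T_qM$ for all $q\in Q$, and your transverse-slice construction breaks down whenever $\xi_t$ is tangent to $Q$ (e.g.\ when $Q$ is a Reeb orbit); none of this is addressed. The condition is genuinely nontrivial: for $\omega_0=dx\wedge dy$, $\omega_1=\omega_0+2x\,dx\wedge dz$, $\eta_0=\eta_1=dz$, $Q=\{0\}$ in $\mathbb{R}^3$, the homotopy-formula primitive is $\alpha=\tfrac23\left(x^2dz-xz\,dx\right)$ while $\xi_t=\partial_z-2tx\,\partial_y$, so $\alpha(\xi_t)=\tfrac23 x^2\neq 0$.

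Your alternative route through $\widetilde M=M\times\mathbb{R}$ does not avoid this. Choosing the $u$-invariant primitive $\widetilde\alpha=p^\ast\alpha+(f\circ p)\,du$ of $\widetilde\omega_1-\widetilde\omega_0$ does make the Moser field $u$-equivariant, so its flow has the fibered form $\widetilde\phi_t(x,u)=\left(\phi_t(x),\,u+g_t(x)\right)$; but then $\widetilde\phi_t^\ast\widetilde\omega_t=\widetilde\omega_0$ yields only $\phi_t^\ast\eta_t=\eta_0$ together with $\phi_t^\ast\omega_t=\omega_0-\eta_0\wedge dg_t$, and one computes $\dot g_t=\alpha(\xi_t)\circ\phi_t$, so the unwanted term disappears exactly when the same compatibility condition holds. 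For comparison, the paper confronts the identical obstacle directly in its Step (2), asserting $d(\beta(\xi_t))=0$ via the claim $\mathcal{L}_{\xi_t}\omega_0=0$ and then invoking Theorem \ref{thm4}; whatever one thinks of that argument, your proposal replaces it with a construction that does not work as stated, so the central step of the proof is missing.
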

\begin{proof} We shall adapt similar proof from symplectic geometry. 
	Let $\eta_0$, $\eta_1\in \mathcal{Z}^1(M)$ and $\omega_0, \omega_1\in \mathcal{Z}^2(M)$ be as in the above assumption. We want to show that there exists a neighborhood $\mathcal{U}$ of $Q$ on which we have $ \omega_1-\omega_0 = d\alpha,$ and $ \eta_1-\eta_0 = df,$ where $\alpha$ is a $1-$form on $M$, and $f$ is a smooth function on $M$.  To do so, we shall adapt the technique used to prove similar result from symplectic geometry. 
	\begin{itemize}
		\item {\bf Step (1):} Fix a Riemannian metric $g$ on $M$, and identify the normal bundle $\mathbf{N}Q$ of $Q$ with the orthogonal complement $Q^{\perp}$. Then, the exponential map $exp: \mathbf{N}Q\longrightarrow M$ is a diffeomorphism on some 
		$\mathbf{B}_\delta :=\big\{(q, v)\in \mathbf{N}Q: \| v\|_g<  \delta\big\},$ 
		for $\delta$ sufficiently small. We set $\mathcal{U} : =  exp(\mathbf{B}_\delta) $ (such n $\delta > 0$ exists because $Q$ is compact). 
		For each $0\leqslant t \leqslant 1$, define $\phi_t(exp((q, v))) :=  exp((q, tv)).$ For $t> 0$, we have that $\phi_t$ realizes a diffeomorphism 
		from $\mathcal{U}$, onto its image in  $\mathcal{U}$. Moreover, we have $ \phi_1 = id$, $\phi_0(\mathcal{U}) = Q$, and $\phi_t$ restricted to $Q$ is the 
		identity of $Q$. Set 
		$\tau := \omega_1 - \omega_0,$ and 
		$\sigma := \eta_1 -\eta_0,$ and for each $0< t \leqslant 1$, define
		$Y_t(\phi_t(x)) := \dfrac{d}{d t}(\phi_t(x)),$
		for all $x\in M$. Now, for any $s> 0$, compute 
		$$\phi_1^\ast(\tau) -  \phi_s^\ast(\tau) = \int_s^1 \frac{d}{d u} (\phi_u^\ast(\tau))du = d\left(\int_s^1 \phi_u^\ast (\iota(Y_u)\tau) du \right) =: d\alpha_s.$$
		Similarly, one obtains 
		$$\displaystyle \phi_1^\ast(\sigma) -  \phi_s^\ast(\sigma) = \int_s^1 \frac{d}{d u} (\phi_u^\ast(\sigma))du = d\left(\int_s^1\sigma(Y_u)\circ\phi_u du \right) =: df_s.$$
		For vector fields $u$ and $ v$ on $\mathcal{U}$, we have 
		$$ \phi_s^\ast(\tau)(u,v) = \tau(d\phi_s(u), d\phi_s(v))\circ \phi_s  \longrightarrow \tau(d\phi_0(u), d\phi_0(v))\circ \phi_0 = 0,$$ 
		as $s\longrightarrow 0^+$ since   $\phi_0(\mathcal{U}) = Q$, and $\tau = 0$ on each $ T_qM$,  for each $q\in Q$. This implies that 
		$\tau = d\left( (\phi^{-1}_1)^\ast(\alpha_0)\right) =: d\beta,$ and similarly, one obtains $\sigma = d \left( f_0\circ\phi^{-1}_1\right).$
		
		\item {\bf Step (2):}
		Note that the restrictions of $\tau$ and $\sigma$ to $ T_qM$ are trivial. For instance,
		set  $\eta_t = \eta_0 + t (\eta_1 - \eta_0)$, and $\omega_t = \omega_0 + t (\omega_1 - \omega_0)$ for each $t$ and derive\\ that,
		$\frac{\partial}{\partial t}\omega_t  = d\beta \quad \text{and}\quad  \frac{\partial}{\partial t}\eta_t = \sigma.$ 
		For each  $t$, consider $I_t:= \widetilde I_{\eta_0,\omega_0} + t (\widetilde I_{\eta_1,\omega_1} - \widetilde I_{\eta_0,\omega_0}).$  Since non-degeneracy is an open condition, then $I_t$
		is non-degenerate  in a smaller neighborhood of $Q$: for each $t$, the couple $(\eta_t, \omega_t)$, induces a cosymplectic structure on that neighborhood. 
		We have to show that if 
		$ \xi_t: = I_t^{-1}(\eta_t)$, then $d(\beta(\xi_t)) = 0$.
		From $ \imath_{\xi_t}\omega_t = 0$, we derive that $td(\beta(\xi_t)) = - \mathcal{L}_{\xi_t}\omega_0 $, for each $t$. 
		We claim that for $0< t\leqslant1$, we 
		have $\mathcal{L}_{\xi_t}\omega_0 = 0.$\\
		{\sf Proof of the claim }:  Assume by contradiction that there exists $s\in ]0, 1]$ such that 
		$\mathcal{L}_{\xi_s}\omega_0  \neq 0$. In particular, since the two forms $\mathcal{L}_{\xi_s}\omega_0$, and  $\mathcal{L}_{\xi_s}\omega_1$ 
		agree on $Q$, then we could have  $\mathcal{L}_{\xi_s}\omega_1 + (1-s)\mathcal{L}_{\xi_s}\omega_0  \neq 0,$ 
		on a neighborhood of $Q$. 
		That is, 
		\begin{equation}\label{Lie-1}
		0=  \mathcal{L}_{\xi_s}\omega_s = s\mathcal{L}_{\xi_s}\omega_1 + (1-s)\mathcal{L}_{\xi_s}\omega_0 \neq 0,
		\end{equation}
		on that neighborhood of $Q$ since $\xi_s$ is a cosymplectic vector field on a neighborhood of $Q$. Formula (\ref{Lie-1}) yields a contradiction then the claim follows. From   $\eta_t = \eta_0 + t (\eta_1 - \eta_0)$ and $\omega_t = \omega_0 + t (\omega_1 - \omega_0)$,  it follows from  the above arguments together with {\bf step (2)} that, $\dfrac{\partial}{\partial t}\omega_t  = d\beta$, 
	 $\dfrac{\partial}{\partial t}\eta_t = \sigma $,  
			and  $d(\beta(\xi_t)) = 0$, for each $t$. From the stability theorem (Theorem \ref{thm4}), we can find a smooth family of vector field $\{X_t\}_t$ defined on a neighborhood of $Q$ such that 
		$ I_t(X_t) + \beta + (f_0\circ\phi^{-1})\eta_t = 0, \qquad \text{ for each } \quad t.$
		\item {\bf Step (3):}
		It is enough to show that there exists a small neighborhood $\mathcal{U}_0$ of $Q$ containing $\mathcal{U}$, on which the
		family of diffeomorphisms $\psi_t$ defined by the ODE:\\  $\dot{\psi}_t = X_t\circ\psi_t,$ and $\psi_0 = id,$ is well-defined.
		By contradiction, assume that no such  neighborhood exists. Then there would exist  sequences $\{x_i\}\subset \mathcal{U}$ and $\{s_i\}\subset [0, 1)$  such that: $\psi_t(x_i)$ is defined for 
			$t\in [0, 1]$,  $\psi_{s_i}(x_i)\in \partial \mathcal{U}$, and as $i\longrightarrow \infty$,  
			$x_i$ belongs to the closure of $Q$. 
		Thus, a subsequence  $\{q_i\}$  of $\{x_i\}$ 
		converges to $q_0\in Q$, and  a subsequence $\{t_i\}$ of $\{s_i\}$ converges to $t_0 \in [0,1]$, whereas the orbit $[0, t_i]\ni t \mapsto\psi_t(q_i)$ converges 
		to the orbit $[0, t_0]\ni t \mapsto\psi_t(q_0)$. Since for each  $t\in [0, t_0]$, we have that $\psi_{t}(q_0)\in \partial \mathcal{U}$ which is compact, then $\psi_{t_0}(q_0)\in \partial \mathcal{U}$. This is a contradiction, because $X_t$ being trivial on $Q$, imposes that $\psi_t(q_0) = q_0$, for 
		each $t\in [0, t_0]$. Therefore, we take $\mathcal{U}_1 = \psi_1( \mathcal{U}_0)$, and $\psi := \psi_1$. 
	\end{itemize}
\end{proof}

\begin{theorem}(Cosymplectic-Darboux theorem)\label{thm1}. 
	On a cosymplectic manifold $(M, \omega,\eta)$ of dimension $(2n +1)$, any point $x\in M$ has a local coordinate system $(z, x_1,\cdots x_n, y_1, \cdots, y_n)$ in which $\eta$ (resp.  $\omega$) agrees with:  $\bar \eta := dz$ (resp. $\displaystyle \bar\omega := \sum_{i=1}^n dx_i\wedge dy_i$). 
\end{theorem}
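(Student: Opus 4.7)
The plan is to reduce the Darboux theorem to the relative statement in Lemma \ref{Darboux1} by building a local model on which the desired standard cosymplectic structure $(\bar\omega,\bar\eta)$ already lives, and then producing the coordinate chart as the pull-back by a local diffeomorphism provided by that lemma. Concretely, fix $x\in M$ and choose any smooth chart centered at $x$, giving a local identification of a neighborhood of $x$ with an open set $U\subset\R^{2n+1}$ carrying coordinates $(z,x_1,\dots,x_n,y_1,\dots,y_n)$ and the model forms $\bar\eta:=dz$, $\bar\omega:=\sum_{i=1}^n dx_i\wedge dy_i$.

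Next I would adjust this chart at the single point $x$ by a linear change of variables in $T_xM$. By Proposition \ref{Dim}, Remark \ref{Dim1} and Theorem \ref{stand-0}, the cosymplectic vector space $(T_xM,\omega_x,\eta_x)$ admits a basis $\xi,e_1,\dots,e_n,f_1,\dots,f_n$ in which $\eta_x(\xi)=1$, $\eta_x(e_i)=\eta_x(f_i)=0$, $\omega_x(\xi,\cdot)=0$ and $\omega_x(e_i,f_j)=\delta_{ij}$, $\omega_x(e_i,e_j)=\omega_x(f_i,f_j)=0$. Choosing coordinates adapted to this basis, I arrange that at the single point $x$ one has $\eta_x=\bar\eta_x$ and $\omega_x=\bar\omega_x$. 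Both $(\omega,\eta)$ and $(\bar\omega,\bar\eta)$ are then cosymplectic structures defined on some common neighborhood of the compact submanifold $Q:=\{x\}$ and they agree on $T_xM$.

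Now I would apply Lemma \ref{Darboux1} with this $Q$: there exist open neighborhoods $\mathcal{U}_0,\mathcal{U}_1$ of $x$ and a local diffeomorphism $\psi:\mathcal{U}_0\to\mathcal{U}_1$ with $\psi(x)=x$ such that $\psi^\ast\bar\omega=\omega$ and $\psi^\ast\bar\eta=\eta$. Transporting the coordinate functions $(z,x_1,\dots,x_n,y_1,\dots,y_n)$ from $\mathcal{U}_1$ to $\mathcal{U}_0$ via $\psi$ yields a coordinate system on a neighborhood of $x$ in $M$ in which $\eta=dz$ and $\omega=\sum dx_i\wedge dy_i$, which is the required Darboux chart.

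The main obstacle is verifying that the hypotheses of Lemma \ref{Darboux1} are genuinely in force, i.e. that the interpolated family $\eta_t=(1-t)\eta+t\bar\eta$, $\omega_t=(1-t)\omega+t\bar\omega$ defines a cosymplectic structure on a common neighborhood of $x$ for every $t\in[0,1]$. Since $\eta$ and $\bar\eta$ coincide at $x$ (similarly for $\omega$), the bundle map $\widetilde I_{\eta_t,\omega_t}$ equals $\widetilde I_{\eta_x,\omega_x}$ at $x$, which is an isomorphism; non-degeneracy is an open condition on the jointly compact parameter space $[0,1]\times\{x\}$, hence persists on a small neighborhood of $x$ uniformly in $t$. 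All other hypotheses (closedness of $\eta_t$ and $\omega_t$, agreement at $Q=\{x\}$) are immediate, so Lemma \ref{Darboux1} applies and the proof is complete.
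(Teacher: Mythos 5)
Your proposal is correct and follows essentially the same route as the paper: both build a local model chart adapted to the standard basis of $(T_xM,\omega_x,\eta_x)$ from Theorem \ref{stand-0} so that the model forms $(\bar\omega,\bar\eta)$ agree with $(\omega,\eta)$ at the point, and then invoke Lemma \ref{Darboux1} with $Q=\{x\}$. The only cosmetic difference is that the paper realizes the adapted chart via the Riemannian exponential map, while you take an arbitrary chart corrected by a linear change of variables, and you additionally spell out the openness-of-non-degeneracy check that the paper leaves inside the proof of Lemma \ref{Darboux1}.
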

\begin{proof} Since for each $q\in M$, we have that $(T_qM, \omega_q, \eta_q)$  is a cosymplectic vector space, we fix a Riemannian metric near $q$
	and  choose a basis $\xi_q, e_1,\cdots, e_n, f_1,\cdots, f_n$ of $T_qM$ as in Theorem \ref{stand-0}. Then, we define \\ 
	$\displaystyle \phi: \mathbb{R}^{(2n +1)}\longrightarrow M, (z, x_1,\cdots, x_n, y_1,  y_2,\cdots, y_n) 
	\mapsto exp_q(z \xi_q + \sum_{i= 1}^{n}\left( x_ie_i + y_if_i\right)).$\\
	The map $\phi$ is a diffeomorphism from a neighborhood $\mathcal{U}$ of zero in $ \mathbb{R}^{(2n +1)}$ onto a neighborhood 
	of $q\in M$: Thus, $(\mathcal{V}, \tilde\phi)$  defines a chart centered at $q$, with $ \tilde \phi :=\phi^{-1}$, and 
	$\mathcal{V} := \phi(\mathcal{U})$. Since the differential of the map $exp_q$  at the
	origin is the identity map and  the  basis $\xi_q, e_1,\cdots, e_n, f_1,\cdots, f_n$ is chosen as in Theorem \ref{stand-0}, it follows that 
	$(\tilde\phi^\ast(\bar\eta), \tilde\phi^\ast(\bar \omega))$ and  $(\eta, \omega)$ are two cosymplectic structures on $\mathcal{U}$  such that 
	$(\tilde\phi^\ast(\bar\eta))_q$  and $\eta_q$ (resp.   $(\tilde\phi^\ast(\bar \omega))_q$ and $\omega_q$) agrees on $T_qM$.  The theorem follows
	immediately by applying Lemma \ref{Darboux1} with $Q = \{q\}$. This completes the proof. 
\end{proof}
\subsection{The $C^0-$topology}
Let $Homeo(M)$ denote the group of all homeomorphisms of $M$ equipped with the $C^0-$ 
compact-open topology. This is the 
metric topology induced by the following distance
\begin{equation}
	d_0(f,h) = \max(d_{C^0}(f,h),d_{C^0}(f^{-1},h^{-1})),
\end{equation}
where 
\begin{equation}
	d_{C^0}(f,h) =\sup_{x\in M}d (h(x),f(x)).
\end{equation}
On the space of all continuous paths $\lambda:[0,1]\rightarrow Homeo(M)$ such that $\lambda(0) = id_M$, 
we consider the $C^0-$topology as the metric topology induced by the following metric  
\begin{equation}
	\bar{d}(\lambda,\mu) = \max_{t\in [0,1]}d_0(\lambda(t),\mu(t)).
\end{equation}
\section{Vector fields of a cosymplectic manifold}\label{SC00-1}
Let $\Omega^1(M)$ (resp. $\chi(M)$ ) be the space of all $1-$forms (resp. of all smooth vector fields) of a cosymplectic manifold $(M, \omega,\eta)$. The cosymplectic structure induces an isomorphism of $C^\infty(M,\mathbb{R})-$modules 
\begin{eqnarray}\begin{array}{ccclcc}
\widetilde I_{\eta, \omega}: & \chi(M) &\longrightarrow &\Omega^1(M) \\
& X &\longmapsto & \widetilde I_{\eta, \omega}(X)=\eta(X)\eta + \imath_X\omega.\nonumber
\end{array}\end{eqnarray}
The vector field $\xi:= \widetilde I_{\eta, \omega}^{-1}(\eta)$ is called the Reeb vector field of $(M, \omega,\eta)$ and is characterized by :  
$ \eta(\xi) = 1$ and $\imath_\xi\omega = 0$.
\begin{proposition}[$( \omega,\eta)-$decomposition)]\label{dec1}
	Let $(M, \omega,\eta)$ be a  cosymplectic manifold. Then, any vector field $X$ on $M$ decomposes in a unique way as : 
	$ X = X_\omega + X_\eta,$ 
	where  $$X_\omega := \widetilde I_{\eta, \omega}^{-1}(\imath_X\omega) \quad  \text{ and } \quad X_\eta :=  \widetilde I_{\eta, \omega}^{-1}(\eta(X)\eta).$$
\end{proposition}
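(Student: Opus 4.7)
The plan is straightforward: the decomposition is essentially forced by the defining formula of $\widetilde I_{\eta,\omega}$, so existence reduces to a one-line inversion and uniqueness follows from the canonical splitting $TM = \mathbb{R}\xi \oplus \ker\eta$ induced by the cosymplectic structure.

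For existence, I would start from the definition
$$\widetilde I_{\eta,\omega}(X) = \eta(X)\eta + \imath_X\omega,$$
valid for every $X\in\chi(M)$. Since $\widetilde I_{\eta,\omega}$ is a $C^\infty(M,\mathbb{R})$-linear isomorphism, applying $\widetilde I_{\eta,\omega}^{-1}$ to both sides immediately yields
$$X = \widetilde I_{\eta,\omega}^{-1}\bigl(\eta(X)\eta\bigr) + \widetilde I_{\eta,\omega}^{-1}(\imath_X\omega) = X_\eta + X_\omega,$$
which is the desired decomposition.

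For uniqueness, I would first recast the two summands geometrically. Using $C^\infty$-linearity, $X_\eta = \eta(X)\,\widetilde I_{\eta,\omega}^{-1}(\eta) = \eta(X)\,\xi$, so $X_\eta$ is a smooth multiple of the Reeb vector field. On the other hand $X_\omega = X - \eta(X)\xi$, and since $\eta(\xi)=1$, we have $\eta(X_\omega) = \eta(X) - \eta(X) = 0$, so $X_\omega\in\ker\eta$. Thus the decomposition coincides with the splitting $X = \eta(X)\xi + (X-\eta(X)\xi)$ associated to $TM = \mathbb{R}\xi \oplus \ker\eta$. Now if $X = Y_1 + Y_2$ with $Y_1 = g\,\xi$ for some $g\in C^\infty(M,\mathbb{R})$ and $\eta(Y_2)=0$, then applying $\eta$ gives $\eta(X) = g$, so $Y_1 = X_\eta$ and $Y_2 = X_\omega$.

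There is no real obstacle; the only point worth flagging is that one must use the $C^\infty(M,\mathbb{R})$-module structure of $\widetilde I_{\eta,\omega}$ to pull the scalar $\eta(X)$ through $\widetilde I_{\eta,\omega}^{-1}$, which is what identifies the $\eta$-component with $\eta(X)\xi$ and thus ties the algebraic decomposition to the canonical Reeb/contact-distribution splitting.
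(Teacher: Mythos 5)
Your proof is correct: the paper states Proposition \ref{dec1} without proof, and your argument (existence by applying $\widetilde I_{\eta,\omega}^{-1}$ to the defining identity, uniqueness via $X_\eta=\eta(X)\xi$ and the splitting $TM=\mathbb{R}\xi\oplus\ker\eta$) is exactly the immediate consequence of the $C^\infty(M,\mathbb{R})$-module isomorphism property that the authors evidently intend. The only cosmetic remark is that $\ker\eta$ is not a contact distribution here, but that wording does not affect the argument.
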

\subsection{Cosymplectic vector fields}\label{SC011} 
In this subsection we study those vector fields $X$ of a  cosymplectic manifold $(M, \eta, \omega)$ whose generating flow $ \Phi_X$ preserve
 the forms $\eta$, and $\omega$. 

\begin{definition}
	Let $(M, \omega,\eta)$ be a  cosymplectic manifold. A vector field $X$ is said to be  cosymplectic if $\mathcal{L}_{X}\eta = 0$
	and  $\mathcal{L}_{X}\omega = 0$.
\end{definition}
We shall denote by $\chi_{\eta, \omega}(M)$ the space of all  cosymplectic vector fields of $(M, \omega,\eta)$.
\begin{corollary}\label{cor1}
	Let $(M,\omega,\eta)$ be a  cosymplectic manifold. For any $X\in \chi_{\eta, \omega}(M)$, the  $1-$form 
	$ \widetilde I_{\eta, \omega}(X)$ is closed. 
\end{corollary}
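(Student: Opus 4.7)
The plan is to apply Cartan's magic formula to each of the two hypotheses $\mathcal{L}_X\eta = 0$ and $\mathcal{L}_X\omega = 0$, exploiting crucially that $\eta$ and $\omega$ are themselves closed (this is part of the definition of a cosymplectic structure recalled earlier).

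First I would unpack $\mathcal{L}_X\eta = 0$. By Cartan's formula, $\mathcal{L}_X\eta = d(\imath_X\eta) + \imath_X d\eta$, and since $d\eta = 0$, this reduces to $d(\imath_X\eta) = 0$, i.e. $d(\eta(X)) = 0$. Thus the smooth function $\eta(X)$ is locally constant on $M$. Next, applying the same identity to $\omega$, I get $\mathcal{L}_X\omega = d(\imath_X\omega) + \imath_X d\omega = d(\imath_X\omega)$, so $\mathcal{L}_X\omega = 0$ yields $d(\imath_X\omega) = 0$ directly.

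Now I would differentiate the defining expression
\begin{equation*}
\widetilde I_{\eta,\omega}(X) = \eta(X)\,\eta + \imath_X\omega.
\end{equation*}
Using the Leibniz rule,
\begin{equation*}
d\bigl(\widetilde I_{\eta,\omega}(X)\bigr) = d(\eta(X))\wedge \eta + \eta(X)\,d\eta + d(\imath_X\omega).
\end{equation*}
The first term vanishes because $d(\eta(X)) = 0$ by the first step; the second vanishes because $d\eta = 0$; the third vanishes by the second step. Hence $d(\widetilde I_{\eta,\omega}(X)) = 0$, which is exactly the assertion.

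There is no real obstacle here: the statement is essentially a two-line consequence of Cartan's formula once one observes that both $\eta$ and $\omega$ are closed, so every Lie-derivative condition collapses into a closedness condition on the interior products. The only mild care is to remember the contribution $d(\eta(X))\wedge \eta$ coming from differentiating the product $\eta(X)\eta$, and to note that it drops out because $\eta(X)$ is locally constant.
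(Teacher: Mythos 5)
Your proof is correct and matches the paper's approach: the paper leaves Corollary \ref{cor1} unproved but establishes the almost-cosymplectic analogue (Corollary \ref{cor1A}) by exactly this Cartan-formula computation, deducing $d(\imath_X\omega)=0$ and the closedness of $\eta(X)\eta$ from the Lie-derivative hypotheses and $d\eta=d\omega=0$. Nothing is missing.
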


We have the following fact.

\begin{lemma}\label{flux-geo}
	Let $(M, \eta, \omega)$ be a cosymplectic manifold.  Consider the symplectic manifold $\widetilde M = M\times \mathbb{R}$ equipped with the symplectic form $\Omega: = p^\ast(\omega) + p^\ast(\eta)\wedge \pi_2^\ast (du)$ where $u$ is the coordinate function on $\mathbb{R}$, $p: \widetilde M\rightarrow M,$ and $\pi_2: \widetilde M\rightarrow \mathbb{R}$ are projection maps. Let $\alpha$ be any closed $1-$form on $M$, and set $ X_\alpha := \tilde\Omega^{-1}(p^\ast(\alpha))$, where $\tilde\Omega$ is the isomorphism induced by the symplectic form from the space of all vector fields on $\widetilde{M}$ onto the space of all $1-$forms on $\widetilde{M}$. Then, the vector field $Y_\alpha : = p_\ast(X_\alpha)$ is a cosymplectic vector field, if and only if, $  d \left(  \left( du((\pi_2)_\ast(X_\alpha) \right)(1)\right) = \alpha(\xi)\eta$, where $\xi$ is the Reed vector field of $(M, \eta, \omega)$. 
\end{lemma}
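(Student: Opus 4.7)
The plan is to compute $X_\alpha$ explicitly on $\widetilde M$ in the product coordinates, read off both $Y_\alpha = p_*(X_\alpha)$ and the auxiliary function $du((\pi_2)_*X_\alpha)$, and then characterize when $Y_\alpha$ preserves $\eta$ and $\omega$.

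First I would decompose $X_\alpha = h + b\,\partial_u$, where $h$ is horizontal (i.e.\ tangent to the $M$-factor, so $(\pi_2)_\ast h = 0$) and $b$ is a smooth function on $\widetilde M$. Using $\Omega = p^\ast\omega + p^\ast\eta \wedge \pi_2^\ast(du)$ together with $p_\ast(\partial_u) = 0$, a direct Cartan-calculus computation gives
\begin{equation*}
\iota_{X_\alpha}\Omega \;=\; p^\ast(\iota_{h_0}\omega - b\,\eta) \;+\; \eta(h_0)\,\pi_2^\ast(du),
\end{equation*}
where $h_0 := p_\ast h$. Setting this equal to $p^\ast\alpha$ and comparing horizontal and vertical components forces $\eta(h_0) = 0$ and $\iota_{h_0}\omega - b\,\eta = \alpha$; since $\alpha,\eta,\omega$ are pulled back from $M$, both $h_0$ and $b$ must be $u$-independent. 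Applying $\widetilde I_{\eta,\omega}^{-1}$ and using $\xi = \widetilde I_{\eta,\omega}^{-1}(\eta)$, together with the identity $\eta(\widetilde I_{\eta,\omega}^{-1}(\alpha)) = \alpha(\xi)$, I obtain
\begin{equation*}
Y_\alpha \;=\; h_0 \;=\; \widetilde I_{\eta,\omega}^{-1}(\alpha) \,-\, \alpha(\xi)\,\xi, \qquad du((\pi_2)_\ast X_\alpha) \;=\; b \;=\; -\alpha(\xi).
\end{equation*}

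Next I would verify cosymplecticity of $Y_\alpha$ directly. A short computation shows $\eta(Y_\alpha) = 0$, so $\mathcal{L}_{Y_\alpha}\eta = d\iota_{Y_\alpha}\eta + \iota_{Y_\alpha}d\eta = 0$ automatically, independently of any condition on $\alpha$. From the definition of $\widetilde I_{\eta,\omega}$ one also gets $\iota_{Y_\alpha}\omega = \alpha - \alpha(\xi)\,\eta$, whence
\begin{equation*}
\mathcal{L}_{Y_\alpha}\omega \;=\; d\iota_{Y_\alpha}\omega \;=\; -\,d(\alpha(\xi)) \wedge \eta,
\end{equation*}
using $d\alpha = 0$ and $d\eta = 0$. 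Under the identification $b(\,\cdot\,,1) = -\alpha(\xi)$ as a function on $M$, the stated identity $d\bigl((du((\pi_2)_\ast X_\alpha))(1)\bigr) = \alpha(\xi)\,\eta$ rewrites as $d(\alpha(\xi)) = -\alpha(\xi)\,\eta$, which wedged with $\eta$ immediately gives $\mathcal{L}_{Y_\alpha}\omega = 0$, yielding the ``if'' direction.

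The main obstacle is the converse. From $\mathcal{L}_{Y_\alpha}\omega = 0$ one only extracts $d(\alpha(\xi))\wedge\eta = 0$ directly, and, since $\eta$ is nowhere zero on a cosymplectic manifold, this produces $d(\alpha(\xi)) = c\,\eta$ for some smooth function $c$. To pin $c$ down to $-\alpha(\xi) = b$, I would feed the information back into the symplectic picture on $\widetilde M$: evaluating the defining equation $\iota_{X_\alpha}\Omega = p^\ast\alpha$ against $\partial_u$ (and against its $\Omega$-dual) and exploiting the explicit formula $b = -\alpha(\xi)$ from the first step should identify the scalar. I expect this matching of $c$ with the slice-function $b$ to be the delicate step; the remainder is straightforward Cartan-formula bookkeeping on the product $M\times\mathbb{R}$.
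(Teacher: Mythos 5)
Your first two steps coincide with the paper's argument: the paper likewise contracts $\iota_{X_\alpha}\Omega = p^\ast\alpha$ with $\partial_u$ to obtain $\eta(Y_\alpha)=0$, and pulls back along a section $S_l$ of $p$ to obtain the slice identity $\iota(Y_\alpha)\omega - \bigl(du((\pi_2)_\ast X_\alpha)\bigr)(l)\,\eta = \alpha$ (its equation (\ref{Const-1})), from which your ``if'' direction follows exactly as you describe. Up to that point the proposal is correct and is essentially the paper's own proof.

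The converse is a genuine gap, and it is not one you can close by ``feeding the information back into the symplectic picture''. The defining equation $\iota_{X_\alpha}\Omega=p^\ast\alpha$ only ever returns $b=-\alpha(\xi)$, which you already have; cosymplecticity of $Y_\alpha$ then yields only $d(\alpha(\xi))\wedge\eta=0$, i.e.\ $d(\alpha(\xi))=\xi(\alpha(\xi))\,\eta$, and nothing identifies the coefficient $\xi(\alpha(\xi))$ with $-\alpha(\xi)$. In fact the implication fails outright: for $\alpha=\eta$ one computes $X_\eta=-\partial_u$, so $Y_\eta=p_\ast(X_\eta)=0$ is trivially cosymplectic, while $d\bigl((du((\pi_2)_\ast X_\eta))(1)\bigr)=d(-1)=0\neq\eta=\alpha(\xi)\,\eta$. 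The paper's own treatment of this direction is the one-line assertion that $d\bigl(g\,\eta\bigr)=0$ forces $dg=\alpha(\xi)\eta$, which is the same unjustified jump; so your instinct that the scalar-matching is the delicate step is correct, but the honest conclusion is that this step cannot be carried out --- the ``only if'' half is where the statement itself breaks down, not merely where your proof is incomplete.
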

\begin{proof}
	Consider the symplectic manifold $\widetilde M = M\times \mathbb{R}$ equipped with the symplectic form $\Omega: = p^\ast(\omega) + p^\ast(\eta)\wedge \pi_2^\ast (du)$ where $u$ is the coordinate function on $\mathbb{R}$, $p: \widetilde M\rightarrow M,$ and\\ $\pi_2: \widetilde M\rightarrow \mathbb{R}$, are projection maps. Let $\alpha$ be any closed $1-$form on $M$, and put\\ $ X_\alpha := \tilde\Omega^{-1}(p^\ast(\alpha))$. 
	We compute $$ \iota(X_\alpha)\Omega = p^\ast\left(\iota( p_\ast(X_\alpha))\omega \right) +  p^\ast\left(\iota( p_\ast(X_\alpha))\eta \right) \pi_2^\ast(du) - \left( du((\pi_2)_\ast(X_\alpha))\right) \circ\pi_2 p^\ast(\eta),$$
	and since by definition we have 
	$\iota(X_\alpha)\Omega = p^\ast(\alpha) $, then we derive that 
	$$ p^\ast\left(\iota( p_\ast(X_\alpha))\omega \right) +  p^\ast\left(\iota( p_\ast(X_\alpha))\eta \right) \pi_2^\ast(du) - \left( \left( du((\pi_2)_\ast(X_\alpha))\right) \circ\pi_2\right)  p^\ast(\eta) = p^\ast(\alpha).$$
	We apply the vector field $\frac{\partial}{\partial u}$ to both sides of the above equality to obtain:\\  
	$ 0 +  p^\ast\left(\iota( p_\ast(X_\alpha))\eta \right) - 0 = 0$ since $p_\ast(\frac{\partial}{\partial u}) = 0$, and 
	$(\pi_2)_\ast(\frac{\partial}{\partial u}) = \frac{\partial}{\partial u}$: this gives $ \eta(Y_\alpha) = 0$. 
	On the other hand, fix $l\in \mathbb{R}$, and let $S_l$ be the corresponding section of the projection $p$. 
	Composing the  equality 
	$$ p^\ast\left(\iota( p_\ast(X_\alpha))\omega \right) +  p^\ast\left(\iota( p_\ast(X_\alpha))\eta \right) \pi_2^\ast(du) - \left( du((\pi_2)_\ast(X_\alpha))\right) \circ\pi_2 p^\ast(\eta) = p^\ast(\alpha),$$
	in both sides with respect to $S_l^\ast$, yields: 
	\begin{equation}\label{Const-1}
	\iota(Y_\alpha)\omega   - \left( du((\pi_2)_\ast(X_\alpha))\right)(l) \eta = \alpha.
	\end{equation}
	Therefore, it follows from (\ref{Const-1}) that $\left( du((\pi_2)_\ast(X_\alpha)\right)(l) = \alpha(\xi)$,\\ and so 
	$d\left( \iota(Y_\alpha)\omega\right) =    d\left( \left( \big( du((\pi_2)_\ast(X_\alpha))(l)\big) \right) \eta  + \alpha\right) = 0,$ whenever\\   
	$ d \big( \left( du((\pi_2)_\ast(X_\alpha))\right)(l)\big) = \alpha(\xi)\eta$:
	 which implies that $\mathcal{L}_{Y_\alpha}\omega = 0$, and  $\mathcal{L}_{Y_\alpha}\eta = 0 $. Conversely, if $Y_\alpha$ is cosymplectic, then from (\ref{Const-1}) we derive that $ d \big( \left( du((\pi_2)_\ast(X_\alpha)\right)(l) \eta\big) = 0$, which implies that  
	 $  du((\pi_2)_\ast(X_\alpha))(l)\big) = \alpha(\xi)\eta$. 
\end{proof}
However, we do not know whether for any $\alpha\in \mathcal{Z}^1(M)$, the vector 
field $X := \widetilde I_{\eta, \omega}^{-1}(\alpha)$ is a cosymplectic vector field or not. Due to that fact, let us consider the set $C^{ste}(M)$  consists of all constant function on $M$, and put 
\begin{equation}
\mathcal{Z}^1_\xi(M) : = \big\{\beta\in \mathcal{Z}^1(M) \ ; \ \beta(\xi)\in C^{ste}(M)\big\}	
\end{equation}
where $\xi$ is the Reeb vector field. This set is non-empty, since from $\eta(\xi) = 1$, we derive that $\eta \in \mathcal{Z}^1_\xi(M)$. Also, 
for any vector field $X$ on $M$ such that $d( \imath_X \omega) = 0 $, we have 
$( \imath_X \omega)(\xi) = 0,$
i.e., $ \imath_X \omega \in \mathcal{Z}^1_\xi(M) $.
\begin{proposition}\label{0pro-1}
	Let $(M, \omega,\eta )$ be a compact cosymplectic manifold. Let $\alpha\in \mathcal{Z}^1_\xi(M)$ and let
	$X := \widetilde I_{\eta, \omega}^{-1}(\alpha)$. If $\{\psi_t\}$ is the flow generated by $X$ then for each $t$
	we have : $\psi^\ast_t(\omega) = \omega$ and $\psi^\ast_t(\eta) = \eta$.
\end{proposition}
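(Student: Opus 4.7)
The plan is to show directly that the vector field $X=\widetilde I_{\eta,\omega}^{-1}(\alpha)$ is cosymplectic, i.e.\ $\mathcal L_X\omega=0$ and $\mathcal L_X\eta=0$; the conclusion then follows by compactness of $M$, which guarantees the flow $\{\psi_t\}$ is globally defined, and by the standard identity $\frac{d}{dt}\psi_t^{\ast}\tau=\psi_t^{\ast}\mathcal L_X\tau$.

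The first step is to unpack the defining identity
\[
\eta(X)\,\eta + \iota_X\omega \;=\; \alpha.
\]
Evaluating both sides on the Reeb vector field $\xi$ and using $\eta(\xi)=1$ together with $(\iota_X\omega)(\xi)=\omega(X,\xi)=-(\iota_\xi\omega)(X)=0$, I obtain $\eta(X)=\alpha(\xi)$. Because $\alpha\in\mathcal Z^{1}_{\xi}(M)$, the function $\alpha(\xi)$ is constant on $M$, so $\eta(X)$ is constant and in particular $d(\eta(X))=0$.

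Next I apply Cartan's magic formula. For $\eta$, since $d\eta=0$,
\[
\mathcal L_X\eta \;=\; d(\iota_X\eta)+\iota_X d\eta \;=\; d(\eta(X)) \;=\; 0.
\]
For $\omega$, rewrite $\iota_X\omega = \alpha-\eta(X)\eta$; since $d\alpha=0$, $d\eta=0$, and $d(\eta(X))=0$, one has $d(\iota_X\omega)=0$, so $\mathcal L_X\omega=d(\iota_X\omega)+\iota_X d\omega=0$ (using also $d\omega=0$).

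With both Lie derivatives vanishing, compactness of $M$ implies that $X$ is complete, yielding a smooth one-parameter family $\{\psi_t\}$ of diffeomorphisms with $\psi_0=\mathrm{id}_M$. The standard computation
\[
\tfrac{d}{dt}\psi_t^{\ast}\omega = \psi_t^{\ast}\mathcal L_X\omega = 0,\qquad
\tfrac{d}{dt}\psi_t^{\ast}\eta = \psi_t^{\ast}\mathcal L_X\eta = 0,
\]
combined with the initial condition at $t=0$, yields $\psi_t^{\ast}\omega=\omega$ and $\psi_t^{\ast}\eta=\eta$ for all $t$. No step should be delicate here; the only place where the hypothesis $\alpha\in\mathcal Z^{1}_{\xi}(M)$ is essential is in the very first step, where the constancy of $\alpha(\xi)$ is precisely what kills the extra term $d(\eta(X))\wedge\eta$ that would otherwise obstruct $\iota_X\omega$ from being closed.
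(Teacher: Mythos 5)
Your proof is correct and follows essentially the same route as the paper: both arguments hinge on evaluating $\widetilde I_{\eta,\omega}(X)=\alpha$ on the Reeb vector field to get $\eta(X)=\alpha(\xi)=\mathrm{const}$, whence $\mathcal L_X\eta=d(\eta(X))=0$ and then $\mathcal L_X\omega=0$. Your write-up merely makes explicit the intermediate step $(\iota_X\omega)(\xi)=0$ and the passage from vanishing Lie derivatives to invariance under the flow, both of which the paper leaves implicit.
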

\begin{proof} Since $d \widetilde I_{\eta, \omega}(X) = d\alpha = 0$, then   $\mathcal{L}_X(\omega) = -\mathcal{L}_X(\eta)\wedge\eta$. But, 
	$\imath_X \omega + \eta(X)\eta = \alpha$, and $\alpha\in \mathcal{Z}^1_\xi(M)$ imply that $\eta(X) = \alpha(\xi) = cte$. So, 
	$d(\eta(X)) = d(\alpha(\xi)) = 0$ and this implies that $\mathcal{L}_X(\eta) =  d(\eta(X)) = 0$, and 
	$\mathcal{L}_X(\omega) = -\mathcal{L}_X(\eta)\wedge\eta = 0$. This concludes the proof. 
\end{proof}

\begin{remark}\label{flux-geo-rem} The map 
	\begin{eqnarray}\begin{array}{ccccccccc}\label{0pro} 
	\sharp_{\eta, \omega} : &   \chi_{\eta, \omega}(M) &\longrightarrow  &\mathcal{Z}^1_\xi(M)\\
	&  X &\longmapsto &\widetilde I_{\eta, \omega}(X)
	\end{array}\nonumber\end{eqnarray}
	is a linear isomorphism. Furthermore, for each closed $1-$form $\alpha$, the vector field\\ $Z_\alpha: = Y_\alpha -\alpha(\xi)\xi$, satisfies 
	$  \widetilde I_{\eta, \omega}(Z_\alpha) = \alpha$,   $\mathcal{L}_{Z_\alpha}\eta = d \left(  \iota(Y_\alpha)\eta  - \alpha(\xi)\iota(\xi)\eta\right) = 0$, and\\ $\mathcal{L}_{Z_\alpha}\omega =  d\left( \alpha(\xi) \right)$.\\ 
\end{remark}

Here is a consequence of the $(\omega, \eta)-$decomposition of vector fields on a cosymplectic manifold.

\begin{proposition}\label{Decom-0}
	Let $X $ be a cosymplectic vector field with $(\omega, \eta)-$decomposition\\ $ X = X_\omega + X_\eta$. Then,
	\begin{enumerate}
		\item the vector fields $ X_\omega$ and $ X_\eta $ are cosymplectic,
		\item  when it exists, the flow $\Phi_\omega = \{\phi_\omega^t\}$ (resp. $\Phi_\eta =  \{\phi_\eta^t\}$) generated by the vector field 
		$ X_\omega $ 
		(resp. $ X_\eta $) preserves the cosymplectic structure,
		\item $[ X_\omega , X_\eta ] = 0 $, 
		\item $ \phi_\eta^s \circ \phi_\omega^t = \phi_\omega^s \circ \phi_\omega^t$, for each $s, t$, 
		\item the flow $\Phi$ generated by $X$ decomposes as: $\Phi= \Phi_\omega\circ\Phi_\eta$, and $\Phi= \Phi_\eta\circ\Phi_\omega$, and 
		\item for any $Y\in \chi_{\eta, \omega}(M)$, we have $[X,Y]\in \chi_{\eta, \omega}(M)$.
	\end{enumerate}
\end{proposition}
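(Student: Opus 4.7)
The plan is to first make the two summands of the $(\omega,\eta)$-decomposition explicit and extract from the cosymplectic condition a single scalar invariant that drives the whole proposition. Since $\widetilde I_{\eta,\omega}$ is a $C^\infty(M)$-linear isomorphism and $\widetilde I_{\eta,\omega}(\xi) = \eta$, I would observe that $X_\eta = \eta(X)\,\xi$, and consequently $X_\omega = X - \eta(X)\,\xi$. Because $d\eta = 0$, Cartan's magic formula gives $\mathcal{L}_X\eta = d(\eta(X))$, so the hypothesis $\mathcal{L}_X\eta = 0$ forces $c := \eta(X)$ to be locally constant. This single fact is what unlocks the rest of the proof.

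For (1), I would use $X_\eta = c\,\xi$ with $dc = 0$ together with the identity $\mathcal{L}_{fY}\alpha = f\mathcal{L}_Y\alpha + df\wedge \iota_Y\alpha$, which collapses $\mathcal{L}_{X_\eta}$ to $c\,\mathcal{L}_\xi$; the Reeb field is itself cosymplectic since $\iota_\xi\eta = 1$ and $\iota_\xi\omega = 0$ are both closed. Linearity of the Lie derivative then yields $\mathcal{L}_{X_\omega}\eta = \mathcal{L}_X\eta - \mathcal{L}_{X_\eta}\eta = 0$ and likewise for $\omega$, giving (1). Item (2) follows at once from the classical identity $\frac{d}{dt}(\phi^t_Y)^\ast\alpha = (\phi^t_Y)^\ast \mathcal{L}_Y\alpha$, and (6) is immediate from $\mathcal{L}_{[X,Y]} = [\mathcal{L}_X, \mathcal{L}_Y]$ together with the vanishing of $\mathcal{L}_X$ and $\mathcal{L}_Y$ on both $\eta$ and $\omega$.

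The crux is (3), from which (4) and (5) follow by the standard ODE fact that commuting vector fields have (locally) commuting flows and that the flow of their sum equals the composition of their individual flows. Using $X_\eta = c\,\xi$ with $c$ locally constant, one has $[X_\omega, X_\eta] = c[X_\omega,\xi] + (X_\omega c)\,\xi = c[X_\omega,\xi]$, so the problem reduces to showing $[X_\omega,\xi] = 0$. My plan is to test this bracket against $\widetilde I_{\eta,\omega}$, invoking the Cartan-calculus identity $\iota_{[Y,Z]} = [\mathcal{L}_Y, \iota_Z]$: this gives $\iota_{[X_\omega,\xi]}\omega = \mathcal{L}_{X_\omega}(\iota_\xi\omega) - \iota_\xi\mathcal{L}_{X_\omega}\omega = 0$ and $\eta([X_\omega,\xi]) = \mathcal{L}_{X_\omega}(1) - \iota_\xi\mathcal{L}_{X_\omega}\eta = 0$, after which injectivity of $\widetilde I_{\eta,\omega}$ closes the argument. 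The main obstacle is precisely this bracket vanishing in (3); everything else in the list reduces either to direct Lie-derivative computations or to formal ODE consequences of commutativity.
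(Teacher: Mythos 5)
Your proposal is correct and follows essentially the same route as the paper: both arguments rest on Cartan's magic formula, the identity $\imath_{[Y,Z]}=[\mathcal{L}_Y,\imath_Z]$, and the non-degeneracy of $\widetilde I_{\eta,\omega}$ to kill the bracket in item (3). Your explicit identification $X_\eta=\eta(X)\,\xi$ with $\eta(X)$ locally constant is a cleaner packaging of what the paper obtains by pairing $\widetilde I_{\eta,\omega}(X_\omega)$ and $\widetilde I_{\eta,\omega}(X_\eta)$ with the Reeb field, but it does not change the substance of the argument.
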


\begin{proof} For $(1)$, let $X\in \chi_{\eta, \omega}(M)$. Since $\mathcal{L}_X  \omega = 0 $ and $ X = X_\omega + X_\eta$, then  
		$ \mathcal{L}_{X_\eta}\omega = -\mathcal{L}_{X_\omega}\omega.$ 
		We claim that $ \mathcal{L}_{X_\eta}\omega  = 0$. In fact, since $\widetilde I_{\eta, \omega}(X_\omega) = \iota(X_\omega)\omega $, and 
		$ \widetilde I_{\eta, \omega}(X_\eta) = \eta(X)\eta$, then we compose in both sides with respect to the Reeb vector $\xi$ gives 
		$\eta(X_\omega) = 0 $, and $ \eta(X_\eta) = \eta(X)$. Thus, it follows that $\mathcal{L}_{X_\omega}\omega = 0$, $\mathcal{L}_{X_\omega}\eta = 0 $, 
		$\mathcal{L}_{X_\eta}\omega  = 0$, and $\mathcal{L}_{X_\eta}\eta = 0$.  For $(2)$, we derive from the first item that  when it exists, 
	 the $1-$parameter family of diffeomorphisms $\Phi_\omega = \{\phi_\omega^t\}$, preserves the cosymplectic structure. From the formula\\  
		$\mathcal{L}_{(\psi_\omega^t)^{-1}_\ast( X_\eta)}\eta =  (\psi_\omega^t)^\ast\left(\mathcal{L}_{X_\eta}\left( (\psi_\omega^t)^{-1}\right)^\ast(\eta )\right)  =  (\psi_\omega^t)^\ast\left(\mathcal{L}_{X_\eta}\eta \right),$
		we derive that  when it exists,  $1-$parameter family of diffeomorphisms $\Psi_\eta $, also preserves that cosymplectic structure. For $(3)$, 
		by the mean of the formula 
		$\imath _{[X_\omega,X_\eta]} = \mathcal{L}_{X_\omega}\circ\imath_{X_\eta}  - \imath_{X_\eta} \circ\mathcal{L}_{X_\omega}$, we compute: 
		\begin{eqnarray}
		\begin{array}{ccccccccc}
		\imath _{[X_\omega,X_\eta]}\omega &=&  \mathcal{L}_{X_\omega}\left( \imath_{X_\eta}\omega\right)   - \imath_{X_\eta} \left( \mathcal{L}_{X_\omega}\omega \right)  \\
		& = &  0 - 0,\\ \nonumber
		\end{array}\nonumber
		\end{eqnarray}
		and  
		\begin{eqnarray}
		\begin{array}{ccccccccc}
		\imath _{[X_\eta,X_\omega]}\eta &=&  \mathcal{L}_{X_\eta}\left( \imath_{X_\omega}\eta\right)   - \imath_{X_\omega} \left( \mathcal{L}_{X_\eta}\eta\right)  \\
		& = & \mathcal{L}_{X_\omega}\left( 0\right) - \imath_{X_\omega} \left( \mu\eta\right),\\ \nonumber
		& = & 0 - 0,\\ \nonumber
		\end{array}\nonumber
		\end{eqnarray}
		an then, we derive that $\widetilde I_{\eta, \omega}([X_\eta,X_\omega]) =  	\imath _{[X_\omega,X_\eta]}\omega -  \left( \imath _{[X_\eta,X_\omega]}\eta\right)\eta  
		= 0 - 0$. 
		The non-degeneracy of $ \widetilde I_{\eta, \omega}$, implies that $[X_\eta,X_\omega] = 0 $. 
		 For $(6)$, let $X, Y\in \chi_{\eta, \omega}(M)$. From the formulas\\ $\imath_{[X,Y]} = \mathcal{L}_X\circ\imath_Y  - \imath_Y \circ\mathcal{L}_X$, and 
		$ d\circ \mathcal{L}_X =  \mathcal{L}_X \circ d$,  we derive that  $ \mathcal{L}_{[X, Y]} \omega = d(\imath_{[X,Y]}\omega) = 0$\\ and 
		$ \mathcal{L}_{[X, Y]} \eta = d(\imath_{[X,Y]}\eta) = 0.$ Hence, 
		$[X,Y]\in \chi_{\eta, \omega}(M)$.  
	
\end{proof}

\begin{definition}
	Let $(M, \omega,\eta)$ be a cosymplectic manifold. An element $Y\in \chi_{\eta, \omega}(M)$ is called 
	\begin{itemize}
		\item  $\eta-$vector field if the $1-$form $\imath_Y\omega = 0$, 
		\item   $\omega-$vector field if the function $\eta(Y)$ is trivial. 
	\end{itemize}
\end{definition}
\begin{example}\label{Ex22}
	Let $Y$ be any cosymplectic vector field. Then in the decomposition,\\ 
	$ Y= Y_\eta + Y_\omega,$ 
	we have that $ Y_\eta$ is a $\omega-$vector field while $Y_\omega$ is an $\eta-$vector field. 
\end{example}
\begin{definition}
	Let $(M, \omega,\eta)$ be a cosymplectic manifold. An element $Y\in \chi_{\eta, \omega}(M)$ is called a co-Hamiltonian vector field if the $1-$form $\widetilde I_{\eta, \omega}(Y)$ is exact. 
\end{definition}
We shall denote by $ham_{\eta, \omega}(M)$ the space of all co-Hamiltonian vector fields of $(M, \eta, \omega)$. 
The following proposition was found in \cite{B-O}.
\begin{proposition}(\cite{B-O})\label{pro2}
	Let $(M, \omega,\eta)$ be a cosymplectic manifold. For any $X, Y\in \chi_{\eta, \omega}(M)$, we have $[X,Y]\in ham_{\eta, \omega}(M)$.
\end{proposition}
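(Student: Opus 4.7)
The plan is to show directly that $\widetilde{I}_{\eta,\omega}([X,Y])$ is an exact $1$-form by computing its two pieces $\eta([X,Y])\eta$ and $\iota_{[X,Y]}\omega$ separately, using Cartan's magic formula together with the cosymplectic conditions $\mathcal{L}_X\eta=\mathcal{L}_Y\eta=0$ and $\mathcal{L}_X\omega=\mathcal{L}_Y\omega=0$. By item $(6)$ of Proposition \ref{Decom-0} we already know that $[X,Y]\in\chi_{\eta,\omega}(M)$, so Corollary \ref{cor1} guarantees that $\widetilde{I}_{\eta,\omega}([X,Y])$ is at least closed; the work is to upgrade closedness to exactness.

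First I would handle the $\omega$-part. Using the identity $\iota_{[X,Y]}=\mathcal{L}_X\iota_Y-\iota_Y\mathcal{L}_X$ and $\mathcal{L}_X\omega=0$, one gets $\iota_{[X,Y]}\omega=\mathcal{L}_X(\iota_Y\omega)$. Cartan's formula then rewrites this as $d\iota_X\iota_Y\omega+\iota_X d\iota_Y\omega$, and the second term vanishes because $\mathcal{L}_Y\omega=d\iota_Y\omega+\iota_Y d\omega=d\iota_Y\omega$ is zero (remember $d\omega=0$). Hence
\begin{equation*}
\iota_{[X,Y]}\omega=d\bigl(\omega(Y,X)\bigr),
\end{equation*}
which is globally exact on $M$.

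Next I would dispose of the $\eta$-part by showing that $\eta([X,Y])$ is identically zero. Since $d\eta=0$, Cartan's formula gives $\mathcal{L}_X\eta=d(\eta(X))$; combined with $\mathcal{L}_X\eta=0$ this forces $d(\eta(X))=0$, and likewise $d(\eta(Y))=0$. Plugging into the standard formula $d\eta(X,Y)=X\eta(Y)-Y\eta(X)-\eta([X,Y])=0$ yields $\eta([X,Y])=0$.

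Putting the two computations together, $\widetilde{I}_{\eta,\omega}([X,Y])=\eta([X,Y])\eta+\iota_{[X,Y]}\omega=d\bigl(\omega(Y,X)\bigr)$, which exhibits the smooth function $\omega(Y,X)$ as a Hamiltonian for $[X,Y]$ and finishes the proof. There is no real obstacle here: the only subtle point is recognizing that the cosymplectic condition $\mathcal{L}_X\eta=0$ combined with $d\eta=0$ is strong enough to force $\eta(X)$ to be locally constant, which is precisely what makes the $\eta$-component of $\widetilde{I}_{\eta,\omega}([X,Y])$ vanish; once that observation is in hand, everything else reduces to a short Cartan-calculus manipulation.
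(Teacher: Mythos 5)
Your proof is correct. The paper itself does not write out a proof of this proposition (it is cited to \cite{B-O}), but the closest in-paper argument — the proof of Proposition \ref{pro2-0} for the almost cosymplectic case — is exactly your Cartan-calculus computation $\imath_{[X,Y]}\omega=\mathcal{L}_X\imath_Y\omega=d(\omega(Y,X))$. Your write-up is in fact the more complete version needed here: since a co-Hamiltonian vector field is defined by exactness of the full $1$-form $\widetilde I_{\eta,\omega}([X,Y])=\eta([X,Y])\eta+\imath_{[X,Y]}\omega$ (not just of $\imath_{[X,Y]}\omega$, as in the almost case), one must also dispose of the $\eta$-component, and your observation that $\mathcal{L}_X\eta=0$ together with $d\eta=0$ forces $d(\eta(X))=0$ and hence $\eta([X,Y])=0$ is precisely the missing step. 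Everything checks out.
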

\begin{example}\label{Ex3} $ $ 	Let $(M, \omega,\eta)$ be a cosymplectic manifold. The Reeb vector field $\xi$ of $(M, \omega,\eta)$ is a cosymplectic 
		vector field since, $\imath_\xi\omega = 0$, implies $\mathcal{L}_\xi(\omega) = 0$, and also $\eta(\xi) = 1$, implies 
		$ \mathcal{L}_\xi(\eta) = d\left( \eta(\xi)\right) = d (1) = 0$. 
	
\end{example}


\subsection{Almost cosymplectic vector fields}\label{SC011-A} 
In this subsection, we define and study those vector fields $X$ of a cosymplectic manifold $(M, \eta, \omega)$ whose generating flow $ \Phi_X$ preserve the forms 
$\eta$, and $\omega$ up to a multiplicative smooth function. 
\begin{definition}
	Let $(M, \omega,\eta)$ be a  cosymplectic manifold. A vector field $X$ is said to be an almost cosymplectic if   $\mathcal{L}_{X}\omega = 0$, and there is a smooth function $\mu_X$ on $M$,  non-identically trivial such that  $\mathcal{L}_{X}\eta = \mu_X\eta$. 
\end{definition}
We shall denote by $\mathcal{A}\chi_{\eta, \omega}(M)$ the space of all  almost cosymplectic vector fields of $(M, \omega,\eta)$. By convention 
we assume that the only almost cosymplectic vector field $X$ with $\mu_X = 0$ is the trivial vector field.\\ 
In the above definition, for any almost cosymplectic vector field $X$ such that $\mathcal{L}_{X}\eta = \mu\eta$,
 the smooth  function $ \mu$, is uniquely determined as: $ \mu = \xi(\eta(X)).$ 
\begin{proposition}\label{pro7-0}
	Let $(M, \omega,\eta)$ be a cosymplectic manifold. For any $X, Y\in \mathcal{A}\chi_{\eta, \omega}(M)$, we have $[X,Y]\in \mathcal{A}\chi_{\eta, \omega}(M)$.
\end{proposition}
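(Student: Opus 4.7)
The plan is to use the standard identity $\mathcal{L}_{[X,Y]}=[\mathcal{L}_X,\mathcal{L}_Y]=\mathcal{L}_X\circ\mathcal{L}_Y-\mathcal{L}_Y\circ\mathcal{L}_X$ and simply push the hypotheses on $X$ and $Y$ through it. The almost cosymplectic condition gives $\mathcal{L}_X\omega=0=\mathcal{L}_Y\omega$ and $\mathcal{L}_X\eta=\mu_X\eta$, $\mathcal{L}_Y\eta=\mu_Y\eta$ for smooth functions $\mu_X,\mu_Y$. I would organize the proof as two short computations — one for $\omega$, one for $\eta$ — and then identify the function $\mu_{[X,Y]}$ explicitly.

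First, for $\omega$ the calculation is immediate:
\begin{equation*}
\mathcal{L}_{[X,Y]}\omega=\mathcal{L}_X(\mathcal{L}_Y\omega)-\mathcal{L}_Y(\mathcal{L}_X\omega)=\mathcal{L}_X(0)-\mathcal{L}_Y(0)=0,
\end{equation*}
so $[X,Y]$ already preserves $\omega$. Next, for $\eta$ I would use the Leibniz rule for the Lie derivative of a function times a form:
\begin{equation*}
\mathcal{L}_X(\mu_Y\eta)=X(\mu_Y)\,\eta+\mu_Y\,\mathcal{L}_X\eta=\big(X(\mu_Y)+\mu_X\mu_Y\big)\eta,
\end{equation*}
and symmetrically $\mathcal{L}_Y(\mu_X\eta)=\big(Y(\mu_X)+\mu_X\mu_Y\big)\eta$. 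Subtracting gives
\begin{equation*}
\mathcal{L}_{[X,Y]}\eta=\big(X(\mu_Y)-Y(\mu_X)\big)\eta,
\end{equation*}
which shows that $[X,Y]$ is almost cosymplectic with associated function $\mu_{[X,Y]}=X(\mu_Y)-Y(\mu_X)$, a smooth function on $M$. As a consistency check one can verify this against the intrinsic formula $\mu_Z=\xi(\eta(Z))$ recorded just after the definition of almost cosymplectic vector fields, but that verification is not needed for the statement.

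The only subtle point — and the one I would flag in the write-up rather than treat as a real obstacle — is the convention that forces $\mu_X\equiv 0$ only for the zero vector field. If it happens that $X(\mu_Y)-Y(\mu_X)\equiv 0$, then $[X,Y]$ is genuinely cosymplectic (in $\chi_{\eta,\omega}(M)$), and one must either enlarge $\mathcal{A}\chi_{\eta,\omega}(M)$ so that it contains $\chi_{\eta,\omega}(M)$, or invoke the convention to observe that in this degenerate case $[X,Y]=0$ is the trivial vector field. Either reading makes the conclusion $[X,Y]\in\mathcal{A}\chi_{\eta,\omega}(M)$ hold, and apart from this point the proof is a two-line Cartan-calculus computation with no real obstacle.
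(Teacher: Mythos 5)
Your proof is correct and arrives at the same formula as the paper, $\mu_{[X,Y]}=X(\mu_Y)-Y(\mu_X)$, but by a slightly different and in fact cleaner route. The paper works at the level of the interior product, using $\imath_{[X,Y]}=\mathcal{L}_X\circ\imath_Y-\imath_Y\circ\mathcal{L}_X$ together with Cartan's formula and $d\eta=0$, and therefore has to differentiate $\imath_{[X,Y]}\eta=\mu_Y\,\eta(X)-\mu_X\,\eta(Y)$; in doing so it silently replaces the term $\eta(X)\,d\mu_Y$ by $(d\mu_Y)(X)\,\eta$, a step that is only legitimate because $0=d\bigl(\mathcal{L}_Y\eta\bigr)=d\mu_Y\wedge\eta$ forces $d\mu_Y$ to be pointwise proportional to $\eta$ (as $\eta$ is nowhere vanishing). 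Your use of $\mathcal{L}_{[X,Y]}=\mathcal{L}_X\circ\mathcal{L}_Y-\mathcal{L}_Y\circ\mathcal{L}_X$ with the Leibniz rule bypasses that subtlety entirely and gives the coefficient of $\eta$ directly. Your closing remark about the convention is also well taken: since the paper requires $\mu_Z$ to be non-identically trivial for a nonzero element of $\mathcal{A}\chi_{\eta,\omega}(M)$, the degenerate case $X(\mu_Y)-Y(\mu_X)\equiv 0$ with $[X,Y]\neq 0$ genuinely needs one of the two readings you propose, and the paper's own proof does not address it.
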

\begin{proof} Let $X, Y\in \mathcal{A}\chi_{\eta, \omega}(M)$. From the formulas $\imath _{[X,Y]} = \mathcal{L}_X\circ\imath_Y  - \imath_Y \circ\mathcal{L}_X$, and\\ 
	$ d\circ \mathcal{L}_X =  \mathcal{L}_X \circ d$,  we derive that  
	$ \mathcal{L}_{[X, Y]} \omega = d\imath_{[X,Y]}\omega= 0$ and 
	\begin{eqnarray}
	\mathcal{L}_{[X, Y]} \eta &=& d\imath _{[X,Y]}\eta = d\big( \mu_Y\eta(X) -\mu_X\eta(Y) \big)\nonumber\\
	&=& \eta(X) d\mu_Y + \mu_Yd\eta(X) - \eta(Y) d\mu_X - \mu_Xd\eta(Y)\nonumber\\
	&=& (d\mu_Y)(X)\eta + \mu_Y\mu_X\eta - (d\mu_X)(Y)\eta - \mu_X\mu_Y\eta,
	\end{eqnarray}
	since $ d (\eta(X)) = \mathcal{L}_{X}\eta = \mu_X\eta$ and $ d (\eta(Y)) = \mathcal{L}_{Y}\eta = \mu_Y\eta$.  Hence, 
	$$\mathcal{L}_{[X, Y]} \omega = (d\mu_Y)(X)\eta - (d\mu_X)(Y)\eta = f_{X, Y}\eta,$$
	with $$f_{X, Y} : = \big((d\mu_Y)(X)  - (d\mu_X)(Y)\big) \in C^\infty(M),$$ i.e., 
	$[X,Y]\in \mathcal{A}\chi_{\eta, \omega}(M)$.\end{proof}  

\begin{definition}
	Let $(M, \omega,\eta)$ be a cosymplectic manifold. An element $Y\in \mathcal{A}\chi_{\eta, \omega}(M)$ is called an almost  co-Hamiltonian vector field if 
	the $1-$form $\imath_Y\omega$ is exact. 
\end{definition}
We shall denote by $\mathcal{A}ham_{\eta, \omega}(M)$ the space of all almost co-Hamiltonian vector fields of $(M, \omega,\eta)$. 

\begin{proposition}\label{pro2-0}
	Let $(M, \omega,\eta)$ be a cosymplectic manifold. For any $X, Y\in \mathcal{A}\chi_{\eta, \omega}(M)$, we have $[X,Y]\in \mathcal{A}ham_{\eta, \omega}(M)$.
\end{proposition}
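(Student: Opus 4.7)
The plan is to combine Proposition \ref{pro7-0} with a direct Cartan-calculus computation showing that the one-form $\imath_{[X,Y]}\omega$ is exact. Since the definition of $\mathcal{A}ham_{\eta,\omega}(M)$ demands both that the vector field lies in $\mathcal{A}\chi_{\eta,\omega}(M)$ and that its contraction with $\omega$ be exact, Proposition \ref{pro7-0} already delivers the first half for free, so the only work left is to verify the exactness of $\imath_{[X,Y]}\omega$.

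First I would write, for any two vector fields $X,Y$, the standard identity
\begin{equation*}
\imath_{[X,Y]} = \mathcal{L}_X\circ\imath_Y - \imath_Y\circ \mathcal{L}_X,
\end{equation*}
and apply it to $\omega$. Since $X\in \mathcal{A}\chi_{\eta,\omega}(M)$ one has $\mathcal{L}_X\omega = 0$, so the second term drops out, leaving $\imath_{[X,Y]}\omega = \mathcal{L}_X(\imath_Y\omega)$. Then I would expand the Lie derivative by Cartan's formula, $\mathcal{L}_X = d\circ\imath_X + \imath_X\circ d$, which gives
\begin{equation*}
\imath_{[X,Y]}\omega \;=\; d\bigl(\imath_X\imath_Y\omega\bigr) \;+\; \imath_X\bigl(d\imath_Y\omega\bigr).
\end{equation*}
The second term vanishes because $Y$ is almost cosymplectic and $\omega$ is closed: by Cartan again, $d\imath_Y\omega = \mathcal{L}_Y\omega - \imath_Y d\omega = 0 - 0 = 0$. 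Hence $\imath_{[X,Y]}\omega = d\bigl(\imath_X\imath_Y\omega\bigr) = d\bigl(\omega(Y,X)\bigr)$, which is manifestly exact, with primitive the smooth function $\omega(Y,X)\in C^\infty(M)$.

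Finally, Proposition \ref{pro7-0} gives $[X,Y]\in \mathcal{A}\chi_{\eta,\omega}(M)$, and the computation above shows that $\imath_{[X,Y]}\omega$ is exact; by the definition of $\mathcal{A}ham_{\eta,\omega}(M)$, these two facts together yield $[X,Y]\in \mathcal{A}ham_{\eta,\omega}(M)$. The proof is essentially a one-line Cartan computation once Proposition \ref{pro7-0} is in hand, so no real obstacle arises; the only point that required minor care is remembering that the almost cosymplectic assumption only forces $\mathcal{L}_Y\eta = \mu_Y\eta$ (not zero), but it does force $\mathcal{L}_Y\omega = 0$, which is exactly the piece of information needed to cancel the $\imath_X d\imath_Y\omega$ term.
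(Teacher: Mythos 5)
Your proof is correct and follows essentially the same route as the paper's: both invoke Proposition \ref{pro7-0} to place $[X,Y]$ in $\mathcal{A}\chi_{\eta,\omega}(M)$ and then use the identity $\imath_{[X,Y]}\omega = \mathcal{L}_X(\imath_Y\omega) = d\bigl(\omega(Y,X)\bigr)$ to get exactness. Your version merely spells out the Cartan-formula steps that the paper's one-line computation leaves implicit.
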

\begin{proof}Since $[X,Y]\in \mathcal{A}\chi_{\eta, \omega}(M)$, we derive that 
	$\imath_{[X,Y]}\omega  = \mathcal{L}_X\circ\imath_Y \omega  = d ( \pm\omega(X, Y)).$
\end{proof}
\begin{corollary}\label{cor1A}
	Let $(M,\omega,\eta)$ be a  cosymplectic manifold. For any $X\in \mathcal{A}\chi_{\eta, \omega}(M)$, the  $1-$form 
	$ \widetilde I_{\eta, \omega}(X)$ is closed. 
\end{corollary}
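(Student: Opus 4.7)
The plan is to unpack the definition $\widetilde I_{\eta,\omega}(X) = \eta(X)\eta + \imath_X\omega$ and show that each of the two summands is closed separately, using only Cartan's magic formula together with the two defining properties of an almost cosymplectic vector field, namely $\mathcal{L}_X\omega = 0$ and $\mathcal{L}_X\eta = \mu_X\eta$, and the fact that both $\omega$ and $\eta$ are closed.

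First I would handle $\imath_X\omega$. Cartan's formula gives $\mathcal{L}_X\omega = d\imath_X\omega + \imath_X d\omega$. Since $d\omega = 0$ and $\mathcal{L}_X\omega = 0$ (by the almost cosymplectic hypothesis), this immediately yields $d(\imath_X\omega) = 0$.

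Next I would handle $\eta(X)\eta$. Expand $d(\eta(X)\eta) = d(\eta(X))\wedge \eta + \eta(X)\,d\eta$; the second term vanishes since $d\eta = 0$. To compute $d(\eta(X))$, apply Cartan's formula to $\eta$: $\mathcal{L}_X\eta = d(\imath_X\eta) + \imath_X d\eta = d(\eta(X))$. By the almost cosymplectic hypothesis this equals $\mu_X\eta$, so $d(\eta(X))\wedge \eta = \mu_X\,\eta\wedge\eta = 0$. Hence $d(\eta(X)\eta) = 0$.

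Combining the two, $d\widetilde I_{\eta,\omega}(X) = 0$, which is the desired conclusion. There is essentially no obstacle here; the only point worth flagging is the clean cancellation $\mu_X\eta\wedge\eta = 0$, which is what makes the almost cosymplectic condition (where $\eta$ is preserved only up to the conformal factor $\mu_X$) suffice to keep $\widetilde I_{\eta,\omega}(X)$ closed, exactly parallel to Corollary \ref{cor1} in the strict cosymplectic case.
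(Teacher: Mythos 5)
Your proof is correct and follows essentially the same route as the paper's: Cartan's formula applied to $\mathcal{L}_X\omega=0$ and $\mathcal{L}_X\eta=\mu_X\eta$ gives $d(\imath_X\omega)=0$ and $d(\eta(X))=\mu_X\eta$, and then $d(\eta(X)\eta)=d(\eta(X))\wedge\eta=\mu_X\,\eta\wedge\eta=0$. The cancellation you flag is exactly the step the paper relies on.
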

\begin{proof} For each $X\in \mathcal{A}\chi_{\eta, \omega}(M)$, since $\mathcal{L}_X = \imath_X \circ d + d\circ \imath_X$, we derive from the equalities
	$\mathcal{L}_X(\omega) = 0 $ and $\mathcal{L}_X(\eta) = \mu_X\eta $ that $d(\imath_X\omega) = 0$ and $ d(\eta(X)) =\mu_X\eta  $. Thus, 
	$$ d(\widetilde I_{\eta, \omega}(X)) = d(\imath_X\omega) + d(\eta(X)\eta) = d(\imath_X\omega) + d(\eta(X))\wedge \eta = 0 + 0.$$
\end{proof}
Here is a consequence of the $(\omega, \eta)-$decomposition of vector fields on a cosymplectic manifold.
\begin{proposition}\label{ADecom-0}
	Let $Y $ be an almost cosymplectic vector field such that $\mathcal{L}_Y(\eta) = \mu\eta $ with $(\omega, \eta)-$decomposition  $ Y = Y_\omega + Y_\eta$. Then, 
	\begin{enumerate}
		\item the vector field $ Y_\omega$ 	(resp. $ Y_\eta $) is cosymplectic (resp. almost cosymplectic),
		\item the flow $\Psi_\omega = \{\psi_\omega^t\}$ (resp. $\Psi_\eta =  \{\psi_\eta^t\}$) generated by the vector field 
		$ Y_\omega $ 
		(resp. $ Y_\eta $) is cosymplectic (resp. almost cosymplectic 
		with $\mathcal{L}_{\dot\psi_\eta^t}\eta = \mu\eta$) when it exists, 
		\item $[ Y_\omega , Y_\eta ] = 0 $, 
		\item $ \psi_\eta^s \circ \psi_\omega^t = \psi_\omega^s \circ \psi_\omega^t$, for each $s, t$, 
		and 
		\item the flow $\Psi$ generated by $Y$ decomposes as $\Psi= \Psi_\omega\circ\Psi_\eta$, and $\Psi= \Psi_\eta\circ\Psi_\omega$.
	\end{enumerate}
\end{proposition}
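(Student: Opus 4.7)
The plan is to mimic the argument for Proposition \ref{Decom-0}, with the main new feature being the bookkeeping of the function $\mu$. The structural starting point in both cases is the same: apply $\widetilde I_{\eta,\omega}$ to $Y_\omega$ and $Y_\eta$, then evaluate at the Reeb vector $\xi$. From $\widetilde I_{\eta,\omega}(Y_\omega)=\imath_{Y_\omega}\omega$ and $\widetilde I_{\eta,\omega}(Y_\eta)=\eta(Y)\eta$ I read off the two identities $\eta(Y_\omega)=0$ and $\eta(Y_\eta)=\eta(Y)$, together with $\imath_{Y_\eta}\omega=\widetilde I_{\eta,\omega}(Y_\eta)-\eta(Y_\eta)\eta=0$. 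These are the only concrete computations I really need; everything else below follows by Cartan's formula $\mathcal L_X=d\circ\imath_X+\imath_X\circ d$ applied to the closed forms $\omega$ and $\eta$.

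For item (1), I first get $\mathcal L_{Y_\omega}\eta=d(\eta(Y_\omega))=0$, so $Y_\omega$ preserves $\eta$. Next, $\mathcal L_{Y_\eta}\omega=d(\imath_{Y_\eta}\omega)=0$ since $\imath_{Y_\eta}\omega=0$. Subtracting, $\mathcal L_{Y_\omega}\omega=\mathcal L_Y\omega-\mathcal L_{Y_\eta}\omega=0$, which completes the proof that $Y_\omega$ is cosymplectic. For $Y_\eta$, I compute $\mathcal L_{Y_\eta}\eta=d(\eta(Y_\eta))=d(\eta(Y))=\mathcal L_Y\eta=\mu\eta$, combined with $\mathcal L_{Y_\eta}\omega=0$, so $Y_\eta$ is almost cosymplectic with the same multiplier $\mu$. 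Item (2) is then immediate: if $\Psi_\omega$ and $\Psi_\eta$ exist, differentiating $\psi^t_\omega{}^{\ast}\omega$ and $\psi^t_\omega{}^{\ast}\eta$ (resp.\ $\psi^t_\eta{}^{\ast}\omega$) at time $t$ gives $0$, while $\frac{d}{dt}\psi^t_\eta{}^{\ast}\eta=\psi^t_\eta{}^{\ast}(\mu\eta)$, which is exactly the statement $\mathcal L_{\dot\psi_\eta^t}\eta=\mu\eta$ pulled to time $t$.

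For item (3), I show $\widetilde I_{\eta,\omega}([Y_\omega,Y_\eta])=0$ and invoke non-degeneracy. Using $\imath_{[X,Y]}=\mathcal L_X\imath_Y-\imath_Y\mathcal L_X$ together with the four identities $\mathcal L_{Y_\omega}\omega=0$, $\mathcal L_{Y_\omega}\eta=0$, $\mathcal L_{Y_\eta}\omega=0$, $\imath_{Y_\eta}\omega=0$, I get $\imath_{[Y_\omega,Y_\eta]}\omega=0$. For the $\eta$-component, since $d\eta=0$ I can use $\eta([Y_\omega,Y_\eta])=Y_\omega(\eta(Y_\eta))-Y_\eta(\eta(Y_\omega))=Y_\omega(\eta(Y))-0$, and $Y_\omega(\eta(Y))=\imath_{Y_\omega}d(\eta(Y))=\imath_{Y_\omega}(\mu\eta)=\mu\,\eta(Y_\omega)=0$. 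This is the only step where the almost cosymplectic condition (through $d(\eta(Y))=\mu\eta$) really enters nontrivially, and together with $\eta(Y_\omega)=0$ it is exactly what closes the argument; this is the step I expect to be the main point of attention. Items (4) and (5) now follow from the standard fact that commuting complete vector fields have commuting flows whose composition is the flow of the sum, applied to $Y=Y_\omega+Y_\eta$.
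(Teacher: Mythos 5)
Your proof is correct and follows essentially the same route as the paper's: extract $\eta(Y_\omega)=0$, $\eta(Y_\eta)=\eta(Y)$, $\imath_{Y_\eta}\omega=0$ by applying $\widetilde I_{\eta,\omega}$ and evaluating on $\xi$, then use Cartan's formula for item (1)–(2) and show $\widetilde I_{\eta,\omega}([Y_\omega,Y_\eta])=0$ for item (3), with the key cancellation $\imath_{Y_\omega}(\mu\eta)=\mu\,\eta(Y_\omega)=0$ appearing in both arguments. The only (harmless) differences are that you are slightly more explicit about $\mathcal L_{Y_\omega}\omega=\mathcal L_Y\omega-\mathcal L_{Y_\eta}\omega$ and that you phrase the $\eta$-component of the bracket via $d\eta=0$ rather than via $\imath_{[X,Y]}=\mathcal L_X\imath_Y-\imath_Y\mathcal L_X$.
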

\begin{proof} Let $Y $ be an almost cosymplectic vector field such that $\mathcal{L}_Y(\eta) = \mu\eta $ with\\ $(\omega, \eta)-$decomposition  $ Y = Y_\omega + Y_\eta$. For $(1)$, we derive from $\widetilde I_{\eta, \omega}(Y_\omega) = \iota(Y)\omega $, by composing with respect to the Reeb vector field that 
	$ \eta(Y_\omega) = 0$, and so we also have $ \iota(Y_\omega)\omega = \iota(Y)\omega$, which show that $Y_\omega$ is a cosymplectic vector field. Similarly, 
	from  
	$\widetilde I_{\eta, \omega}(Y_\eta) = \eta(Y)\eta $, we derive that 
	$ \eta(Y_\eta) =  \eta(Y)$, and so we also have $ \iota(Y_\eta)\omega = 0$. This implies that 
	$\mathcal{L}_{Y_\eta}\omega = 0 $, and  $\mathcal{L}_{Y_\eta}\eta = \mu\eta$. On the other hand, to prove $(2)$, we derive from the first item 
	that when the isotopy generated by $Y_\omega$ exists, then the latter is  a cosymplectic isotopy. From the formula 
	$\mathcal{L}_{(\psi_\omega^t)^{-1}_\ast( Y_\eta)}\eta =  (\psi_\omega^t)^\ast\left(\mathcal{L}_{Y_\eta}\left( (\psi_\omega^t)^{-1}\right)^\ast(\eta )\right)  =  (\psi_\omega^t)^\ast\left(\mathcal{L}_{Y_\eta}\eta \right),$
	we derive that $\Psi_\eta $, is an almost cosymplectic isotopy when it exists. By the mean of the formula 
	$\imath _{[Y_\omega,Y_\eta]} = \mathcal{L}_{Y_\omega}\circ\imath_{Y_\eta}  - \imath_{Y_\eta} \circ\mathcal{L}_{Y_\omega}$, we compute: 
	\begin{eqnarray}
	\begin{array}{ccccccccc}
	\imath _{[Y_\omega,Y_\eta]}\omega &=&  \mathcal{L}_{Y_\omega}\left( \imath_{Y_\eta}\omega\right)   - \imath_{Y_\eta} \left( \mathcal{L}_{Y_\omega}\omega \right)  \\
	& = &  0 - 0,\\ \nonumber
	\end{array}\nonumber
	\end{eqnarray}
	and  
	\begin{eqnarray}
	\begin{array}{ccccccccc}
	\imath _{[Y_\eta,Y_\omega]}\eta &=&  \mathcal{L}_{Y_\eta}\left( \imath_{Y_\omega}\eta\right)   - \imath_{Y_\omega} \left( \mathcal{L}_{Y_\eta}\eta\right)  \\
	& = & \mathcal{L}_{Y_\omega}\left( 0\right) - \imath_{Y_\omega} \left( \mu\eta\right),\\ \nonumber
	& = & 0 - 0,\\ \nonumber
	\end{array}\nonumber
	\end{eqnarray}
	an then, we derive that $	\widetilde I_{\eta, \omega}([Y_\eta,Y_\omega]) = 	\imath _{[Y_\omega,Y_\eta]}\omega -  \left( \imath _{[Y_\eta,Y_\omega]}\eta\right)\eta  = 0 - 0 $. 
	The non-degeneracy of $ \widetilde I_{\eta, \omega}$, implies that $[Y_\eta,Y_\omega] = 0 $. 
\end{proof}

\section{Diffeomorphisms of a cosymplectic manifold}\label{SC00-2}

\begin{definition} Let $(M, \omega,\eta)$ be a cosymplectic manifold. 
	\begin{enumerate}
		\item A diffeomorphism $\phi : M\longrightarrow M$ is said to be an almost cosymplectic diffeomorphism (or almost cosymplectomorphism) if : $\phi^\ast(\omega)= \omega$ and there exists a smooth function $f\in C^\infty(M)$ such that $\phi^\ast(\eta)= e^f\eta$. 
		\item  A diffeomorphism $\phi : M\longrightarrow M$ is said to be a  cosymplectic diffeomorphism (or cosymplectomorphism) if : $\phi^\ast(\eta)= \eta$ and $\phi^\ast(\omega)= \omega$.  
	\end{enumerate}
\end{definition}

We shall denote by $\rm{\mathcal{A}Cosymp}_{\eta, \omega}(M)$ the space of all almost cosymplectomorphisms of $(M,\omega,\eta, )$ and by $\text{Cosymp}_{\omega,\eta}(M)$ the space of all cosymplectomorphisms of $(M, \omega, \eta)$.

\begin{definition} Let $(M, \omega,\eta)$ be a cosymplectic manifold. 
	An isotopy $\Phi =\{\phi_t\}$ is said to be an almost cosymplectic (resp.  cosymplectic) if for each time $t$, we have 
	$\phi_t\in \rm{\mathcal{A}Cosymp}_{\eta, \omega}(M)$ (resp. $\phi_t\in  \text{Cosymp}_{\eta, \omega}(M)$). 
\end{definition}
We shall denote by  $\mathcal{A}Iso_{\eta, \omega}(M)$ (resp. $Iso_{\eta, \omega}(M)$) the space of all almost cosymplectic (resp. 
the space of all cosymplectic) isotopies of $(M, \eta, \omega)$.\\ 


We then define the following important subgroups: 
\begin{equation*}
\mathcal{A}G_{\eta, \omega}(M) := ev_1\left(\mathcal{A}Iso_{\eta, \omega}(M)\right)  \quad \text{and}\quad G_{\eta, \omega}(M) := ev_1\left(Iso_{\eta, \omega}(M)\right),
\end{equation*}
and  equip both groups $ \mathcal{A}G_{\eta, \omega}(M)$ and $G_{\eta, \omega}(M) $ with the $C^\infty-$compact-open topology \cite{Hirs76}.

\begin{lemma}\label{pushfoward}
	Let $(M,\omega, \eta)$ be a cosymplectic manifold and let $\xi$ be its Reeb vector field. We have the following properties.
	\begin{enumerate}
		\item If $\phi\in G_{\eta, \omega}(M)$, then \ $\phi_\ast(\xi) = \xi.$
		\item If $\psi\in  \mathcal{A}G_{\eta, \omega}(M)$ with $\psi^\ast(\eta) = e^f\eta$, then \ $\psi_\ast(\xi) = e^{f\circ\psi^{-1}}\xi.$
	\end{enumerate}
\end{lemma}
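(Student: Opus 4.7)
The overall strategy is to exploit the fact that the Reeb vector field is \emph{uniquely} determined by the two algebraic conditions $\eta(\xi)=1$ and $\imath_\xi\omega=0$, a uniqueness that is built into the non-degeneracy of the isomorphism $\widetilde I_{\eta,\omega}$ highlighted in the preliminaries. So in both cases, I would simply compute $\eta(\phi_\ast\xi)$ and $\imath_{\phi_\ast\xi}\omega$, recognize the candidate vector field these identify, and then invoke injectivity of $\widetilde I_{\eta,\omega}$ to conclude.

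The main computational tool is the naturality identity $(\phi^\ast\alpha)(X)=\alpha(\phi_\ast X)\circ\phi$ for a $1$-form $\alpha$, and analogously $(\phi^\ast\omega)(X,Y)=\omega(\phi_\ast X,\phi_\ast Y)\circ\phi$ for a $2$-form. For part (1), applying this with $X=\xi$ and using $\phi^\ast\eta=\eta$ gives
\begin{equation*}
\eta(\phi_\ast\xi)=(\phi^\ast\eta)(\xi)\circ\phi^{-1}=\eta(\xi)\circ\phi^{-1}=1,
\end{equation*}
and applying the analogous identity to $\omega$ together with $\phi^\ast\omega=\omega$ and $\imath_\xi\omega=0$ shows that $\omega(\phi_\ast\xi,Y)$ vanishes for every vector field $Y$, hence $\imath_{\phi_\ast\xi}\omega=0$. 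By uniqueness of the Reeb vector field, $\phi_\ast\xi=\xi$.

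For part (2), the same two computations, now using $\psi^\ast\eta=e^f\eta$ and $\psi^\ast\omega=\omega$, yield
\begin{equation*}
\eta(\psi_\ast\xi)=(e^f\eta(\xi))\circ\psi^{-1}=e^{f\circ\psi^{-1}},\qquad \imath_{\psi_\ast\xi}\omega=0.
\end{equation*}
On the other hand the candidate vector field $Z:=e^{f\circ\psi^{-1}}\xi$ satisfies $\eta(Z)=e^{f\circ\psi^{-1}}$ and $\imath_Z\omega=e^{f\circ\psi^{-1}}\imath_\xi\omega=0$. Thus $\widetilde I_{\eta,\omega}(\psi_\ast\xi)=\widetilde I_{\eta,\omega}(Z)$, and the non-degeneracy (i.e.\ injectivity) of $\widetilde I_{\eta,\omega}$ forces $\psi_\ast\xi=e^{f\circ\psi^{-1}}\xi$, as desired.

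There is no real obstacle here; the only point requiring a bit of care is bookkeeping of base points when translating between $\phi^\ast$ and $\phi_\ast$, in order to ensure that the conformal factor appears composed with $\psi^{-1}$ (and not $\psi$). Once the naturality identity is written in the correct form, the rest is immediate.
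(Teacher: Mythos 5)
Your argument is correct and is essentially the same as the paper's: both reduce the claim to computing the image of the pushed-forward Reeb field under $\widetilde I_{\eta,\omega}$ (you compute $\eta(\phi_\ast\xi)$ and $\imath_{\phi_\ast\xi}\omega$ separately, the paper assembles them in one line via $\widetilde I_{\eta, \omega} (\varphi_\ast\xi) = (\varphi^{-1})^\ast(\imath_\xi \varphi^\ast\omega) + (\varphi^{-1})^\ast(\imath_\xi  \varphi^\ast\eta)\,\eta$), and both conclude by the non-degeneracy of $\widetilde I_{\eta,\omega}$. The bookkeeping of the conformal factor $e^{f\circ\psi^{-1}}$ is handled correctly.
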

\begin{proof} 
	Since for all  diffeomorphism $\varphi\in \text{Diff}(M)$, we have\\ $\widetilde I_{\eta, \omega} (\varphi_\ast(\xi)) = (\varphi^{-1})^\ast(\imath_\xi \varphi^\ast(\omega)) + (\varphi^{-1})^\ast(\imath_\xi  \varphi^\ast(\eta))\eta,$ then  we derive that: 
	\begin{enumerate}
		\item If $\phi\in G_{\eta, \omega}(M)$, then for $\varphi =\phi$, we have 
		\begin{eqnarray}
		\widetilde I_{\eta, \omega} (\phi_\ast(\xi)) &=& (\phi^{-1})^\ast(\imath_\xi \phi^\ast(\omega)) + (\phi^{-1})^\ast(\imath_\xi \phi^\ast(\eta))\eta,\nonumber\\
		&  = & (\phi^{-1})^\ast(\imath_\xi\omega) + (\phi^{-1})^\ast(\imath_\xi\eta)\eta\nonumber\\
		& =&  \eta\nonumber\\ 
		& =&\widetilde  I_{\eta, \omega} (\xi).
		\end{eqnarray}
		
		Thus, $\widetilde  I_{\eta, \omega} (\phi_\ast(\xi)) = \widetilde I_{\eta, \omega} (\xi)$ which  implies that $\phi_\ast(\xi) = \xi$, since $ \widetilde I_{\eta, \omega}$ is non-degenerate.  
		\item If $\psi\in  \mathcal{A}G_{\eta, \omega}(M)$ with $\psi^\ast(\eta) = e^f\eta$, then 
		\begin{eqnarray}
		\widetilde I_{\eta, \omega} (\psi_\ast(\xi)) &=& (\phi^{-1})^\ast(\imath_\xi\psi^\ast(\omega)) + (\psi^{-1})^\ast(\imath_\xi \psi^\ast(\eta))\eta,\nonumber\\
		& =& (\psi^{-1})^\ast(\imath_\xi\omega) + (\psi^{-1})^\ast(e^f\imath_\xi\eta)\eta\nonumber\\
		& = & 0 +  e^{f\circ\psi^{-1}}\eta \nonumber\\
		& = & \widetilde I_{\eta, \omega} (e^{f\circ\psi^{-1}}\xi).
		\end{eqnarray}
		Thus, $\widetilde  I_{\eta, \omega} (\psi_\ast(\xi)) =\widetilde  I_{\eta, \omega} (e^{f\circ\psi^{-1}}\xi)$ from which we derive   
		$\psi_\ast(\xi) = e^{f\circ\psi^{-1}}\xi$, since $\widetilde I_{\eta, \omega}$ is non-degenerate.   
	\end{enumerate}
\end{proof}

The following result generalizes Lemma \ref{pushfoward} and its proof comes immediately from the proof of Lemma \ref{pushfoward}. 

\begin{lemma}\label{Gpushfoward}
	Let $(M_i, \omega_i, \eta_i)$, $i = 1, 2$ be two cosymplectic manifolds and let $\xi_i$, $i = 1, 2$ be their  Reeb vector fields respectively. 
	We have the following properties.
	\begin{enumerate}
		\item If $\phi\in \text{Diff}(M_1, M_2)$ such that $ \phi^\ast(\omega_2) = \omega_1$, and $ \phi^\ast(\eta_2) = \eta_1$, then $\phi_\ast(\xi_1) = \xi_2.$
		\item If $\psi\in \text{Diff}(M_1, M_2)$ such that $ \psi^\ast(\omega_2) = \omega_1$, and $ \psi^\ast(\eta_2) = e^h\eta_1$, then 
		$\psi_\ast(\xi_1) = e^{h\circ\psi^{-1}}\xi_2.$\\
	\end{enumerate}
\end{lemma}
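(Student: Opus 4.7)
The plan is to mimic the proof of Lemma \ref{pushfoward} almost verbatim, but with the single isomorphism $\widetilde I_{\eta,\omega}$ replaced by the two isomorphisms $\widetilde I_{\eta_1,\omega_1}$ and $\widetilde I_{\eta_2,\omega_2}$ attached to the source and target cosymplectic structures. The whole argument rests on the non-degeneracy of $\widetilde I_{\eta_2,\omega_2}$: it will be enough to compute $\widetilde I_{\eta_2,\omega_2}$ applied to $\phi_\ast(\xi_1)$ (respectively $\psi_\ast(\xi_1)$) and show that the result equals $\widetilde I_{\eta_2,\omega_2}(\xi_2)$ (respectively $\widetilde I_{\eta_2,\omega_2}(e^{h\circ\psi^{-1}}\xi_2)$).

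First I would record the general identity, valid for any diffeomorphism $\varphi:M_1\longrightarrow M_2$ and any vector field $X$ on $M_1$,
\begin{equation*}
\widetilde I_{\eta_2,\omega_2}(\varphi_\ast X) = (\varphi^{-1})^\ast\bigl(\imath_X \varphi^\ast(\omega_2)\bigr) + (\varphi^{-1})^\ast\bigl(\imath_X \varphi^\ast(\eta_2)\bigr)\,\eta_2,
\end{equation*}
which is just the naturality of interior multiplication together with the definition of $\widetilde I_{\eta_2,\omega_2}$. This is exactly the relation used in Lemma \ref{pushfoward}, so no new ingredient is needed.

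For part (1), I would substitute $\varphi = \phi$, $X = \xi_1$ and use the hypotheses $\phi^\ast(\omega_2) = \omega_1$ and $\phi^\ast(\eta_2) = \eta_1$ to rewrite the right-hand side as $(\phi^{-1})^\ast(\imath_{\xi_1}\omega_1) + (\phi^{-1})^\ast(\imath_{\xi_1}\eta_1)\,\eta_2$. The defining properties of the Reeb vector field $\xi_1$, namely $\imath_{\xi_1}\omega_1 = 0$ and $\eta_1(\xi_1) = 1$, reduce this to $\eta_2 = \widetilde I_{\eta_2,\omega_2}(\xi_2)$. Non-degeneracy of $\widetilde I_{\eta_2,\omega_2}$ then forces $\phi_\ast(\xi_1) = \xi_2$. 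Part (2) proceeds identically with $\varphi = \psi$, except that the pullback formula now contributes an extra factor $e^f$ in the $\eta$-component; carrying it through yields $\widetilde I_{\eta_2,\omega_2}(\psi_\ast(\xi_1)) = e^{h\circ\psi^{-1}}\eta_2 = \widetilde I_{\eta_2,\omega_2}(e^{h\circ\psi^{-1}}\xi_2)$, and non-degeneracy gives $\psi_\ast(\xi_1) = e^{h\circ\psi^{-1}}\xi_2$.

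There is no real obstacle here: the lemma is a direct generalization of Lemma \ref{pushfoward} in which the only genuine change is bookkeeping, the careful distinction between the two Reeb vector fields $\xi_1,\xi_2$ and the two isomorphisms $\widetilde I_{\eta_i,\omega_i}$. As the statement itself points out, the proof is essentially a transcription of the previous one, so the only thing to watch is that $\widetilde I_{\eta_2,\omega_2}$, not $\widetilde I_{\eta_1,\omega_1}$, is the operator that is inverted at the final step.
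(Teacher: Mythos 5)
Your proof is correct and is exactly the argument the paper intends: the paper gives no separate proof of Lemma \ref{Gpushfoward}, stating only that it "comes immediately from the proof of Lemma \ref{pushfoward}," which is precisely the transcription you carry out, with the only substantive point being that non-degeneracy of $\widetilde I_{\eta_2,\omega_2}$ (the target-side isomorphism) is what is invoked at the end. The lone blemish is the stray "$e^f$" where you mean "$e^h$."
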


\begin{definition}
	Let $(M, \omega,\eta)$ be a cosymplectic manifold. An isotopy $\Psi:=\{\psi_t\}$ is called an almost co-Hamiltonian isotopy, if
	for each $t$, the vector field $\dot\psi_t$ is an almost co-Hamiltonian vector field, i.e., $\dot\psi_t\in \mathcal{A}ham_{\eta, \omega}(M)$, for each $t$. 
\end{definition}
We shall denote by $\mathcal{A}H_{\eta, \omega}(M)$ the space of all almost co-Hamiltonian isotopies of $(M, \omega,\eta)$, and put
\begin{equation}
\mathcal{A}Ham_{\eta, \omega}(M) := ev_1\left(\mathcal{A}H_{\eta, \omega}(M)\right). 
\end{equation}
The elements of the set $\mathcal{A}H_{\eta, \omega}(M)$ are called almost co-Hamiltonian diffeomorphisms of $(M, \omega,\eta)$. 

\begin{definition}
	Let $(M, \omega,\eta)$ be a cosymplectic manifold. A isotopy $\Psi:=\{\psi_t\}$ is called a co-Hamiltonian isotopy, if
	for each $t$, the vector field $\dot\psi_t$ is a co-Hamiltonian vector field, i.e., $\dot\psi_t\in ham_{\eta, \omega}(M)$, for each $t$. 
\end{definition}
We shall denote by $H_{\eta, \omega}(M)$ the space of all  co-Hamiltonian isotopies of $(M, \omega,\eta)$, and put
\begin{equation}
Ham_{\eta, \omega}(M) := ev_1\left(H_{\eta, \omega}(M)\right). 
\end{equation}
The elements of the set $Ham_{\eta, \omega}(M)$ are called co-Hamiltonian diffeomorphisms of $(M, \omega,\eta)$. 

\begin{proposition}\label{pro4-A}
	Let $(M, \omega, \eta)$ be a  cosymplectic manifold. The following properties hold.
	\begin{enumerate}
		\item The set $\mathcal{A}Ham_{\eta, \omega}(M)$ is a Lie group whose Lie algebra is the space $ \mathcal{A}ham_{\eta, \omega}(M)$.
		\item The set $\mathcal{A}Ham_{\eta, \omega}(M)$ is a normal subgroup in the group $ \mathcal{A}G_{\eta, \omega}(M)$. 
		\item The set $Ham_{\eta, \omega}(M)$ is a Lie group whose Lie algebra is the space $ham_{\eta, \omega}(M)$.
		\item  The set $Ham_{\eta, \omega}(M)$ is a normal subgroup in the group $G_{\eta, \omega}(M)$. 
	\end{enumerate}
\end{proposition}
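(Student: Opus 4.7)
The plan is to establish all four items by combining the standard calculus of time-dependent vector fields with the push-forward formulas in Lemma \ref{pushfoward}. The crucial underlying fact is that if $\{\phi_t\}$ and $\{\psi_t\}$ are smooth isotopies starting at the identity with generating vector fields $X_t$ and $Y_t$, then $\{\phi_t\circ\psi_t\}$ is generated by $X_t+(\phi_t)_\ast Y_t$ and $\{\phi_t^{-1}\}$ is generated by $-(\phi_t^{-1})_\ast X_t$. All four claims therefore reduce to verifying that $ham_{\eta,\omega}(M)$ (respectively $\mathcal{A}ham_{\eta,\omega}(M)$) is preserved under push-forward by elements of $G_{\eta,\omega}(M)$ (respectively $\mathcal{A}G_{\eta,\omega}(M)$), together with closure under the Lie bracket.

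I would prove items (4) and (3) first. Fix $\phi\in G_{\eta,\omega}(M)$ and an isotopy $\{\psi_t\}\in H_{\eta,\omega}(M)$ with generator $X_t$ satisfying $\widetilde I_{\eta,\omega}(X_t)=dH_t$. Using $\phi^\ast\omega=\omega$ and $\phi^\ast\eta=\eta$, a direct manipulation of interior products gives
\begin{equation*}
\widetilde I_{\eta,\omega}(\phi_\ast X_t)=(\phi^{-1})^\ast\iota_{X_t}\omega+(\eta(X_t)\circ\phi^{-1})\,\eta=(\phi^{-1})^\ast\widetilde I_{\eta,\omega}(X_t)=d(H_t\circ\phi^{-1}).
\end{equation*}
Hence $\phi_\ast X_t\in ham_{\eta,\omega}(M)$, which shows that the conjugated isotopy $\phi\psi_t\phi^{-1}$ is co-Hamiltonian and proves (4). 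Closure of $Ham_{\eta,\omega}(M)$ under composition then follows by applying the same push-forward computation with $\phi$ replaced by the isotopy factor $\phi_t$, and closure under inversion follows from the second generating formula above. Finally, the tangent space of $Ham_{\eta,\omega}(M)$ at the identity is by definition $ham_{\eta,\omega}(M)$; Proposition \ref{pro2} then ensures Lie bracket closure, establishing (3).

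Items (2) and (1) follow the same scheme, but one must additionally verify that the almost cosymplectic condition $\mathcal{L}_{X_t}\eta=\mu_t\eta$ is preserved under push-forward by an element $\phi\in\mathcal{A}G_{\eta,\omega}(M)$. Writing $\phi^\ast\eta=e^f\eta$ (so that $(\phi^{-1})^\ast\eta=e^{-f\circ\phi^{-1}}\eta$), the naturality identity $\mathcal{L}_{\phi_\ast X_t}\eta=(\phi^{-1})^\ast\mathcal{L}_{X_t}(\phi^\ast\eta)$ yields
\begin{equation*}
\mathcal{L}_{\phi_\ast X_t}\eta=\big((X_t(f)+\mu_t)\circ\phi^{-1}\big)\,\eta.
\end{equation*}
Together with $\iota_{\phi_\ast X_t}\omega=(\phi^{-1})^\ast\iota_{X_t}\omega=d(H_t\circ\phi^{-1})$ when $\iota_{X_t}\omega=dH_t$, this gives $\phi_\ast X_t\in\mathcal{A}ham_{\eta,\omega}(M)$, proving (2). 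Item (1) then follows by repeating the argument for (3), with Proposition \ref{pro2-0} replacing Proposition \ref{pro2} to secure the Lie bracket stability of $\mathcal{A}ham_{\eta,\omega}(M)$.

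The main obstacle is the careful bookkeeping of the conformal factor $e^f$ in the almost cosymplectic setting: when composing or inverting almost cosymplectic isotopies one must track simultaneously the exact $1$-form $\iota_{\dot\psi_t}\omega$ and the coefficient of $\mathcal{L}_{\dot\psi_t}\eta$, and verify that neither operation destroys the almost cosymplectic structure. Once the identities above are in place, the rest is routine. The term \emph{Lie group} is understood here in the sense of Fréchet--Lie groups with the $C^\infty$-compact-open topology fixed in Section \ref{SC00-2}.
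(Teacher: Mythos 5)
Your proposal is correct and follows essentially the route the paper itself takes: the paper states Proposition \ref{pro4-A} without a formal proof and instead records exactly your identities (inversion, conjugation and composition formulas for co-Hamiltonian and almost cosymplectic isotopies, including the bookkeeping of the conformal factor $e^{f}$) in the immediately following Subsection \ref{rem1}, with bracket closure supplied by Propositions \ref{pro2} and \ref{pro2-0} just as you invoke them. Your write-up merely assembles these computations explicitly, which is consistent with the paper's intent.
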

\subsection{Cosymplectic and almost cosymplectic flows}\label{rem1}
 To that end, we shall need the following fact : to any smooth isotopy  
 $\Phi =\{\phi_t\}$  with  $\phi_0 = id_M$, it is attached a smooth family of smooth vector fields $\{\dot{\phi}_t\}$, defined by
 \begin{equation}
 \boxed{\dot\phi_t := X_t\circ\phi_t^{-1}}
 \end{equation}
  where 
 \begin{equation}
 X_t (x):= \frac{d\phi_t(x)}{d t},
 \end{equation}
for each $t$ and for all $x\in M$. Furthermore, we have 
\begin{equation}
	 \boxed{\phi_t^\ast\left(\widetilde I_{\eta, \omega}(\dot\phi_t)\right)  = 	\phi_t^\ast\left( \widetilde I_{\eta, \omega}(X_t)\right) }
\end{equation}
for each $t$, provided  $\Phi =\{\phi_t\}$ is cosymplectic. 
\begin{enumerate}
	\item Let $\{\phi_t\}\in H_{\eta, \omega}(M)$ such that $\widetilde I_{\eta, \omega}(\dot\phi_t) = dF_t$, for each $t$ and smooth function $F_t$. Then, from the relation 
	$\dot{\phi}_{-t} = - (\phi_t^{-1})_\ast(\dot{\phi}_t),$  
	for each $t$ and 
	$ \imath_{(\phi_t^{-1})_\ast\dot{\phi}_t}\alpha = -\phi_t^\ast\big(\imath_{(\dot{\phi}_t\circ\phi_t^{-1})}\alpha\big),$ 
	for all $p-$form $\alpha$, we derive that 
	\begin{equation}\label{equ1}
	\widetilde I_{\eta, \omega}(\dot{\phi}_{-t}) = -\phi_t^\ast(\imath_{\dot{\phi}_t}\omega) -\phi_t^\ast(\eta(\dot{\phi}_t))\eta = -\phi_t^\ast(\widetilde I_{\eta, \omega}(\dot\phi_t))= d(-F_t\circ\phi_t), \quad \text{for all } \quad t.
	\end{equation}
	Hence, $\{\phi_t^{-1}\}\in H_{\eta, \omega}(M)$ and $\widetilde I_{\eta, \omega}(\dot{\phi}_{-t}) =  d(-F_t\circ\phi_t), $ where $\phi_t^{-1} =:\phi_{-t}$, for each $t$. 
	\item If  $\Phi_F = \{\phi_t\}$ is a co-Hamiltonian isotopy such that $\widetilde I_{\eta, \omega} (\dot\phi_t) = dF_t$, for all $t$, then for all 
	$\rho \in G_{\eta,  \omega}(M)$, the isotopy $ \Psi =\{\psi_t\}$ with  $\psi_t :=\rho^{-1} \circ \phi_t\circ \rho$ is also  co-Hamiltonian : in fact, 
	from $\dot { \psi}_t = \rho_\ast^{-1}(\dot\phi_t)$, we derive that 
	$$ \widetilde I_{\eta, \omega} (\dot { \psi}_t) = \rho^\ast (\widetilde I_{\eta, \omega} (\dot\phi_t)) = d(F_t\circ\rho), \quad \text{for each}\  t. $$ 
	\item Similarly, if  $\{\psi_t\}$ and  $\{\phi_t\}$ are two elements of $H_{\eta, \omega}(M)$ such that $\widetilde I_{\eta, \omega}(\dot\phi_t) = dF_t$ and 
	$\widetilde I_{\eta, \omega}(\dot\psi_t) = dK_t$, for each $t$, then we have 
	\begin{equation}\label{equ2}
	\widetilde I_{\eta, \omega}(\dot{\overbrace{\phi_t\circ\psi_t}}) = d(F_t + K_t\circ\phi_t^{-1}),
	\quad \quad \text{for each}\  t. 
	\end{equation}
	\item Let $\{\phi_t\}\in Iso_{\eta, \omega}(M)$. Then, for each $t$, we have  
	$$\widetilde I_{\eta, \omega}(\dot{\phi}_{-t}) = -\phi_t^\ast(\widetilde I_{\eta, \omega}(\dot{\phi}_t)) .$$
	\item If  $\{\psi_t\}$ and  $\{\phi_t\}$ are two elements of $ Iso_{\eta, \omega}(M)$, then 
	for each $t$, we have 
	$$\widetilde I_{\eta, \omega}(\dot{\overbrace{\phi_t\circ\psi_t}}) = \widetilde I_{\eta, \omega}(\dot{\phi}_t) + (\phi_t^{-1})^\ast(\widetilde I_{\eta, \omega}(\dot{\psi}_t)).$$
	\item Let $\{\phi_t\}\in \mathcal{A}Iso_{\eta, \omega}(M) $ such that $\mathcal{L}_{\dot\phi_t}\eta = \mu_t\eta$, (or $\phi_t^\ast(\eta) = e^{f_t}\eta$), 
	for each $t$. Compute,  
	\begin{eqnarray}\label{equ1-I}
	\mathcal{L}_{\dot{\phi}_{-t}}\eta &=& \phi_{t}^\ast \left( \frac{d}{dt}\left( (\phi_t^{-1})^\ast(\eta)\right) \right)  
	=  \phi_{t}^\ast \left( \frac{d}{dt}\left(e^{-f_t\circ\phi_t^{-1}}\eta\right) \right)  \\
	&=&  \phi_{t}^\ast \left( \eta \frac{d}{dt}\left(e^{- f_t\circ\phi_t^{-1}}\right) \right)=  (-(\dot{\overbrace{f_t\circ\phi_t^{-1}}})\circ\phi_t)\eta,
	\end{eqnarray} 
	i.e., $ \mathcal{L}_{\dot{\phi}_{-t}}\eta = \vartheta_t\eta,$ for all $t$, with  $ \vartheta_t := -(\dot{\overbrace{f_t\circ\phi_t^{-1}}})\circ\phi_t.$ 
	\item If  $\{\psi_t\}$ and $\{\phi_t\}$ are two elements of $  \mathcal{A}Iso_{\eta, \omega}(M)$ such that $\mathcal{L}_{\dot\phi_t}\eta = \mu_t\eta$,\\ (or $\phi_t^\ast(\eta) = e^{f_t}\eta$), and 
	$\mathcal{L}_{\dot\psi_t}\eta = \mu'_t\eta$, (or $\psi_t^\ast(\eta) = e^{q_t}\eta$),
	for each $t$, then we have 
	\begin{eqnarray}\label{equ2-I}
	\mathcal{L}_{\dot{\overbrace{\phi_t\circ\psi_t}}}\eta &=&  ( (\phi_t\circ\psi_t)^{-1})^\ast \left( \frac{d}{dt}\left( (\phi_t\circ\psi_t)^\ast\eta\right)   \right),\nonumber\\
	&=&  ( (\phi_t\circ\psi_t)^{-1})^\ast 	\left( \frac{d}{dt}\left( e^{f_t\circ\psi_t}e^{q_t}\eta \right) \right),\nonumber\\
	&=&  \left( \dot{\overbrace{f_t\circ\psi_t}} + \dot{q}_t \right)\circ(\phi_t\circ\psi_t)^{-1} \eta, \quad \text{ for each} \ t. 
	\end{eqnarray}
	Thus, 
	\begin{equation}\label{equ2-Iv}
	\mathcal{L}_{\dot{\overbrace{\phi_t\circ\psi_t}}}\eta = \varrho_t\eta,
	\end{equation}
	with  $ \varrho_t : = \left( \dot{\overbrace{f_t\circ\psi_t}} + \dot{q}_t \right)\circ(\phi_t\circ\psi_t)^{-1},$ 
	for all $t$.
	\item 	If  $\Phi = \{\phi_t\}$ is an almost co-Hamiltonian isotopy such that $ \imath_{\dot\phi_t}\omega = dF_t$, for all $t$, and \\
	$\mathcal{L}_{\dot\phi_t}\eta = \mu_t\eta$, (or $\phi_t^\ast(\eta) = e^{f_t}\eta$), 
	for each $t$, then  for all $\rho \in \mathcal{A}G_{\eta,  \omega}(M)$ such that\\ $\rho^\ast(\eta) = e^{f^\rho}\eta$, 
	the isotopy $ \Psi =\{\psi_t\}$ with $ \psi_t := \rho^{-1} \circ \phi_t\circ \rho$ is an almost co-Hamiltonian: in fact, 
	from $\dot { \psi}_t = \rho_\ast^{-1}(\dot\phi_t)$, we derive that 
	$ \imath_{\dot  \psi_t}\omega = \rho^\ast ( \imath_{\dot\phi_t}\omega) = d(F_t\circ\rho),$ 
	for each $t$. On the other hand, we have 
	\begin{eqnarray}
	\mathcal{L}_{\rho_\ast^{-1}(\dot\phi_t)}\eta &=& \mathcal{L}_{\dot  \psi_t}\eta   =  (\psi_t^{-1})^\ast 
	\left( \frac{d}{dt}\left(( \rho^{-1} \circ \phi_t\circ \rho)^\ast(\eta) \right) \right)\nonumber\\
	&=&  (\psi_t^{-1})^\ast  \frac{d}{dt}\left( e^{ -f^\rho\circ\rho^{-1} \circ \phi_t\circ \rho}e^{f_t\circ\rho}e^{f^\rho}\eta \right) , \qquad
	\text{for all} \ t.
	\end{eqnarray}
	Hence, 
	$ \mathcal{L}_{\rho_\ast^{-1}(\dot\phi_t)}\eta =  H_t(\rho)\eta,$
	with 
	\begin{eqnarray} 
	H_t(\rho) &:=& e^{ -f^\rho\circ\rho^{-1} \circ \phi_t\circ \rho}e^{f_t\circ\rho}e^{f^\rho}\frac{d}{dt}\left( e^{ -f^\rho\circ\rho^{-1} \circ \phi_t\circ \rho}e^{f_t\circ\rho}e^{f^\rho}\eta \right)\circ ( \rho^{-1} \circ \phi_t\circ \rho)^{-1},\nonumber\\
	&=&  \left(-df^\rho((\rho^{-1})_\ast(\dot{\phi}_t )) + \dot f_t\right) \circ (\rho^{-1}\circ\phi_t\circ \rho)^{-1}\quad \text{for each} \ t.
	\end{eqnarray} 
	\item For any isotopy $\Phi = \{\phi_t\},$ we shall denote by $C_{\Phi,\eta}^t$ the smooth function  $x\longmapsto\eta(\dot \phi_t)(\phi_t(x))$. 
	If  $\Phi = \{\phi_t\}$ and $\Psi = \{\psi_t\}$ are two elements of $Iso_{\eta, \omega}(M)$ such that 
	$\mathcal{L}_{\dot\phi_t}\eta = \mu_t\eta$, (or $\phi_t^\ast(\eta) = e^{f_t}\eta$),	for each $t$, then we have 
	\begin{eqnarray}C_{\Phi\circ\Psi, \eta}^t &=& \eta(\dot \phi_t + (\phi_t)_\ast(\dot \psi_t))\circ(\phi_t\circ\psi_t)\nonumber\\
	&=&  C_{\Phi, \eta}^t\circ\psi_t + \eta((\phi_t)_\ast(\dot \psi_t))\circ(\phi_t\circ\psi_t)\nonumber\\
	&=&  C_{\Phi, \eta}^t\circ\psi_t + \big((\phi_t^{-1})^\ast( e^{f_t}\eta(\dot \psi_t))\big)\circ(\phi_t\circ\psi_t)\nonumber\\
	&=&  C_{\Phi, \eta}^t\circ\psi_t + e^{f_t\circ\psi_t}  C_{\Psi, \eta}^t, \qquad \text{for each} \  t \nonumber
	\end{eqnarray} namely 
	\begin{equation}\label{rfunc}  
	C_{\Phi\circ\Psi, \eta}^t =  C_{\Phi, \eta}^t\circ\psi_t + e^{f_t\circ\psi_t}  C_{\Psi, \eta}^t,\quad \text{for all} \  t.
	\end{equation}
	\item So, from \eqref{rfunc}, we derive that if $\Phi = \{\phi_t\}$ is an almost cosymplectic isotopy such that $\phi_t^\ast(\eta) = e^{f_t}\eta$, 
	for each $t$, then, we have  $$ C_{\Phi^{-1},\eta}^t =  - e^{f_t}  C_{\Phi, \eta}^t\circ \phi_t^{-1},\quad \text{for all} \ t.$$ 	
\end{enumerate}
\subsection{Cosymplectic geometry and Symplectic geometry}\label{Trans}
\subsubsection{Co-Hamiltonian dynamical systems}\label{CHDS}
\begin{itemize}
	\item  Let  $\mathfrak{N}([0,1]\times M\ ,\mathbb{R})$ denotes the vector space of all smooth functions $F$ defined on $ [0,1]\times M$ such that $\displaystyle \int_MF_t\eta\wedge\omega^n = 0,$ for all $t\in [0,1]$. We shall call such functions, time-dependent co-normalized function. It is clear that any $\{\phi_t\}\in H_{\eta, \omega}(M)$ determines a unique co-normalized  function $ F\in \mathfrak{N}([0,1]\times M\ ,\mathbb{R})$ such that $
	\widetilde I_{\eta, \omega}(\dot{\phi}_t) = dF_t,$ 
	for all $t$.   
	\item Let $X$ be a co-Hamiltonian vector field of $(M, \omega,\eta)$, and let $\Phi_X$ be its flow. Since 
	$\widetilde I_{\eta, \omega}(X) = d G$, for some $G\in C^\infty(M,\mathbb{R})$, we derive that for each $p\in M$, we have
	$$\dfrac{d}{ dt}G(\phi_X^t(p)) = dG(X)(\phi_X^t(p)) = \eta(X)^2(\phi_X^t(p)) \geqslant 0, \ \text{  for all}\  t \ \text {and for all} \  p\in M$$
	According to the above equality,  along the orbit $t\mapsto\phi_X^t(p)$,  the energy function $G$, increases with time, and we have $$\displaystyle G(\phi_X^t(p)) = G(p)  + \int_0^t \eta(X)^2(\phi_X^s(p))ds,$$
	for all $t$, and for all $p\in M$. It seems that, in general, not as in symplectic geometry (where the Hamiltonian is constant 
	along the orbits of its generating flow), in the cosymplectic case, we do not yet have such a conservation property.\\
	Furthermore, it seems that in such a dynamics system, given a co-Hamiltonian vector field $X$ of $(M, \omega,\eta)$, 
	the orbit $t\mapsto\phi_X^t(p)$ is  $\sigma-$periodic if and only if,\\ $\displaystyle \int_0^\sigma \eta(X)^2(\phi_X^s(p))ds = 0$  and since the map 
	$s\mapsto \eta(X)^2(\phi_X^s(p))$, is non-negative and continuous, then we must have $\eta(X)(\phi_X^s(p)) = 0,$ for all $s\in [0,\sigma]$. \\
\end{itemize}

\subsubsection*{Co-Hamiltonian equations:}
Let $(z,q_1,\dots,q_n, p_1,\dots,p_n)$ be the cosymplectic-Darboux coordinates system such that\\ $ \omega = \Sigma_{i = 1}^n dq_i\wedge dp_i$, and 
$\eta = dz$. Let $\{H_t\}$ be a smooth family of smooth functions on $M$ such that 
$$ 
\left\{
\begin{array}{l}
\dot p_i =-\frac{\partial}{ \partial q_i}H_t, \\
\dot q_i = \frac{\partial}{ \partial p_i}H_t, \\
\dot z =\frac{\partial}{ \partial z}H_t . \\
\end{array}
\right. 
$$  
It is clear that if $X_t := \tilde I^{-1}_{\eta, \omega}(dH_t),$ then the one-parameter family of local diffeomorphisms $ \Phi : = \{\phi_t\}$ generated 
by $ X_t$, consists of co-Hamiltonian diffeomorphisms whenever\\ $\frac{\partial}{ \partial z}H_t = cte(t)$. Conversely, if $ \Phi : = \{\phi_t\}$ is a co-Hamiltonian 
isotopy such that 
$ \tilde I_{\eta, \omega}(\dot{\phi}_t) = dL_t $, then 
$$ 
(CHE)\left\{
\begin{array}{l}
\dot p_i =-\frac{\partial}{ \partial q_i}L_t, \\
\dot q_i = \frac{\partial}{ \partial p_i}L_t, \\
\dot z =\frac{\partial}{ \partial z}L_t = cte(t). \\
\end{array}
\right. 
$$ \\

Lemma \ref{lem-3} tells to us that: given a smooth manifold $M$, a $1-$form $\eta$, and  a $2-$form $\omega$ on $M$, if we denote by 
$\mathbb{S}^1$ unit circle equipped with the coordinate function $\theta$, and we consider smooth manifold $\widetilde M := M\times \mathbb{S}^1$, equipped with the $2-$form $\widetilde\omega := p^\ast\omega + p^\ast\eta\wedge \pi_2^\ast (d\theta)$ where $\pi_2: \widetilde M\rightarrow \mathbb{S}^1$, and $p: \widetilde M\rightarrow M$ are projection maps, then  $(M, \omega,\eta)$ is a  cosymplectic manifold, if and only if,  $(\widetilde M, \tilde\omega)$ is a  symplectic manifold. We have the following facts: 
\begin{enumerate}
	\item[{\sf Fact 1}:] For any cosymplectic  isotopy $\Phi = \{\phi_t\}$, one defines an isotopy $\tilde\Phi = \{\tilde\phi_t\}$ of  the symplectic manifold $(\widetilde M, \widetilde\omega)$  as follows: For each $t$, 
	\begin{eqnarray}\begin{array}{ccclc}
	\tilde\phi_t : & M\times \mathbb{S}^1 & \longrightarrow  & M\times \mathbb{S}^1\\
	&(x, \theta) & \longmapsto& \big(\phi_t(x), \mathcal{R}_{\Lambda_t(\Phi)}(x, \theta) \big),
	\end{array}\nonumber\end{eqnarray}
	where $ \mathcal{R}_{\Lambda_t(\Phi)}(x, \theta) := \theta - \int_0^tC_{\Phi, \eta}^s(x) ds \mod{2\pi}.$
	Furthermore, if we consider the canonical projection  $p: \widetilde M\rightarrow M,$ then for each $t$, we have the commutative diagram 
	\begin{eqnarray}
	\begin{array}{cccccccccc}
	\widetilde M & \stackrel{\tilde \phi_t}{\longrightarrow} & \widetilde M\\[.1cm]
	p \downarrow &   & \downarrow p\\[.1cm]
	M & \stackrel{ \phi_t}{\longrightarrow}  &  M,
	\end{array}\nonumber 
	\end{eqnarray}
	namely $ p\circ\tilde{\phi_t} = \phi_t\circ p$. The isotopy $\widetilde\Phi = \{\widetilde\phi_t\}$, is in fact symplectic: since we have  
	\begin{eqnarray} 
	\tilde{\phi_t}^\ast(\widetilde\omega) &=& (p\circ\tilde{\phi_t})^\ast\omega + (p\circ\tilde{\phi_t})^\ast\eta\wedge \tilde{\phi_t}^\ast(\pi_2^\ast (d\theta))\nonumber\\
	&=& (\phi_t\circ p)^\ast\omega + (\phi_t\circ p)^\ast\eta\wedge \pi_2^\ast (d\theta)\nonumber\\
	&=& p^\ast\omega + p^\ast\eta\wedge \pi_2^\ast (d\theta) \nonumber\\
	& =& \widetilde\omega.
	\end{eqnarray}
	\item[{\sf Fact 2}:]  For any cosymplectic  isotopy $\Phi = \{\phi_t\}$, one defines an isotopy $\tilde\Phi = \{\tilde\phi_t\}$ as in {\bf Fact 1}, and we can easily compute,  
	$\dot {\tilde{\phi_t}} = \dot{\phi}_t - p^\ast(C_{\Phi, \eta}^t ) \dfrac{\partial}{\partial \theta},$ 
	for each $t$, which implies that 
	\begin{equation}\label{Hofer-trans}
	\imath_{\dot {\tilde{\phi_t}}}\widetilde\omega = p^\ast( \imath_{\dot {\phi}_t }\omega)  +   p^\ast( C_{\Phi, \eta}^t) \pi_2^\ast (d\theta) +    p^\ast(C_{\Phi, \eta}^t\eta), \ \text{ for each} \ t.
	\end{equation}
\end{enumerate}

  \subsection{Almost cosymplectic structure and Symplectic structure}\label{Transs}

  \begin{proposition}\label{Trasit-1}
  	Let $(M, \eta, \omega)$ be a cosymplectic manifold. 
  	If $\Psi = \{\phi_t\}$ is any  almost cosymplectic isotopy such that 
  	$\psi_t^\ast(\eta) = e^{f_t}\eta$ for all $t$, then the isotopy $\tilde\Psi = \{\tilde\psi_t\}$ defined by 
  	$$ \tilde\psi_t: M\times \mathbb{S}^1 \longrightarrow M\times \mathbb{S}^1,(x, \theta) \mapsto (\psi_t(x), \theta e^{-f_t(x)} ),$$
  	is a  symplectic isotopy of  the symplectic manifold $(\tilde M, \tilde\omega)$. 
  \end{proposition}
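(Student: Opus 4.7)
\bigskip

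\noindent\textbf{Proof plan.} My plan is to verify the proposition by direct pullback computation on $\widetilde{M}$. The two things I need to check are: (i) each $\tilde\psi_t$ is a diffeomorphism of $\widetilde{M}$ with $\tilde\psi_0=\mathrm{id}$ and the family depends smoothly on $t$, and (ii) $\tilde\psi_t^{\ast}(\tilde\omega)=\tilde\omega$ for every $t\in[0,1]$. Item (i) is essentially formal: since $\psi_t$ is a diffeomorphism of $M$ and $\theta\mapsto\theta e^{-f_t(x)}$ is, for each $x\in M$, a linear isomorphism (because $e^{-f_t(x)}>0$), the map $\tilde\psi_t$ is a diffeomorphism of $\widetilde M$ with explicit inverse $(y,\sigma)\mapsto(\psi_t^{-1}(y),\sigma e^{f_t(\psi_t^{-1}(y))})$; smoothness in $t$ follows from smoothness of $\{\psi_t\}$ and $\{f_t\}$, and $\tilde\psi_0=\mathrm{id}$ since $\psi_0=\mathrm{id}_M$ forces $f_0\equiv 0$.

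The meat of the argument is (ii). First I would observe the identity $p\circ\tilde\psi_t=\psi_t\circ p$, which yields
\[
\tilde\psi_t^{\ast}(p^{\ast}\omega)=p^{\ast}(\psi_t^{\ast}\omega)=p^{\ast}\omega,\qquad
\tilde\psi_t^{\ast}(p^{\ast}\eta)=p^{\ast}(\psi_t^{\ast}\eta)=e^{f_t\circ p}\,p^{\ast}\eta.
\]
Next, since $\pi_2\circ\tilde\psi_t(x,\theta)=\theta e^{-f_t(x)}$, the Leibniz rule gives
\[
\tilde\psi_t^{\ast}(\pi_2^{\ast}d\theta)=d(\theta\,e^{-f_t\circ p})=e^{-f_t\circ p}\bigl(\pi_2^{\ast}d\theta-\theta\,p^{\ast}(df_t)\bigr).
\]
Putting these together,
\[
\tilde\psi_t^{\ast}\tilde\omega=p^{\ast}\omega+p^{\ast}\eta\wedge\pi_2^{\ast}d\theta-\theta\,p^{\ast}(\eta\wedge df_t)=\tilde\omega-\theta\,p^{\ast}(\eta\wedge df_t),
\]
so everything reduces to showing $\eta\wedge df_t=0$ on $M$ for every $t$.

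This last identity is the one subtle point of the proof, and it is where the cosymplectic assumption $d\eta=0$ enters (as opposed to the assumption $\psi_t^{\ast}\omega=\omega$, which I have already used). Differentiating the equation $\psi_t^{\ast}\eta=e^{f_t}\eta$ yields
\[
0=\psi_t^{\ast}(d\eta)=d(e^{f_t}\eta)=e^{f_t}\bigl(df_t\wedge\eta+d\eta\bigr)=e^{f_t}\,df_t\wedge\eta,
\]
so $df_t\wedge\eta=0$, hence $\eta\wedge df_t=0$. Substituting this back gives $\tilde\psi_t^{\ast}\tilde\omega=\tilde\omega$, which, combined with (i), shows that $\tilde\Psi=\{\tilde\psi_t\}$ is a symplectic isotopy of $(\widetilde M,\tilde\omega)$. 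I do not expect any genuine obstacle beyond keeping the pullback bookkeeping straight; the only conceptual step is recognizing that the exponential factor $e^{-f_t(x)}$ in the $\mathbb{S}^1$-coordinate is precisely what is needed to cancel the conformal factor $e^{f_t}$ produced by $\psi_t^{\ast}\eta$, and that closedness of $\eta$ removes the remaining $\theta$-dependent term.
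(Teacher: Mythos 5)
Your proof is correct and follows essentially the same route as the paper: use $p\circ\tilde\psi_t=\psi_t\circ p$ to pull back $p^\ast\omega$ and $p^\ast\eta$, and let the factor $e^{-f_t\circ p}$ in the $\mathbb{S}^1$-coordinate cancel the conformal factor $e^{f_t\circ p}$ coming from $\psi_t^\ast\eta$. In fact your version is more careful than the paper's: the paper writes $(\pi_2\circ\tilde\psi_t)^\ast(d\theta)=e^{-f_t\circ p}\,\pi_2^\ast(d\theta)$ and silently drops the term $-\theta\,e^{-f_t\circ p}\,p^\ast(df_t)$, whereas you keep it, reduce the claim to $\eta\wedge df_t=0$, and derive that identity from $d\eta=0$ together with $d(\psi_t^\ast\eta)=d(e^{f_t}\eta)$ --- which is exactly the justification the paper's computation needs but does not supply.
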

  {\it Proof.} Assume $\Psi = \{\psi_t\}$ to be any almost cosymplectic isotopy such that 
  $\psi_t^\ast(\eta) = e^{f_t}\eta$ for all $t$, and consider $p: \tilde M\rightarrow M,$ and $\pi_2: \tilde M\rightarrow \mathbb{S}^1,$ are projection maps. For each $t$, we have $p\circ \tilde{\psi}_t = \psi_t\circ p$, and also
  \begin{eqnarray} 
  \tilde{\psi}_t^\ast(\tilde{\omega}) &=&  \tilde{\psi}_t^\ast (p^\ast(\omega) + p^\ast(\eta)\wedge \pi^\ast_2(d\theta)) \nonumber\\
  &=&(p\circ\tilde{\psi}_t)^\ast\omega +  (p\circ\tilde{\psi}_t)^\ast\eta\wedge (\pi_2\circ\tilde{\psi}_t)^\ast d\theta \nonumber\\
  &=&  (\psi_t\circ p)^\ast\omega +  (\psi_t\circ p)^\ast\eta\wedge e^{-f_t\circ p}\pi_2^\ast d\theta \nonumber\\
  &=&  p^\ast((\psi_t)^\ast\omega) +  p^\ast((\psi_t)^\ast\eta) \wedge e^{-f_t\circ p}\pi_2^\ast d\theta \nonumber\\
  &=&  p^\ast(\omega) +  e^{f_t\circ p}p^\ast(\eta) \wedge e^{-f_t\circ p}\pi_2^\ast (d\theta)
  \nonumber\\
  &=& \tilde{\omega}.
  \end{eqnarray}
  
  Thus, $\tilde\Psi :=  \{(\psi_t\circ p, \pi_2 e^{-f_t\circ p} )_t\},$ is a symplectic isotopy of  $(\tilde M, \tilde\omega)$. $ \blacksquare$
  \begin{proposition}\label{Trasit-2}
  	Let $(M, \eta, \omega)$ be a  cosymplectic manifold. 
  	If $\Psi = \{\phi_t\}$ is any  almost cosymplectic isotopy such that 
  	$\psi_t^\ast(\eta) = e^{f_t}\eta$ for all $t$, then we have 
  	$$  d\left( \pi_2\eta(\dot{\psi_t})\circ p\right)  = \left( \eta(\dot{\psi_t})\circ p\right) \pi_2^\ast(d\theta)  + \left(\dot f_t\circ pe^{-f_t\circ p}\pi_2\right) p^\ast(\eta),\ \text{ for each} \ t.$$
  \end{proposition}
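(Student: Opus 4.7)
The identity is essentially a one–line Leibniz calculation on $\widetilde M$, once one knows how to evaluate $d(\eta(\dot\psi_t))$ on the base $M$, so my plan is to split the exterior derivative on the left–hand side and evaluate the two resulting pieces in turn. First I would treat $\pi_2\cdot(\eta(\dot\psi_t)\circ p)$ as a product of two smooth functions on $\widetilde M=M\times\mathbb{S}^1$ and apply the Leibniz rule,
$$d\bigl(\pi_2\cdot(\eta(\dot\psi_t)\circ p)\bigr) \;=\; (\eta(\dot\psi_t)\circ p)\,d\pi_2 \;+\; \pi_2\,d\bigl(\eta(\dot\psi_t)\circ p\bigr).$$
Since $\pi_2$ is literally the pullback of the coordinate $\theta$ on $\mathbb{S}^1$, one has $d\pi_2=\pi_2^\ast(d\theta)$, which already matches the first summand of the claimed identity.

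For the second summand, I would reduce the computation to one on $M$ by naturality of $d$: $d(\eta(\dot\psi_t)\circ p)=p^\ast\bigl(d(\iota_{\dot\psi_t}\eta)\bigr)$. Because $\eta$ is closed, Cartan's magic formula collapses this to $\mathcal{L}_{\dot\psi_t}\eta$, so the task becomes identifying $\mathcal{L}_{\dot\psi_t}\eta$ as an explicit scalar multiple of $\eta$. For this I would start from the defining relation $\psi_t^\ast\eta=e^{f_t}\eta$ and differentiate it in $t$, using the standard identity $\tfrac{d}{dt}\psi_t^\ast\eta=\psi_t^\ast(\mathcal{L}_{\dot\psi_t}\eta)$ that results from the convention $\dot\psi_t=X_t\circ\psi_t^{-1}$ recalled at the beginning of Section~\ref{rem1}. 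This yields $\psi_t^\ast(\mathcal{L}_{\dot\psi_t}\eta)=\dot f_t\,e^{f_t}\eta$, and inverting the pullback with the help of $(\psi_t^{-1})^\ast\eta=e^{-f_t\circ\psi_t^{-1}}\eta$ produces the desired scalar–times–$\eta$ form for $\mathcal{L}_{\dot\psi_t}\eta$. Pulling this back along $p$ and multiplying by the leftover factor $\pi_2$ then furnishes the second summand $\bigl(\dot f_t\circ p\cdot e^{-f_t\circ p}\,\pi_2\bigr)\,p^\ast(\eta)$ of the proposition.

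No serious conceptual obstacle is expected: every step is either the Leibniz rule, naturality of $d$, the closedness of $\eta$, or the time–derivative of the almost cosymplectic condition $\psi_t^\ast\eta=e^{f_t}\eta$. The only genuinely delicate point is the bookkeeping in the last step, where one must keep careful track of whether a given factor is evaluated at the original point or at $\psi_t^{-1}$, since this is precisely what governs the exponential weight sitting in front of $p^\ast(\eta)$ in the final expression. This is the same kind of accounting already carried out in equations (\ref{equ1-I}) and (\ref{equ2-I}), which therefore provide a useful consistency check for the final formula.
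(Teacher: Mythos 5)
Your reduction of the identity to a Leibniz rule plus a computation of $\mathcal{L}_{\dot\psi_t}\eta$ is a genuinely different route from the paper's: the paper never differentiates $\psi_t^\ast\eta=e^{f_t}\eta$ in $t$ on $M$. Instead it writes down the lifted velocity $\dot{\tilde\psi}_t$ on $\widetilde M$, expands $\iota(\dot{\tilde\psi}_t)\tilde\omega$, invokes the closedness of that $1$-form (which is Proposition \ref{Trasit-1}), and contracts the resulting equation with $\partial/\partial\theta$ to isolate $d\bigl(\eta(\dot\psi_t)\circ p\bigr)$; the exponential weight $e^{-f_t\circ p}$ is read off from the $\theta$-component of $\dot{\tilde\psi}_t$, not from a Lie-derivative computation on the base. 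Your first summand (Leibniz plus $d\pi_2=\pi_2^\ast(d\theta)$) and your reduction $d(\eta(\dot\psi_t))=\mathcal{L}_{\dot\psi_t}\eta$ via Cartan and $d\eta=0$ are both fine.

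The gap is exactly the step you defer as ``bookkeeping,'' and it does not close. Differentiating $\psi_t^\ast\eta=e^{f_t}\eta$ gives $\psi_t^\ast(\mathcal{L}_{\dot\psi_t}\eta)=\dot f_t\,e^{f_t}\eta$, and applying $(\psi_t^{-1})^\ast$ together with $(\psi_t^{-1})^\ast\eta=e^{-f_t\circ\psi_t^{-1}}\eta$ yields
\begin{equation*}
\mathcal{L}_{\dot\psi_t}\eta=\bigl(\dot f_t\circ\psi_t^{-1}\bigr)\,e^{f_t\circ\psi_t^{-1}}\,e^{-f_t\circ\psi_t^{-1}}\,\eta=\bigl(\dot f_t\circ\psi_t^{-1}\bigr)\,\eta .
\end{equation*}
The two exponential factors cancel, and what survives in front of $\eta$ is the composition $\dot f_t\circ\psi_t^{-1}$, not the multiplicative weight $\dot f_t\,e^{-f_t}$ required by the statement. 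To pass from one to the other you would need the identity $\dot f_t\circ\psi_t^{-1}=\dot f_t\,e^{-f_t}$ (equivalently $\dot f_t\circ\psi_t=\dot f_t\,e^{f_t}$), which is not a formal consequence of the chain rule: it is precisely the content of Proposition \ref{Trasit-3}, and the paper proves Proposition \ref{Trasit-3} as a corollary of the present Proposition \ref{Trasit-2}. As written, your argument therefore either leaves the essential identity unproved or runs in a circle. To repair it you must bring in the input the paper actually uses, namely that the lift $\tilde\Psi$ is a symplectic isotopy of $(\widetilde M,\tilde\omega)$ and hence $\iota(\dot{\tilde\psi}_t)\tilde\omega$ is closed; that is where the stated coefficient of $p^\ast(\eta)$ is extracted.
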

  \begin{proof}
   Assume $\Psi = \{\psi_t\}$ to be any almost cosymplectic isotopy such that 
  $\psi_t^\ast(\eta) = e^{f_t}\eta$ for all $t$, and consider $p: \tilde M\rightarrow M,$ and $\pi_2: \tilde M\rightarrow \mathbb{S}^1,$ to be projection maps. 
  From the previous proposition (Proposition \ref{Trasit-1}) the isotopy $\tilde\Psi :=  \{(\psi_t\circ p, \pi_2 e^{-f_t\circ p} )_t\},$ is a symplectic isotopy of  $(\tilde M, \tilde\omega)$. That is, the $1-$form $\iota (\dot{\tilde{\psi}}_t)\tilde{\omega}$ is closed for each $t$. On the other hand, we have\\ 
  $\dot{\tilde{\psi}}_t = \dot{\psi}_t -  \left(\dot f_t\circ pe^{-f_t\circ p}\pi_2 \right)\frac{\partial}{\partial\theta},$ 
  which implies that\\ 
  $\iota (\dot{\tilde{\psi}}_t)\tilde{\omega} = p^\ast(\iota (\dot{\psi}_t)\omega)  + \eta(\dot{\psi_t})\circ p\pi_2^\ast(d\theta)  + \left(\dot f_t\circ pe^{-f_t\circ p}\pi_2\right) p^\ast(\eta),$
  for each $t$. Therefore, differentiating the above equality in both sides gives:
  $$ d \iota (\dot{\tilde{\psi}}_t)\tilde{\omega} = d \left( p^\ast(\iota (\dot{\psi}_t)\omega)\right)   + d\left( \eta(\dot{\psi_t})\circ p\pi_2^\ast(d\theta)  + \left(\dot f_t\circ pe^{-f_t\circ p}\pi_2\right) p^\ast(\eta)\right) ,$$
  i.e.,
  $0 = 0  + d\left( \eta(\dot{\psi_t})\circ p\pi_2^\ast(d\theta)  + \left(\dot f_t\circ pe^{-f_t\circ p}\pi_2\right) p^\ast(\eta)\right),$  
  for each $t$. That is, 
  $$ d\left( \eta(\dot{\psi_t})\circ p\right) \wedge \pi_2^\ast(d\theta)  =  -d\left(\dot f_t\circ pe^{-f_t\circ p}\pi_2\right)\wedge p^\ast(\eta) ,$$  
  for each $t$. Taking the interior derivative in the above equality with respect to the vector field $ \frac{\partial}{\partial\theta}$, yields 
  $-d\left( \eta(\dot{\psi_t})\circ p\right)  = -\left(\dot f_t\circ pe^{-f_t\circ p}\right) p^\ast(\eta) ,$  \ \text{ for each} \ t. Finally, we compute 
  \begin{eqnarray} 
  d\left( \pi_2\eta(\dot{\psi_t})\circ p\right) & =& \left( \eta(\dot{\psi_t})\circ p\right) \pi_2^\ast(d\theta)  + d \left(\eta(\dot{\psi_t})\circ p\right)\pi_2 \nonumber\\
  &=&\left( \eta(\dot{\psi_t})\circ p\right) \pi_2^\ast(d\theta) + \left(\dot f_t\circ pe^{-f_t\circ p}\pi_2\right) p^\ast(\eta),
  \end{eqnarray}
  for each $t$. 
  \end{proof}
  Here is a corollary of Proposition \ref{Trasit-2}.
  
  \begin{proposition}\label{Trasit-3}
  	Let $(M, \eta, \omega)$ be a  cosymplectic manifold. 
  	If $\Psi = \{\psi_t\}$ is any  almost cosymplectic isotopy such that 
  	$\psi_t^\ast(\eta) = e^{f_t}\eta$  ( or $\mathcal{L}_{\dot{\psi}_t}\eta = \mu_t\eta$) for all $t$, then we have 
  	\begin{equation}
  	\boxed{  \mu_t   = \dot f_te^{-f_t}},
  	\end{equation}
  	or equivalently, 
  	\begin{equation}
  	\boxed{\dot f_t\circ \psi_t = \dot f_te^{f_t}},
  	\end{equation}
  	for each $t$. 
  \end{proposition}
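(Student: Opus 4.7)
The plan is to descend the identity from Proposition \ref{Trasit-2} down to the base manifold $M$ and combine it with Cartan's magic formula to read off $\mu_t$ directly, and then to obtain the equivalent form by differentiating the defining relation $\psi_t^\ast\eta = e^{f_t}\eta$ in $t$.

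First, I would expand the left-hand side of the identity in Proposition \ref{Trasit-2} using the Leibniz rule:
\begin{equation*}
d\bigl(\pi_2\cdot \eta(\dot\psi_t)\circ p\bigr) = \bigl(\eta(\dot\psi_t)\circ p\bigr)\pi_2^\ast(d\theta) + \pi_2\cdot p^\ast\bigl(d(\eta(\dot\psi_t))\bigr).
\end{equation*}
Matching this against the right-hand side of the identity in Proposition \ref{Trasit-2}, the $\pi_2^\ast(d\theta)$ contributions cancel, and what remains is
\begin{equation*}
\pi_2\cdot p^\ast\bigl(d(\eta(\dot\psi_t))\bigr) = \pi_2\cdot p^\ast\bigl(\dot f_t\, e^{-f_t}\eta\bigr).
\end{equation*}
Cancelling the common factor $\pi_2$ (working locally on $\mathbb{S}^1$) and exploiting the injectivity of $p^\ast$ on forms on $M$ (via any local section of $p$), this descends to the identity $d(\eta(\dot\psi_t)) = \dot f_t\, e^{-f_t}\eta$ on $M$. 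Since $d\eta = 0$, Cartan's magic formula reduces to $\mathcal{L}_{\dot\psi_t}\eta = d(\eta(\dot\psi_t))$, and comparison with the defining relation $\mathcal{L}_{\dot\psi_t}\eta = \mu_t \eta$ yields the first boxed identity $\mu_t = \dot f_t\, e^{-f_t}$.

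For the equivalent form, I would differentiate the identity $\psi_t^\ast\eta = e^{f_t}\eta$ in $t$. The standard time-dependent Lie derivative formula gives $\tfrac{d}{dt}\psi_t^\ast\eta = \psi_t^\ast(\mathcal{L}_{\dot\psi_t}\eta) = (\mu_t\circ\psi_t)\,\psi_t^\ast\eta = (\mu_t\circ\psi_t)\,e^{f_t}\eta$, while the right-hand side of the differentiated identity equals $\dot f_t\, e^{f_t}\eta$; cancelling the common factor produces $\mu_t\circ\psi_t = \dot f_t$, and substituting the first boxed identity into this relation yields the second. The main obstacle is the bookkeeping in the first step: one must separate the two linearly independent components $\pi_2^\ast(d\theta)$ and $p^\ast\eta$ on $\widetilde M$ and carefully handle the factor $\pi_2$, which is only locally defined on $\mathbb{S}^1$; once this is done, the remainder of the argument reduces to Cartan's formula and a single direct differentiation in $t$.
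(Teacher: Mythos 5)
Your argument is essentially the paper's: the paper likewise extracts the identity $d\left(\eta(\dot{\psi_t})\circ p\right) = \left(\dot f_t\circ p\, e^{-f_t\circ p}\right)p^\ast(\eta)$ from the proof of Proposition \ref{Trasit-2}, descends it to $M$ along a smooth section of $p$, and compares with $\mathcal{L}_{\dot{\psi}_t}\eta=\mu_t\eta$ to get the first boxed identity, and for the second it invokes the same relation $\dot f_t=\mu_t\circ\psi_t$ (stated there as $f_t=\int_0^t\mu_s\circ\psi_s\,ds$) that you obtain by differentiating $\psi_t^\ast\eta=e^{f_t}\eta$. The one caveat --- shared with the paper's own proof --- is that substituting $\mu_t=\dot f_te^{-f_t}$ into $\mu_t\circ\psi_t=\dot f_t$ literally yields $\dot f_t\circ\psi_t=\dot f_t\,e^{f_t\circ\psi_t}$ rather than $\dot f_t\,e^{f_t}$, so the second boxed formula as written requires an extra step (or the identity $f_t\circ\psi_t=f_t$) that neither you nor the paper supplies.
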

  \begin{proof}
  Assume $\Psi = \{\psi_t\}$ to be any almost cosymplectic isotopy such that 	$\psi_t^\ast(\eta) = e^{f_t}\eta$  ( or $\mathcal{L}_{\dot{\psi}_t}\eta = \mu_t\eta$) for all $t$. From the proof of Proposition \ref{Trasit-2}, we derive that\\ 
  $d\left( \eta(\dot{\psi_t})\circ p\right)  = \left(\dot f_t\circ pe^{-f_t\circ p}\right) p^\ast(\eta) ,$ and composing the latter equality with any smooth section of the projection $p$, yields 
  $d\left( \eta(\dot{\psi_t})\right)  = \left(\dot f_te^{-f_t}\right) \eta ,$
  for each $t$.  On the other hand, from $d\left( \eta(\dot{\psi_t})\right) =  \mathcal{L}_{\dot{\psi}_t}\eta = \mu_t\eta,$ for all $t$
  we derive that $ \left(\dot f_te^{-f_t}\right) \eta = \mu_t\eta$, for each $t$. Applying the Reed vector field in both sides of the 
  latter equality implies $ \mu_t   = \dot f_te^{-f_t},$ for each $t$. From $f_t = \int_0^t\mu_s\circ\psi_s ds$, it follows that 
  $\dot f_t\circ \psi_t = \dot f_te^{f_t} ,$
  for each $t$.
  \end{proof}   
  
  \begin{proposition}\label{Trasit-4}
  	Let $(M, \eta, \omega)$ be a  cosymplectic manifold. 
  	If $\Psi = \{\psi_t\}$ is any  almost co-Hamiltonian isotopy such that 
  	$\psi_t^\ast(\eta) = e^{f_t}\eta$ for all $t$, and $\iota (\dot{\psi}_t)\omega = dH_t$ , then the isotopy $\tilde\Psi = \{\tilde\psi_t\}$ defined by 
  	$$ \tilde\psi_t: M\times \mathbb{S}^1 \longrightarrow M\times \mathbb{S}^1,(x, \theta) \mapsto (\psi_t(x), \theta e^{-f_t(x)} ),$$
  	is a Hamiltonian isotopy of  the symplectic manifold $(\tilde M, \tilde\omega)$ such that 
  	\begin{equation}
  	\boxed{\iota (\dot{\tilde{\psi}}_t)\tilde{\omega} = d\left(
  		H_t\circ p  +  \pi_2\eta(\dot{\psi_t})\circ p\right)},
  	\end{equation}
  	for each $t$.  
  \end{proposition}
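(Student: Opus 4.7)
The plan is to assemble the result directly from the preceding Propositions \ref{Trasit-1} and \ref{Trasit-2}, since Proposition \ref{Trasit-1} already tells us that $\tilde\Psi$ is a symplectic isotopy of $(\tilde M,\tilde\omega)$. The remaining task is to exhibit a primitive for $\iota(\dot{\tilde\psi}_t)\tilde\omega$, which will automatically upgrade ``symplectic'' to ``Hamiltonian.''

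First I would compute the tangent vector field of $\tilde\Psi$ directly from its definition. From $\tilde\psi_t(x,\theta) = (\psi_t(x),\theta e^{-f_t(x)})$, differentiating in $t$ and expressing the result at the point $\tilde\psi_t(x,\theta)$ yields
\[
\dot{\tilde\psi}_t \;=\; \dot\psi_t \;-\; \bigl(\dot f_t\circ p\cdot e^{-f_t\circ p}\,\pi_2\bigr)\frac{\partial}{\partial\theta},
\]
exactly as derived at the start of the proof of Proposition \ref{Trasit-2}. Contracting with $\tilde\omega = p^\ast\omega + p^\ast\eta\wedge\pi_2^\ast(d\theta)$ and using $p_\ast(\partial/\partial\theta)=0$, I obtain
\[
\iota(\dot{\tilde\psi}_t)\tilde\omega \;=\; p^\ast\bigl(\iota(\dot\psi_t)\omega\bigr) \;+\; \bigl(\eta(\dot\psi_t)\circ p\bigr)\pi_2^\ast(d\theta) \;+\; \bigl(\dot f_t\circ p\cdot e^{-f_t\circ p}\pi_2\bigr)p^\ast(\eta).
\]

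Next I would use the almost co-Hamiltonian hypothesis $\iota(\dot\psi_t)\omega = dH_t$ to rewrite the first summand as $p^\ast(dH_t) = d(H_t\circ p)$. For the two remaining summands, the key step is to invoke Proposition \ref{Trasit-2}, which asserts precisely
\[
d\bigl(\pi_2\,\eta(\dot\psi_t)\circ p\bigr) \;=\; \bigl(\eta(\dot\psi_t)\circ p\bigr)\pi_2^\ast(d\theta) \;+\; \bigl(\dot f_t\circ p\cdot e^{-f_t\circ p}\pi_2\bigr)p^\ast(\eta).
\]
Substituting back gives
\[
\iota(\dot{\tilde\psi}_t)\tilde\omega \;=\; d(H_t\circ p) \;+\; d\bigl(\pi_2\,\eta(\dot\psi_t)\circ p\bigr) \;=\; d\bigl(H_t\circ p + \pi_2\,\eta(\dot\psi_t)\circ p\bigr),
\]
which is the claimed identity and simultaneously exhibits a time-dependent Hamiltonian generating $\tilde\Psi$.

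There is no serious obstacle here: the genuine work (showing $\tilde\Psi$ is symplectic, and identifying the exact form appearing in Proposition \ref{Trasit-2}) has already been done upstream. The only thing to watch is sign/normalization bookkeeping in the computation of $\dot{\tilde\psi}_t$ and in the contraction with $\tilde\omega$, together with verifying that $p_\ast(\partial/\partial\theta)=0$ and $\pi_2^\ast(d\theta)\bigl(\partial/\partial\theta\bigr)=1$ are applied consistently so that the ``fiber'' contribution combines cleanly with the ``base'' contribution. Once those are pinned down, the proof is essentially a one-line consequence of Propositions \ref{Trasit-1} and \ref{Trasit-2}.
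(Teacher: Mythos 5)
Your proof is correct and is exactly the argument the paper intends: the paper states Proposition \ref{Trasit-4} without proof because it follows immediately from Proposition \ref{Trasit-1} (symplecticity of $\tilde\Psi$), the displayed expression for $\iota(\dot{\tilde\psi}_t)\tilde\omega$ computed in the proof of Proposition \ref{Trasit-2}, and the identity that Proposition \ref{Trasit-2} asserts. Your substitution of $\iota(\dot\psi_t)\omega = dH_t$ and of that identity reproduces the boxed formula with no new ideas needed.
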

  
  The following theorem shows that the Reeb vector field determines the almost cosymplectic nature of a uniform limit of a sequence of almost cosymplectic diffeomorphisms. 
  \begin{theorem}\label{Theo-Al1}
  	Let $(M, \eta, \omega)$ be a  compact cosymplectic manifold with Reeb's vector field $\xi$. 
  	Let $ \{\psi_i\}$ be any  sequence of almost co-symplectic diffeomorphisms that uniformly converges to a diffeomorphism $\psi$, and suppose that $\psi_i^\ast(\eta) = e^{f_i}\eta$. The following assertions hold.
  	\begin{enumerate}
  		\item The smooth function $\psi^\ast(\eta)(\xi)$, is non-negative.
  		\item  If the smooth function $\psi^\ast(\eta)(\xi)$, is positive, then the sequence $\{f_i\}$ uniformly converges to  
  		$ F_\psi^\xi: = \ln{\left( \psi^\ast(\eta)(\xi)\right) }$.
  		\item If the smooth function $\psi^\ast(\eta)(\xi)$ is equal to the constant function $1$, then $\psi$ is a  cosymplectic diffeomorphism: A flexibility result.
  		
  		\item If the smooth function $\psi^\ast(\eta)(\xi)$ is positive and different from the constant function $1$, then $\psi$ is an almost cosymplectic diffeomorphism
  		with $\psi^\ast(\eta) = e^{F_\psi^\xi}\eta $: A rigidity result.
  	\end{enumerate}
  \end{theorem}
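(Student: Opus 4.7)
The plan is to reduce the theorem to analyzing a single scalar sequence obtained by pairing the defining relation $\psi_i^\ast\eta = e^{f_i}\eta$ with the Reeb vector field. Since $\eta(\xi)=1$, evaluating both sides at $\xi$ gives the key identity
\begin{equation}\label{plan-star-1}
e^{f_i} \;=\; \psi_i^\ast(\eta)(\xi), \qquad \text{for every } i.
\end{equation}
The left-hand side is strictly positive, while the right-hand side is a smooth function on the compact $M$ which, under the regularity of convergence $\psi_i\to\psi$ implicit in the hypothesis, converges uniformly to $\psi^\ast(\eta)(\xi)$. Part (1) then follows immediately: as a uniform limit of positive functions, $\psi^\ast(\eta)(\xi)$ is non-negative everywhere on $M$.

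For part (2), the hypothesis $\psi^\ast(\eta)(\xi)>0$ together with compactness of $M$ provides a uniform lower bound $\psi^\ast(\eta)(\xi)\geq \varepsilon>0$. Uniform convergence in \eqref{plan-star-1} then forces $e^{f_i}\geq \varepsilon/2$ for all sufficiently large $i$, so one may compose with the logarithm (which is uniformly continuous on $[\varepsilon/2,\infty)$) to conclude
\begin{equation}
f_i \;\longrightarrow\; \ln\bigl(\psi^\ast(\eta)(\xi)\bigr) \;=\; F_\psi^\xi
\end{equation}
uniformly on $M$. For parts (3) and (4), I would pass to the limit in the full identity $\psi_i^\ast\eta = e^{f_i}\eta$. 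By part (2) the right-hand side converges uniformly to $e^{F_\psi^\xi}\eta$, while the left-hand side converges to $\psi^\ast\eta$; by uniqueness of limits,
\begin{equation}\label{plan-star-2}
\psi^\ast\eta \;=\; e^{F_\psi^\xi}\eta.
\end{equation}
When $F_\psi^\xi\equiv 0$, equation \eqref{plan-star-2} reduces to $\psi^\ast\eta=\eta$; coupling with $\psi^\ast\omega=\lim_i \psi_i^\ast\omega=\omega$, the limit $\psi$ is cosymplectic, which gives (3). When $F_\psi^\xi$ is positive but not identically $0$, the same equation is precisely the statement that $\psi$ is almost cosymplectic with multiplier $e^{F_\psi^\xi}$, which gives (4).

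The step I expect to be the principal obstacle is the one I invoked repeatedly above: the uniform convergence $\psi_i\to\psi$ in the $C^0$-sense does not a priori imply uniform convergence of the pullbacks $\psi_i^\ast\eta\to\psi^\ast\eta$, because pullback of forms involves the differentials $d\psi_i$, which are not controlled by $C^0$ norms alone. To close this gap I would exploit the rigidity of the two structural constraints $\psi_i^\ast\omega=\omega$ and $e^{-f_i}\psi_i^\ast\eta=\eta$: on the compact $M$ these relations, being polynomial in $d\psi_i$, should suffice via an Arzel\`a--Ascoli argument on a subsequence to upgrade $C^0$-convergence of $\psi_i$ to $C^1$-convergence, whereupon the uniqueness of the limit recovers convergence of the full sequence and legitimises every step above.
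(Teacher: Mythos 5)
There is a genuine gap, and it is exactly where you suspected it would be --- but your proposed repair does not work. The identity $e^{f_i}=\psi_i^\ast(\eta)(\xi)$ is correct, but the step $\psi_i^\ast(\eta)(\xi)\to\psi^\ast(\eta)(\xi)$ (and likewise $\psi_i^\ast\omega\to\psi^\ast\omega$, $\psi_i^\ast\eta\to\psi^\ast\eta$) cannot be obtained from $C^0$-convergence of $\psi_i$, and the Arzel\`a--Ascoli upgrade to $C^1$-convergence is unavailable: the constraints $\psi_i^\ast\omega=\omega$ and $e^{-f_i}\psi_i^\ast\eta=\eta$ are determinant-type conditions on $d\psi_i$ and impose no uniform bound on $\|d\psi_i\|$ whatsoever (a sequence of linear symplectomorphisms $\mathrm{diag}(n,1/n)$ already shows that volume/symplectic constraints do not bound first derivatives). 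Indeed, if such an elementary compactness argument worked, the Eliashberg--Gromov $C^0$-rigidity theorem would be trivial; the entire difficulty of the statement lives in precisely the step you deferred.

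The paper closes this gap by two devices that your proposal is missing. First, for the convergence $e^{f_i}\to\psi^\ast(\eta)(\xi)$ (Lemma \ref{Lem-Al1} via Corollary \ref{Cor-Al1}), it never differentiates $\psi_i$: since $d\eta=0$, Stokes' theorem applied to the $2$-chain bounded by $\psi_i\circ\gamma$, $\psi\circ\gamma$ and two short geodesics gives $\bigl|\int_\gamma(\psi_i^\ast\eta-\psi^\ast\eta)\bigr|\le 2|\eta|_0\,d_{C^0}(\psi_i,\psi)$; taking $\gamma$ to be arcs of Reeb orbits and differentiating the resulting integral identity in the time parameter yields $e^{f_i}(x)\to(\psi^\ast(\eta)(\xi))(x)$. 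Second, for parts (3) and (4) the paper does not ``pass to the limit in $\psi_i^\ast\eta=e^{f_i}\eta$''; it lifts the $\psi_i$ to symplectomorphisms $\tilde\phi_i=(\psi_i\circ p,\ \pi_2 e^{-f_i\circ p})$ of the symplectization $(\widetilde M,\tilde\omega)=(M\times\mathbb{S}^1,\ p^\ast\omega+p^\ast\eta\wedge\pi_2^\ast d\theta)$ via Proposition \ref{Trasit-1}, uses the first step to see that $\tilde\phi_i$ converges $C^0$ to the diffeomorphism $(\psi\circ p,\ \pi_2 e^{-F_\psi^\xi\circ p})$, and only then invokes Eliashberg--Gromov rigidity to conclude that this limit preserves $\tilde\omega$, which unpacks to $\psi^\ast\omega=\omega$ and $\psi^\ast\eta=e^{F_\psi^\xi}\eta$. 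Your outline of parts (1)--(4) is structurally the right reduction, but without the Stokes-on-curves argument and the appeal to symplectic $C^0$-rigidity the proof does not close.
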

  To prove the above theorem, we shall need the following lemma. 
  \begin{lemma}\label{Lem-Al1}
  	Let $(M, \eta, \omega)$ be a compact  cosymplectic manifold. If $ \{\psi_i\}$ is a  sequence of almost co-symplectic diffeomorphisms that uniformly converges to a diffeomorphism $\psi$, with $\psi_i^\ast(\eta) = e^{f_i}\eta$, then for each fixed $x\in M$, the sequence of positive real numbers 
  	$ \{e^{f_i(x)}\}$ converges to $(\psi^\ast(\eta)(\xi))(x)$, where $\xi$ is the Reeb vector field. 
  \end{lemma}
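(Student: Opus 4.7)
The plan is to express the scalar $e^{f_i(x)}$ intrinsically and then reduce the convergence to a single continuity argument. The starting observation is that one can read off $e^{f_i(x)}$ from the defining identity $\psi_i^\ast\eta = e^{f_i}\eta$ by simply evaluating at $x$ on the Reeb vector $\xi_x$. Since $\eta_x(\xi_x)=1$, this produces
\begin{equation*}
e^{f_i(x)} \;=\; e^{f_i(x)}\,\eta_x(\xi_x) \;=\; (\psi_i^\ast \eta)_x(\xi_x) \;=\; \eta_{\psi_i(x)}\!\bigl(d\psi_i(x)\,\xi_x\bigr) \;=\; (\psi_i^\ast \eta)(\xi)(x).
\end{equation*}
This is the key reformulation: the positive scalar $e^{f_i(x)}$ is nothing other than the pointwise value of the pulled-back $1$-form on the Reeb field at $x$. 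The same identity used in reverse gives $(\psi^\ast\eta)(\xi)(x) = \eta_{\psi(x)}(d\psi(x)\xi_x)$, which is the limiting target.

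Next I would pass to the limit. In the topology fixed on $\mathcal{A}G_{\eta,\omega}(M)$ just before Lemma \ref{pushfoward} (the $C^\infty$-compact-open topology), uniform convergence $\psi_i\to\psi$ to the diffeomorphism $\psi$ forces both $\psi_i(x)\to\psi(x)$ in $M$ and $d\psi_i(x)\xi_x \to d\psi(x)\xi_x$ in $TM$. Smoothness of $\eta$ makes the evaluation map $(y,v)\mapsto \eta_y(v)$ continuous on $TM$, so
\begin{equation*}
(\psi_i^\ast \eta)(\xi)(x) \;=\; \eta_{\psi_i(x)}\!\bigl(d\psi_i(x)\,\xi_x\bigr) \;\longrightarrow\; \eta_{\psi(x)}\!\bigl(d\psi(x)\,\xi_x\bigr) \;=\; (\psi^\ast \eta)(\xi)(x).
\end{equation*}
Chaining this with the identity from the previous paragraph yields $e^{f_i(x)} \to (\psi^\ast\eta)(\xi)(x)$, which is the claim.

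There is no genuine obstacle: the whole argument is two lines once one notices that $e^{f_i(x)}$ coincides with $(\psi_i^\ast\eta)(\xi)(x)$. The only point requiring a bit of care is that "uniform convergence to a diffeomorphism" must be interpreted in the $C^\infty$-compact-open topology already chosen on the diffeomorphism groups, so that $d\psi_i(x)\xi_x$ really does converge to $d\psi(x)\xi_x$ at the given point $x$; naive $C^0$-closeness of $\psi_i$ and $\psi$ is not enough, and indeed the analogous identification $(\psi_i)_\ast\xi = e^{f_i\circ\psi_i^{-1}}\xi$ from Lemma \ref{pushfoward} shows that the derivative data on $\xi$ is exactly what must pass to the limit.
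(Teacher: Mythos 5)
Your opening identity $e^{f_i(x)} = (\psi_i^\ast\eta)(\xi)(x) = \eta_{\psi_i(x)}(d\psi_i(x)\,\xi_x)$ is correct and is indeed the implicit reason the limit is stated in the form $(\psi^\ast\eta)(\xi)(x)$. The gap is in the passage to the limit: you resolve it by reinterpreting ``uniformly converges'' as convergence in the $C^\infty$-compact-open topology, so that $d\psi_i(x)\xi_x \to d\psi(x)\xi_x$. But the hypothesis of the lemma is genuinely $C^0$ convergence: the paper writes $\psi_i \xrightarrow{C^0}\psi$ in the proof of Corollary \ref{Cor-Al1}, controls everything by $d_{C^0}(\psi_i,\psi)$, and the theorem this lemma feeds into (Theorem \ref{Theo-Al1}) is advertised as a rigidity result and invokes Eliashberg--Gromov precisely because the limit is only uniform. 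Under $C^\infty$ convergence the whole statement would be trivial and the surrounding machinery pointless. Under the actual $C^0$ hypothesis, $d\psi_i(x)\xi_x$ need not converge to $d\psi(x)\xi_x$ at all, so your continuity argument does not go through; you have in effect proved an easier statement with a strengthened hypothesis rather than the lemma as stated.

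The paper's route around this is the one your identity should have suggested next: since pointwise derivative data is not available, one integrates. Because $\eta$ is closed, for any smooth curve $\gamma$ the integrals $\int_{\psi_i\circ\gamma}\eta$ and $\int_{\psi\circ\gamma}\eta$ differ only by the integrals of $\eta$ over two short geodesic caps joining the endpoints (Stokes over the $2$-chain bounded by $\psi_i\circ\gamma$, $\psi\circ\gamma$ and the caps), and these are bounded by $2|\eta|_0\, d_{C^0}(\psi_i,\psi)\to 0$; this is Corollary \ref{Cor-Al1} and is the step that is robust under mere $C^0$ convergence. Taking $\gamma$ to be a short arc $s\mapsto\phi_{st}(x)$ of the Reeb flow turns $\int_\gamma\psi_i^\ast\eta$ into $\int_0^t e^{f_i(\phi_u(x))}\,du$, and differentiating the resulting limit identity in $t$ (the mean value theorem for integrals) recovers the pointwise convergence $e^{f_i(x)}\to(\psi^\ast\eta)(\xi)(x)$. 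To repair your proof you would need to replace the pointwise evaluation of $d\psi_i$ on $\xi_x$ by this integrated, Stokes-based version.
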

  The proof of the above lemma will need the following corollary. 
  
  \begin{corollary}\label{Cor-Al1}
  	Let $(M, \eta, \omega)$ be a compact  cosymplectic manifold. If $ \{\psi_i\}$ is a  sequence of almost co-symplectic diffeomorphisms that uniformly converges to a diffeomorphism $\psi$, with $\psi_i^\ast(\eta) = e^{f_i}\eta$, then for any smooth curve $\gamma \subset M$, we have 
  	$\lim_{i\rightarrow\infty} \int_\gamma e^{f_i}\eta = 
  	\int_\gamma \psi^\ast(\eta).$ 
  \end{corollary}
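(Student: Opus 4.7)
The plan is to exploit the closedness of $\eta$, which makes it locally exact, in order to reduce each integral along a parameterised curve to a difference of primitives at endpoints; this lets one pass to the limit using only the $C^0$-convergence $\psi_i\to\psi$, with no control whatsoever on the differentials $d\psi_i$. First I rewrite the left-hand side using the hypothesis $\psi_i^\ast(\eta)=e^{f_i}\eta$, obtaining
\begin{equation*}
\int_\gamma e^{f_i}\eta \,=\, \int_\gamma \psi_i^\ast(\eta) \,=\, \int_{\psi_i\circ\gamma}\eta,\qquad \int_\gamma \psi^\ast(\eta) \,=\, \int_{\psi\circ\gamma}\eta,
\end{equation*}
so it is enough to prove $\int_{\psi_i\circ\gamma}\eta\longrightarrow \int_{\psi\circ\gamma}\eta$ as $i\to\infty$.

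Fix a parameterisation $\gamma:[0,1]\to M$. The compact image $\psi\circ\gamma([0,1])$ can be covered by finitely many open sets $U_1,\dots,U_N$ on each of which the closed form $\eta$ admits a smooth primitive, say $\eta|_{U_j}=dh_j$ (by the Poincar\'e lemma applied on contractible neighbourhoods of each point of the image). By uniform continuity of $\psi\circ\gamma$ I then choose a subdivision $0=t_0<t_1<\cdots<t_k=1$ and indices $j(\ell)$ with $\psi\circ\gamma([t_{\ell-1},t_\ell])\subset U_{j(\ell)}$ for every $\ell$. The uniform convergence $\psi_i\to\psi$ guarantees that, for all $i$ large enough, one also has $\psi_i\circ\gamma([t_{\ell-1},t_\ell])\subset U_{j(\ell)}$ for every $\ell$, since each $U_{j(\ell)}$ contains a uniform open neighbourhood of the compact set $\psi\circ\gamma([t_{\ell-1},t_\ell])$.

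On each such piece, the integral of the exact form $dh_{j(\ell)}$ collapses to an endpoint evaluation:
\begin{equation*}
\int_{\psi_i\circ\gamma|_{[t_{\ell-1},t_\ell]}}\eta \,=\, h_{j(\ell)}\bigl(\psi_i(\gamma(t_\ell))\bigr) - h_{j(\ell)}\bigl(\psi_i(\gamma(t_{\ell-1}))\bigr),
\end{equation*}
and continuity of $h_{j(\ell)}$ combined with $\psi_i\to\psi$ uniformly forces the right-hand side to tend to $h_{j(\ell)}(\psi(\gamma(t_\ell)))-h_{j(\ell)}(\psi(\gamma(t_{\ell-1})))=\int_{\psi\circ\gamma|_{[t_{\ell-1},t_\ell]}}\eta$. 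Summing the finitely many contributions $\ell=1,\dots,k$ yields the claim. The one subtle point worth flagging is that a direct approach via the explicit formula $\int_\gamma\psi_i^\ast\eta=\int_0^1\eta_{\psi_i(\gamma(t))}(d\psi_i(\gamma'(t)))\,dt$ would demand convergence of the differentials $d\psi_i$, which is \emph{not} provided by the hypotheses; it is precisely the closedness of $\eta$ that bypasses this obstacle by reducing the whole computation to the values of $\psi_i$ at the finitely many subdivision points, where pointwise $C^0$-convergence is enough.
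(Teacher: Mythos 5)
Your proof is correct, but it takes a genuinely different route from the paper's. Both arguments begin the same way, reducing the claim to $\int_{\psi_i\circ\gamma}\eta\to\int_{\psi\circ\gamma}\eta$ and exploiting $d\eta=0$; the divergence is in how closedness is used. The paper fixes a Riemannian metric with injectivity radius $r(g)$, joins $\psi_i(\gamma(t))$ to $\psi(\gamma(t))$ by minimizing geodesics once $d_{C^0}(\psi_i,\psi)\le r(g)/2$, observes that the four curves bound a smooth $2$-chain, and applies Stokes' theorem to get the \emph{quantitative} estimate $\bigl|\int_\gamma(e^{f_i}\eta-\psi^\ast\eta)\bigr|\le 2|\eta|_0\,d_{C^0}(\psi_i,\psi)$. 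You instead use local exactness (Poincar\'e lemma) together with a Lebesgue-number subdivision of $[0,1]$, collapsing each sub-integral to endpoint evaluations of continuous local primitives, where pointwise $C^0$-convergence suffices. Your version is more elementary --- it needs no geodesics, no injectivity radius, and no justification that the geodesic interpolation actually produces a smooth $2$-chain (a point the paper glosses over) --- and your closing remark correctly identifies why naive differentiation under the integral would fail. What you give up is the explicit rate: the paper's geodesic-homotopy estimate is not just a means to this corollary but a tool it reuses essentially verbatim in Lemma \ref{Rigid-lem} and in the proof of Theorem \ref{closure}, so the quantitative form has independent value there.
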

  
  \begin{proof}
  Assume $M$ to be equipped with a Riemannian metric $g$, with injectivity radius $r(g)$. Let $\gamma$ be any smooth curve in $M$. Since $\psi_i\xrightarrow{C^0}\psi$, then for $i$ sufficiently large we may assume that 
   $d_{C^0} (\psi_i,\psi)\leq r(g)/2$, and derive that, for each $t\in [0,1]$, the points $\psi_i(\gamma(t))$ to  $\psi(\gamma(t))$ can be connected 
   through a minimizing geodesic  $\chi_i^t$. This means that 
  the curves $\chi_i^0$, $\chi_i^1$, $\psi_i\circ \gamma$, and  $\psi\circ \gamma$ form the boundary of a smooth  $2-$chain $\square(\gamma, \psi_i, \psi)\subset M$. Since $d\eta = 0$, we derive from Stockes' theorem that $\int_{ \square(\gamma, \psi_i, \psi)}d\eta = 0,$ i.e., 
  $\int_{\psi_i\circ \gamma}\eta  - \int_{\psi_\circ \gamma}\eta  = \int_{\chi_i^1}\eta -  \int_{\chi_i^0}\eta,$
  for $i$ sufficiently large. On the other hand, 
  from 
  $ \psi_i^\ast(\eta) - \psi^\ast(\eta) = e^{f_i}\eta - \psi^\ast(\eta),$ we derive that 
  \begin{eqnarray} 
  |\int_{\gamma}\left( e^{f_i}\eta - \psi^\ast(\eta)\right)  | &=&  
  |\int_{\gamma}\left( \psi_i^\ast(\eta) - \psi^\ast(\eta) \right)|
  \nonumber\\
  &=&  |\int_{\chi_i^1}\eta -  \int_{\chi_i^0}\eta|,
  \nonumber\\
  &\leq& 2 |\eta |_{0} d_{C^0}(\psi_i, \psi),
  \nonumber\\
  \end{eqnarray}
  for all $i$ sufficiently large, where $|.|_0$ stands for the uniform sup norm on the space of $1-$form of a compact manifold \cite{Tchuiaga2}. The top right hand side of the above estimates tends to zero as $i$ tends to infinity.
   \end{proof}
  
  {\it Proof of Lemma \ref{Lem-Al1}.} Let $\{\phi_t\}$ be the cosymplectic flow generated by the Reeb vector field $\xi$. For each fixed $t\in ]0, 1[$, and each $x\in M$, consider the smooth curve $\bar \gamma_{x, t}: s \mapsto \phi_{st}(x)$, and derive from Corollary  \ref{Cor-Al1} that 
  $  \lim_{i\longrightarrow \infty} \int_{\bar \gamma_{x, t}}\left( e^{f_i}\eta\right) = \int_{\bar \gamma_{x, t}} \psi^\ast(\eta),$ for each fixed $t\in ]0, 1[$, i.e., 
  $ \lim_{i\longrightarrow \infty} \int_0^t\left( e^{f_i(\phi_{u}(x))} du\right) =  \int_0^t(\psi^\ast(\eta)(\xi))(\phi_{u}(x))du,$ for each fixed $t\in ]0, 1[$. Therefore, applying the mean valuer theorem for integral in a suitable way to the above equality, implies that  
  $  \lim_{i\longrightarrow \infty} e^{f_i(\phi_{t}(x))}  =  (\psi^\ast(\eta)(\xi))(\phi_{t}(x)),$ for each fixed $t\in ]0, 1[$, for all $x\in M$. This implies that 
  $ \lim_{i\longrightarrow \infty} e^{f_i(x)}  =  (\psi^\ast(\eta)(\xi))(x),$  for each $x\in M$. $\blacksquare$\\
  
  {\it Proof of Theorem \ref{Theo-Al1}.} 
  By Lemma \ref{Lem-Al1}, the smooth function $x \mapsto (\psi^\ast(\eta)(\xi))(x),$ is the uniform limit of a sequence of positive functions, hence the latter is non-negative. Assume that the smooth function 
  $x \mapsto (\psi^\ast(\eta)(\xi))(x),$ is positive. Therefore, as in Proposition \ref{Trasit-1}, we define a sequence of symplectic isotopies of the symplectic manifold $(\tilde M, \tilde \omega)$  by\\ 
  $\tilde \phi_i: = (\psi_i\circ p, \pi_2 e^{-f_i\circ p}),$
  for each $i$. Since by assumption,  $\psi_i\xrightarrow{C^0}\psi$, and by Lemma \ref{Lem-Al1} we have $ \lim_{i\longrightarrow \infty} e^{-f_i\circ p} = e^{ -\ln((\psi^\ast(\eta)(\xi)))\circ p},$
  then, it follows that 
  $\tilde \phi_i \xrightarrow{C^0} (\psi\circ p, \pi_2 e^{ -F_\xi^\psi\circ p}),$
  with $F_\xi^\psi := \ln((\psi^\ast(\eta)(\xi))).$ Since the map 
  $(\psi\circ p, \pi_2 e^{ -F_\xi^\psi\circ p})$ is a diffeomorphism, then it follows from the celebrated rigidity theorem of Elishberg-Gromov that the diffeomorphism $\Phi_\psi:= (\psi\circ p, \pi_2 e^{ -F_\xi^\psi\circ p}),$ is a symplectic diffeomorphism of $(\tilde M, \tilde \omega)$. Finally, the fact that $ \Phi_\psi^\ast(\tilde \omega) = \tilde \omega$, obviously implies that $ \psi^\ast(\omega) = \omega$, and $ \psi^\ast(\eta) = e^{F_\xi^\psi}\eta$, provided the positive function $x \mapsto (\psi^\ast(\eta)(\xi))(x)$ is different from the constant function $1$: That is, $\psi$ is an almost cosymplectic diffeomorphism. Otherwise, we have  $ \psi^\ast(\omega) = \omega$, and $ \psi^\ast(\eta) = \eta$: That is, $\psi$ is a cosymplectic diffeomorphism. 
  $\blacksquare$ 
  \subsubsection{Almost co-Hamiltonian dynamical systems (ACHDS)}
  Let $Y$ be an almost co-Hamiltonian vector field of $(M, \omega,\eta)$, and let $\Psi_Y$ be its flow. Since 
  $\imath_Y\omega = dH$, for some $H\in C^\infty(M,\mathbb{R})$, we derive that for each $p\in M$, we have 
  $$\frac{d}{ dt}H(\psi_Y^t(z)) = \omega(Y, Y)\circ\psi_Y^t(z) = 0, \ \text{ for each} \ t,$$
   and for all $z\in M$. So,  in almost cosymplectic geometry, along the orbit $t\mapsto\psi_Y^t(z)$, 
  the energy function $H$, is a constant function. Therefore, it could interesting to study the topology of orbits together 
  with  fix point theory from almost co-Hamiltonian dynamical system view point.\\
  
  Here is a consequence of Arnold's conjecture from symplectic geometry.
  
  \begin{proposition}\label{Trasit-5}
  	Let $(M, \eta, \omega)$ be a closed cosymplectic manifold. 
  	If $\Psi = \{\phi_t\}$ is any  almost co-Hamiltonian isotopy such that 
  	$\psi_t^\ast(\eta) = e^{f_t}\eta$ for all $t$, and $\iota (\dot{\psi}_t)\omega = dH_t$, then for each $t$, the map $\psi_t$ has at least one 
  	fixed point $x_t$ satisfying $f_t(x_t) = 0$.
  \end{proposition}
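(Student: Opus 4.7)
The plan is to lift the almost co-Hamiltonian isotopy $\Psi = \{\psi_t\}$ to a Hamiltonian isotopy on the symplectic manifold $(\tilde M, \tilde\omega)$ with $\tilde M = M \times \mathbb{S}^1$, apply the classical Arnold conjecture (in its now-settled form) to produce a fixed point of the lift, and then read off a fixed point of $\psi_t$ with the desired property $f_t(x_t) = 0$ from its two coordinates.

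More precisely, by Proposition \ref{Trasit-4}, the isotopy $\tilde\Psi = \{\tilde\psi_t\}$ defined by $\tilde\psi_t(x,\theta) = (\psi_t(x), \theta e^{-f_t(x)})$ is a Hamiltonian isotopy of the closed symplectic manifold $(\tilde M, \tilde\omega)$ whose Hamiltonian is $H_t \circ p + \pi_2\, \eta(\dot\psi_t)\circ p$. Since $M$ is closed, so is $\tilde M = M \times \mathbb{S}^1$, and Arnold's conjecture guarantees that for each $t$ the diffeomorphism $\tilde\psi_t$ admits at least one fixed point $(x_t, \theta_t) \in \tilde M$. Equating coordinates in $\tilde\psi_t(x_t,\theta_t) = (x_t,\theta_t)$ yields $\psi_t(x_t) = x_t$ in the first slot, and $\theta_t = \theta_t e^{-f_t(x_t)}$ in the second; since $\theta_t$ is a non-zero element of $\mathbb{S}^1$, this forces $e^{-f_t(x_t)} = 1$, i.e., $f_t(x_t) = 0$, which is the desired conclusion.

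The main obstacle is the appeal to the Arnold conjecture: while its sharpest quantitative versions (with bounds by cup-length or total Betti number) are deep results, what is used here is only the weakest qualitative statement, namely the existence of at least one fixed point for a Hamiltonian diffeomorphism of a closed symplectic manifold, which is a well-established theorem. For the specific target $\tilde M = M \times \mathbb{S}^1$, a cup-length argument even produces at least two fixed points, providing slack should the second-coordinate equation single out a degenerate point. A secondary technical subtlety is fixing the convention for $\mathbb{S}^1$ used in the lift formula so that $\theta_t$ is unambiguously non-zero (for instance, by realizing $\mathbb{S}^1$ as the unit circle in $\mathbb{C}^*$), after which the extraction $f_t(x_t) = 0$ is immediate from the multiplicative fixed-point relation.
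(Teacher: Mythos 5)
Your proof is correct and follows essentially the same route as the paper: lift $\Psi$ to the Hamiltonian isotopy $\tilde\psi_t(x,\theta) = (\psi_t(x), \theta e^{-f_t(x)})$ on $(\tilde M,\tilde\omega)$ via Proposition \ref{Trasit-4}, invoke Arnold's conjecture to obtain a fixed point $(x_t,\theta_t)$, and read off $\psi_t(x_t)=x_t$ together with $f_t(x_t)=0$ from the two coordinates. Your additional remarks on the non-vanishing of $\theta_t$ and on which form of the Arnold conjecture is actually needed are sensible refinements, but the argument is the paper's own.
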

  \begin{proof}
   If $\Psi = \{\phi_t\}$ is any  almost co-Hamiltonian isotopy such that 
   $\psi_t^\ast(\eta) = e^{f_t}\eta$ for all $t$, then the isotopy $\tilde\Psi = \{\tilde\psi_t\}$ defined by  
   $ \tilde\psi_t: M\times \mathbb{S}^1 \longrightarrow M\times \mathbb{S}^1, (x, \theta) \mapsto (\psi_t(x), \theta e^{-f_t(x)} ),$
   is a  Hamiltonian isotopy of  the symplectic manifold $(\tilde M, \tilde\omega)$ (Proposition \ref{Trasit-4}).  Thus, 
   by Arnold's conjecture, for each $t$, the map $\tilde\psi_t $ has at least one fix point $(x_t, \theta_t)$. That is, 
   $\psi_t(x_t) = x_t$, and $\theta_t e^{-f_t(x_t)} = \theta_t$, i.e., 
   $\psi_t(x_t) = x_t$, and $f_t(x_t) = 0$, for each $t$.
  \end{proof}

  \begin{proposition}\label{Trasit-6}
  	Let $(M, \eta, \omega)$ be a closed cosymplectic manifold. Let $\psi$ be an almost co-Hamiltonian diffeomorphism such that 
  	$\psi^\ast(\eta)= e^f\eta$. 
  	If $\Psi = \{\phi_t\}$ is any  almost co-Hamiltonian isotopy with time-one map $\psi$, then  any fix point of $\psi$ is a critical 
  	point for the function $x\mapsto \left( \int_0^1\psi^\ast_s\left(\eta(\dot{\psi}_s) \right)ds\right)  (x).$
  \end{proposition}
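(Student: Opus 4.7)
The plan is a direct Stokes-theorem computation, exploiting nothing beyond the closedness of $\eta$. First I would recognise that the integrand has a clean geometric meaning:
\begin{equation*}
G(x)\;=\;\int_0^1\!\bigl(\psi_s^{\ast}(\eta(\dot\psi_s))\bigr)(x)\,ds\;=\;\int_0^1\!\eta_{\psi_s(x)}\bigl(\dot\psi_s(\psi_s(x))\bigr)\,ds\;=\;\int_{\gamma_x}\!\eta,
\end{equation*}
where $\gamma_x\colon[0,1]\to M$ is the $\Psi$-orbit $s\mapsto \psi_s(x)$. In particular, when $x_0$ satisfies $\psi(x_0)=x_0$ the arc $\gamma_{x_0}$ is a closed loop in $M$, and $G(x_0)$ is simply the $\eta$-period of that loop.

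To differentiate $G$ at $x_0$, pick $v\in T_{x_0}M$, a curve $c\colon(-\varepsilon,\varepsilon)\to M$ with $c(0)=x_0$ and $c'(0)=v$, and apply Stokes' theorem to the smooth $2$-chain $\Sigma(s,r):=\psi_s(c(r))$ on $[0,1]\times[0,u]$. Since $d\eta=0$, the four boundary pieces contribute to zero, and after re-arranging one obtains
\begin{equation*}
G(c(u))-G(x_0)\;=\;\int_0^u\!\Bigl[\eta\bigl((\psi_1\!\circ c)'(r)\bigr)-\eta\bigl(c'(r)\bigr)\Bigr]\,dr,
\end{equation*}
whence dividing by $u$ and letting $u\to 0$ yields the compact identity
\begin{equation*}
dG(x_0)(v)\;=\;\eta_{x_0}\!\bigl((d\psi)_{x_0}(v)\bigr)-\eta_{x_0}(v)\;=\;\bigl(\psi^{\ast}\eta-\eta\bigr)_{x_0}(v).
\end{equation*}

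Substituting the hypothesis $\psi^{\ast}\eta=e^{f}\eta$ collapses this to $dG(x_0)=(e^{f(x_0)}-1)\eta_{x_0}$, so at every fix point the differential of $G$ lies in the one-dimensional line $\mathbb{R}\cdot\eta_{x_0}$; equivalently, $dG(x_0)$ annihilates the co-horizontal distribution $\ker\eta_{x_0}$, which is the appropriate notion of criticality in the cosymplectic category and reduces to the usual notion on the zero locus $\{f=0\}$ isolated by Proposition~\ref{Trasit-5}. The subtle step — and the place where I would expect the author's argument to need the most care — is precisely this passage from the identity $dG(x_0)=(\psi^{\ast}\eta-\eta)_{x_0}$ to the statement that $x_0$ is genuinely critical; either one interprets ``critical point'' relative to the Reeb-transverse distribution, or one invokes the co-Hamiltonian structure (via $\mu_s\circ\psi_s=\dot f_s$ from Proposition~\ref{Trasit-3} and Lemma~\ref{pushfoward}) to show that the multiplicative defect $e^{f(x_0)}-1$ must vanish at any fix point of an almost co-Hamiltonian time-one map.
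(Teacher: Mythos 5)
Your argument is essentially the paper's own proof, reached by a slightly different computation: the paper obtains the global identity $\psi^{\ast}(\eta)-\eta=d\bigl(\int_0^1\psi_s^{\ast}(\eta(\dot\psi_s))\,ds\bigr)$ by integrating $\tfrac{d}{ds}\psi_s^{\ast}\eta=\psi_s^{\ast}(d(\eta(\dot\psi_s)))$ (using $d\eta=0$), whereas you recover the same formula pointwise via Stokes' theorem on the $2$-chain swept out by the orbits; these are the same computation in different clothing, and both reduce the claim to showing that $(e^{f(x_0)}-1)\eta_{x_0}=0$ at a fixed point $x_0$.

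The place where you stop is exactly where the paper's proof does its only real work, and your hesitation is warranted but must be resolved to have a complete proof of the statement as written. The paper closes the gap in one line by asserting, with a citation of Proposition \ref{Trasit-5}, that $f(y)=0$ for \emph{every} $y\in\mathcal{F}ix(\psi)$, and hence $dG(x_0)=0$ outright; it does not retreat to criticality relative to $\ker\eta_{x_0}$. Note, however, that Proposition \ref{Trasit-5} as stated only produces \emph{at least one} fixed point $x_t$ with $f_t(x_t)=0$ (via Arnold's conjecture applied to the lift $\tilde\psi_t$ on $M\times\mathbb{S}^1$), not the vanishing of $f$ on the whole fixed-point set; so the step you flag as needing the most care is precisely the step the paper passes over. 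To genuinely finish, you would need an argument that every fixed point of the time-one map of an almost co-Hamiltonian isotopy satisfies $f(x_0)=0$ (e.g.\ by showing that $(x_0,\theta)$ must be fixed by the lifted Hamiltonian diffeomorphism for some, hence all, $\theta$), rather than leaving the alternative of reinterpreting ``critical point.'' As a proof of the proposition as stated, your proposal is therefore incomplete at its final step; as a reconstruction of the paper's route it is accurate, and it correctly exposes the weakest link in that route.
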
		 
  \begin{proof}
  From the formula 
  \begin{eqnarray} 
  e^f\eta -\eta &=& \psi^\ast(\eta)- \eta \nonumber\\ 
  &=& d\left( \int_0^1\psi^\ast_s\left(\eta(\dot{\psi}_s) \right)ds\right),
  \end{eqnarray}
  we derive that, for each $x\in \mathcal{F}ix(\psi)$,
  we have 
  \begin{eqnarray} 
  e^{f(x)}\eta_{|_x} -\eta_{|_x}  &=&  \left( e^f\eta -\eta\right)_{|_x} \nonumber\\ 
  &=&  d\left( \int_0^1\psi^\ast_s\left(\eta(\dot{\psi}_s) \right)ds\right)_{|_x}, 
  \end{eqnarray}   	
  i.e., $ 0 = d\left( \int_0^1\psi^\ast_s\left(\eta(\dot{\psi}_s) \right)ds\right)_{|_x} ,$
  since by Proposition \ref{Trasit-5}, we have 
  $f(y) = 0$, whenever  $y\in \mathcal{F}ix(\psi)$. 
  \end{proof}
  \subsubsection*{Almost co-Hamiltonian equations:}
  Let $(z,q_1,\dots,q_n, p_1,\dots,p_n)$ be the cosymplectic-Darboux coordinates system such that\\ $ \omega = \Sigma_{i = 1}^n dq_i\wedge dp_i$, and 
  $\eta = dz$. Let $\{H_t\}$ and $\{\mu_t\} $ be smooth families of smooth functions on $M$ such that 
  $$ 
  \left\{
  \begin{array}{l}
  \dot p_i =-\frac{\partial}{ \partial q_i}H_t, \\
  \dot q_i = \frac{\partial}{ \partial p_i}H_t, \\
  \frac{\partial}{ \partial z}(\dot z) = \mu^t.\\
  \end{array}
  \right. 
  $$  
  If $X_t$ is a smooth family of vector fields on $M$ such that $ \iota(X_t)\omega = d H_t$, and $d(\eta(X_t)) = \mu^t\eta$, then
  the one-parameter family of local diffeomorphisms $ \Phi : = \{\phi_t\}$ generated 
  by $ X_t$ consists of almost co-Hamiltonian diffeomorphisms. Conversely, let $ \Phi : = \{\phi_t\}$ be an almost 
  co-Hamiltonian isotopy such that 
  $ \iota(\dot \phi_t)\omega = d L_t$, and $d(\eta(\dot \phi_t)) = \nu^t\eta$, then 
  $$ 
  (ACHE) \left\{
  \begin{array}{l}
  \dot p_i =-\frac{\partial}{ \partial q_i}L_t, \\
  \dot q_i = \frac{\partial}{ \partial p_i}L_t, \\
  \frac{\partial}{ \partial z}(\dot z) = \nu^t.\\
  \end{array}
  \right. 
  $$  
  \section{Cosymplectic flux geometries}\label{Flux}
  Let $\{\phi_t\} $ be either an almost cosymplectic,  or a cosymplectic isotopy. Since the $1-$form $\widetilde I_{\eta, \omega}(\dot\phi_t)$ is closed for each $t$, then its defines a de Rham cohomology class\\
  $[\widetilde I_{\eta, \omega}(\dot\phi_t)]\in H^1(M,\mathbb{R})$. Therefore, to any such an isotopy $\{\phi_t\},$ corresponds a unique de Rham cohomology class 
  \begin{equation}
  \displaystyle \int_0^1[\phi_t^\ast(\widetilde I_{\eta, \omega}(\dot\phi_t))]dt\in H^1(M,\mathbb{R}),
  \end{equation}
  i.e., we have group homomorphisms 
  \begin{eqnarray}\begin{array}{rcccc}
  \mathcal{A}\widetilde{S}_{\eta, \omega} : & \mathcal{A}Iso_{\eta, \omega}(M) &\longrightarrow & H^1(M,\mathbb{R})\\
  & \{\psi_t\}& \longmapsto &\displaystyle \int_0^1[\psi_t^\ast(\widetilde I_{\eta, \omega}(\dot\psi_t))]dt, 
  \end{array}\nonumber
  \end{eqnarray}
  and
  \begin{eqnarray}\begin{array}{rcccc}
  \widetilde{S}_{\eta, \omega} : & Iso_{\eta, \omega}(M) &\longrightarrow & H^1(M,\mathbb{R})\\
  & \{\phi_t\}& \longmapsto &\displaystyle \int_0^1[\phi_t^\ast(\widetilde I_{\eta, \omega}(\dot\phi_t))]dt.
  \end{array}\nonumber
  \end{eqnarray}
  
  \begin{proposition}\label{prof1} 
  	The  map $\widetilde{S}_{\eta, \omega}$ is a continuous group homomorphism whose kernel contains  $ CH_{\eta, \omega}(M)$.
  \end{proposition}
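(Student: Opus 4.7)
My plan is to verify the three assertions sequentially: the group homomorphism property, the kernel containment, and continuity. The two main ingredients are the composition identity
$$\widetilde I_{\eta,\omega}(\dot{\overbrace{\phi_t\circ\psi_t}}) = \widetilde I_{\eta,\omega}(\dot\phi_t) + (\phi_t^{-1})^\ast\widetilde I_{\eta,\omega}(\dot\psi_t)$$
from item (5) of Subsection \ref{rem1}, together with the standard fact that a diffeomorphism smoothly isotopic to the identity acts as the identity on de Rham cohomology. In particular, for any $\{\psi_t\}\in Iso_{\eta,\omega}(M)$, each map $\psi_t^\ast\colon H^1(M,\mathbb{R})\to H^1(M,\mathbb{R})$ is the identity.

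For the homomorphism property, given $\Phi=\{\phi_t\}$ and $\Psi=\{\psi_t\}$ in $Iso_{\eta,\omega}(M)$, I would apply $(\phi_t\circ\psi_t)^\ast=\psi_t^\ast\circ\phi_t^\ast$ to the displayed identity to obtain
$$(\phi_t\circ\psi_t)^\ast\widetilde I_{\eta,\omega}(\dot{\overbrace{\phi_t\circ\psi_t}}) = \psi_t^\ast\bigl(\phi_t^\ast\widetilde I_{\eta,\omega}(\dot\phi_t)\bigr) + \psi_t^\ast\widetilde I_{\eta,\omega}(\dot\psi_t).$$
Passing to de Rham classes and invoking the triviality of $\psi_t^\ast$ on $H^1(M,\mathbb{R})$ collapses the first term on the right to $[\phi_t^\ast\widetilde I_{\eta,\omega}(\dot\phi_t)]$; integrating in $t$ over $[0,1]$ then yields $\widetilde S_{\eta,\omega}(\Phi\circ\Psi)=\widetilde S_{\eta,\omega}(\Phi)+\widetilde S_{\eta,\omega}(\Psi)$. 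The kernel inclusion is immediate from the definitions: if $\Phi\in CH_{\eta,\omega}(M)$ is co-Hamiltonian, then $\widetilde I_{\eta,\omega}(\dot\phi_t)=dF_t$ is exact for each $t$, so its pullback $\phi_t^\ast(dF_t)=d(F_t\circ\phi_t)$ is again exact, its cohomology class vanishes pointwise in $t$, and the integral is therefore zero in $H^1(M,\mathbb{R})$.

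For continuity, I would equip $H^1(M,\mathbb{R})$ with its canonical finite-dimensional Hausdorff vector topology (available whenever $M$ has finite type) and keep the $C^\infty$-compact-open topology on $Iso_{\eta,\omega}(M)$ already in use. The assignment $\Phi\mapsto\{\phi_t^\ast\widetilde I_{\eta,\omega}(\dot\phi_t)\}_{t\in[0,1]}$ factors through jointly continuous operations (differentiation $\Phi\mapsto\dot\phi_t$, application of the $C^\infty(M)$-linear isomorphism $\widetilde I_{\eta,\omega}$, and pullback by $\phi_t$), followed by the continuous de Rham projection $\mathcal{Z}^1(M)\to H^1(M,\mathbb{R})$ and integration in $t$, which is a bounded linear operation on curves taking values in a finite-dimensional vector space. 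The step I expect to be most delicate is the homomorphism calculation, where one must carefully distribute the pullback by the composition and invoke cohomology-level cancellation at the correct moment; the kernel inclusion and the continuity are essentially formal once the target topology is fixed.
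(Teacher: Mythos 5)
Your proposal is correct and follows essentially the same route as the paper, whose proof consists of the single remark that the homomorphism property "follows from Subsection \ref{rem1}" — i.e., precisely the composition identity $\widetilde I_{\eta,\omega}(\dot{\overbrace{\phi_t\circ\psi_t}}) = \widetilde I_{\eta,\omega}(\dot\phi_t) + (\phi_t^{-1})^\ast\widetilde I_{\eta,\omega}(\dot\psi_t)$ that you invoke, combined with the triviality of $\psi_t^\ast$ on $H^1(M,\mathbb{R})$. You have simply written out the cohomology-level cancellation, the exactness argument for the kernel, and the continuity factorization that the paper leaves implicit.
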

  \begin{proof} The fact that $\widetilde{S}_{\eta, \omega}$ is a group homomorphism follows from Subsection \ref{rem1}. 
  \end{proof}
  \begin{proposition}\label{prof1A} 
  	The  map $ \mathcal{A}\widetilde{S}_{\eta, \omega}$ is a continuous group homomorphism.
  \end{proposition}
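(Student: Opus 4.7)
The plan is to mirror the argument for Proposition \ref{prof1}, establishing well-definedness, continuity, and the group homomorphism property. For any $\Psi = \{\psi_t\} \in \mathcal{A}Iso_{\eta,\omega}(M)$, the generator $\dot\psi_t$ is an almost cosymplectic vector field, so Corollary \ref{cor1A} gives that $\widetilde I_{\eta,\omega}(\dot\psi_t)$ is closed; pulling back by $\psi_t$ preserves closedness, and smooth dependence on $t$ makes $\int_0^1 [\psi_t^\ast \widetilde I_{\eta,\omega}(\dot\psi_t)]\, dt$ a well-defined element of $H^1(M,\mathbb{R})$. Continuity in the $C^\infty$-topology on $\mathcal{A}Iso_{\eta,\omega}(M)$ is then immediate from the smooth dependence of $\dot\psi_t$ on the isotopy.

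For the homomorphism property, I fix $\Phi = \{\phi_t\}$ and $\Psi = \{\psi_t\}$ with $\phi_t^\ast\eta = e^{f_t}\eta$ and $\psi_t^\ast\eta = e^{g_t}\eta$, and set $\chi_t := \phi_t\circ\psi_t$. Starting from $\dot\chi_t = \dot\phi_t + (\phi_t)_\ast\dot\psi_t$ and using $\phi_t^\ast\omega = \omega$, I will expand $\chi_t^\ast\widetilde I_{\eta,\omega}(\dot\chi_t)$ as $\psi_t^\ast\phi_t^\ast\widetilde I_{\eta,\omega}(\dot\phi_t) + \psi_t^\ast\widetilde I_{\eta,\omega}(\dot\psi_t) + R_t$, where the remainder $R_t$ arises from the conformal factor $e^{f_t}$ and takes the form $c_t\,\eta$ for a smooth function $c_t$. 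A key structural input is the identity $df_t\wedge\eta = 0$, obtained by differentiating $\phi_t^\ast\eta = e^{f_t}\eta$ and using $d\eta = 0$; the analogous identity holds for $g_t$, and together with $d(\eta(\dot\psi_t)) = \mu_t\eta$ from the almost cosymplectic condition, this guarantees that $R_t$ is closed for each $t$.

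The chief obstacle will be proving that $\int_0^1 R_t\, dt$ is exact in $H^1(M,\mathbb{R})$, so that the additivity survives passage to cohomology. The natural strategy is to route the argument through the symplectic lift of Proposition \ref{Trasit-1}: the assignment $\Psi \mapsto \tilde\Psi$ is compatible with composition, $\widetilde{\Phi\circ\Psi} = \tilde\Phi\circ\tilde\Psi$, and the symplectic flux on $(\tilde M,\tilde\omega)$ is a standard group homomorphism. Using the K\"unneth decomposition of $H^1(\tilde M,\mathbb{R})$ for the trivial $S^1$-bundle $p\colon\tilde M\to M$, together with the explicit formula for $\iota_{\dot{\tilde\psi}_t}\tilde\omega$ given by Proposition \ref{Trasit-2}, I will project the symplectic flux homomorphism back down to $M$ to recover the sought additivity; the central technical difficulty is identifying which component of the lifted flux corresponds to $\mathcal{A}\widetilde S_{\eta,\omega}$ and verifying that the conformal-factor remainder, which has no analogue in the purely symplectic or cosymplectic cases, is captured by an exact differential after time integration.
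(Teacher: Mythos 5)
Your proposal correctly isolates the crux, which the paper's own one-line proof (a bare reference to Subsection~\ref{rem1}) glosses over: unlike the cosymplectic case of Proposition~\ref{prof1}, the additivity $\widetilde I_{\eta,\omega}(\dot{\overbrace{\phi_t\circ\psi_t}}) = \widetilde I_{\eta,\omega}(\dot\phi_t) + (\phi_t^{-1})^\ast\widetilde I_{\eta,\omega}(\dot\psi_t)$ fails for almost cosymplectic isotopies, because $\eta\bigl((\phi_t)_\ast\dot\psi_t\bigr)\eta = (\phi_t^{-1})^\ast\bigl(e^{2f_t}\,\eta(\dot\psi_t)\,\eta\bigr)$ rather than $(\phi_t^{-1})^\ast\bigl(\eta(\dot\psi_t)\,\eta\bigr)$, leaving a closed remainder with $[R_t]=[(e^{2f_t}-1)\,\eta(\dot\psi_t)\,\eta]$. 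But you never close this gap: the entire homomorphism property rests on the unproved assertion that $\int_0^1[R_t]\,dt$ vanishes, and the lift-to-$\widetilde M$/K\"unneth strategy you sketch is explicitly left as a plan (you yourself flag the identification of the relevant component of the lifted flux as an open difficulty). So the decisive step of the proof is missing.

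Worse, that step cannot be carried out in general. Take $M=\mathbb{T}^3$ with coordinates $(\theta_1,\theta_2,z)$, $\omega=d\theta_1\wedge d\theta_2$, $\eta=dz$, let $\Phi$ be the flow of $\sin(z)\,\partial_z$ (so $\phi_t=(\mathrm{id},\zeta_t(z))$ with $e^{f_t}=\zeta_t'$) and $\Psi$ the flow of $b(z)\,\partial_z$ with $b\geq 0$ a bump function concentrated near the repelling fixed point $z=0$. Both are almost cosymplectic, $\mathcal{A}\widetilde S_{\eta,\omega}(\Phi)=[\sin(z)\,dz]=0$ and $\mathcal{A}\widetilde S_{\eta,\omega}(\Psi)=\tfrac{1}{2\pi}\bigl(\int_{S^1} b\,dz\bigr)[dz]$, whereas the $[dz]$-component of $\mathcal{A}\widetilde S_{\eta,\omega}(\Phi\circ\Psi)$ equals $\tfrac{1}{2\pi}\int_0^1\!\!\int_{S^1} b(z)\,\zeta_t'(z)^2\,dz\,dt$, which is strictly larger because $\zeta_t'(z)=\exp\bigl(\int_0^t\cos\zeta_s(z)\,ds\bigr)>1$ near $z=0$ for $t>0$. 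Hence $\int_0^1[R_t]\,dt\neq 0$ and additivity under the pointwise composition $\{\phi_t\circ\psi_t\}$ fails. Any repair must restrict the isotopies, change the group law, or modify the definition of $\mathcal{A}\widetilde S_{\eta,\omega}$; routing the argument through the symplectic flux on $(\widetilde M,\widetilde\omega)$ as you propose cannot make the obstruction disappear.
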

  \begin{proof} This follows from Subsection \ref{rem1}. 
  \end{proof}
  \begin{proposition}\label{prof2}
  	If $\{\phi_t\}, \{\psi_t\} \in Iso_{\eta, \omega}(M) $ are homotopic relatively to fixed endpoints, then  
  	$\widetilde{S}_{\eta, \omega}(\{\psi_t\}) = \widetilde{S}_{\eta, \omega}(\{\phi_t\})$.
  \end{proposition}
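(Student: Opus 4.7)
The plan is to pick a smooth two-parameter family $\{\phi_t^s\}_{(t,s)\in [0,1]^2}$ of cosymplectic diffeomorphisms realising the homotopy, with $\phi_t^0 = \phi_t$, $\phi_t^1 = \psi_t$, $\phi_0^s = id_M$, and $\phi_1^s = \phi_1 = \psi_1$ for every $s$, and then to show that the auxiliary function
\begin{equation*}
F(s) := \int_0^1 \bigl[(\phi_t^s)^\ast \widetilde I_{\eta,\omega}(\dot\phi_t^s)\bigr]\, dt \in H^1(M,\mathbb R)
\end{equation*}
is constant in $s$ by verifying that $F'(s)=0$ at the cohomology level.

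First I would introduce, for each $(t,s)$, the two time-dependent vector fields $X_t^s := \partial_t \phi_t^s \circ (\phi_t^s)^{-1}$ and $Y_t^s := \partial_s \phi_t^s \circ (\phi_t^s)^{-1}$. Both are cosymplectic: $X_t^s$ because $\{\phi_t^s\}_t$ is a cosymplectic isotopy for fixed $s$, and $Y_t^s$ because $\{\phi_t^s\}_s$ is a smooth path in $G_{\eta,\omega}(M)$ for fixed $t$. By Corollary \ref{cor1}, both $\alpha_t^s := \widetilde I_{\eta,\omega}(X_t^s)$ and $\widetilde I_{\eta,\omega}(Y_t^s)$ are closed, while by Proposition \ref{pro2}, the bracket $[X_t^s,Y_t^s]$ is co-Hamiltonian, so $\widetilde I_{\eta,\omega}([X_t^s,Y_t^s])$ is exact. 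I would also invoke the standard commutation identity $\partial_s X_t^s - \partial_t Y_t^s = [X_t^s, Y_t^s]$ (up to sign) for two-parameter families of diffeomorphisms.

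The core computation is to differentiate $(\phi_t^s)^\ast \alpha_t^s$ in $s$ and to recognise the result as a total $t$-derivative modulo an exact form. Expanding
\begin{equation*}
\partial_s \bigl[(\phi_t^s)^\ast \alpha_t^s\bigr] = (\phi_t^s)^\ast \mathcal L_{Y_t^s}\alpha_t^s + (\phi_t^s)^\ast \widetilde I_{\eta,\omega}(\partial_s X_t^s),
\end{equation*}
applying Cartan's formula to the closed forms $\alpha_t^s$ and $\widetilde I_{\eta,\omega}(Y_t^s)$, and absorbing the bracket contribution via the exactness of $\widetilde I_{\eta,\omega}([X_t^s, Y_t^s])$, I would rearrange to an identity of the form
\begin{equation*}
\partial_s \bigl[(\phi_t^s)^\ast \alpha_t^s\bigr] = \partial_t \bigl[(\phi_t^s)^\ast \widetilde I_{\eta,\omega}(Y_t^s)\bigr] + d\gamma_t^s,
\end{equation*}
for a smooth family of functions $\gamma_t^s$ on $M$. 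Integrating over $t\in [0,1]$ then gives
\begin{equation*}
F'(s) = \bigl[(\phi_1^s)^\ast \widetilde I_{\eta,\omega}(Y_1^s)\bigr] - \bigl[(\phi_0^s)^\ast \widetilde I_{\eta,\omega}(Y_0^s)\bigr]
\end{equation*}
in $H^1(M,\mathbb R)$, and the constancy of the endpoints in $s$ forces $Y_0^s \equiv 0$ and $Y_1^s \equiv 0$, so both boundary terms vanish.

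The main obstacle is precisely this bookkeeping: one has to collect three independent sources of exact forms, namely Cartan's identity for $\alpha_t^s$, Cartan's identity for $\widetilde I_{\eta,\omega}(Y_t^s)$, and the co-Hamiltonian bracket term, and combine them into a single coboundary $d\gamma_t^s$, so that the boxed identity above actually holds at the level of $1$-forms. Once that rearrangement is carried out cleanly, the fixed-endpoint hypothesis does the rest and yields $F(0)=F(1)$, which is the required equality $\widetilde S_{\eta,\omega}(\{\phi_t\}) = \widetilde S_{\eta,\omega}(\{\psi_t\})$.
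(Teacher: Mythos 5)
Your argument is correct, and it is genuinely more self-contained than what the paper does: the paper's entire proof of Proposition \ref{prof2} is a one-line appeal to the ``similar result from the symplectic case'' in \cite{Banyaga78}, whereas you carry out the classical two-parameter-family argument directly in the cosymplectic setting. The ingredients you invoke are exactly the right ones from this paper: Corollary \ref{cor1} gives closedness of $\widetilde I_{\eta,\omega}(X_t^s)$ and $\widetilde I_{\eta,\omega}(Y_t^s)$ (note that $Y_t^s$ is cosymplectic because $\partial_s\bigl[(\phi_t^s)^\ast\omega\bigr]=(\phi_t^s)^\ast\mathcal L_{Y_t^s}\omega=0$ and likewise for $\eta$, as you say), Proposition \ref{pro2} makes the bracket term exact, and the commutation identity $\partial_s X_t^s-\partial_t Y_t^s=\pm[X_t^s,Y_t^s]$ together with the $\mathbb R$-linearity of $\widetilde I_{\eta,\omega}$ (so that it commutes with $\partial_s$ and $\partial_t$) yields
\begin{equation*}
\partial_s\bigl[(\phi_t^s)^\ast\alpha_t^s\bigr]-\partial_t\bigl[(\phi_t^s)^\ast\widetilde I_{\eta,\omega}(Y_t^s)\bigr]
=(\phi_t^s)^\ast\Bigl(d\bigl(\imath_{Y_t^s}\alpha_t^s-\imath_{X_t^s}\widetilde I_{\eta,\omega}(Y_t^s)\bigr)\pm\widetilde I_{\eta,\omega}\bigl([X_t^s,Y_t^s]\bigr)\Bigr),
\end{equation*}
which is exact; integrating in $t$ and using $Y_0^s\equiv 0\equiv Y_1^s$ kills the boundary terms. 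What your approach buys is a proof that does not presuppose the transfer of flux theory to $\widetilde M=M\times\mathbb S^1$ and that exhibits precisely which cosymplectic facts (closedness of $\widetilde I_{\eta,\omega}$ on cosymplectic fields, exactness on brackets) drive the homotopy invariance; what the paper's citation buys is brevity. The only points you should make explicit in a final write-up are (i) that ``homotopic relative to fixed endpoints'' is taken \emph{within} $Iso_{\eta,\omega}(M)$, i.e.\ each intermediate path $t\mapsto\phi_t^s$ is itself a cosymplectic isotopy (otherwise $X_t^s$ need not be cosymplectic and the argument collapses), and (ii) that the homotopy may be assumed smooth in $(t,s)$, which is a standard smoothing step.
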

  \begin{proof} This is a consequence of similar result from symplectic case \cite{Banyaga78}.
  	\end{proof}
  \begin{proposition}\label{prof2A}
  	If $\{\phi_t\}, \{\psi_t\} \in  \mathcal{A}Iso_{\eta, \omega}(M) $ are homotopic relatively to fixed endpoints, then  $ \mathcal{A}\widetilde{S}_{\eta, \omega}(\{\psi_t\}) =  \mathcal{A}\widetilde{S}_{\eta, \omega}(\{\phi_t\})$.
  \end{proposition}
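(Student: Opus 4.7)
The plan is to reduce the almost cosymplectic statement to the symplectic case (Proposition~\ref{prof2}) via the lifting construction of Proposition~\ref{Trasit-1}, and to handle the extra $\eta$-contribution by a direct Cartan-style argument that exploits the fixed endpoints.

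Let $(s,t) \mapsto \psi_{s,t}$ be a smooth homotopy in $\mathcal{A}Iso_{\eta,\omega}(M)$ with $\psi_{0,t} = \phi_t$, $\psi_{1,t} = \psi_t$, $\psi_{s,0} = \mathrm{id}_M$, and $\psi_{s,1} = \psi_1$ for all $s$, and write $\psi_{s,t}^\ast \eta = e^{f_{s,t}} \eta$. Applying Proposition~\ref{Trasit-1} pointwise in $(s,t)$ produces the smooth family $\tilde\psi_{s,t}(x,\theta) := (\psi_{s,t}(x), \theta e^{-f_{s,t}(x)})$ on $\tilde M := M \times \mathbb{S}^1$, which for each fixed $s$ is a symplectic isotopy of $(\tilde M, \tilde\omega)$. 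The endpoint conditions lift to $\tilde\psi_{s,0} = \mathrm{id}_{\tilde M}$ and $\tilde\psi_{s,1}$ being $s$-independent, so the homotopy invariance of symplectic flux on $\tilde M$ (Proposition~\ref{prof2}) yields that $\int_0^1 [\iota_{\dot{\tilde\phi_t}} \tilde\omega]\,dt$ and $\int_0^1 [\iota_{\dot{\tilde\psi_t}} \tilde\omega]\,dt$ agree in $H^1(\tilde M,\mathbb{R})$. Using the identity $\iota_{\dot{\tilde\psi_t}} \tilde\omega = p^\ast(\iota_{\dot\psi_t} \omega) + d(\pi_2 \, \eta(\dot\psi_t)\circ p)$ coming from Proposition~\ref{Trasit-2}, this equality reduces to $p^\ast \int_0^1 [\iota_{\dot\phi_t} \omega]\,dt = p^\ast \int_0^1 [\iota_{\dot\psi_t} \omega]\,dt$, and the K\"unneth-injectivity of $p^\ast : H^1(M,\mathbb{R}) \to H^1(\tilde M,\mathbb{R})$ descends this to $M$, settling the $\omega$-part of the cosymplectic flux.

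For the $\eta$-part $\int_0^1 [\eta(\dot\psi_t)\,\eta]\,dt$, I would combine the identity $(e^{f_t} - 1)\eta = d\bigl(\int_0^t \psi_u^\ast(\eta(\dot\psi_u))\,du\bigr)$ (derived from $\mathcal{L}_{\dot\psi_u}\eta = d(\eta(\dot\psi_u))$ and pullback differentiation) with the fixed-endpoint hypothesis $\psi_1 = \phi_1$, which forces $f_1$ to agree for both isotopies and hence determines the corresponding integrals modulo a constant. The principal obstacle of the proof is the clean bookkeeping of the pullback $\psi_t^\ast$ appearing inside the definition of $\mathcal{A}\widetilde S_{\eta,\omega}$ and the unified treatment of the $\omega$- and $\eta$-components: a direct Cartan computation of $F'(s)$ with $F(s) := \mathcal{A}\widetilde{S}_{\eta,\omega}(\{\psi_{s,t}\}_t)$, using the commutator identity $\partial_s X_{s,t} - \partial_t Y_{s,t} = [X_{s,t}, Y_{s,t}]$ with $X_{s,t} = \dot\psi_{s,t}$ and $Y_{s,t} = (\partial_s \psi_{s,t})\circ \psi_{s,t}^{-1}$, closedness of $\widetilde I_{\eta,\omega}(X_{s,t})$ (Corollary~\ref{cor1A}), Proposition~\ref{pro2-0} to kill the $\iota_{[X,Y]}\omega$ piece, and the vanishing $Y_{s,0} = Y_{s,1} = 0$, reduces matters to exactness of $\eta([X,Y])\eta = (\mu_Y \eta(X) - \mu_X \eta(Y))\eta$; this quantity vanishes identically in the cosymplectic case ($\mu_X = \mu_Y = 0$) as in Proposition~\ref{prof2}, but in the almost cosymplectic setting the fixed-endpoint hypothesis must be invoked nontrivially to supply the missing exactness.
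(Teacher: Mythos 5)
The paper offers no proof of Proposition \ref{prof2A} at all (the neighbouring Proposition \ref{prof2} is itself disposed of by a citation of the symplectic flux invariance in \cite{Banyaga78}, and nothing is said about the almost cosymplectic case), so your proposal must stand on its own. Its $\omega$-component does stand: whether via the lift of Proposition \ref{Trasit-1}--\ref{Trasit-2} or via the Cartan computation, the invariance of $\int_0^1[\imath_{\dot\psi_t}\omega]\,dt$ follows from the exactness of $\imath_{[X,Y]}\omega$ (Proposition \ref{pro2-0}), closedness of $\widetilde I_{\eta,\omega}$ of the relevant fields (Corollary \ref{cor1A}), and $Y_{s,0}=Y_{s,1}=0$. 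But you have correctly located, and then not closed, the real gap. Your first route for the $\eta$-part only controls $\int_0^1\eta(\dot\psi_t)\circ\psi_t\,dt$ (note: not the flux integrand $\eta(\dot\psi_t)$) and only up to an additive constant $c$, leaving a discrepancy $c[\eta]$; your second route reduces to the exactness of $\eta([X,Y])\eta=(\mu_Y\eta(X)-\mu_X\eta(Y))\eta$, and the closing sentence merely asserts that the fixed-endpoint hypothesis ``must be invoked nontrivially'' without saying how. As written the argument is incomplete exactly where the almost cosymplectic case departs from the cosymplectic one.

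This is not a repairable omission: the $\eta$-part of $\mathcal{A}\widetilde S_{\eta,\omega}$ is not a fixed-endpoint homotopy invariant, so the obstruction you isolated is genuinely nonvanishing. On $M=\mathbb{T}^2\times\mathbb{S}^1$ with $\omega=d\theta_1\wedge d\theta_2$, $\eta=ds$, every isotopy $\phi_t=\mathrm{id}_{\mathbb{T}^2}\times\rho_t$ with $\rho_t\in\mathrm{Diff}^{+}(\mathbb{S}^1)$, $\rho_0=\mathrm{id}$, lies in $\mathcal{A}Iso_{\eta,\omega}(M)$ (indeed $\phi_t^\ast\eta=\rho_t'\,ds=e^{\ln\rho_t'}\eta$ and $\phi_t^\ast\omega=\omega$), and since $\imath_{\dot\phi_t}\omega=0$ its flux is
\[
\mathcal{A}\widetilde S_{\eta,\omega}(\{\phi_t\})=\Bigl(\tfrac{1}{2\pi}\int_0^1\!\!\int_{\mathbb{S}^1}\eta(\dot\phi_t)\,ds\,dt\Bigr)[\eta].
\]
Take $V=\sin(s)\partial_s$, $W=\cos(s)\partial_s$ and $\rho^{\sigma}_t=\exp\bigl(\sigma c(t)V\bigr)\circ\exp\bigl(\sigma d(t)W\bigr)$ with $c(0)=c(1)=d(0)=d(1)=0$ and $\int_0^1\dot d\,c\,dt\neq0$ (e.g.\ $c=t(1-t)$, $d=t^2(1-t)$). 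For each $\sigma$ this is a loop at the identity, and $\sigma'\mapsto\{\rho^{\sigma'}_t\}_t$, $\sigma'\in[0,\sigma]$, is a fixed-endpoint homotopy in $\mathcal{A}Iso_{\eta,\omega}(M)$ down to the constant loop, whose flux is $0$. Yet $\dot\phi^{\sigma}_t=\sigma\dot c\,V+\sigma\dot d\,(\exp(\sigma cV))_\ast W$, and since $\int_{\mathbb{S}^1}\sin=\int_{\mathbb{S}^1}\cos=0$ while $\frac{d}{d\tau}\big|_{0}\int_{\mathbb{S}^1}\eta\bigl((\exp\tau V)_\ast W\bigr)ds=-\int_{\mathbb{S}^1}\eta([V,W])\,ds=2\pi$, one gets $\int_0^1\int_{\mathbb{S}^1}\eta(\dot\phi^{\sigma}_t)\,ds\,dt=2\pi\sigma^2\int_0^1\dot d\,c\,dt+O(\sigma^3)\neq0$ for small $\sigma\neq0$. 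So the proposition fails as stated; it holds for the $\omega$-component $\int_0^1[\imath_{\dot\psi_t}\omega]\,dt$ (and in the cosymplectic case, where $\mu\equiv0$ kills your obstruction), but the term $\eta(\dot\psi_t)\eta$ must either be dropped from the definition of $\mathcal{A}\widetilde S_{\eta,\omega}$ or the class of admissible homotopies restricted before the quotient construction in diagram $(II)$ of Section \ref{Flux} can be carried out.
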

  
  Let $\sim$ be an equivalence relation on 
  $ Iso_{\eta, \omega}(M) $ (resp. $  \mathcal{A}Iso_{\eta, \omega}(M) $) defined by: $\Phi\sim \Psi$ if and only if, $\Phi$ and $\Psi$ are homotopic 
  relatively to fixed endpoints. Let $\widetilde{Iso_{\eta, \omega}(M)}$ (resp. $\widetilde{ \mathcal{A}Iso_{\eta, \omega}(M)}$ ) be the quotient space of the above equivalence relation. Let $\pi_1(G_{\eta, \omega}(M))$ (resp. $\pi_1( \mathcal{A}G_{\eta, \omega}(M))$) be the first fundamental group of $ G_{\eta, \omega}(M)$ (resp. $ \mathcal{A}G_{\eta, \omega}(M)$) , and set 
  $\Gamma_{\eta, \omega} = \widetilde{S}_{\eta, \omega}(\pi_1(G_{\eta, \omega}(M)))$ (resp. $ \mathcal{A}\Gamma_{\eta, \omega} =  \mathcal{A}\widetilde{S}_{\eta, \omega}(\pi_1( \mathcal{A}G_{\eta, \omega}(M)))$).

  The mapping $\widetilde{S}_{\eta, \omega} $ induces a map $S_{\eta, \omega}$ from $\widetilde{Iso_{\eta, \omega}(M)}$ onto 
  $H^1(M,\mathbb{R})/\Gamma_{\eta, \omega}$ such that the following diagram commutes:  
  $$
  \begin{array}{lcr}
  \widetilde{Iso_{\eta, \omega}(M)}  &\xrightarrow{\widetilde{S}_{\eta, \omega}} & {H}^1(M,\mathbb{R}) \\
  \hspace{0.4cm} ev_1 \downarrow &                                                  &\downarrow \pi \hspace{0.4cm}\\ 
  G_{\eta, \omega}(M) &      \xrightarrow{S_{\eta, \omega}}                        & {H}^1(M,\mathbb{R})/\Gamma_{\eta, \omega},
  \end{array}
  \hspace{1cm}(I)
  $$
  where $\pi$ is the quotient map, and $ev_1$ is the time-one map evaluation: 
  \begin{equation}
  \pi\circ\widetilde{S}_{\eta, \omega} = S_{\eta, \omega}\circ ev_1.
  \end{equation}
  Similarly, the mapping $ \mathcal{A}\widetilde{S}_{\eta, \omega}$ induces a map $ \mathcal{A}S_{\eta, \omega}$ from $\widetilde{ \mathcal{A}Iso_{\eta, \omega}(M)}$ onto 
  $H^1(M,\mathbb{R})/ \mathcal{A}\Gamma_{\eta, \omega}$ such that the following diagram commutes:  
  $$
  \begin{array}{lcr}
  \widetilde{ \mathcal{A}Iso_{\eta, \omega}(M)}  &\xrightarrow{ \mathcal{A}\widetilde{S}_{\eta, \omega}} & {H}^1(M,\mathbb{R}) \\
  \hspace{0.4cm}  ev_1 \downarrow &                                                  &\downarrow \pi \hspace{0.4cm}\\ 
  \mathcal{A}G_{\eta, \omega}(M) &      \xrightarrow{ \mathcal{A}S_{\eta, \omega}}                        & {H}^1(M,\mathbb{R})/ \mathcal{A}\Gamma_{\eta, \omega},
  \end{array}
  \hspace{1cm}(II)
  $$
  where $\pi$ is the quotient map, and $ev_1$ is the time-one map evaluation: 
  \begin{equation}
  \pi\circ \mathcal{A}\widetilde{S}_{\eta, \omega} =  \mathcal{A}S_{\eta, \omega}\circ ev_1.
  \end{equation}
  
  We have the following factorization result: 
  \begin{proposition}\label{fact}
  	Let $\Phi:= \{\phi_t\} \in Iso_{\eta, \omega}(M) $, and $\alpha$ be any closed $1-$form of $M$. Then
  	
  	\begin{equation}\label{fact1}
  	\int_M \Delta(\Phi,\eta)\alpha\wedge\omega^n - 	\int_M \Delta(\Phi,\alpha) \eta\wedge\omega^n = n!\langle[\alpha], [\omega^{(n-1)}\wedge \eta]\wedge \widetilde{S}_{\eta, \omega}(\Phi)\rangle
  	\end{equation}
  	where $\Delta(\Phi, \bullet) = \displaystyle   \int_0^1\bullet(\dot{\phi}_t)\circ\phi_t dt $ and $\langle\cdot, \cdot\rangle$ is the usual Poincar\'e pairing. 
  \end{proposition}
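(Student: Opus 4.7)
The plan is to rewrite both sides as integrals of the same $(2n+1)$-form on $M$. First I would pick the canonical representative of the flux,
\[
\theta_\Phi := \int_0^1 \phi_t^\ast\,\widetilde I_{\eta,\omega}(\dot\phi_t)\,dt,
\]
and split it as $\theta_\Phi = \beta_\Phi + \Delta(\Phi,\eta)\,\eta$ with $\beta_\Phi := \int_0^1 \phi_t^\ast\iota_{\dot\phi_t}\omega\,dt$. Because $\Phi$ is cosymplectic, $\mathcal{L}_{\dot\phi_t}\eta=0$, hence $d\Delta(\Phi,\eta)=\phi_1^\ast\eta-\eta=0$, so $\Delta(\Phi,\eta)$ is locally constant; and since $\eta\wedge\eta=0$, the pairing on the right-hand side collapses to $\int_M \alpha\wedge\omega^{n-1}\wedge\eta\wedge\beta_\Phi$.

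The key pointwise ingredient is an identity valid on any $(2n+1)$-dimensional cosymplectic manifold: for every vector field $V$,
\[
n\,\alpha\wedge\omega^{n-1}\wedge\eta\wedge\iota_V\omega \;=\; \eta(V)\,\alpha\wedge\omega^n \;-\; \alpha(V)\,\eta\wedge\omega^n.
\]
This follows by contracting $V$ into the top-degree-exceeding form $\alpha\wedge\omega^n\wedge\eta=0$ and expanding via the Leibniz rule with $\iota_V\omega^n=n\,\omega^{n-1}\wedge\iota_V\omega$. I would then apply it with $V=\dot\phi_t$ and integrate over $M$ and over $t\in[0,1]$.

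To match the two integrals produced on the right with the two terms on the left-hand side, I would change variables $y=\phi_t(x)$. Since $\phi_t$ preserves the volume form $\eta\wedge\omega^n$, the substitution is transparent for the $\alpha(\dot\phi_t)\,\eta\wedge\omega^n$ integral and produces $\int_M\Delta(\Phi,\alpha)\,\eta\wedge\omega^n$. For $\eta(\dot\phi_t)\,\alpha\wedge\omega^n$ the obstruction $(\phi_t^{-1})^\ast\alpha-\alpha=-d(\delta_t\circ\phi_t^{-1})$ is exact; using $d(\eta(\dot\phi_t))=\mathcal{L}_{\dot\phi_t}\eta=0$, a Stokes-type integration by parts kills this correction and yields $\int_M\Delta(\Phi,\eta)\,\alpha\wedge\omega^n$.

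Finally, to convert the pointwise $\iota_{\dot\phi_t}\omega$ into $\phi_t^\ast\iota_{\dot\phi_t}\omega$ under the $t$-integral (so as to reassemble $\beta_\Phi$), I would invoke the same exact-correction Stokes argument, now relying on the closedness of $\omega^{n-1}\wedge\eta\wedge\iota_{\dot\phi_t}\omega$ (a consequence of $d\iota_{\dot\phi_t}\omega=\mathcal{L}_{\dot\phi_t}\omega=0$); the discrepancy $(\phi_t^{-1})^\ast\alpha-\alpha$ wedged with a closed form is a coboundary and integrates to zero. Collecting the pieces delivers the claimed pairing formula. The main obstacle is the parallel bookkeeping of three Stokes cancellations in the change-of-variables step; each one must be kept separate and is driven by the corresponding cosymplectic identity $\mathcal{L}_{\dot\phi_t}\omega=\mathcal{L}_{\dot\phi_t}\eta=0$.
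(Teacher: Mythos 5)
Your argument is essentially the paper's own proof, which is only a one-line sketch: the identity $\imath_{\dot\phi_t}(\alpha\wedge\eta\wedge\omega^n)=0$ (your ``key pointwise ingredient'' is exactly its Leibniz expansion, using $\imath_V\omega^n=n\,\omega^{n-1}\wedge\imath_V\omega$), followed by Stokes' theorem to absorb the exact discrepancies $(\phi_t^{-1})^\ast\alpha-\alpha$ and $\phi_t^\ast(\imath_{\dot\phi_t}\omega)-\imath_{\dot\phi_t}\omega$; your bookkeeping of those cancellations is correct and supplies the details the paper omits. One caveat: carried out as you describe, the computation produces the constant $n$ in front of $\int_M\alpha\wedge\omega^{n-1}\wedge\eta\wedge\theta_\Phi$, not $n!$ as in the statement (the two agree only for $n=1$); you should flag this rather than assert that the pieces ``deliver the claimed pairing formula,'' since with the usual unnormalized Poincar\'e pairing the correct constant is $n$ and the $n!$ in \eqref{fact1} appears to be a misprint.
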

  \begin{proof} This is a direct consequence of the fact that for each $\Phi:= \{\phi_t\} \in Iso_{\eta, \omega}(M) $, and $\alpha$ any closed $1-$form of $M$, 
  	we have $\imath_{\dot \phi_t}(\alpha \wedge \eta\wedge \omega^n ) = 0$, for each $t$, combined with Stokes' theorem. 
  \end{proof}
  
  \begin{proposition}\label{factA}
  	Let $\Phi:= \{\phi_t\} \in  \mathcal{A}Iso_{\eta, \omega}(M) $ such that $\phi_t^\ast(\eta) = e^{f_t}\eta$ for each $t$, and $\alpha$ be any closed $1-$form of $M$. Then
  	\begin{equation}\label{fact1A}
  	\int_M \left( \int_0^1e^{-f_t}C_{\Phi, \eta}^tdt\right)\alpha\wedge\omega^n - 	\int_M \Delta(\Phi,\alpha) \eta\wedge\omega^n =  n!\langle[\alpha], [\omega^{(n-1)}\wedge \eta]\wedge  \mathcal{A}\widetilde{S}_{\eta, \omega}(\Phi)\rangle.
  	\end{equation}
  \end{proposition}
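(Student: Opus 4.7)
The plan is to follow the strategy of Proposition \ref{fact}: derive a pointwise algebraic identity from the observation that $\alpha\wedge\eta\wedge\omega^n$ is a $(2n+2)$-form on a $(2n+1)$-manifold, then integrate over $M\times[0,1]$ and clean up with Stokes' theorem. The only new feature compared to the cosymplectic case is the relation $\phi_t^\ast\eta = e^{f_t}\eta$, which is what produces the exponential weight $e^{-f_t}$ in the final formula.

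Since $\imath_{\dot\phi_t}(\alpha\wedge\eta\wedge\omega^n) = 0$ for dimensional reasons, expanding by the Leibniz rule and substituting $\imath_{\dot\phi_t}\omega = \widetilde I_{\eta,\omega}(\dot\phi_t) - \eta(\dot\phi_t)\eta$ yields the pointwise identity
\[
\alpha(\dot\phi_t)\,\eta\wedge\omega^n \;-\; \eta(\dot\phi_t)\,\alpha\wedge\omega^n \;+\; n\,\alpha\wedge\eta\wedge\widetilde I_{\eta,\omega}(\dot\phi_t)\wedge\omega^{n-1} \;=\; 0.
\]
I would next pull this identity back by $\phi_t^\ast$ (using $\phi_t^\ast\omega = \omega$ and $\phi_t^\ast\eta = e^{f_t}\eta$) and divide through by $e^{f_t}$; this converts the first two integrands into $\bigl(\alpha(\dot\phi_t)\circ\phi_t\bigr)\,\eta\wedge\omega^n$ and $e^{-f_t}C_{\Phi,\eta}^t\,\phi_t^\ast\alpha\wedge\omega^n$ respectively, which is precisely the mechanism that introduces the $e^{-f_t}$ weight in the statement.

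Integrating over $M$ and over $t\in[0,1]$, and identifying $\bigl[\int_0^1\phi_t^\ast\widetilde I_{\eta,\omega}(\dot\phi_t)\,dt\bigr]$ with $\mathcal{A}\widetilde S_{\eta,\omega}(\Phi)$, will deliver the Poincar\'e pairing on the right-hand side. To match the formula as stated, I still have to replace $\phi_t^\ast\alpha$ by $\alpha$ in the two surviving wedge products. Since $\alpha$ is closed, $\phi_t^\ast\alpha = \alpha + d\beta_t$ with $\beta_t := \int_0^t (\alpha(\dot\phi_s)\circ\phi_s)\,ds$; in the third term the replacement is immediate, because $\eta$, $\omega$, and $\phi_t^\ast\widetilde I_{\eta,\omega}(\dot\phi_t)$ (closed by Corollary \ref{cor1A}) are all closed, so the exact correction $d\beta_t\wedge\eta\wedge\phi_t^\ast\widetilde I_{\eta,\omega}(\dot\phi_t)\wedge\omega^{n-1}$ drops out by Stokes' theorem on the closed manifold $M$.

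The main obstacle is the error term $\int_0^1\!\int_M e^{-f_t}C_{\Phi,\eta}^t\,d\beta_t\wedge\omega^n\,dt$ generated by the same replacement in the second integral. Integration by parts rewrites it as $-\int_0^1\!\int_M d\bigl(e^{-f_t}C_{\Phi,\eta}^t\bigr)\wedge\beta_t\wedge\omega^n\,dt$, and two observations should close the gap: (i) differentiating $\phi_t^\ast\eta = e^{f_t}\eta$ and using $d\eta=0$ forces $df_t$ to be proportional to $\eta$; (ii) Proposition \ref{Trasit-3} together with Corollary \ref{cor1A} gives $dC_{\Phi,\eta}^t$ proportional to $\eta$ as well. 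Hence $d(e^{-f_t}C_{\Phi,\eta}^t)$ is proportional to $\eta$, so $d(e^{-f_t}C_{\Phi,\eta}^t)\wedge\beta_t\wedge\omega^n$ can be recognised as a total $t$-derivative of an expression in $e^{-f_t}\beta_t\,\eta\wedge\omega^n$ that vanishes upon integration from $0$ to $1$ thanks to $f_0=0$ and $\beta_0=0$. This cancellation is the crux of the argument and the only point that genuinely goes beyond the cosymplectic analogue.
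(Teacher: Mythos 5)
Your reduction is sound and is essentially the intended route: the paper gives no proof of Proposition \ref{factA} at all (its proof of Proposition \ref{fact} is a one-line hint), and your pointwise identity, the pull-back by $\phi_t$, the division by $e^{f_t}$, and the disposal of the exact correction in the third term via Stokes' theorem are all correct. After these steps you have correctly isolated the entire difficulty in the single error term
$E:=\int_0^1\int_M e^{-f_t}C_{\Phi,\eta}^t\,(\phi_t^\ast\alpha-\alpha)\wedge\omega^n\,dt
=-\int_0^1\int_M \beta_t\, d\bigl(e^{-f_t}C_{\Phi,\eta}^t\bigr)\wedge\omega^n\,dt$,
and your observations (i) and (ii), that $df_t$ and $dC_{\Phi,\eta}^t$ are both pointwise proportional to $\eta$, are correct.

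The gap is the final sentence: the claim that $\beta_t\,d(e^{-f_t}C_{\Phi,\eta}^t)\wedge\omega^n$ integrates to a vanishing total $t$-derivative is never substantiated, and it is in fact false. Writing $d(e^{-f_t}C_{\Phi,\eta}^t)=h_t\eta$ gives $E=-\int_0^1\int_M\beta_t h_t\,\eta\wedge\omega^n\,dt$, and nothing forces this to vanish; in the cosymplectic case of Proposition \ref{fact} the analogous term dies because $d(\eta(\dot\phi_t))=\mathcal{L}_{\dot\phi_t}\eta=0$ makes $C_{\Phi,\eta}^t$ locally constant, and that is exactly the mechanism lost in the almost cosymplectic setting. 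Concretely, take $M=\mathbb{T}^2\times\mathbb{S}^1$ with $\omega=dx\wedge dy$, $\eta=ds$, let $\Phi$ be the flow of $X=g(s)\partial_s$ with $g(s)=1+\epsilon\cos s$, so that $\phi_t^\ast\eta=\sigma_t'(s)\,ds$ where $\sigma_t$ is the flow of $g\,\partial_s$ on $\mathbb{S}^1$, and take $\alpha=\eta$. One checks $e^{-f_t}C_{\Phi,\eta}^t=g(\sigma_t(s))/\sigma_t'(s)=g(s)$ for all $t$, while the right-hand side of \eqref{fact1A} vanishes because $[\eta]\wedge[\omega^{(n-1)}\wedge\eta]=0$; the claimed identity therefore reduces to $\int_0^1\int_{\mathbb{S}^1}\bigl(g(s)-g(\sigma_t(s))\bigr)\,ds\,dt=0$. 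A second-order expansion gives $\int_0^{2\pi}g(\sigma_t(s))\,ds=2\pi-\epsilon^2\pi(1-\cos t)+O(\epsilon^3)$, so that double integral equals $\epsilon^2\pi(1-\sin 1)+O(\epsilon^3)\neq 0$. Hence the cancellation you need cannot hold in general, and the proposition as literally stated appears to fail; what your computation does establish is the variant in which $\phi_t^\ast\alpha$ is kept inside the first integral (equivalently, the stated formula corrected by the term $E$).
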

  
  Let $\Psi:= \{\psi_t\} \in  \mathcal{A}Iso_{\eta, \omega}(M) $ be a loop. From Proposition \ref{factA}, we derive that 
  \begin{equation}
  \int_M \left( \int_0^1e^{-f_t}C_{\Psi, \eta}^tdt\right)\alpha\wedge\omega^n - \left( \int_{\mathcal{O}_x^\Psi}\alpha\right)  \left( \int _M \eta \wedge\omega^n\right) 
  =  n!\langle[\alpha], [\omega^{(n-1)}\wedge \eta]\wedge  \mathcal{A}\widetilde{S}_{\eta, \omega}(\Phi)\rangle,
  \end{equation}
  for each closed $1-$form $\alpha$: In particular, if the isotopy $\Psi$ is an almost co-Hamiltonian loop, then
  \begin{equation}
  \int_{\mathcal{O}_x^\Psi}\alpha = \frac{1}{ Vol_{\eta, \omega}(M)}\int_M \left( \int_0^1e^{-f_t}C_{\Psi, \eta}^tdt\right)\alpha\wedge\omega^n,
  \end{equation}
  for all $x\in M$, and for each closed $1-$form $\alpha$. That is, $\Psi$ will have at least  one contractible orbit if and only if 
  \begin{equation}
  \int_M \left( \int_0^1e^{-f_t}C_{\Psi, \eta}^tdt\right)\alpha\wedge\omega^n = 0,
  \end{equation}
  for each closed $1-$form $\alpha$.\\ 
  
  We have the following facts. 
  
  \begin{proposition}\label{Decom-1}
  	Let $\Phi:= \{\phi_t\} \in Iso_{\eta, \omega}(M) $  such that $\Phi= \Phi_\omega\circ\Phi_\eta$.  Then, 
  	\begin{itemize}
  		\item $ \widetilde{S}_{\eta, \omega}(\Phi_\omega) = \int_0^1[\iota(\dot \phi_t)\omega]dt$,
  		\item $ \widetilde{S}_{\eta, \omega}(\Phi_\eta) = \left( \int_0^1 \eta(\dot \phi_t)\circ\phi_tdt\right) [\eta]$.
  	\end{itemize}
  \end{proposition}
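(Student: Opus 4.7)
The plan is to unfold the definition of $\widetilde{S}_{\eta, \omega}$ applied to each factor and to simplify using (i) the defining formulas for the $(\omega,\eta)$-decomposition of $\dot\phi_t$, (ii) the fact that each $\phi_t^\omega$, $\phi_t^\eta$ is isotopic to the identity (hence acts trivially on $H^1(M,\mathbb{R})$), and (iii) the observation that $\eta(\dot\phi_t)$ is constant along a cosymplectic isotopy. Throughout, write $V_t:=\dot\phi_t$ and $V_t=V_t^\omega+V_t^\eta$ for its $(\omega,\eta)$-decomposition (Proposition \ref{dec1}), so that by definition $\widetilde I_{\eta,\omega}(V_t^\omega)=\iota_{V_t}\omega$ and $\widetilde I_{\eta,\omega}(V_t^\eta)=\eta(V_t)\eta$. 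The generators of $\Phi_\omega$ and $\Phi_\eta$ are exactly $V_t^\omega$ and $V_t^\eta$; by Proposition \ref{Decom-0} both are cosymplectic, so $\Phi_\omega,\Phi_\eta\in Iso_{\eta,\omega}(M)$.

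For the first bullet, I would write
\begin{equation*}
\widetilde{S}_{\eta, \omega}(\Phi_\omega)=\int_0^1[(\phi_t^\omega)^\ast(\widetilde I_{\eta,\omega}(V_t^\omega))]\,dt=\int_0^1[(\phi_t^\omega)^\ast(\iota_{V_t}\omega)]\,dt.
\end{equation*}
Since $V_t$ is cosymplectic, $\iota_{V_t}\omega$ is closed (Corollary \ref{cor1}); since each $\phi_t^\omega$ is isotopic to the identity in $\operatorname{Diff}^\infty(M)$, its induced map on de Rham cohomology is the identity, so $[(\phi_t^\omega)^\ast(\iota_{V_t}\omega)]=[\iota_{V_t}\omega]=[\iota(\dot\phi_t)\omega]$, which proves the first identity after integrating in $t$.

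For the second bullet, the same formula gives
\begin{equation*}
\widetilde{S}_{\eta,\omega}(\Phi_\eta)=\int_0^1[(\phi_t^\eta)^\ast(\eta(V_t)\,\eta)]\,dt.
\end{equation*}
The key simplification is that $\eta(V_t)$ is a (time-dependent) \emph{constant} on $M$: indeed, $V_t$ cosymplectic means $\mathcal{L}_{V_t}\eta=0$, and since $\eta$ is closed Cartan's formula yields $d(\eta(V_t))=\mathcal{L}_{V_t}\eta=0$, so on the connected manifold $M$ the function $\eta(V_t)$ reduces to the constant $c(t):=\eta(\dot\phi_t)$. Moreover, $\phi_t^\eta$ is a cosymplectic diffeomorphism (Proposition \ref{Decom-0}(2)), hence $(\phi_t^\eta)^\ast(\eta)=\eta$, which gives $(\phi_t^\eta)^\ast(c(t)\eta)=c(t)\eta$. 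Therefore
\begin{equation*}
\widetilde{S}_{\eta,\omega}(\Phi_\eta)=\Bigl(\int_0^1 c(t)\,dt\Bigr)[\eta]=\Bigl(\int_0^1\eta(\dot\phi_t)\circ\phi_t\,dt\Bigr)[\eta],
\end{equation*}
the last equality using once more that $\eta(\dot\phi_t)$ is constant, so $\eta(\dot\phi_t)\circ\phi_t=\eta(\dot\phi_t)=c(t)$.

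The only delicate point is justifying that $V_t^\omega$ and $V_t^\eta$ remain cosymplectic in the time-dependent setting and that their isotopies satisfy $(\phi_t^\omega)^\ast\omega=\omega$, $(\phi_t^\eta)^\ast\eta=\eta$; this is the expected obstacle, but it follows routinely from $\tfrac{d}{dt}(\phi_t^\ast\alpha)=\phi_t^\ast(\mathcal{L}_{V_t}\alpha)$ applied to $\alpha\in\{\omega,\eta\}$ together with Proposition \ref{Decom-0}(1). Beyond that, the whole argument is a direct manipulation of the flux formula and the non-degeneracy of $\widetilde I_{\eta,\omega}$.
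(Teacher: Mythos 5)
The paper states Proposition \ref{Decom-1} without any proof, so there is nothing to compare against; your argument is correct and is the natural one: it unwinds the definition of $\widetilde{S}_{\eta,\omega}$ on each factor, uses the defining relations $\widetilde I_{\eta,\omega}\big((\dot\phi_t)_\omega\big)=\iota_{\dot\phi_t}\omega$ and $\widetilde I_{\eta,\omega}\big((\dot\phi_t)_\eta\big)=\eta(\dot\phi_t)\eta$ from Proposition \ref{dec1}, and removes the pullbacks via homotopy invariance of de Rham cohomology in the first bullet and via $(\phi_t^\eta)^\ast\eta=\eta$ together with $d(\eta(\dot\phi_t))=\mathcal{L}_{\dot\phi_t}\eta=0$ in the second. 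The one caveat worth recording is that the constancy of $\eta(\dot\phi_t)$ --- and hence the very meaning of the scalar $\int_0^1\eta(\dot\phi_t)\circ\phi_t\,dt$ in the statement --- requires $M$ connected, an assumption the paper leaves implicit.
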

  
  \begin{proposition}\label{ADecom-1}
  	Let $\Psi:= \{\psi_t\} \in \mathcal{A}Iso_{\eta, \omega}(M) $ such that 
  	$\Psi= \Psi_\omega\circ\Psi_\eta $. The following results hold.
  	\begin{itemize}
  		\item $\widetilde{S}_{\eta, \omega}(\Psi_\omega) = \int_0^1[\iota(\dot \psi_t)\omega]dt$. In particular, when $\Psi$ is an almost co-Hamiltonian isotopy.
  		then $\widetilde{S}_{\eta, \omega}(\Psi_\omega) = 0$. 
  		\item $ \mathcal{A}\widetilde{S}_{\eta, \omega}(\Psi_\eta) =  [\left( \int_0^1 \eta(\dot \psi_t)\circ\psi_tdt\right)\eta]$.
  	\end{itemize}
  \end{proposition}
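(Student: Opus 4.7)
The plan is to exploit the splitting from Proposition~\ref{ADecom-0}: each generator $\dot\psi_t$ of the almost cosymplectic isotopy $\Psi$ decomposes as $\dot\psi_t=(\dot\psi_t)_\omega+(\dot\psi_t)_\eta$, with $(\dot\psi_t)_\omega$ cosymplectic, $(\dot\psi_t)_\eta$ almost cosymplectic, $\eta((\dot\psi_t)_\omega)=0$, and $\iota((\dot\psi_t)_\eta)\omega=0$. Integrating pointwise in $t$ yields the commuting factorisation $\Psi=\Psi_\omega\circ\Psi_\eta$, and in particular $\widetilde I_{\eta,\omega}(\dot\psi_\omega^t)=\iota(\dot\psi_t)\omega$ and $\widetilde I_{\eta,\omega}(\dot\psi_\eta^t)=\eta(\dot\psi_t)\eta$, both closed $1$-forms by Corollary~\ref{cor1A}.

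For the first identity, I start from the definition $\widetilde S_{\eta,\omega}(\Psi_\omega)=\int_0^1[(\psi_\omega^t)^\ast(\iota(\dot\psi_t)\omega)]\,dt$. Pulling back a closed $1$-form by a diffeomorphism sitting in an isotopy to the identity produces a cohomologous form, via the Cartan-formula computation $\psi^\ast\alpha-\alpha=d\int_0^1\psi_s^\ast\iota_{\dot\psi_s}\alpha\,ds$, so the integrand represents the same class as $[\iota(\dot\psi_t)\omega]$, yielding $\widetilde S_{\eta,\omega}(\Psi_\omega)=\int_0^1[\iota(\dot\psi_t)\omega]\,dt$. The ``in particular'' assertion is then immediate: if $\Psi$ is almost co-Hamiltonian, then by definition $\iota(\dot\psi_t)\omega=dH_t$ for each $t$, and the integrand vanishes in cohomology.

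For the second identity the same pullback-cohomology argument gives $\mathcal{A}\widetilde S_{\eta,\omega}(\Psi_\eta)=\int_0^1[\eta(\dot\psi_t)\eta]\,dt$. To recast this in the stated form $[(\int_0^1\eta(\dot\psi_t)\circ\psi_t\,dt)\eta]$, I invoke the cocycle identity $\psi_t^\ast\eta-\eta=d(\int_0^t\eta(\dot\psi_s)\circ\psi_s\,ds)$ from the proof of Proposition~\ref{Trasit-6}, together with the observation that $\Psi_\omega$ preserves $\eta$ so that the conformal factor $f_t$ of $\Psi$ coincides with that of $\Psi_\eta$. This already shows that the right-hand form is closed, its differential reducing to $(e^{f_1}-1)\eta\wedge\eta=0$; the two cohomology representatives can then be identified using the conformal relation $\dot f_t\circ\psi_t=\dot f_t e^{f_t}$ from Proposition~\ref{Trasit-3}, which provides the explicit primitive bridging them.

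The main obstacle is the second identity: the claimed cocycle is composed with the full $\psi_t=\psi_\omega^t\circ\psi_\eta^t$ rather than with the more natural $\psi_\eta^t$ arising directly from the definition of $\mathcal{A}\widetilde S_{\eta,\omega}(\Psi_\eta)$. Bridging $\int_0^1[\eta(\dot\psi_t)\eta]\,dt$ and $[(\int_0^1\eta(\dot\psi_t)\circ\psi_t\,dt)\eta]$ demands an explicit primitive that combines the commutativity $[(\dot\psi_t)_\omega,(\dot\psi_t)_\eta]=0$ of Proposition~\ref{ADecom-0}, the $\eta$-invariance of $\Psi_\omega$, and the analytic relations of Propositions~\ref{Trasit-3} and~\ref{Trasit-6}; the book-keeping for the conformal factors $f_t$ is where the argument is most delicate.
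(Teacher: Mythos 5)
The paper states Proposition \ref{ADecom-1} without proof, so there is nothing to compare against directly; judged on its own terms, your argument for the first bullet is sound. Since $\eta(\dot\psi_\omega^t)=0$ and $\iota(\dot\psi_\omega^t)\omega=\iota(\dot\psi_t)\omega$ by the $(\omega,\eta)$-decomposition, and since each $\psi_\omega^t$ is isotopic to the identity so that pullback acts trivially on $H^1(M,\mathbb{R})$, you correctly get $\widetilde{S}_{\eta,\omega}(\Psi_\omega)=\int_0^1[\iota(\dot\psi_t)\omega]\,dt$, and the almost co-Hamiltonian case is immediate because $\iota(\dot\psi_t)\omega$ is then exact.

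The second bullet, however, contains a genuine gap which you yourself flag but do not close. Unwinding the definition gives
\begin{equation*}
\mathcal{A}\widetilde{S}_{\eta,\omega}(\Psi_\eta)=\int_0^1\bigl[(\psi_\eta^t)^\ast\bigl(\eta(\dot\psi_\eta^t)\eta\bigr)\bigr]dt
=\int_0^1\bigl[\bigl(\eta(\dot\psi_t)\circ\psi_t\bigr)e^{f_t}\eta\bigr]dt ,
\end{equation*}
using $(\psi_\eta^t)^\ast\eta=\psi_t^\ast\eta=e^{f_t}\eta$ and $C_{\Psi,\eta}^t=C_{\Psi_\eta,\eta}^t$. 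The statement asks for $\bigl[\bigl(\int_0^1\eta(\dot\psi_t)\circ\psi_t\,dt\bigr)\eta\bigr]$, so what must be shown is that the difference $\bigl(\int_0^1 C_{\Psi,\eta}^t(e^{f_t}-1)\,dt\bigr)\eta$ is \emph{exact}, not merely closed. This is the whole content of the bullet: a closed $1$-form of the shape $h\eta$ need not be exact (indeed $\eta$ itself is closed and generically represents a nonzero class), so verifying $d\bigl(\bigl(\int_0^1 C_{\Psi,\eta}^t\,dt\bigr)\eta\bigr)=(e^{f_1}-1)\eta\wedge\eta=0$, as you do, establishes only that the right-hand side is a legitimate cohomology class, not that it equals the left-hand side. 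Your proposal then asserts that Proposition \ref{Trasit-3} ``provides the explicit primitive bridging them'' without writing that primitive down, and your closing paragraph concedes that the book-keeping for the conformal factors is unresolved. Writing $G_t:=\int_0^t C_{\Psi,\eta}^s\,ds$ so that $dG_t=(e^{f_t}-1)\eta$, the discrepancy is $\int_0^1 C_{\Psi,\eta}^t\,dG_t\,dt=\int_0^1\partial_tG_t\,dG_t\,dt$, and the obvious integrations by parts only return the tautology $\int_0^1 G_t\partial_tG_t\,dt=\tfrac12 G_1^2$ rather than exhibiting the required primitive. Until that $1$-form is shown to be exact (or the statement is adjusted, e.g.\ to $\int_0^1[\eta(\dot\psi_t)\eta]\,dt$ or to carry the factor $e^{f_t}$ inside the integral), the second identity is not proved.
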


  \subsection{Reeb dynamics} 
  
  Let $\Phi:= \{\phi_t\} \in Iso_{\eta, \omega}(M) $ be a loop. From Proposition \ref{fact}, we derive that for each $x\in M$, if $\mathcal{O}_x^\Phi$ stands for the orbit 
  of $x$ under $\Phi$, then 
  \begin{equation}
  \left( \int_{\mathcal{O}_x^\Phi}\eta\right)  \left( \int _M \alpha \wedge\omega^n\right) - \left( \int_{\mathcal{O}_x^\Phi}\alpha\right)  \left( \int _M \eta \wedge\omega^n\right) 
  =  n!\langle[\alpha], [\omega^{(n-1)}\wedge \eta]\wedge \widetilde{S}_{\eta, \omega}(\Phi)\rangle, 
  \end{equation}
  for each closed $1-$form $\alpha$. 
  \begin{itemize}
  	\item If some  $y\in M$ has a contractible orbit under $\Phi$, then $ [\omega^{(n-1)}\wedge \eta]\wedge \widetilde{S}_{\eta, \omega}(\Phi) = 0$.
  	\item If $ \widetilde{S}_{\eta, \omega}(\Phi) = 0$, then $
  	\displaystyle \int_{\mathcal{O}_x^\Phi}\alpha  = 
  	\left( \frac{\int _M \alpha \wedge\omega^n}{ \int _M \eta \wedge\omega^n}\right) \int_{\mathcal{O}_x^\Phi}\eta,
 $
  	for all closed $1-$form $\alpha$, for all $x\in M$. 
  \end{itemize}

  Here is a consequence of the previous items. 
  \begin{proposition}\label{prof2-B}
  	Let $(M,\eta,\omega)$ be a smooth closed cosymplectic manifold of dimension\\  $(2n + 1)$.  Assume that the flow $\Phi_\eta$  of the Reeb vector field 
  	is a loop. Then, for each closed $1-$form $\alpha$, the smooth map $x\mapsto \int_{\mathcal{O}_x^{\Phi_\eta}}\alpha,$ is constant, and it is completely determined by the integral $\int _M \alpha \wedge\omega^n.$
  \end{proposition}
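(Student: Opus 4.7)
The plan is to establish constancy of the map $g(x) := \int_{\mathcal{O}_x^{\Phi_\eta}}\alpha$ directly, by showing $dg = 0$, and then to pin down the constant by plugging $\Phi_\eta$ into Proposition \ref{fact}.

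First I would observe that since $\Phi_\eta = \{\phi_t\}$ is the autonomous flow of the Reeb vector field $\xi$, one has $\dot\phi_t = \xi$ for every $t$, so that
$$g(x) = \int_0^1 \alpha(\xi)(\phi_t(x))\,dt = \int_0^1 (\phi_t^\ast f)(x)\,dt, \qquad f := \imath_\xi \alpha.$$
Differentiating under the integral sign (which is legitimate because $M$ is closed and everything is smooth) gives $dg = \int_0^1 \phi_t^\ast(df)\,dt$. Using $d\alpha = 0$ together with Cartan's magic formula, $df = d(\imath_\xi \alpha) = \mathcal{L}_\xi \alpha - \imath_\xi d\alpha = \mathcal{L}_\xi \alpha$, so that
$$dg = \int_0^1 \phi_t^\ast(\mathcal{L}_\xi \alpha)\,dt = \int_0^1 \frac{d}{dt}(\phi_t^\ast\alpha)\,dt = \phi_1^\ast\alpha - \phi_0^\ast\alpha = 0,$$
the last equality being exactly the loop hypothesis $\phi_1 = \phi_0 = \mathrm{id}_M$. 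Hence $g$ is locally constant, and therefore constant on each connected component of $M$.

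Second, to identify this constant I would apply Proposition \ref{fact} to the loop $\Phi_\eta$. Since $\xi$ is cosymplectic (Example \ref{Ex3}), $\phi_t^\ast\eta = \eta$ for every $t$, so $\widetilde{S}_{\eta,\omega}(\Phi_\eta) = \int_0^1 [\phi_t^\ast\eta]\,dt = [\eta]$; moreover $\Delta(\Phi_\eta,\eta) = \int_0^1 \eta(\xi)\circ\phi_t\,dt \equiv 1$. The pairing on the right-hand side of \eqref{fact1} then carries a factor $[\eta]\wedge[\eta] = 0$ and so vanishes, reducing the identity to
$$\int_M \alpha\wedge\omega^n \;=\; \int_M g\cdot\eta\wedge\omega^n \;=\; g \int_M \eta\wedge\omega^n,$$
which yields the explicit formula $g = \bigl(\int_M \eta\wedge\omega^n\bigr)^{-1}\int_M \alpha\wedge\omega^n$, manifestly a function of $\int_M \alpha\wedge\omega^n$ alone.

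The conceptual core of the argument is the derivative computation: once $\alpha$ is closed, Cartan's formula collapses $d(\imath_\xi\alpha)$ to $\mathcal{L}_\xi\alpha$, and the loop hypothesis is precisely what kills the telescoped boundary term $\phi_1^\ast\alpha-\phi_0^\ast\alpha$. I do not expect any real obstacle; the only mild subtlety is connectedness of $M$, which the statement leaves implicit---if $M$ has several components the same argument runs on each and the conclusion is the analogous volume-weighted formula per component.
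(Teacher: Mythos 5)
Your proof is correct, and its second half coincides with the paper's route: both plug the Reeb loop into Proposition \ref{fact}, observe that $\widetilde{S}_{\eta,\omega}(\Phi_\eta)=[\eta]$ so that the Poincar\'e pairing dies against $[\omega^{n-1}\wedge\eta]\wedge[\eta]=0$, note $\Delta(\Phi_\eta,\eta)\equiv 1$, and read off $g=\bigl(\int_M\eta\wedge\omega^n\bigr)^{-1}\int_M\alpha\wedge\omega^n$. Where you genuinely add something is the first half: the paper states the identity
$\bigl(\int_{\mathcal{O}_x^\Phi}\eta\bigr)\bigl(\int_M\alpha\wedge\omega^n\bigr)-\bigl(\int_{\mathcal{O}_x^\Phi}\alpha\bigr)\bigl(\int_M\eta\wedge\omega^n\bigr)=n!\langle[\alpha],[\omega^{n-1}\wedge\eta]\wedge\widetilde{S}_{\eta,\omega}(\Phi)\rangle$
by silently pulling the orbit integrals out of $\int_M\Delta(\Phi,\bullet)\,\cdot\,$, i.e.\ it already assumes the constancy it is supposed to prove, and then declares the proposition ``a consequence of the previous items.'' Your computation $dg=\int_0^1\phi_t^\ast(\mathcal{L}_\xi\alpha)\,dt=\phi_1^\ast\alpha-\alpha=0$ (Cartan plus closedness of $\alpha$ plus the loop hypothesis) supplies exactly the missing justification, and your remark about connected components is the right caveat. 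So: same strategy for identifying the constant, but your write-up closes a gap the paper leaves open rather than merely reproducing it.
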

  Proposition \ref{prof2}  seems to tell us that on such a cosymplectic manifold $M$, 
  one can describe some topological properties of $M$ by the mean of such a flow $\Phi_\eta $. 
  \begin{proposition}\label{prof3}
  	The group $ \ker S_{\eta, \omega} $ is path connected by smooth arcs.
  \end{proposition}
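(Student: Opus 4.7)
The plan is to show that every $\phi \in \ker S_{\eta, \omega}$ can be joined to $\mathrm{id}_M$ by a co-Hamiltonian isotopy; since co-Hamiltonian isotopies have identically vanishing flux in $H^1(M, \mathbb{R})$, the arc they trace lies entirely in $\ker S_{\eta, \omega}$. The argument follows the Banyaga scheme from symplectic geometry, adapted via the correspondence of Lemma \ref{lem-3}.

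First, I would exploit the commutative diagram $(I)$. Given $\phi \in \ker S_{\eta, \omega}$, pick any cosymplectic isotopy $\Phi = \{\phi_t\}$ with $\phi_1 = \phi$, which exists by the very definition of $G_{\eta, \omega}(M) = ev_1(Iso_{\eta, \omega}(M))$. Commutativity of $(I)$ gives $\pi(\widetilde{S}_{\eta, \omega}(\Phi)) = S_{\eta, \omega}(\phi) = 0$, hence $\widetilde{S}_{\eta, \omega}(\Phi) \in \Gamma_{\eta, \omega}$. By the very definition of $\Gamma_{\eta, \omega}$, there is a cosymplectic loop $\Lambda = \{\lambda_t\}$ based at $\mathrm{id}$ with $\widetilde{S}_{\eta, \omega}(\Lambda) = \widetilde{S}_{\eta, \omega}(\Phi)$. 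The composition formulas of Subsection \ref{rem1} make $\widetilde{S}_{\eta, \omega}$ into a group homomorphism up to exact forms, so the cosymplectic isotopy $\Psi = \{\phi_t \circ \lambda_t^{-1}\}$ has endpoints $\mathrm{id}$ and $\phi$ and satisfies $\widetilde{S}_{\eta, \omega}(\Psi) = 0$.

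Second, I would deform $\Psi$ (rel endpoints) to a co-Hamiltonian isotopy $\Theta = \{\theta_t\}$ with $\theta_1 = \phi$. The cleanest route is to transfer the problem to the associated symplectic manifold $(\widetilde{M}, \widetilde\omega)$ from Lemma \ref{lem-3}: lift $\Psi$ to a symplectic isotopy $\widetilde\Psi$ of $\widetilde{M}$ (the construction is exactly the one used in Proposition \ref{Trasit-1} and in the proof of Theorem \ref{thm00}), observe from the explicit form of $\widetilde\omega$ that $\widetilde\Psi$ inherits vanishing symplectic flux, invoke Banyaga's theorem that a flux-zero symplectic isotopy is homotopic rel endpoints to a Hamiltonian one, and push the resulting Hamiltonian isotopy back to $M$ via a fixed section $S_l$ of $p: \widetilde{M} \rightarrow M$. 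Combining this with the $(\omega, \eta)$-decomposition of Proposition \ref{Decom-0}, one verifies that the projected isotopy is genuinely co-Hamiltonian, i.e., $\widetilde{I}_{\eta, \omega}(\dot\theta_t) = dH_t$ for some smooth family $\{H_t\}$.

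Finally, the arc $s \mapsto \theta_s$ is smooth and stays inside $\ker S_{\eta, \omega}$: for each $s \in [0,1]$, the truncated isotopy $\{\theta_t\}_{0 \le t \le s}$ is co-Hamiltonian, hence the partial flux $\int_0^s [\theta_t^*(dH_t)]dt = \int_0^s d[H_t \circ \theta_t]dt$ is exact, so diagram $(I)$ yields $S_{\eta, \omega}(\theta_s) = 0$. The main obstacle is clearly the second step, the Banyaga-type deformation from a zero-flux cosymplectic isotopy to a co-Hamiltonian one with the prescribed time-one map. I would handle it by the symplectization detour just outlined, although a direct intrinsic treatment using Hodge theory on $M$ and killing harmonic representatives family-by-family along the path (in the spirit of Proposition \ref{Decom-0}) is equally viable.
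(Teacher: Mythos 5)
Your first and third steps are sound (indeed, your reduction to a zero-flux isotopy via composition with a loop realizing the class in $\Gamma_{\eta,\omega}$ is spelled out more carefully than in the paper, which simply asserts the existence of $\Phi$ with $\int_0^1\widetilde I_{\eta,\omega}(\dot\phi_t)\,dt=df$). The genuine gap is in your second step. You propose to lift the zero-flux cosymplectic isotopy $\Psi$ to a zero-flux symplectic isotopy $\widetilde\Psi$ of $(\widetilde M,\widetilde\omega)$, apply Banyaga's theorem to replace it by a Hamiltonian isotopy $\widetilde\Theta$, and then ``push $\widetilde\Theta$ back to $M$ via a fixed section $S_l$.'' But the Hamiltonian isotopy produced by Banyaga's theorem is an arbitrary Hamiltonian isotopy of $M\times\mathbb{S}^1$: it need not preserve the fibration $p$, so the maps $p\circ\tilde\theta_t\circ S_l$ are in general not injective, not diffeomorphisms of $M$, and there is no reason for them to preserve $\omega$ and $\eta$, let alone to be generated by exact $1$-forms under $\widetilde I_{\eta,\omega}$. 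Invoking Proposition \ref{Decom-0} does not repair this, since that proposition decomposes vector fields that are already cosymplectic. So the deformation ``zero-flux cosymplectic isotopy $\leadsto$ co-Hamiltonian isotopy rel endpoints,'' which you correctly identify as the crux, is not actually established by the symplectization detour.

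The paper avoids this entirely by running Banyaga's two-parameter argument \emph{intrinsically} on $M$: from the isotopy $\Phi=\{\phi_t\}$ with $\int_0^1\widetilde I_{\eta,\omega}(\dot\phi_t)\,dt=df$, it forms $X_{s,t}$ with $X_{s,t}(\phi_{st}(x))=\tfrac{\partial}{\partial s}\phi_{st}(x)$, sets $\alpha_t=\int_0^1\widetilde I_{\eta,\omega}(X_{s,t})\,ds$, corrects by the cosymplectic field $Y_t=\widetilde I_{\eta,\omega}^{-1}(\alpha_t-t\alpha_1)$, and lets $H_{s,t}$ be the $s$-flow of $Z_{s,t}=X_{s,t}-Y_t$. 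Then $\int_0^1\widetilde I_{\eta,\omega}(Z_{s,t})\,ds=d(tf)$ is exact for every $t$, so each $H_{1,t}$ lies in $\ker S_{\eta,\omega}$, while $H_{s,1}=\phi_s$ forces $H_{1,1}=\phi$ and $H_{1,0}=id$. Note that the paper does not claim the resulting arc $t\mapsto H_{1,t}$ is itself co-Hamiltonian; it only needs each point of the arc to be the endpoint of an exact-flux isotopy. Your plan aims at the stronger statement (a co-Hamiltonian arc from $id$ to $\phi$), which is exactly where the unproved step concentrates; your own fallback suggestion of a ``direct intrinsic treatment, killing harmonic representatives family-by-family'' is essentially the paper's argument and is the route you should take.
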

  
  \begin{proof} Let $\phi\in\ker S_{\eta, \omega} $, by definition, there exists  $\Phi:= \{\phi_t\} \in Iso_{\eta, \omega}(M) $ with $\phi_1 = h$ such that 
  	$\displaystyle \int_0^1 \widetilde I_{\eta, \omega}(\dot\phi_t)dt = df,$ for some smooth function on $M$. As in \cite{Banyaga78}, consider the $2-$parameters family 
  	of cosymplectic vector field defined by 
  	$ X_{s,t} (\phi_{st}(x)) = \dfrac{\partial}{\partial s}(\phi_{st}(x)),$
  	for all $x\in M$, then set $\displaystyle  \alpha_t = \int_0^1 \widetilde I_{\eta, \omega}(X_{s,t})ds,$
  	and define another smooth family $(Y_t) $ of cosymplectic vector field such that $ \widetilde I_{\eta, \omega}(Y_{t}) = \alpha_t -t\alpha_1 =: \beta_t,$ 
  	for each $t$. Compute 
  	\begin{equation}
  	\displaystyle  \int_0^1 \widetilde I_{\eta, \omega}(X_{s,t} - Y_t)ds = d(tf),
  	\end{equation}
  	for each $t$, and set $Z_{s,t}:= X_{s,t} - Y_t$, for each $s$, for each $t$. The 
  	$2-$parameters family 
  	of cosymplectic diffeomorphisms ${H_{s,t}}$ defined by 
  	$ Z_{s,t} (H_{s,t}(x)) = \dfrac{\partial}{\partial s}(H_{s,t}(x)),$
  	for all $x\in M$, satisfies $ H_{s, 1} = \phi_s$, for all $s$,  $ H_{1, t} \in \ker S_{\eta, \omega} $, for each $t$. So, $t\longmapsto H_{1, t} $ is a smooth path 
  	in $\ker S_{\eta, \omega} $ with time-one map $\phi$. 
  \end{proof}
  \section{Geometry of cosymplectic diffeomorphisms}\label{Hofer-N}
  In order to describe further structures of the groups $G_{\eta, \omega}(M) $ and $\mathcal{A}G_{\eta, \omega}(M)  $ we shall need the comparison 
  results established in the following subsection. 
  \subsection{\bf Comparison of norms} \label{Hofer-N-1}
  Let  $M$, and $N$ be two smooth compact manifolds, and put $ \widetilde{M}:= M\times N$. Let $q: \widetilde M\rightarrow N$ and $p: \widetilde M\rightarrow M$  be  projection maps. Let us equip $M$ with a Riemannian metric $g^M$, equip $N$ with its natural metric $g^N$ and  denote by $\tilde g$ the corresponding induced product metric on 
  $\widetilde M$.
 \subsubsection{Comparison of the norms $|p^\ast(\alpha)|_0$ and $|\alpha|_{0}$, for each $\alpha\in\Omega^1(M)$  } 
   Consider  a $1-$ form   $\alpha$ on $M$ and let us recall the definition of the supremum norm (i.e., the uniform sup norm)  of $\alpha$: 
   for each $x\in M$, we know  
  that $\alpha$ induces a linear map $\alpha_x : T_xM \rightarrow \mathbb{R},$ 
  whose norm is given by 
  \begin{equation}
  \|\alpha_x\|^{g^M} := \sup\bigg\{ |\alpha_x(X)| \ ; \ X\in T_xM,  \|X\|_{g^M} = 1\bigg\},
  \end{equation}
  where $\|\cdot\|_{g^M}$ is the norm induced 
  on each tangent space $ T_xM$ (at the point $x$) by the Riemannian metric $g^M$. 
  Therefore, the uniform sup norm of $\alpha$, 
  say $|\cdot|_{0}$, is defined as
  \begin{equation}
  |\alpha|_{0} := \sup_{x\in M}\|\alpha_x\|^{g^M}.
  \end{equation}
  On the other hand, since $p^\ast\alpha$ is a $1-$form on $\widetilde M$, then for each $(x, y)\in \widetilde M$, we have 
  \begin{eqnarray}\begin{array}{cclccccccc}
  \|p^\ast(\alpha)_{|(x,y)}\|^{\tilde g} &= & \sup\bigg\{ |\alpha_x(p_\ast(Y))|  \ ; \  Y\in T_{(x,y)}\tilde M,  \|Y\|_{\tilde g} = 1\bigg\}\nonumber\\
  &=&  \sup\bigg\{ |\alpha_x(Y_1)|  \ ; \  (Y_1 + Y_2) \in T_{x} M\oplus T_y N\ \text{ and }\  \|Y_1\|_{g^M} +  \|Y_2\|_{g^N}= 1\bigg\}\nonumber \\
  & \leqslant & \sup\bigg\{ |\alpha_x(Y_1)|  \ ; \  Y_1  \in T_{x} M, \|Y_1\|_{g^M} \leqslant 1\bigg\},
  \end{array}\end{eqnarray}
  where $\|\cdot\|_{g^M}$ (resp. $\|\cdot\|_{g^N}$) is the norm induced on each tangent space $ T_xM$ (resp. $T_y N$) by
  the Riemannian metric $g^M$ (resp. $g^N$). Therefore, 
  \begin{equation}\label{Compar1}
  \|p^\ast(\alpha)_{|(x,y)}\|^{\tilde g} \leqslant  \|\alpha_x\|^{g^M},
  \end{equation}
 for each $(x,y)\in \tilde M$, implies that
  \begin{equation}\label{Compar 2}
  |p^\ast(\alpha)|_{0}  \leqslant  |\alpha|_{0}. 
  \end{equation}
  
  \subsubsection{Splitting of closed $1$-forms and the uniform sup norm }\label{SC01}
  Let $H^1(M,\mathbb{R})$ (resp. $H^1(\widetilde M,\mathbb{R})$) denote the first de Rham cohomology group (with real coefficients) of $M$ (resp. $\widetilde M$)
  and let $\mathcal{Z}^1(M)$ (resp. $\mathcal{Z}^1(\widetilde M)$) denote the space of all closed $1-$forms on $M$ (resp. $\widetilde M$).  
  Consider the map 
  \begin{equation}\label{eq1}
  \mathcal{S} :   H^1(M,\mathbb{R})\longrightarrow \mathcal{Z}^1(M),
  \end{equation}
  to be a fixed linear section of the natural 
  projection 
  \begin{equation}\label{eq2}
  \mathcal{\pi}_M: \mathcal{Z}^1(M)\longrightarrow  H^1(M,\mathbb{R}).
  \end{equation}  
  Each $\alpha\in \mathcal{Z}^1(M)$ splits as :
  \begin{equation}\label{eq3}
  \alpha = \mathcal{S}(\mathcal{\pi}_M(\alpha)) + (\alpha - \mathcal{S}(\mathcal{\pi}_M(\alpha))).
  \end{equation}
  We shall call the $1-$form $(\alpha - \mathcal{S}(\mathcal{\pi}_M(\alpha)))$ the exact part of $\alpha$ and throughout 
  all the paper, for simplicity, when this will be necessary, the  latter $1-$form will be denoted $df_{\alpha,\mathcal{S} }$  to mean that it is the differential of a certain function  that depends on $\alpha$ and $\mathcal{S}$ ; while we shall  call the $1-$form $\mathcal{S}(\mathcal{\pi}_M(\alpha))$ the $\mathcal{S}-$form of $\alpha$.  Let $\mathbb H^1(M,\mathcal{S})$ denote the space of all $\mathcal{S}-$forms and define the set 
  $\mathbb{B}^1(M)$ as  :   $$\mathbb{B}^1(M):= (\mathcal{Z}^1(M) \smallsetminus  \mathbb H^1(M,\mathcal{S}))\cup\{0\}.$$
  We then  have the following direct sum : 
  \begin{equation}\label{eq4}
  \mathcal{Z}^1(M) = \mathbb H^1(M,\mathcal{S})\oplus_\mathcal{S}\mathbb{B}^1(M),
  \end{equation}
  with 
  \begin{equation}
  \dim(\mathbb H^1(M,\mathcal{S})) = \dim(H^1(M,\mathbb{R}))< \infty,
  \end{equation}
  for each  linear section $\mathcal{S} $ (see \cite{Tchuiaga2}).\\
  
  Denote by $ \mathcal P\mathbb H^1(M,\mathcal{S})$ the space of all smooth mappings 
  $\mathcal{H}: [0,1]\rightarrow\mathbb H^1(M,\mathcal{S})$. Since both spaces $\mathbb H^1(M,\mathcal{S})$ and $H^1(M,\mathbb{R})$ are isomorphic and  $H^1(M,\mathbb{R})$ is a finite dimensional vector space whose dimension is the first Betti number $b_1(M)$, then
  $\mathbb H^1(M,\mathcal{S})$ is of finite dimension \cite{DeRam}. Thus, there exists a positive constants 
  $K_1(g)$ and $k_2(g)$ which depend on the Riemannian metric $g$ on $M$ such that  
  \begin{equation}\label{sharp 1}
  k_1(g)\|\alpha\|_{L^2} \leqslant |\alpha|_{0} \leqslant k_2(g)\|\alpha\|_{L^2},
  \end{equation}
  for all $ \alpha\in \mathbb H^1(M,\mathcal{S})$. On the other hand, consider the projection $p : \widetilde M\longrightarrow M$, and let 
  \begin{equation}
  \mathcal{\pi}_{\widetilde{M}}: \mathcal{Z}^1(\widetilde M)\longrightarrow  H^1(\widetilde M,\mathbb{R}).
  \end{equation} 
  be the canonical projection, where  $\mathcal{Z}^1(\widetilde M) $ is the set of all closed $1-$forms on $\widetilde M$:  we have the commutative diagram 
  \begin{eqnarray}
  \begin{array}{cccccccccc}
  \mathcal{Z}^1( M) & \stackrel{p^\ast}{\longrightarrow} & \mathcal{Z}^1(\widetilde M)\\[.1cm]
  \mathcal{\pi}_{M}\downarrow &   & \downarrow \mathcal{\pi}_{\widetilde{M}} \\[.1cm]
  H^1(M,\mathbb{R}) & \stackrel{ p^\ast}{\longrightarrow}  &  H^1(\widetilde M,\mathbb{R})
  \end{array}\nonumber 
  \end{eqnarray}
  namely $ p^\ast\circ  \mathcal{\pi}_{M} =  \mathcal{\pi}_{\widetilde M}\circ p^\ast$. The following composition of linear mappings 
  \begin{eqnarray}\begin{array}{ccccccc}
  \mathbb H^1(M,\mathcal{S})& \xrightarrow{ \ \pi_M \ } &H^1(M,\mathbb{R})&\xrightarrow{\ p^\ast } &H^1(\widetilde M,\mathbb{R})\nonumber\\
  \alpha & \longmapsto& \pi_M(\alpha) &\longmapsto & p^\ast(\pi_M (\alpha)),
  \end{array}\end{eqnarray}
  is continuous, then there is a constant $\kappa_0$ such that 
  \begin{equation}
  \|  \pi_{\widetilde{M}}(p^\ast(\alpha))\|_{L^2} = \|  p^\ast(\pi_M (\alpha)) \|_{L^2} \leqslant \kappa_0 |\alpha|_0,
  \end{equation}
  since from the commutation of the previous diagram we have $ p^\ast\circ  \mathcal{\pi}_{M} =  \mathcal{\pi}_{\widetilde M}\circ p^\ast$. \\
  Let $\tilde S : H^1(\widetilde M,\mathbb{R})\longrightarrow \mathcal{Z}^1(\widetilde M)$ be any fixed linear section of $\pi_{\widetilde{M}}$, then 
  there exists a positive constant $\upsilon_0$ such that
  \begin{equation}
  |\tilde S(\pi_{\widetilde{M}}(\theta)) |_0\leqslant \upsilon_0  \|  \pi_{\widetilde{M}}(\theta) \|_{L^2}, \qquad \forall \ \theta \in \mathcal{Z}^1(\widetilde M).
  \end{equation}
  Summarizing the above inequalities gives:   
  \begin{equation}\label{sharp 2}
  |\tilde S\big(\pi_{\widetilde{M}}(p^\ast(\mathcal{S}(\pi_M(\alpha))))\big) |_0 \leqslant 
  \upsilon_0 \| \pi_{\widetilde{M}} \big(p^\ast(\mathcal{S}(\pi_M(\alpha)))\big) \|_{L^2}  
  \leqslant \upsilon_0\kappa_0 |\mathcal{S}(\pi_M(\alpha))|_0, \qquad \forall \ \alpha \in \mathcal{Z}^1( M).
  \end{equation}
 \subsection{Co-Hofer-like geometries} \label{Hofer-N-2}
  For any $X\in \chi_{\eta, \omega}(M)$, the closed $1-$forms $\imath_X\omega$ and $\eta(X)\eta$ split as  follows:
  \begin{equation}\label{equ5}
  \imath_X\omega = \mathcal{H}_\omega + dU_\omega,
  \qquad \text{ and } \qquad 
  \eta(X)\eta = \mathcal{K}_\eta + d V_\eta.
  \end{equation}
  Hence, the closed $1-$form, $\widetilde I_{\eta, \omega}(X) $ splits as : 
  \begin{equation}\label{equ6}
  \widetilde I_{\eta, \omega}(X) = \left(\mathcal{K}_\eta + \mathcal{H}_\omega \right) + d\left( U_\omega + V_\eta\right).
  \end{equation}
  From the above splitting, one defines a norm $\|\cdot\|_{C}^\mathcal{S} $ on 
  $\chi_{\eta, \omega}(M)$ as follows: for any  \\ $X\in \chi_{\eta, \omega}(M)$, 
  \begin{equation}\label{equ7}
  \|X\|_{C}^\mathcal{S} := \|\mathcal{K}_\eta + \mathcal{H}_\omega\|_{L^2} + \nu^B(d(U_\omega + V_\eta)) + | \eta(X)|,
  \end{equation}
  where $ \|\cdot\|_{L^2}$ is the $L^2-$Hodge norm and $ \nu^B$ is any norm on $ \mathbb{B}^1(M)$ which we assume to be  equivalent to the oscillation norm  (see \cite{Tchuiaga2}) : 
  $$osc(df)= \max_xf(x) -\min_xf(x), \qquad \forall \ f\in C^\infty(M). $$
  \begin{theorem}\label{thm2}
  	Let $(M, \omega, \eta)$ be a compact cosymplectic manifold, let $ \mathcal{S}$ and $\mathcal{T}$ be two linear sections of the projection 
  	$\mathcal{\pi} : \mathcal{Z}^1(M)\rightarrow  H^1(M,\mathbb{R}) $. Then, the two norms $\|\cdot \|_{C}^\mathcal{S} $ and $\|\cdot\|_{C}^\mathcal{T} $ are equivalent. 
  \end{theorem}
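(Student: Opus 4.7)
The plan is to exploit the fact that, for a compact manifold $M$, the de Rham cohomology space $H^1(M,\mathbb{R})$ has finite dimension $b_1(M)$, so that any two linear sections of $\pi_M:\mathcal{Z}^1(M)\to H^1(M,\mathbb{R})$ differ by a linear map taking values in the space of exact $1$-forms. Once this is noted, the whole discrepancy between the two norms is a finite-dimensional correction whose boundedness is automatic.

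First I would set $L:=\mathcal{T}-\mathcal{S}:H^1(M,\mathbb{R})\to\mathcal{Z}^1(M)$; for every $c$ the form $L(c)$ is cohomologous to zero, hence exact, so $L$ takes values in $\mathbb{B}^1(M)$. For $X\in\chi_{\eta,\omega}(M)$, observe that $\mathcal{L}_X\eta=0$ together with $d\eta=0$ forces $d(\eta(X))=0$, so $\eta(X)\eta$ is closed; $\imath_X\omega$ is closed by the analogous argument involving $\mathcal{L}_X\omega=0$. Setting $c_X:=\pi_M(\widetilde{I}_{\eta,\omega}(X))\in H^1(M,\mathbb{R})$, the uniqueness of the direct sum decomposition $\mathcal{Z}^1(M)=\mathbb{H}^1(M,\cdot)\oplus\mathbb{B}^1(M)$ gives the identities
\begin{equation*}
\mathcal{K}_\eta^{\mathcal{T}}+\mathcal{H}_\omega^{\mathcal{T}}=\mathcal{K}_\eta^{\mathcal{S}}+\mathcal{H}_\omega^{\mathcal{S}}+L(c_X),\qquad d(U_\omega^{\mathcal{T}}+V_\eta^{\mathcal{T}})=d(U_\omega^{\mathcal{S}}+V_\eta^{\mathcal{S}})-L(c_X),
\end{equation*}
where the superscripts $\mathcal{S}$, $\mathcal{T}$ indicate which section was used in the splitting \eqref{equ5}.

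Next I would fix any norm $\|\cdot\|_\ast$ on the finite-dimensional space $H^1(M,\mathbb{R})$ and invoke the basic principle that any linear map out of a finite-dimensional normed space is bounded into any normed target. This produces constants $C_1,C_2>0$ with $\|L(c)\|_{L^2}\leq C_1\|c\|_\ast$ and $\nu^B(L(c))\leq C_2\|c\|_\ast$ for all $c\in H^1(M,\mathbb{R})$. The linear map $\mathcal{S}:H^1(M,\mathbb{R})\to\mathbb{H}^1(M,\mathcal{S})$ is an isomorphism between finite-dimensional spaces, so $c\mapsto\|\mathcal{S}(c)\|_{L^2}$ is an equivalent norm; in particular there is $b>0$ with $\|c_X\|_\ast\leq b\|\mathcal{K}_\eta^{\mathcal{S}}+\mathcal{H}_\omega^{\mathcal{S}}\|_{L^2}$. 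Plugging these estimates into the triangle inequality applied to the displayed identities yields $\|X\|_C^{\mathcal{T}}\leq(1+b(C_1+C_2))\|X\|_C^{\mathcal{S}}$, and the reverse inequality follows by interchanging the roles of $\mathcal{S}$ and $\mathcal{T}$.

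The most delicate point is not any single estimate but the structural observation that the correction $L(c_X)$ enters coherently in both pieces of the norm: it shifts the harmonic part of $\widetilde{I}_{\eta,\omega}(X)$ by $+L(c_X)$ and the exact part by $-L(c_X)$, with no further degrees of freedom. This coherent splitting is what allows one to absorb the correction into a single bound on $\|c_X\|_\ast$ controlled by $\|X\|_C^{\mathcal{S}}$. Once this is in place, the equivalence of $\|\cdot\|_C^{\mathcal{S}}$ and $\|\cdot\|_C^{\mathcal{T}}$ reduces to the single fact that linear maps out of $H^1(M,\mathbb{R})$ are automatically continuous and all norms on this finite-dimensional space are equivalent.
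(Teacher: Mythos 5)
Your proof is correct, and it is in fact more complete than the one printed in the paper. The paper reduces the claim to the two inequalities (\ref{EQL1})--(\ref{EQL2}) and then defers to ``similar arguments to those used in Banyaga for Hodge's decomposition'' without carrying them out; you actually prove those inequalities. The device that does the work for you is the difference map $L=\mathcal{T}-\mathcal{S}$, which lands in the exact $1$-forms because both sections split the same projection $\pi_M$, together with the observation that $\mathcal{K}_\eta^{\mathcal{S}}+\mathcal{H}_\omega^{\mathcal{S}}=\mathcal{S}(c_X)$ by linearity of the section, so that the $\mathcal{T}$-harmonic and $\mathcal{T}$-exact parts are obtained from the $\mathcal{S}$-parts by adding and subtracting the single correction $L(c_X)$. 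Both arguments ultimately rest on the same two facts --- $\dim H^1(M,\mathbb{R})=b_1(M)<\infty$ and $\mathbb H^1(M,\mathcal{S})\cap\mathbb{B}^1(M)=\{0\}$ --- but yours converts them into explicit constants $C_1$, $C_2$, $b$ via the automatic boundedness of linear maps out of a finite-dimensional normed space, which is precisely the content the paper outsources to a citation. One small remark: the paper's literal definition of $\mathbb{B}^1(M)$ as $(\mathcal{Z}^1(M)\smallsetminus\mathbb H^1(M,\mathcal{S}))\cup\{0\}$ is not a vector subspace; your reading of it as the space of exact $1$-forms is the only one consistent with the splitting (\ref{eq3}), with the direct sum (\ref{eq4}), and with $\nu^B$ being equivalent to the oscillation norm, and it is clearly what the authors intend.
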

  
  \begin{proof} Let $X$ be a strict cosymplectic vector field such that 
  	$$\widetilde I_{\eta, \omega}(X) = \left(\mathcal{K}_\eta^\mathcal{S} + \mathcal{H}_\omega^\mathcal{S} \right) + d\left( U_\omega^\mathcal{S} + V_\eta^\mathcal{S}\right) ,$$ with respect to the  $\mathcal{S}-$decomposition, and 
  	$$\widetilde I_{\eta, \omega}(X) = \left(\mathcal{K}_\eta^\mathcal{T} + \mathcal{H}_\omega^\mathcal{T} \right) + d\left( U_\omega^\mathcal{T} + V_\eta^\mathcal{T}\right),$$ 
  	with respect to the  $\mathcal{T}-$decomposition. It is enough to show that there exists $C_1> 0$, and $C_2> 0$ such that 
  	$$C_1 \|X\|_{C}^\mathcal{T}  \leqslant \|X\|_{C}^\mathcal{S}  \leqslant C_2 \|X\|_{C}^\mathcal{T} .$$
  	Since $\dim(\mathbb H^1(M,\mathcal{S})) = \dim(H^1(M,\mathbb{R})) = \dim(\mathbb H^1(M,\mathcal{T}))< \infty$, then all the norms on each of the spaces 
  	$H^1(M,\mathcal{S})$ and $H^1(M,\mathcal{T})$ we shall equip  $H^1(M,\mathcal{S})$ with a basis $\mathbf{B}$ (resp. $H^1(M,\mathcal{T})$ with a basis $\mathbf{B}'$) 
  	and denote by  $\|\cdot\|_{\mathbf{B}}$ (resp.  $\|\cdot\|_{\mathbf{B}'}$)  the  corresponding norm. So, we only have to show that 
  	\begin{equation}\label{EQL1}
  	C_1 (\nu^B(d(U_\omega^\mathcal{T} + V_\eta^\mathcal{T})) + \|\mathcal{K}_\eta^\mathcal{T} + \mathcal{H}_\omega^\mathcal{T} \|_{\mathbf{B}'} + 
  	| \eta(X)|) \leqslant (\nu^B(d( U_\omega^\mathcal{S} + V_\eta^\mathcal{S})) + \lVert\mathcal{K}_\eta^\mathcal{S} + \mathcal{H}_\omega^\mathcal{S} \rVert_{\mathbf{B}} + | \eta(X)|),
  	\end{equation}
  	and
  	\begin{equation}\label{EQL2}
  	(\nu^B(d( U_\omega^\mathcal{S} + V_\eta^\mathcal{S})) + \|\mathcal{K}_\eta^\mathcal{S} + \mathcal{H}_\omega^\mathcal{S} \|_{\mathbf{B}} + | \eta(X)|) \leqslant C_2 (\nu^B(d(U_\omega^\mathcal{T} + V_\eta^\mathcal{T})) + \| \mathcal{K}_\eta^\mathcal{T} + \mathcal{H}_\omega^\mathcal{T}\|_{\mathbf{B}'} 
  	+ | \eta(X)|).
  	\end{equation}
  	
  	The inequalities (\ref{EQL1}) and  (\ref{EQL2}) follow from similar arguments to those used in Banyaga \cite{Banyaga08} for Hodge's decomposition. But, here 
  	the uniqueness of harmonic part in Hodge's decomposition is replaced by the fact that $\mathbb H^1(M,\mathcal{S}) \cap \mathbb{B}^1(M) = \{0\} $ 
  \\	(resp. $\mathbb H^1(M,\mathcal{T}) \cap \mathbb{B}^1(M) = \{0\} $). 
  \end{proof}
  
  Base on Theorem \ref{thm2}, we shall denote the norm $\|\cdot\|_{C}^\mathcal{S} $, simply by $\|\cdot\|_{C}$ no matter the choice of the 
  linear section  $\mathcal{S}$. 
  \subsubsection{Co-Hofer-like lengths} 
  Let $ \Phi = \{\phi_t\}\in Iso_{\eta, \omega}(M)$,
  for each $t$, we have  
  $$ \|\dot{\phi}_t\|_{C} := \|\mathcal{K}_\eta^t + \mathcal{H}_\omega^t\|_{L^2} + osc(U_\omega^t + V_\eta^t) + | C_{\Phi,\eta}^t|.$$ Therefore, we define the $L^{(1, \infty)}-$version of the co-Hofer-like length of $\Phi: =\{\phi_t\}$ as:
  \begin{equation}\label{equ8}
  l_{Co}^{(1,\infty)} (\Phi) : = \int_0^1\|\dot{\phi}_t\|_{C}dt,
  \end{equation}
  and, $L^{\infty}-$version of the co-Hofer-like length of $\Phi$ as  :
  \begin{equation}\label{equ9}
  l_{Co}^{\infty} (\Phi) : = \max_{t\in [0,1]}\|\dot{\phi}_t\|_{C}. 
  \end{equation}
  Since $$\widetilde I_{\eta, \omega}(\dot{\phi}_t) = \left(\mathcal{K}_\eta^t + \mathcal{H}_\omega^t \right) + d\left( U_\omega^t + V_\eta^t\right),$$ 
  for each $t$, we have 
  $\widetilde I_{\eta, \omega}(\dot{\phi}_{-t}) = -\phi_t^\ast\left(\mathcal{K}_\eta^t + \mathcal{H}_\omega^t \right) - d\left( U_\omega^t\circ\phi_t + V_\eta^t\circ\phi_t\right),$
  i.e.,
  $$\widetilde I_{\eta, \omega}(\dot{\phi}_{-t}) = -\left(\mathcal{K}_\eta^t + \mathcal{H}_\omega^t \right) - d\left( U_\omega^t\circ\phi_t + V_\eta^t\circ\phi_t +\Delta_t(\mathcal{K}_\eta + \mathcal{H}_\omega,\Phi)\right) ,$$ 
  with $\Delta_t(\alpha,\Phi)
  :=\displaystyle  \int_0^t\alpha_t(\dot{\phi}_{s})\circ\phi_{s}ds$. Hence, we see that in general, we  may have
  \begin{equation}\label{equ10}
  l_{Co}^{(1,\infty)} (\Phi) \ne  l_{Co}^{(1,\infty)} (\Phi^{-1}),
  \end{equation}
  or,
  \begin{equation}\label{equ11}
  l_{Co}^{\infty} (\Phi) \ne  l_{Co}^{\infty} (\Phi^{-1}). 
  \end{equation}
  The restriction of the above lengths to the group $ H_{\eta, \omega}(M)$ will be called co-Hofer lengths, and denoted $ l_{CH}^{\infty}$, and 
  $ l_{CH}^{(1,\infty)}$. Indeed, if  $\Phi_F = \{\phi_t\}$ is a co-Hamiltonian isotopy such that $\widetilde I_{\eta, \omega} (\dot\phi_t) = dF_t$, for all $t$, then 
  \begin{equation}\label{equ13}
  l_{CH}^{(1,\infty)} (\Phi_F) = \int_0^1 \left( osc(F_t) + | C_{\Phi_F,\eta}^t|\right) dt,
  \end{equation}
  and 
  \begin{equation}\label{equ14}
  l_{CH}^{\infty} (\Phi_F) = \max_t \left( osc(F_t) + | C_{\Phi_F,\eta}^t|\right). 
  \end{equation}
  Notice that the lengths $ l_{CH}^{\infty}$, and 
  $ l_{CH}^{(1,\infty)}$ are symmetric.

  \subsubsection{Displacement energy of fibers}
  Assume that $\Phi = \{\phi_t\}$ and $\tilde\Phi = \{\tilde\phi_t\}$ are as defined in the Subsection \ref{Trans} with $\phi_1\neq id_M$ ;  let $ l_{HL}$   denote the Hofer-like length and by $E_S$, we denote the symplectic displacement energy defined on the closed symplectic manifold $(\widetilde M, \widetilde\omega)$ \cite{Banyaga08, Tchuiaga2}. 
  Since $\phi_1\neq id_M$, then $ \tilde\phi_1 \neq id_{\tilde M}$, i.e., there exists a compact subset $\mathbf{B}_0\subset \widetilde M$ such that 
  $ \tilde\phi_1(\mathbf{B}_0)\cap\mathbf{B}_0 = \emptyset $. 
  We may assume that  $\mathbf{B}_0$ is  
  of the form $\mathbf{B}\times \mathbf{C}$, with $\mathbf{B}$ a compact subset of $M$ and $\mathbf{C}$  a compact subset of $\mathbb{S}^1$. Thus, for 
  each fixed $\theta\in \mathbf{C}$, the compact fiber $\mathbf{B}\times \{\theta\}$ is also completely displaced by $ \tilde\phi_1$. Therefore, 
  we have 
  \begin{equation}\label{Ene-trans1}
  0<  E_S(\mathbf{B}\times \{\theta\})\leqslant  l_{HL}(\tilde\Phi), \qquad \forall \, \theta\ \in \mathbf{C}.
  \end{equation}
  Since the map $\theta \longmapsto E_S(\mathbf{B}\times \{\theta\})$ is bounded and positive on $\mathbf{C}$, we derive that
  \begin{equation}\label{Ene-trans2}
  0<  \frac{1}{2\pi}\int_{\mathbf{C}} E_S(\mathbf{B}\times \{\theta\})d\theta\leqslant  l_{HL}(\tilde\Phi).
  \end{equation}
  On the other hand, from (\ref{Hofer-trans}), if $ \widetilde I_{\eta, \omega}(\dot{\Phi}_t) = \mathcal{H}^t_\omega  + \mathcal{K}_\eta^t + d(U_\eta^t + U_\omega^t) $, for all $t$, then we derive that
  \begin{eqnarray}\label{Hofer-trans1}
  \imath_{\dot{\tilde \Phi}_t}\tilde{\omega} &=& p^\ast( \imath_{\dot {\phi}_t }\omega)  +   p^\ast( C_{\Phi,\eta}^t) d\theta + p^\ast(C_{\Phi,\eta}^t\eta)\nonumber \\
  &=&  p^\ast(\mathcal{H}^t_\omega  + \mathcal{K}_\eta^t) + d(C_{\Phi,\eta}^t{\color{blue}\pi_2} + U_\eta^t + U^t_\omega ), \ \text{ for each } \ t.
  \end{eqnarray}
  Thus, 
  \begin{eqnarray}\label{Hofer-trans4}
  l_{HL}^\infty(\tilde{\Phi}) &=& \max_t \left( \lVert \pi_{\widetilde{M}} ( p^\ast(\mathcal{H}^t_\omega  + \mathcal{K}_\eta^t))\rVert_{L^2} + 
  osc ( C_{\Phi,\eta}^t\pi_2 + U_\eta^t + U^t_\omega)\right)\nonumber\\
  &\leqslant &  \max_t \left( \lVert \pi_{\widetilde{M}} ( p^\ast(\mathcal{H}^t_\omega  + \mathcal{K}_\eta^t))\rVert_{L^2} + 
  osc ( U_\eta^t + U^t_\omega) + \pi| C_{\Phi,\eta}^t|\right).
  \end{eqnarray}
  By (\ref{sharp 2}), we have $$ \| \pi_{\widetilde{M}}( p^\ast(\mathcal{H}^t_\omega  + \mathcal{K}_\eta^t))\|_{L^2}\leqslant \kappa_0 |\mathcal{H}^t_\omega  + \mathcal{K}_\eta^t |_0,$$ whereas by  (\ref{sharp 1}), we have
  $$|\mathcal{H}^t_\omega  + \mathcal{K}_\eta^t |_0\leqslant k_2(g)  \|\mathcal{H}^t_\omega  + \mathcal{K}_\eta^t\|_{L^2},$$ so, it follows that 
  \begin{equation}\label{Hofer-trans 5}
  l_{HL}^\infty(\tilde \Phi) \leqslant 2\max\{(1 + \kappa_0k_2(g)), \pi\} l_{Co}^\infty( F_j).
  \end{equation}
  Thus, (\ref{Ene-trans1}) and (\ref{Hofer-trans 5}) imply that 
  \begin{equation}\label{Ene-trans4}
  0<  \frac{1}{4\pi\max\{(1 + \kappa_0k_2(g)), \pi\}}\int_{\mathbf{C}} E_S(\mathbf{B}\times \{\theta\})d\theta\leqslant  l_{Co}^\infty( \Phi),
  \end{equation}
  In the rest of this paper, we shall refer to (\ref*{Ene-trans4}) as the Co-energy-inequality. \\
  
  \subsubsection{Gromov area of fibers}
  Assume that $\Phi_F = \{\phi_t\}$ is a co-Hamiltonian isotopy such that $\widetilde I_{\eta, \omega} (\dot\phi_t) = dF_t$, for all $t$, and 
  $\tilde\Phi_F = \{\tilde\phi_t\}$ is defined via $\Phi_F$ as in the Subsection \ref{Trans} with $\phi_1\neq id_M$, let
  $C_W(\overline{B})$ represents the Gromov area of a ball $\overline{B}$ on the closed symplectic manifold $(\tilde M, \tilde\omega)$ \cite{Lal-McD95}. 
  Since $\phi_1\neq id_M$, then $ \tilde\phi_1 \neq id_{\tilde M}$, i.e., there exists a compact subset $\mathbf{B}_0\subset \tilde M$ such that 
  $ \tilde\phi_1(\mathbf{B}_0)\cap\mathbf{B}_0 = \emptyset $. 
  We may assume that  $\mathbf{B}_0$ is  
  of the form $\mathbf{B}\times \mathbf{C}$, with $\mathbf{B}$ a compact subset of $M$, and $\mathbf{C}$ subset a compact of $\mathbb{S}^1$. Thus, for 
  each fixed $\theta\in \mathbf{C}$, the compact fiber $\mathbf{B}\times \{\theta\}$ is also completely displaced by $ \tilde\phi_1$. Therefore, 
  
  \begin{equation}\label{Ene-trans7-2}
  0<  \frac{1}{(2 \pi)^2}\int_{\mathbf{C}} C_W(\mathbf{B}\times \{\theta\})d\theta\leqslant  l_{CH}^\infty( \Phi_F).
  \end{equation}
  In the rest of this paper, we shall refer to (\ref*{Ene-trans7-2}) as the co-capacity-inequality. \\
  
  Here is the cosymplectic analogues of Theorem $6-$\cite{Hof-Zen94}. 
  \begin{theorem}\label{maint1} 
  	Let $(M,\eta, \omega)$ be a closed  
  	cosymplectic manifold.
  	Let $ \Phi = \{\phi_i^t\}$ be a sequence of cosymplectic isotopies, 
  	$\Psi = \{\psi^t\}$ be another cosymplectic isotopy, and  $\phi : M\rightarrow M$ be a map such that
  	\begin{itemize}
  		\item  $(\phi_i^1)$ converges 
  		uniformly to $\phi$, and 
  		\item $l_{C}^{\infty}(\Psi^{-1}\circ\{\phi_i^t\})\rightarrow0,i\rightarrow\infty$.
  	\end{itemize}
  	Then we must have $\phi = \psi^1.$
  \end{theorem}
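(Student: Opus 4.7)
My plan is to proceed by contradiction, leveraging the Co-energy-inequality (\ref{Ene-trans4}) and the lifting construction of Subsection \ref{Trans} to transfer the problem to the symplectic manifold $(\widetilde{M},\widetilde{\omega})$, where the symplectic displacement energy is known to be strictly positive on nonempty open sets. The key observation is that the hypothesis $l_{C}^{\infty}(\Psi^{-1}\circ\{\phi_i^t\})\to 0$ is exactly the kind of smallness that the Co-energy-inequality translates into non-displacement information about the time-one map of $\Psi^{-1}\circ\Phi_i$.

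First, I would suppose for contradiction that $\phi\neq\psi^1$. Then there exists a point $x_0\in M$ with $\psi^{-1}(\phi(x_0))\neq x_0$, so by continuity one can choose a compact ball $\overline{\mathbf{B}}\subset M$ containing $x_0$ in its interior such that $\psi^{-1}(\phi(\overline{\mathbf{B}}))\cap \overline{\mathbf{B}}=\emptyset$. Using the uniform convergence $\phi_i^1\to\phi$, for all $i$ sufficiently large the time-one map of $\Psi^{-1}\circ\Phi_i$, namely $(\psi^1)^{-1}\circ\phi_i^1$, also satisfies $(\psi^1)^{-1}\circ\phi_i^1(\overline{\mathbf{B}})\cap\overline{\mathbf{B}}=\emptyset$.

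Next, I would lift: by the construction in \textsf{Fact 1} of Subsection \ref{CHDS}, each cosymplectic isotopy $\Psi^{-1}\circ\Phi_i=\{(\psi^t)^{-1}\circ\phi_i^t\}$ induces a symplectic isotopy $\widetilde{\Psi^{-1}\circ\Phi_i}$ on $(\widetilde{M},\widetilde{\omega})$ whose first component of the time-one map equals $(\psi^1)^{-1}\circ\phi_i^1$. Consequently, for $i$ large the set $\mathbf{B}_0:=\overline{\mathbf{B}}\times\mathbb{S}^1$ is completely displaced by $\widetilde{(\psi^1)^{-1}\circ\phi_i^1}$, because the projections onto $M$ of $\mathbf{B}_0$ and of its image under the lift are disjoint. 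Applying the Co-energy-inequality (\ref{Ene-trans4}) with $\mathbf{C}=\mathbb{S}^1$ gives
\begin{equation*}
0<\frac{1}{4\pi\max\{(1+\kappa_0 k_2(g)),\pi\}}\int_{\mathbb{S}^1} E_S(\overline{\mathbf{B}}\times\{\theta\})\,d\theta\leq l_{Co}^{\infty}(\Psi^{-1}\circ\Phi_i).
\end{equation*}
The left-hand side is a fixed positive constant (positivity of $E_S$ on the nonempty open fiber $\overline{\mathbf{B}}\times\{\theta\}$, plus lower semicontinuity in $\theta$), while the right-hand side tends to $0$ by hypothesis, giving the required contradiction and forcing $\phi=\psi^1$.

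The main delicate step is justifying that $\theta\mapsto E_S(\overline{\mathbf{B}}\times\{\theta\})$ is bounded below by a uniform positive constant (or at least has positive integral) independently of $i$; this follows because the set $\overline{\mathbf{B}}\times\{\theta\}$ has a fixed open neighborhood in $\widetilde{M}$ whose symplectic displacement energy is positive and depends only on $\overline{\mathbf{B}}$ and $\widetilde{\omega}$, not on the sequence. Everything else is a routine chaining of the lifting estimate (\ref{Hofer-trans 5}) with the contradiction hypothesis.
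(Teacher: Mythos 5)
Your proposal is correct and follows essentially the same route as the paper's own proof: assume $\phi\neq\psi^1$, extract a compact set displaced by $(\psi^1)^{-1}\circ\phi_i^1$ for large $i$, and contradict the Co-energy-inequality (\ref{Ene-trans4}) since its right-hand side $l_{Co}^{\infty}(\Psi^{-1}\circ\{\phi_i^t\})$ tends to zero while the left-hand side is a fixed positive constant. The only cosmetic difference is that you make the lifting to $(\widetilde{M},\widetilde{\omega})$ and the displacement of the full fiber product $\overline{\mathbf{B}}\times\mathbb{S}^1$ explicit, whereas the paper invokes the inequality directly with an unspecified compact $\mathbf{C}_0\subset\mathbb{S}^1$.
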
 
  \begin{proof} Let assume that $\phi \neq \psi^1,$ i.e., there exists a  compact subset $\mathbf{B}_0\subseteq M$ which is completely displaced by 
  	$ (\psi^1)^{-1}\circ \phi$, and since the convergence $ \phi_i^1\longrightarrow \phi$, is uniformly, then 
  	we may assume that $ (\psi^1)^{-1}\circ \phi_i^1$, completely displace $\mathbf{B}_0$, for all $i$ sufficiently large. 
  	Fix $i_0$ to be a sufficiently large natural number. We have 
  	a sequence of cosymplectic isotopies $\{ \Psi^{-1}\circ\{\phi_j^t\}\}_{j\geqslant i_0}$ with time-one map  $ (\psi^1)^{-1}\circ \phi_j^1$, for all 
  	$j\geqslant i_0$. Then, we derive from the Co-energy-inequality that 
  	\begin{equation}\label{Ene-trans7}
  	0<  \frac{1}{4\pi\max\{(1 + \kappa_0k_2(g)), \pi\}}\int_{\mathbf{C}_0} E_S(\mathbf{B}_0\times \{.\})d\theta\leqslant  l_{Co}^\infty( \Psi^{-1}\circ\{\phi_j^t\}),
  	\end{equation}
  	for some non-trivial compact subset $\mathbf{C}_0$ of the unit circle, and 
  	for all $j\geqslant i_0$. Since the right-hand side in (\ref{Ene-trans4}) tends to zero as $j$ tend to infinity, then 
  	(\ref{Ene-trans4}) yields a contradiction. 
  \end{proof} 
  
  The following result is an immediate consequence of Theorem \ref{maint1}. It can justify the existence of a $C^0-$counterpart of 
  cosymplectic geometry (see \cite{BanTchu}). 
  \begin{corollary}\label{Comaint}
  	Let $\Phi_i = \{\phi_i^t\}$ be a sequence of symplectic isotopies,
  	$\Psi = \{\psi_t\}$ be another symplectic isotopy, and  $\varXi:t\mapsto \varXi_t$ be a 
  	family of maps $\varXi_t : M\rightarrow M$, 
  	such that the sequence $\Phi_i$ converges  uniformly 
  	to $\varXi$ and  $l_{C}^{\infty}(\Psi^{-1}\circ\Phi_i)\rightarrow0,i\rightarrow\infty$. 
  	Then $\varXi = \Psi.$
  \end{corollary}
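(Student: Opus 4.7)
The plan is to deduce the corollary from Theorem \ref{maint1} by a pointwise-in-time argument: for each fixed $t_0\in[0,1]$ we will produce, via reparametrization, a setting to which Theorem \ref{maint1} applies and concludes $\varXi_{t_0}=\psi_{t_0}$. Interpreting ``symplectic'' in the statement as ``cosymplectic'' (as the context of the section dictates and as is needed for consistency with the hypothesis $l_C^\infty(\Psi^{-1}\circ\Phi_i)\to 0$), the plan covers the claim verbatim.

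First I would fix $t_0\in(0,1]$ and introduce the reparametrized isotopies
\[
\tilde\Phi_i^{t_0}:=\{\phi_i^{s t_0}\}_{s\in[0,1]},\qquad \tilde\Psi^{t_0}:=\{\psi_{s t_0}\}_{s\in[0,1]}.
\]
These are again cosymplectic isotopies, because each time-$s$ map is by assumption a cosymplectic diffeomorphism, and composing with a smooth rescaling of time does not affect that property. The uniform convergence hypothesis of Theorem \ref{maint1} is then immediate: since $\bar d(\Phi_i,\varXi)\to 0$, the time-one map of $\tilde\Phi_i^{t_0}$, which is $\phi_i^{t_0}$, converges uniformly to $\varXi_{t_0}$.

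Next I would check the length hypothesis. Using that composition of cosymplectic isotopies is cosymplectic (see Subsection \ref{rem1}) and that inverses of cosymplectic isotopies are cosymplectic, $(\tilde\Psi^{t_0})^{-1}\circ \tilde\Phi_i^{t_0}$ is a well-defined cosymplectic isotopy. By the chain rule, the infinitesimal generator at parameter $s$ equals $t_0$ times the generator of $\Psi^{-1}\circ\Phi_i$ evaluated at $st_0$. Since $\|\cdot\|_C$ is linear in the vector field, we get
\[
l_{Co}^\infty\!\bigl((\tilde\Psi^{t_0})^{-1}\circ \tilde\Phi_i^{t_0}\bigr)\ =\ t_0\cdot \max_{s\in[0,1]}\bigl\|\dot{\overbrace{\psi_{st_0}^{-1}\circ\phi_i^{st_0}}}\bigr\|_C\ \leqslant\ t_0\cdot l_{Co}^\infty(\Psi^{-1}\circ\Phi_i),
\]
which tends to $0$ by hypothesis. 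Applying Theorem \ref{maint1} to $\tilde\Phi_i^{t_0}$, $\tilde\Psi^{t_0}$ and the limit map $\varXi_{t_0}$ yields $\varXi_{t_0}=\psi_{t_0}$. Since $t_0\in(0,1]$ was arbitrary and $\varXi_0=\mathrm{id}_M=\psi_0$ (a consequence of the uniform convergence $\phi_i^0=\mathrm{id}\to\varXi_0$ and the isotopy condition), we conclude $\varXi=\Psi$.

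The only genuinely delicate point is ensuring that $\Psi^{-1}\circ\Phi_i$ really is a cosymplectic isotopy so that Theorem \ref{maint1} is legitimately applicable, and that the reparametrization preserves this property together with the scaling law for $l_{Co}^\infty$; both are routine consequences of the formalism in Subsection \ref{rem1}. Beyond that, the argument is a direct transfer of the time-one statement of Theorem \ref{maint1} to the full isotopy via time rescaling, so no further obstacle is expected.
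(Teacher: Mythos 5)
Your argument is correct and is essentially the paper's own proof: the paper likewise reparametrizes, considering $\Phi_{t,i}:s\mapsto\phi_i^{st}$ and invoking Theorem \ref{maint1} (phrased there as a contradiction at a single time $t$ where $\varXi_t\neq\psi_t$, whereas you argue directly at every $t_0$). The extra details you supply — that the rescaled generator is $t_0$ times the original and that $\|\cdot\|_C$ scales accordingly — are exactly the routine verifications the paper leaves implicit.
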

  \begin{proof} Assume the contrary, i.e., that $\Psi \neq \varXi$. This is equivalent to say that  
  	there exists $t\in ]0,1]$ such that $\varXi_t\neq\psi_t.$ Therefore, the sequence 
  	of symplectic paths $\Phi_{t,i}:s\mapsto \phi_i^{st}$ contradicts Theorem \ref{maint1}. 
  \end{proof}
  \subsubsection{Co-Hofer norm}\label{SC01-12}
  For any co-Hamiltonian diffeomorphism $\psi$, we define its
  the  $L^{(1,\infty)}-$co-Hofer norm and its $L^{\infty}-$co-Hofer norm respectively as follow:
  \begin{equation}\label{bener10}
  \Arrowvert \psi \Arrowvert _{CH}^{(1,\infty)} := \inf(l_{CH}^{(1,\infty)}(\Psi))
  \quad \text{ and }\quad 
  \Arrowvert \psi \Arrowvert_{CH}^{\infty} := \inf(l_{CH}^{\infty}(\Psi)),
  \end{equation}
  where each infimum is taken over the set of all co-Hamiltonian isotopies 
  $\Psi$ with time-one maps equal to $\psi$.
  \begin{theorem}\label{thm33}
  	Let $(M, \eta, \omega)$ be a compact cosymplectic manifold. Then, each of the rules $\|\cdot\|_{CH}^{(1,\infty)}$ and $\|\cdot\|_{CH}^\infty $ induces 
  	a bi-invariant norm on $Ham_{\eta, \omega}(M)$. 
  \end{theorem}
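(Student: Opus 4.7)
The plan is to verify the four defining properties of a bi-invariant norm---finiteness/non-negativity, symmetry, subadditivity (triangle inequality), and conjugation invariance---together with non-degeneracy, separately for each of the two lengths. For each verification I will translate a property of the length $l_{CH}^{(1,\infty)}$ (resp. $l_{CH}^\infty$) into the corresponding property for the infimum $\Arrowvert\cdot\Arrowvert_{CH}^{(1,\infty)}$ (resp. $\Arrowvert\cdot\Arrowvert_{CH}^\infty$). Non-negativity and finiteness are immediate from \eqref{equ13}--\eqref{equ14}: each integrand $osc(F_t)+|C_{\Phi_F,\eta}^t|$ is non-negative, and any smooth co-Hamiltonian isotopy yields a finite value.

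For the symmetry $\Arrowvert\psi^{-1}\Arrowvert=\Arrowvert\psi\Arrowvert$, I would start from equation \eqref{equ1}: if $\Phi_F=\{\phi_t\}$ generates $\widetilde I_{\eta,\omega}(\dot\phi_t)=dF_t$, then $\widetilde I_{\eta,\omega}(\dot\phi_{-t})=d(-F_t\circ\phi_t)$, so the oscillation term satisfies $osc(-F_t\circ\phi_t)=osc(F_t)$. In the cosymplectic case $C_{\Phi_F,\eta}^t$ is \emph{constant} on $M$ (since $\mathcal L_{\dot\phi_t}\eta=0$ forces $d(\eta(\dot\phi_t))=0$), and item~(10) of Subsection~\ref{rem1} with $f_t\equiv 0$ gives $C_{\Phi_F^{-1},\eta}^t=-C_{\Phi_F,\eta}^t\circ\phi_t^{-1}$, whose absolute value equals $|C_{\Phi_F,\eta}^t|$. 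Hence $l_{CH}^{(1,\infty)}(\Phi_F^{-1})=l_{CH}^{(1,\infty)}(\Phi_F)$ and likewise for the sup version; taking infima over isotopies with time-one map $\psi$ gives symmetry. The triangle inequality is handled analogously from \eqref{equ2} and \eqref{rfunc}: the sub-additivity $osc(F_t+K_t\circ\phi_t^{-1})\leq osc(F_t)+osc(K_t)$ and $|C_{\Phi\circ\Psi,\eta}^t|\le|C_{\Phi,\eta}^t\circ\psi_t|+|C_{\Psi,\eta}^t|=|C_{\Phi,\eta}^t|+|C_{\Psi,\eta}^t|$ (again by constancy) give $l_{CH}(\Phi\circ\Psi)\le l_{CH}(\Phi)+l_{CH}(\Psi)$, whence the triangle inequality on the infimum. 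Bi-invariance under conjugation by any $\rho\in G_{\eta,\omega}(M)$ comes from item~(2) of Subsection~\ref{rem1}: conjugating $\Phi_F$ by $\rho$ replaces $F_t$ by $F_t\circ\rho$, and since $\rho$ is a diffeomorphism $osc(F_t\circ\rho)=osc(F_t)$; an identical computation using $\rho^\ast\eta=\eta$ shows $|C_{\rho^{-1}\Phi_F\rho,\eta}^t|=|C_{\Phi_F,\eta}^t|$. Hence $l_{CH}$ is invariant under conjugation, and so is the infimum.

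The main obstacle is the non-degeneracy statement: $\Arrowvert\psi\Arrowvert_{CH}=0\Rightarrow\psi=\mathrm{id}$. This is where the co-capacity-inequality \eqref{Ene-trans7-2} proved earlier is indispensable. If $\psi\neq\mathrm{id}$, pick a point $x_0$ with $\psi(x_0)\neq x_0$ and a small closed ball $\mathbf B\subset M$ around $x_0$ together with a small arc $\mathbf C\subset\mathbb S^1$ such that $\tilde\psi(\mathbf B\times \mathbf C)\cap(\mathbf B\times\mathbf C)=\emptyset$, where $\tilde\psi$ is the lift constructed in \textsf{Fact~1} of Subsection~\ref{Trans}. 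Then for every co-Hamiltonian isotopy $\Phi_F$ with $\phi_1=\psi$, inequality \eqref{Ene-trans7-2} yields
\begin{equation*}
0<\frac{1}{(2\pi)^2}\int_{\mathbf C}C_W(\mathbf B\times\{\theta\})\,d\theta\le l_{CH}^\infty(\Phi_F),
\end{equation*}
and the left-hand side depends only on $\psi$ (not on the particular isotopy). Taking the infimum over all such $\Phi_F$ gives $\Arrowvert\psi\Arrowvert_{CH}^\infty>0$. For the $L^{(1,\infty)}$-norm I would use the trivial bound $l_{CH}^{\infty}(\Phi_F)\leq l_{CH}^{(1,\infty)}(\Phi_F)$ only after re-parametrising: given a co-Hamiltonian isotopy one smooths the normalisation of time so that $t\mapsto\|\dot\phi_t\|$ is essentially constant, which shows $\Arrowvert\psi\Arrowvert_{CH}^\infty\le\Arrowvert\psi\Arrowvert_{CH}^{(1,\infty)}$, and hence non-degeneracy of the sup-version transfers to the $L^{(1,\infty)}$-version.

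The only subtle step beyond routine verification is controlling the term $|C_{\Phi_F,\eta}^t|$ under composition, inversion, and conjugation; the observation that on a connected cosymplectic manifold this is a constant function on $M$ makes each manipulation clean. I expect the genuinely non-trivial point to be the non-degeneracy, which as indicated rests on the co-capacity-inequality \eqref{Ene-trans7-2} and therefore ultimately on the Lalonde--McDuff capacity-energy bound applied to the lifted symplectic manifold $(\widetilde M,\tilde\omega)$; everything else is formal.
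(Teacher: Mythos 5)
Your argument is correct, and in fact the paper states Theorem \ref{thm33} without supplying any proof at all, so there is nothing to compare against line by line. What you propose is exactly the route the paper follows for the two neighbouring results it \emph{does} prove: the formal properties (symmetry, subadditivity, conjugation invariance) read off from items (1)--(3) and (10) of Subsection \ref{rem1}, and non-degeneracy obtained by lifting to the symplectic manifold $(\widetilde M,\tilde\omega)$ and invoking a displacement-type inequality --- the paper uses the symplectic displacement energy via Theorem \ref{maint1} to prove Theorem \ref{thm3}, and the Gromov-area bound \eqref{AC-5} to prove Theorem \ref{thm5A}; your use of the co-capacity-inequality \eqref{Ene-trans7-2} is the same mechanism. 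Two points are worth making explicit when you write this up. First, in the non-degeneracy step the lift $\tilde\phi_1$ of \textsf{Fact 1} depends on the whole isotopy $\Phi_F$, not only on $\psi$, so you should say why a single displaced set works for \emph{every} isotopy ending at $\psi$: since $p\circ\tilde\phi_1=\psi\circ p$, any $\mathbf{B}$ with $\psi(\mathbf{B})\cap\mathbf{B}=\emptyset$ gives $\tilde\phi_1(\mathbf{B}\times\mathbf{C})\cap(\mathbf{B}\times\mathbf{C})=\emptyset$ for every such lift and every arc $\mathbf{C}$; this makes the left-hand side of \eqref{Ene-trans7-2} a positive constant depending only on $\psi$, which is what survives the infimum. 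Second, your constancy claim for $C^t_{\Phi_F,\eta}$ needs $M$ connected (it follows from $d(\eta(\dot\phi_t))=\mathcal{L}_{\dot\phi_t}\eta=0$), though it is not really needed: since $\phi_t$ and $\psi_t$ are diffeomorphisms, the sup norm of $C^t$ is already preserved under the compositions you perform. The reparametrisation argument giving $\Arrowvert\psi\Arrowvert_{CH}^{\infty}\leq\Arrowvert\psi\Arrowvert_{CH}^{(1,\infty)}$ is standard and transfers non-degeneracy correctly (note the pointwise inequality goes the other way, $l^{(1,\infty)}_{CH}\leq l^{\infty}_{CH}$, as you implicitly acknowledge). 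With these clarifications the proof is complete.
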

  \subsubsection{Co-Hofer-like energies}\label{SC012}
  For $\phi\in G_{\eta, \omega}(M)$, we define its $L^{(1,\infty)}-$energy and its $L^{\infty}-$energy as follows:
  \begin{equation}\label{bener1}
  e_C^{(1,\infty)}(\phi) := \inf(l_{C}^{(1,\infty)}(\Phi)),
  \end{equation}
  and
  \begin{equation}\label{bener2}
  e^{\infty}_C(\phi) := \inf(l_{C}^{\infty}(\Phi)),
  \end{equation}
  where each infimum is taken over the set of all cosymplectic isotopies 
  $\Phi$ with time-one maps equal to $\phi$.
  
  \subsubsection{Co-Hofer-like norms}
  The $L^{(1,\infty)}-$version and the $L^{\infty}-$version 
  of the co-Hofer-like norms of $\phi\in G_{\eta, \omega}(M)$  are respectively defined by,
  \begin{equation}\label{bny1}
  \|\phi\|_{C}^{(1,\infty)} 
  := \frac{1}{2}(e_{C}^{(1,\infty)}(\phi) + e_{C}^{(1,\infty)}(\phi^{-1})),
  \end{equation}
  and
  \begin{equation}\label{bny2}
  \|\phi\|_{C}^\infty 
  := \frac{1}{2}(e^{\infty}_{C}(\phi) + e^{\infty}_{C}(\phi^{-1})).
  \end{equation}
  
  \begin{theorem}\label{thm3}
  	Let $(M, \eta, \omega)$ be a compact cosymplectic manifold. Then, each of the rules $\|\cdot\|_{C}^{(1,\infty)}$ and $\|\cdot\|_{C}^\infty $ induces 
  	a right-invariant norm on $G_{\eta, \omega}(M)$. 
  \end{theorem}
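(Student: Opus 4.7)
The plan is to verify the three norm axioms for $\|\cdot\|_{C}^{(1,\infty)}$ and $\|\cdot\|_{C}^{\infty}$ on $G_{\eta,\omega}(M)$; right-invariance of the induced metric $d(g,h) := \|gh^{-1}\|_{C}$ then follows automatically since $d(g\rho, h\rho) = \|(g\rho)(h\rho)^{-1}\|_{C} = \|gh^{-1}\|_{C}$ for every $\rho$. Non-negativity is immediate from the definition of $e_{C}$ as an infimum of non-negative lengths, and the symmetry $\|\phi\|_{C} = \|\phi^{-1}\|_{C}$ is built into the symmetrization in \eqref{bny1}--\eqref{bny2}.

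For the triangle inequality, given cosymplectic isotopies $\Phi = \{\phi_t\}$ and $\Psi = \{\psi_t\}$ with time-one maps $\phi$ and $\psi$, form the composed isotopy $\Phi\circ\Psi$ with time-one map $\phi\circ\psi$. By item 5 of Subsection \ref{rem1},
$$\widetilde I_{\eta,\omega}(\dot{\overbrace{\phi_t\circ\psi_t}}) = \widetilde I_{\eta,\omega}(\dot\phi_t) + (\phi_t^{-1})^\ast(\widetilde I_{\eta,\omega}(\dot\psi_t)).$$
Since $(\phi_t^{-1})^\ast(dU) = d(U\circ\phi_t^{-1})$ and the oscillation norm is invariant under composition with a diffeomorphism, the exact parts add cleanly. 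For the closed non-exact part, $(\phi_t^{-1})^\ast(\mathcal{K}_\eta^t + \mathcal{H}_\omega^t)$ is closed but typically not in $\mathbb{H}^1(M,\mathcal{S})$; one re-decomposes it through the section $\mathcal{S}$ and uses Theorem \ref{thm2} together with the inequalities \eqref{sharp 1} to control the resulting harmonic and exact contributions. The $|\eta(\cdot)|$ terms combine via \eqref{rfunc}. Passing to the infimum gives $e_{C}(\phi\circ\psi) \leq e_{C}(\phi) + e_{C}(\psi)$, and the symmetrized expression inherits the triangle inequality.

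The central step is non-degeneracy: if $\phi \neq id_{M}$, then $\|\phi\|_{C}^{\infty} > 0$. Pick a non-empty compact $\mathbf{B}_0 \subset M$ with $\phi(\mathbf{B}_0) \cap \mathbf{B}_0 = \emptyset$. For any cosymplectic isotopy $\Phi$ with time-one map $\phi$, the associated symplectic isotopy $\widetilde{\Phi} = \{\widetilde{\phi}_t\}$ on $(\widetilde{M}, \widetilde{\omega})$ provided by \textbf{Fact 1} of Subsection \ref{Trans} displaces $\mathbf{B}_0 \times \mathbf{C}$ for some non-empty compact $\mathbf{C} \subset \mathbb{S}^1$, so the Co-energy-inequality \eqref{Ene-trans4} yields
$$0 < \frac{1}{4\pi\max\{(1 + \kappa_0 k_2(g)), \pi\}} \int_{\mathbf{C}} E_S(\mathbf{B}_0 \times \{\theta\})\, d\theta \leq l_{Co}^{\infty}(\Phi),$$
uniformly over such $\Phi$. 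Taking infimum, $e_{C}^{\infty}(\phi) > 0$, and similarly for $\phi^{-1}$, whence $\|\phi\|_{C}^{\infty} > 0$. For the $L^{(1,\infty)}$ version, a constant-speed reparametrization of $\Phi$ produces $\widetilde{\Phi}$ with $l_{Co}^{\infty}(\widetilde{\Phi}) = l_{Co}^{(1,\infty)}(\Phi)$ and the same time-one map, giving $e_{C}^{\infty}(\phi) \leq e_{C}^{(1,\infty)}(\phi)$ and therefore non-degeneracy of $\|\cdot\|_{C}^{(1,\infty)}$.

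The main obstacle will be the triangle inequality, because $(\phi_t^{-1})^\ast$ does not preserve the splitting $\mathcal{Z}^1(M) = \mathbb{H}^1(M,\mathcal{S}) \oplus_{\mathcal{S}} \mathbb{B}^1(M)$; one must re-decompose the pulled-back closed form and invoke Theorem \ref{thm2} together with the comparison constants from Subsection \ref{Hofer-N-1} to close the bound. Non-degeneracy, conceptually the deepest point, reduces cleanly to the already-established Co-energy-inequality.
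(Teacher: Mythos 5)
Your proposal is correct and follows essentially the same route as the paper: the only norm axiom the paper actually verifies is non-degeneracy, and it does so by exactly your displacement argument — the paper just packages it as an application of Theorem \ref{maint1} (with $\Psi$ the constant identity path), whereas you inline the underlying Co-energy-inequality \eqref{Ene-trans4}; these are the same proof. One small caution on your triangle-inequality sketch (which the paper omits entirely as a "straight calculation"): the cross-term is controlled not by Theorem \ref{thm2} (equivalence of norms attached to \emph{different} sections), but by the fact that $\phi_t^{-1}$ is isotopic to the identity, so $(\phi_t^{-1})^\ast$ fixes de Rham classes, the $\mathcal{S}$-part of the pulled-back form is unchanged, and only an exact remainder must be absorbed into the oscillation term.
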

  \begin{proof} Since checking the other properties of a norm are straight calculations, we shall just prove the non-degeneracy of the norm $ \|\cdot \|_{C}^\infty$ \ :  if $\phi\in G_{\eta, \omega}(M)$ such that  $\|\phi\|_{C}^\infty = 0,$ then  
  	from the definition of the norm $ \|\cdot\|_{C}^\infty$, we derive that there exists a sequence   
  	of cosymplectic isotopies $\{\Phi_{i}\}$, each of which with time-one map $\phi$ such that $
  	l_{C}^{\infty}(\Phi_{i})< \dfrac{1}{i} ,$ for each  positive integer $i$. That is, 
  	\begin{itemize}
  		\item $\lim_{C^0}(\Phi_{i}(1)) = \phi$ and
  		\item $l_{C}^{\infty}(\{Id\}^{-1}\circ\Phi_{i})\longrightarrow 0$,  as $i\longrightarrow\infty$,
  	\end{itemize}
  	where $Id$ is the constant path identity. 
  	Hence, by Theorem \ref{maint1}, we must have $\phi = id_M$. 
  \end{proof}
  Banyaga-Bikorimana \cite{Ban-Bi} have studied the cosymplectic analogue of the $C^0-$rigidity result of Eliashberg-Gromov  \cite{Elias87}
  using Lemma \ref{lem-3} together with a result found by Buhosky \cite{L-B}. Here is a direct proof of cosymplectic analogue of the $C^0-$rigidity result of Eliashberg-Gromov  \cite{Elias87}: This follows as a direct consequence of Theorem \ref{maint1}, and here we do not appeal to Buhosky's result \cite{L-B}. 
  \begin{theorem}\label{closure}
  	The group $G_{\eta, \omega}(M)$ is $C^0-$closed inside the group $Diff^\infty(M)$. 
  \end{theorem}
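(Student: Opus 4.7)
My plan is to argue by contradiction, using Theorem~\ref{maint1} together with the Co-energy-inequality (\ref{Ene-trans4}). Let $\{\phi_i\}\subset G_{\eta,\omega}(M)$ converge in the $C^0$-topology to a smooth $\phi\in Diff^\infty(M)$, and for each $i$ fix a cosymplectic isotopy $\Phi_i=\{\phi_i^t\}$ with $\phi_i^1=\phi_i$. The goal is to show $\phi\in G_{\eta,\omega}(M)$.

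Suppose on the contrary that $\phi\notin G_{\eta,\omega}(M)$. Then for every cosymplectic isotopy $\Psi=\{\psi^t\}$ one has $\psi^1\neq\phi$, and hence some nonempty compact $\mathbf{B}_\Psi\subset M$ is completely displaced by $(\psi^1)^{-1}\circ\phi$. By uniform convergence $\phi_i\to\phi$, the same $\mathbf{B}_\Psi$ is completely displaced by $(\psi^1)^{-1}\circ\phi_i$ for all $i$ sufficiently large. Following the argument in the proof of Theorem~\ref{maint1}, I would lift the cosymplectic isotopy $\Psi^{-1}\circ\Phi_i$ to a symplectic isotopy of $(\widetilde M,\widetilde\omega)$ via Fact~1 of Subsection~\ref{Trans}, pick a suitable compact $\mathbf{C}\subset\mathbb{S}^1$, and invoke the Co-energy-inequality (\ref{Ene-trans4}) to obtain the uniform lower bound
\[
0 < \frac{1}{4\pi\max\{(1+\kappa_0 k_2(g)),\pi\}}\int_{\mathbf{C}} E_S(\mathbf{B}_\Psi\times\{\theta\})\,d\theta \leq l_{Co}^\infty(\Psi^{-1}\circ\Phi_i),
\]
valid for every large enough $i$ and every cosymplectic isotopy $\Psi$.

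To derive a contradiction, I would produce a single cosymplectic isotopy $\Psi$ with $l_{Co}^\infty(\Psi^{-1}\circ\Phi_i)\to 0$. This is built by first passing to a rapidly $C^0$-convergent subsequence so that successive products $\phi_i\circ\phi_{i+1}^{-1}$ lie in a small $C^0$-neighborhood of the identity, then connecting them to the identity by short cosymplectic paths constructed in cosymplectic Darboux charts (Theorem~\ref{thm1}) and patched together using the cosymplectic stability results of Subsection~\ref{SC00}; telescoping these paths yields the required $\Psi$ and makes $l_{Co}^\infty(\Psi^{-1}\circ\Phi_i)\to 0$. Combined with the lower bound above, this contradicts the assumption, so $\phi\in G_{\eta,\omega}(M)$.

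The main obstacle is precisely the construction of $\Psi$ with $l_{Co}^\infty(\Psi^{-1}\circ\Phi_i)\to 0$: converting the purely $C^0$-closeness of the time-one maps $\phi_i$ into cosymplectic Hofer-like control on the chosen isotopies is the genuine rigidity content of the theorem, and it closely parallels the core difficulty of the classical Eliashberg-Gromov argument in symplectic geometry. Ensuring that the glued path is globally cosymplectic (rather than merely locally so, chart by chart) is where the cosymplectic Moser-type stability theorems enter in an essential way.
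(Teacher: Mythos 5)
There is a genuine gap, and it sits exactly where you yourself locate ``the main obstacle'': the construction of a cosymplectic isotopy $\Psi$ with $l_{Co}^{\infty}(\Psi^{-1}\circ\Phi_i)\rightarrow 0$ from nothing more than $C^0$-convergence of the time-one maps $\phi_i$. This cannot be done by the route you sketch. The $C^0$-topology and the (co-)Hofer-like topology are incomparable: a cosymplectic diffeomorphism that is $C^0$-close to the identity need not admit any generating isotopy of small co-Hofer-like length, so decomposing $\phi_i\circ\phi_{i+1}^{-1}$ into ``short'' paths in Darboux charts and telescoping gives no control on $l_{Co}^{\infty}$. Your plan therefore defers the entire content of the theorem to an unproved, and in general false, intermediate claim; note also that if such a $\Psi$ existed you would not need the contradiction hypothesis at all, since Theorem~\ref{maint1} would immediately give $\phi=\psi^1\in G_{\eta,\omega}(M)$.

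The paper sidesteps this by the Eliashberg--Buhovsky conjugation trick, which never attempts to control the lengths of the isotopies $\Phi_i$ themselves. Assuming the $C^0$-limit $\varphi$ is \emph{not} cosymplectic, one produces (via Remark~\ref{Rigid-2} and Lemma~\ref{Rigid-lem}) a co-Hamiltonian vector field $X_H$ with $\varphi_\ast(X_H)\neq X_{H\circ\varphi^{-1}}$, hence $\varphi\circ\Psi_H\circ\varphi^{-1}\neq\Psi_{H\circ\varphi^{-1}}$. One then applies Theorem~\ref{maint1} to the auxiliary sequence of conjugated flows $\varphi_i\circ\Psi_H\circ\varphi_i^{-1}$: these are co-Hamiltonian isotopies generated by $H\circ\varphi_i^{-1}$, so the relevant length $l_{C}^{\infty}\bigl(\Psi_{H\circ\varphi^{-1}}^{-1}\circ\{\varphi_i\circ\Psi_H\circ\varphi_i^{-1}\}\bigr)$ is controlled by $osc(H\circ\varphi_i^{-1}-H\circ\varphi^{-1})$ plus a Reeb term estimated by $2|\eta|_0\,\bar d(\varphi_i\circ\Psi_H\circ\varphi_i^{-1},\varphi\circ\Psi_H\circ\varphi^{-1})$ via a minimizing-geodesic $2$-chain and Stokes' theorem --- and \emph{these} quantities do tend to zero precisely because $\varphi_i\to\varphi$ uniformly and $H$ is continuous. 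Theorem~\ref{maint1} then forces $\varphi\circ\Psi_H^t\circ\varphi^{-1}=\Psi^t_{H\circ\varphi^{-1}}$, contradicting the choice of $H$. If you want to salvage your write-up, replace the construction of $\Psi$ by this conjugation argument; as it stands the proposal does not prove the theorem.
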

  We shall need the following rigidity lemma.
 \begin{lemma}\label{Rigid-lem}
 	Let $(M, \omega, \eta)$ be a compact connected cosymplectic manifold, and let $X_H$ be a co-Hamiltonian vector field 
 	such that $\widetilde  I_{\eta, \omega}(X_H) = dH$. Let $\{\psi_i\}\subset  G_{\eta, \omega}(M)$ such that $ \psi_i\xrightarrow{C^0}\psi$. If $\psi\in Diff^\infty(M)$, then
 	\begin{enumerate}
 		\item  $\psi_\ast(\xi)(H) = \xi(H)$, and 
 		\item $ \eta(\psi_\ast(\xi)) = 1$, 
 	\end{enumerate}	
 	
 	where $\xi$ is the Reeb vector field of $(M, \omega, \eta)$. 
 \end{lemma}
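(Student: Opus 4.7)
The plan is to establish the stronger assertion $\psi_{*}(\xi) = \xi$, from which both conclusions of the lemma follow at once: $(1)$ becomes the functional equality $\psi_{*}(\xi)(H) = \xi(H)$, and $(2)$ reduces to the tautology $\eta(\xi) = 1$. The strategy is to transfer the infinitesimal relation $(\psi_i)_{*}(\xi) = \xi$ into a \emph{finite} identity that survives $C^0$ limits, namely commutation with the Reeb flow, and then to use smoothness of $\psi$ only at the end to differentiate back.

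First I invoke Lemma \ref{pushfoward} to obtain $(\psi_i)_{*}(\xi) = \xi$ for every $i$. Since $M$ is compact and $\xi$ is smooth, the Reeb flow $\{\phi_{s}^{\xi}\}_{s \in \mathbb{R}}$ is globally defined (its cosymplectic nature, coming from Example \ref{Ex3}, is not even needed here). The invariance of $\xi$ under $\psi_i$ is equivalent to
\begin{equation*}
\psi_i \circ \phi_{s}^{\xi} = \phi_{s}^{\xi} \circ \psi_i, \qquad s \in \mathbb{R}, \quad i \in \mathbb{N}.
\end{equation*}

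Next I pass to the limit in $i$. Because $\psi_i \xrightarrow{C^{0}} \psi$ uniformly on the compact manifold $M$ and $\phi_{s}^{\xi}$ is continuous for each fixed $s$, both sides of the commutation identity above converge pointwise (in fact uniformly) to the corresponding expressions with $\psi_i$ replaced by $\psi$. Hence
\begin{equation*}
\psi \circ \phi_{s}^{\xi} = \phi_{s}^{\xi} \circ \psi, \qquad s \in \mathbb{R}.
\end{equation*}
For each fixed $x \in M$, both sides are now smooth curves of $s$ since $\psi$ is smooth by hypothesis and $\phi_{s}^{\xi}$ is smooth. Differentiating at $s = 0$ yields $d\psi_{x}(\xi_{x}) = \xi_{\psi(x)}$, i.e.\ $\psi_{*}(\xi) = \xi$, and the two claims of the lemma drop out as immediate corollaries.

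The only genuine hurdle is the familiar one that $C^{0}$ convergence does not control first derivatives, so one cannot directly take limits in $(\psi_i)_{*}(\xi) = \xi$. The remedy is precisely the replacement of the infinitesimal formulation by a finite identity (commutation with a smooth flow) that is a closed condition under uniform convergence; the smoothness hypothesis on $\psi$ is then exactly what is needed to recover the derivative information at the end. I expect no further technical obstacle beyond this conceptual step.
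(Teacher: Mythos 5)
Your proof is correct, and it takes a genuinely different route from the paper's. You upgrade the infinitesimal invariance $(\psi_i)_{*}\xi=\xi$ (Lemma \ref{pushfoward}) to the finite identity $\psi_i\circ\phi_s^{\xi}=\phi_s^{\xi}\circ\psi_i$, pass this commutation relation through the $C^0$ limit (legitimate, since left-composition with the uniformly continuous map $\phi_s^{\xi}$ on compact $M$ preserves uniform convergence), and then differentiate at $s=0$ using the smoothness of $\psi$ to recover $\psi_{*}\xi=\xi$, of which both assertions are immediate corollaries. The paper instead integrates $\eta$ (for part $(2)$) and $dH$ (for part $(1)$) along the images under $\psi_i$ of Reeb orbit segments $\bar\gamma_{x,t}:s\mapsto\phi_{st}(x)$, compares these with the integrals along $\psi\circ\bar\gamma_{x,t}$ by filling in a $2$-chain bounded by short minimizing geodesics and applying Stokes' theorem to the closed forms $\eta$ and $dH$, and finally differentiates the resulting integral identities in $t$; that route needs $M$ connected and the constancy of $\xi(H)$ (a consequence of $X_H$ being co-Hamiltonian), neither of which your argument uses. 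Your approach is shorter and yields the strictly stronger conclusion $\psi_{*}\xi=\xi$; the paper's Stokes-on-$2$-chains technique is weaker here but is the one that generalizes to the almost cosymplectic setting (Corollary \ref{Cor-Al1}, Lemma \ref{Lem-Al1}) and to the proof of Theorem \ref{closure}, where one only controls closed $1$-forms rather than full invariance of the Reeb field under the approximating diffeomorphisms.
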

 \begin{proof} For $(2)$,  assume $M$ to be equipped with a Riemannian metric $g$, with injectivity radius $r(g)$. Pick any smooth curve $\gamma \in M$. 	Since $\psi_i\xrightarrow{C^0}\psi$, then for $i$ sufficiently large we may assume that 
 	$d_{C^0} (\psi_i,\psi)\leq r(g)/2$, and derive that, for each $t\in [0,1]$, the points $\psi_i(\gamma(t))$ to  $\psi(\gamma(t))$ can be connected 
 	through a minimizing geodesic  $\chi_i^t$. This implies that 
 	the curves $\chi_i^0$, $\chi_i^1$, $\psi_i\circ \gamma$, and  $\psi\circ \gamma$ form the boundary of a smooth  $2-$chain $\square(\gamma, \psi_i, \psi)\subset M$. Since $d\eta = 0$, we derive from Stockes' theorem that $\int_{ \square(\gamma, \psi_i, \psi)}d\eta = 0,$ i.e., 
 	$\int_{\psi_i\circ \gamma}\eta - \int_{\psi_\circ \gamma}\eta  = \int_{\chi_i^1} \eta -  \int_{\chi_i^0}\eta,$
 	for $i$ sufficiently large. That is,
 	$	| \int_{\psi_i\circ \gamma}\eta - \int_{\psi_\circ \gamma}\eta|\leq 2 |\eta|_{0}d_{C^0} (\psi_i,\psi),$
 	for $i$ sufficiently large because the length of any minimizing geodesic is bounded from above by the distance between its endpoints, i.e., 
 	$\lim_{i\longrightarrow \infty}\left( \int_{\psi_i\circ \gamma}\eta\right)  =  \int_{\psi_\circ \gamma}\eta $. On the other hand, 
 	 let $\{\phi_t\}$ be the cosymplectic flow generated by the Reeb vector field $\xi$. For each fixed $t\in ]0, 1[$, and each $x\in M$, consider the smooth curve $\bar \gamma_{x, t}: s \mapsto \phi_{st}(x)$, and derive from previous limit that\\ 
 	 $\lim_{i\longrightarrow \infty} \int_{\psi_i(\bar\gamma_{x, t})} \eta  = \int_{\psi(\bar \gamma_{x, t})} \eta,$ for each fixed $t\in ]0, 1[$, i.e.,\\ 
 	 $  t = \lim_{i\longrightarrow \infty} \int_0^t\left( du\right) =  \int_0^t(\psi^\ast(\eta)(\xi))(\phi_{u}(x))du,$ for each fixed $t\in ]0, 1[$, since by 
 	 Lemma \ref{pushfoward} we have $(\psi_i)_\ast(\xi) = \xi$, for all $i$. Therefore, taking the derivative of the equality with respect to $t$ gives: 
 	 $ 1  =  (\psi^\ast(\eta)(\xi))(\phi_{t}(x)), $ for each fixed $t\in ]0, 1[$, for all $x\in M$, which implies 
 	 $\eta(\psi_\ast(\xi)) = 1$. For $(1)$, since $\psi_i\xrightarrow{C^0}\psi$, and $H$ is continuous, we derive that\\ 
 	 $ \lim_{i\longrightarrow \infty}\left( H(\psi_i(\bar\gamma_{x, t}(1))) - H(\psi_i(x))\right) =   \left( H(\psi(\bar \gamma_{x, t}(1))) - H(\psi(x))\right),$ 
 	 for each fixed $t\in ]0, 1[$, for all $x\in M$. We also have\\ $ \lim_{i\longrightarrow \infty}\left( H(\psi_i(\bar\gamma_{x, t}(1))) - H(\psi_i(x))\right) = \lim_{i\longrightarrow \infty}\left(\int_{\psi_i\circ\bar\gamma_{x, t} }dH  )\right) = \int_0^t dH((\psi_i)_\ast(\xi))\circ\bar\gamma_{x, t}(s)ds $, i,e., 
 	  $$ \lim_{i\longrightarrow \infty}\left( H(\psi_i(\bar\gamma_{x, t}(1))) - H(\psi_i(x))\right)
 	   = \int_0^t dH((\psi_i)_\ast(\xi))(\psi_i(\phi_s(x))ds = \int_0^t dH(\xi)((\psi_i(\phi_s(x)))ds,$$
 	   which implies that $ \lim_{i\longrightarrow \infty}\left( H(\psi_i(\bar\gamma_{x, t}(1))) - H(\psi_i(x))\right) =  t\xi(H),$ for each fixed $t\in ]0, 1[$, for all $x\in M$ because the function $y\mapsto \xi(H)(y)$ is constant. Thus, we have just proved that
 	    $$  t\xi(H) = \lim_{i\longrightarrow \infty}\left( H(\psi_i(\bar\gamma_{x, t}(1))) - H(\psi_i(x))\right) = \left( H(\psi(\bar \gamma_{x, t}(1))) - H(\psi(x))\right) = \int_{\psi\circ\bar\gamma_{x, t} }dH,$$  for each fixed $t\in ]0, 1[$, for all $x\in M$, i.e., 
 	    $t\xi(H) = \int_0^t\left( \psi_\ast(\xi)(H)\right)(\psi(\phi_s(x))ds$, for each fixed $t\in ]0, 1[$, for all $x\in M$. 
 	     Thus, taking the derivative in the latter equality with respect to $t$, gives 
 	   $ \xi(H) =  \psi_\ast(\xi)(H)(\psi(\phi_t(x)),$ for each fixed $t\in ]0, 1[$, for all $x\in M$.
 \end{proof}
 
 \begin{remark}\label{Rigid-2}
 		Let $(M, \omega, \eta)$ be a compact connected cosymplectic manifold, and let $X_H$ be a co-Hamiltonian vector field 
 such that $\widetilde  I_{\eta, \omega}(X_H) = dH$.  For each smooth diffeomorphism $\psi$ of $M$, as in Remark \ref{flux-geo-rem}, define a vector field  $X_ {H\circ\psi}: = Y_{d(H\circ \psi)} - 
 \xi(H\circ \psi)\xi$, which satisfies 
 $  \widetilde I_{\eta, \omega}(X_{H\circ\psi}) =  d(H\circ \psi)$,   $\mathcal{L}_{X_{H\circ\psi}}\eta = 0$, and $\mathcal{L}_{X_{H\circ\psi}}\omega =  d\left( \xi(H\circ \psi) \right)$. If in addition, we have the information  that there exists a sequence $\{\psi_i\}\subset  G_{\eta, \omega}(M)$ such that $,  \psi_i\xrightarrow{C^0}\psi$, then with the help of Lemma \ref{Rigid-lem}, we can derive that $\psi_\ast(\xi)(H) = \xi(H) $. This combined together with\\ $\psi^\ast(d H)(\xi) = \left( dH(\psi_\ast(\xi))\right)\circ\psi$, implies that $\psi^\ast(d H)(\xi) = \xi(H)\circ \psi$. Since the smooth function 
 $\xi(H)$ is constant  by assumption, then we have $ \xi(H) = d(H\circ \psi)(\xi) = \xi(H\circ \psi)$. Thus, 
  $\mathcal{L}_{X_{H\circ\psi}}\omega =  d\left( \xi(H\circ \psi) \right) = d(\xi(H)) = 0$. Therefore, $X_{H\circ\psi} $ is a  co-Hamiltonian vector field 
  such that  $  \widetilde I_{\eta, \omega}(X_{H\circ\psi}) =  d(H\circ \psi)$, whenever $\psi$ is the $C^0-$limit of a sequence of cosymplectic diffeomorphisms.\\
 
 \end{remark}
  {\it Proof of Theorem \ref{closure}.} We shall adapt the proof given by Buhosky \cite{L-B} for similar result in symplectic geometry. Assume that 
  $M$ is equipped with a Riemannian metric $g$  with injectivity radius $r(g)$. Let $\{\varphi_i\}\subseteq G_{\eta, \omega}(M)$  be a sequence of cosymplectic diffeomorphisms such that $\varphi_i\xrightarrow{C^0}\psi\in Diff^\infty(M)$. Assume that $\psi$ is not a cosymplectic diffeomorphism. Then, for any 
  co-Hamiltonian vector field $X_H$ such that $\widetilde I_{\eta, \omega}(X_H) = dH$, we have\\ $ \varphi_\ast(X_H)\neq X_{H\circ\varphi^{-1}}$ (this is supported by Remark \ref{Rigid-2}). This implies that, if $\Psi_H$ 
  is  
  the cosymplectic flow generated by  $X_H$, then we must have 
  \begin{equation}\label{contradiction}
  \varphi \circ \Psi_H\circ \varphi^{-1} \neq \Psi_{H\circ \varphi^{-1}},
  \end{equation}
  where $ \Psi_{H\circ \varphi^{-1}}$ is the cosymplectic flow generated by $ Y_{H\circ \varphi^{-1}}: = \widetilde I_{\eta, \omega}^{-1}( d\left( H\circ \varphi^{-1}\right) ).$ The sequence 
  of co-Hamiltonian isotopies $ \varphi_i \circ \Psi_H\circ \varphi^{-1}_i$  converges uniformly to  $ \varphi \circ \Psi_H\circ \varphi^{-1}$, and we have
  $$ l_{C}^{\infty}(\Psi_{H\circ \varphi^{-1}}^{-1}\circ\{\varphi_i \circ \Psi_H\circ \varphi^{-1}_i\}) = osc(H\circ \varphi^{-1}_i - H\circ \varphi^{-1})  
  + |\eta(X_{H\circ \varphi^{-1}}) - \eta(X_{H\circ \varphi^{-1}_i})|,$$ 
  for each $i$. On the other hand, for each fixed $x\in M$, consider  the orbits $ \mathcal{C}_{x, i}: =\left( \varphi_i \circ \Psi_H\circ \varphi^{-1}_i\right)(x)$, and  $\mathcal{C}_{x}: = \left( \varphi \circ \Psi_H\circ \varphi^{-1}\right) (x)$.
   Since $ \varphi_i \circ \Psi_H\circ \varphi^{-1}_i$  converges uniformly to  $ \varphi \circ \Psi_H\circ \varphi^{-1}$, 
   then for $i$ sufficiently large we may assume that 
   $\bar d (\varphi_i \circ \Psi_H\circ \varphi^{-1}_i, \varphi \circ \Psi_H\circ \varphi^{-1})\leq r(g)/2$, and derive as in the proof of Lemma \ref{Rigid-lem},  that there exist two minimal geodesics $\gamma_i$ (with endpoints $x$ and $\left( \varphi_i \circ \Psi_H^1\circ \varphi^{-1}_i\right)(x)$) and $\gamma$ (with endpoints $x$ and $\left( \varphi \circ \Psi_H^1\circ \varphi^{-1}\right)(x)$) such that 
  $ \mathcal{C}_{x, i},  \mathcal{C}_{x}, \gamma_i$, and $\gamma $ delimit a $2-$chain in $\boxplus_{ \mathcal{C}_{x, i},  \mathcal{C}_{x}, \gamma_i, \gamma}\subset M$. Since $d\eta = 0$, it follows from Stokes' theorem that 
  $ \int_{\boxplus_{ \mathcal{C}_{x, i},  \mathcal{C}_{x}, \gamma_i, \gamma}}d\eta = 0$, i.e., 
  $\int_{\mathcal{C}_{x, i}} \eta - \int_{\mathcal{C}_{x}} \eta =  \int_{\gamma_i} \eta - \int_{\gamma} \eta,$
  for all $i$ sufficiently large. That is, for each $x\in M$, we have
  $$ |\eta(X_{H\circ \varphi^{-1}})(x) - \eta(X_{H\circ \varphi^{-1}_i})(x)| = | \int_{\mathcal{C}_{x, i}} \eta - \int_{\mathcal{C}_{x}} \eta|$$ 
  $$ = | \int_{\gamma_i} \eta - \int_{\gamma} \eta| $$ 
  $$\leq 2|\eta|_0\bar d(\varphi_i \circ \Psi_H\circ \varphi^{-1}_i,\varphi \circ \Psi_H\circ \varphi^{-1}),$$
   for all $i$ sufficiently large. Hence  
    $$ l_{C}^{\infty}(\Psi_{H\circ \varphi^{-1}}^{-1}\circ\{\varphi_i \circ \Psi_H\circ \varphi^{-1}_i\}) = osc(H\circ \varphi^{-1}_i - H\circ \varphi^{-1}) 
 + |\eta(X_{H\circ \varphi^{-1}}) - \eta(X_{H\circ \varphi^{-1}_i})|\longrightarrow 0, i\longrightarrow\infty.$$ Similarly, one 
 proves that: if we consider the reparametrized isotopies $ \Psi_{t, H}: s\mapsto \Psi_H^{ts}$, and $ \Psi_{t, H\circ \varphi^{-1}}: s\mapsto \Psi_{H\circ \varphi^{-1}}^{st}$, for each fixed 
 $t$, then  $l_{C}^{\infty}(\Psi_{t, H\circ \varphi^{-1}}^{-1}\circ\{\varphi_i \circ\Psi_{t, H}\circ \varphi^{-1}_i\})
  \rightarrow0,i\rightarrow\infty,$  for each fixed 
  $t$. 
 Finally, we have proved that for each fixed 
 $t$, we have 
 \begin{itemize}
 	\item $\varphi_i \circ \Psi_{t, H}\circ \varphi^{-1}_i\xrightarrow{C^0} \varphi \circ \Psi_{t, H}\circ \varphi^{-1}$, and 
 	\item $  l_{C}^{\infty}(\Psi_{t, H\circ \varphi^{-1}}^{-1}\circ\{\varphi_i \circ \Psi_H^t\circ \varphi^{-1}_i\}) \rightarrow0,i\rightarrow\infty$.
 \end{itemize}
  Thus, by Theorem \ref{maint1}
  we must have, $ \varphi \circ \Psi_H^t\circ \varphi^{-1} = \Psi_{H\circ \varphi^{-1}}^t$, for all $t$: This contradicts (\ref{contradiction}). $\square$\\
  \subsection{Almost co-Hofer-like geometries}
  For any $X\in  \mathcal{A}\chi_{\eta, \omega}(M)$, the closed $1-$form $\iota(X)\omega$ splits as:
  \begin{equation}\label{equ5-A}
  \iota(X)\omega = \mathcal{H}_\omega + dU_\omega.
  \end{equation}
  From the above splitting, one defines a norm $\|.\|_{\mathcal{A}C}^\mathcal{S} $ on 
  $\chi_{\eta, \omega}(M)$ as follows:\\ For any  $X\in \chi_{\eta, \omega}(M)$, 
  \begin{equation}\label{equ7-A}
  \|X\|_{\mathcal{A}C}^\mathcal{S} := \|\mathcal{H}_\omega\|_{L^2} + osc(U_\omega) + \Theta(X) ,
  \end{equation}
  with
  $$ \Theta(X): = \frac{1}{Vol_{\eta, \omega}(M)}\int_M|\eta(X)|\eta\wedge\omega^n,$$
  where $ \|.\|_{L^2}$ is the $L^2-$Hodge norm, $\dim(M) = 
  (2n +1)$, and $Vol_{\eta, \omega}(M): = \int_M\eta\wedge\omega^n $. One can derive from Theorem \ref{thm2}
  that if $ \mathcal{S}$ and $\mathcal{T}$ are two linear sections of the projection 
  $\mathcal{\pi}: \mathcal{Z}^1(M)\rightarrow  H^1(M,\mathbb{R}) $, then the two norms $\|.\|_{\mathcal{A}C}^\mathcal{S} $, and $\|.\|_{\mathcal{A}C}^\mathcal{T} $ are equivalent. 
  \subsubsection{Almost co-Hofer-like lengths} 
  Let $ \Phi = \{\phi_t\}\in  \mathcal{A}Iso_{\eta, \omega}(M)$, such that $\mathcal{L}_{\dot\phi_t}\eta = \mu_t\eta$, 
  for each $t$, we have  
  $$ \|\dot{\phi}_t\|_{\mathcal{A}C} := \|\mathcal{H}_\omega^t\|_{L^2} + osc(U_\omega^t) + \varTheta_t(\Phi),$$
  with 
  $$ \varTheta_t(\Phi): = \frac{1}{Vol_{\eta, \omega}(M)}\int_M |\eta(\dot{\phi}_t)|\eta\wedge\omega^n,$$
  for each $t$. Therefore, we define the $L^{(1, \infty)}-$version of the almost co-Hofer-like length of $\Phi: =\{\phi_t\}$ as:
  \begin{equation}\label{equ8-A}
  l_{\mathcal{A}co}^{(1,\infty)} (\Phi) : = \int_0^1\|\dot{\phi}_t\|_{\mathcal{A}C}dt,
  \end{equation}
  and, $L^{\infty}-$version of the almost co-Hofer-like length of $\Phi$ as:
  \begin{equation}\label{equ9-A}
  l_{\mathcal{A}co}^{\infty} (\Phi) : = \max_{t\in [0,1]}\|\dot{\phi}_t\|_{\mathcal{A}C}. 
  \end{equation}
  As in the case of co-Hofer-like length, it seems that in general, we have
  \begin{equation}\label{equ10-A}
  l_{\mathcal{A}co}^{(1,\infty)} (\Phi) \ne  l_{\mathcal{A}co}^{(1,\infty)} (\Phi^{-1}),
  \end{equation}
  or,
  \begin{equation}\label{equ11-A}
  l_{\mathcal{A}co}^{\infty} (\Phi) \ne  l_{\mathcal{A}co}^{\infty} (\Phi^{-1}). 
  \end{equation}
  \subsubsection{Almost co-Hamiltonian lengths}
  The restriction of the above lengths to the group $ \mathcal{A}H_{\eta, \omega}(M)$ will be called almost co-Hofer lengths, and denoted $ l_{\mathcal{A}H}^{\infty}$, and 
  $ l_{\mathcal{A}H}^{(1,\infty)}$. Indeed, if  $\Phi = \{\phi_t\}$ is an almost co-Hamiltonian isotopy such that 
  $ \iota(\dot\phi_t)\omega = dF_t$, for all $t$, then 
  \begin{equation}\label{equ13-A}
  l_{\mathcal{A}H}^{(1,\infty)} (\Phi) := \int_0^1 \left( osc(F_t) + \varTheta_t(\Phi)\right) dt,
  \end{equation}
  and,
  \begin{equation}\label{equ14-A}
  l_{\mathcal{A}H}^{\infty} (\Phi) := \max_t \left( osc(F_t) + \varTheta_t(\Phi)\right). 
  \end{equation}
  If  $\Phi = \{\phi_t\}$ is an almost co-Hamiltonian isotopy, then  it is  not hard to check that\\ $ l_{\mathcal{A}H}^{(1,\infty)} (\Phi^{-1}) = l_{\mathcal{A}H}^{(1,\infty)} (\Phi),$ and $ l_{\mathcal{A}H}^\infty (\Phi^{-1}) = l_{\mathcal{A}H}^\infty (\Phi),$ i.e.,  the lengths $ l_{\mathcal{A}H}^{\infty}$, and 
  $ l_{\mathcal{A}H}^{(1,\infty)}$ are symmetric.
  \subsubsection{Almost co-Hofer norms}\label{SC01-12A}
  For any almost co-Hamiltonian diffeomorphism $\psi$, we define 
  the  $L^{(1,\infty)}-$version of its almost co-Hofer norm and $L^{\infty}-$version of its almost co-Hofer norm respectively as follows:
  \begin{equation}\label{bener10A}
  \Arrowvert \psi \Arrowvert _{\mathcal{A}H}^{(1,\infty)} := \inf(l_{\mathcal{A}H}^{(1,\infty)}(\Psi)),
  \end{equation}
  and,
  \begin{equation}\label{bener20A}
  \Arrowvert \psi \Arrowvert_{\mathcal{A}H}^{\infty} := \inf(l_{\mathcal{A}H}^{\infty}(\Psi)),
  \end{equation}
  where each infimum is taken over the set of all almost co-Hamiltonian isotopies 
  $\Psi$ with time-one maps equal to $\psi$.
  \begin{theorem}\label{thm5A}
  	Let $(M, \eta, \omega)$ be a compact cosymplectic manifold. Then, each of the rules $\|.\|_{\mathcal{A}H}^{(1,\infty)}$, and $\|.\|_{\mathcal{A}H}^\infty $ induces 
  	a bi-invariant norm on $\mathcal{A}Ham_{\eta, \omega}(M)$. 
  \end{theorem}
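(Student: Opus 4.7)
My plan mirrors the proof of Theorem \ref{thm33} for the co-Hamiltonian case, adapted to accommodate the extra $\Theta_t$ term and the non-preservation of $\eta$ under almost cosymplectic isotopies. For each of the two candidate norms I must verify positivity, symmetry $\|\psi^{-1}\| = \|\psi\|$, the triangle inequality, bi-invariance $\|\rho\psi\rho^{-1}\| = \|\psi\|$, and non-degeneracy. The first four properties reduce, via the inf-definition of the norm, to corresponding statements about the length functional $l_{\mathcal{A}H}$ on almost co-Hamiltonian isotopies.

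Positivity is immediate from the non-negativity of $osc$ and $\Theta_t$. Symmetry rests on items 6 and 10 of Subsection \ref{rem1}: inversion sends $\iota(\dot\phi_t)\omega = dF_t$ to $\iota(\dot\phi_{-t})\omega = -d(F_t\circ\phi_t)$, preserving $osc$; and the transformation formula for $C_{\Phi,\eta}^t$, combined with the change of variables $\int_M g\,\eta\wedge\omega^n = \int_M(g\circ\phi_t)e^{f_t}\eta\wedge\omega^n$, preserves $\Theta_t$. The triangle inequality uses the composition formula $\iota(\dot{(\phi_t\psi_t)})\omega = d(F_t + K_t\circ\phi_t^{-1})$, which is sub-additive in $osc$, together with \eqref{rfunc} to bound $\Theta_t$ of the composition by the sum of the $\Theta_t$'s (after a change of variables absorbing the conformal weight $e^{f_t}$). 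Bi-invariance under conjugation by $\rho$ follows from item 8 of Subsection \ref{rem1}: $osc(F_t\circ\rho) = osc(F_t)$, and the push-forward identity of Lemma \ref{pushfoward}, coupled with $\rho^*(\eta\wedge\omega^n) = e^{f^\rho}\eta\wedge\omega^n$ in the change of variables, forces the conformal factors to cancel in the expression for $\Theta_t$ of the conjugated isotopy.

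The main obstacle is non-degeneracy: showing that $\|\psi\|_{\mathcal{A}H}^\infty = 0 \Rightarrow \psi = id$. I will invoke the almost cosymplectic energy-capacity inequality (Theorem \ref{maint1-A} stated later in this section). If $\psi\neq id$, some non-empty open set $\mathbf{B}\subset M$ is completely displaced by $\psi$. By Proposition \ref{Trasit-4}, any almost co-Hamiltonian isotopy $\Psi$ with $\psi_1 = \psi$ lifts to a Hamiltonian isotopy $\tilde\Psi$ of $(\widetilde M,\widetilde\omega)$ whose time-one map displaces each fiber $\mathbf{B}\times\{\theta\}$ for $\theta$ in a compact set $\mathbf{C}\subset\mathbb{S}^1$; integrating the classical symplectic energy-capacity inequality over $\theta\in\mathbf{C}$ produces a positive uniform lower bound on $l^\infty_{\mathcal{A}H}(\Psi)$ in terms of $\int_{\mathbf{C}}E_S(\mathbf{B}\times\{\theta\})\,d\theta$, contradicting the hypothesis. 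A standard time-reparametrization argument transfers the conclusion to $\|\cdot\|_{\mathcal{A}H}^{(1,\infty)}$, since reparametrizing an isotopy by a monotone $s:[0,1]\to[0,1]$ leaves $l^{(1,\infty)}_{\mathcal{A}H}$ unchanged while allowing $l^\infty_{\mathcal{A}H}$ to be made equal to it.
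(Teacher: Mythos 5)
Your treatment of positivity, symmetry, the triangle inequality and bi-invariance is reasonable (the paper does not even write these out), and your reparametrization remark relating the $L^{(1,\infty)}$ and $L^\infty$ versions is standard. The problem is in the non-degeneracy step, which is the only part the paper actually proves, and your argument there has a genuine gap.

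The symplectic energy--capacity inequality gives a lower bound on the \emph{Hofer length of the lifted isotopy} $\tilde\Psi$ on $(\widetilde M,\widetilde\omega)$, whose Hamiltonian is, by Proposition \ref{Trasit-4}, $H_t\circ p+\pi_2\,\eta(\dot\psi_t)\circ p$. To convert this into a lower bound on $l^{\infty}_{\mathcal{A}H}(\Psi)$ you need an estimate of the form $l_H^\infty(\tilde\Psi)\leqslant C\, l^{\infty}_{\mathcal{A}H}(\Psi)$. But $l^{\infty}_{\mathcal{A}H}$ controls the $\eta$-component only through $\varTheta_t(\Psi)=\frac{1}{Vol_{\eta,\omega}(M)}\int_M|\eta(\dot\psi_t)|\,\eta\wedge\omega^n$, an $L^1$-average, whereas the oscillation of the lifted Hamiltonian involves $osc\bigl(\eta(\dot\psi_t)\bigr)$, a $C^0$-quantity. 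There is no constant $C$ with $osc(g)\leqslant C\int_M|g|\,\eta\wedge\omega^n$ for all smooth $g$, so the ``positive uniform lower bound on $l^{\infty}_{\mathcal{A}H}(\Psi)$'' you assert does not follow. This is exactly the difficulty the paper's proof is built around: it takes a minimizing sequence $\Psi_j$ with $l^{\infty}_{\mathcal{A}H}(\Psi_j)<1/j$ and devotes its Step $(2)$ (a partition-of-unity argument) to upgrading $\varTheta_t(\Psi_j)\to 0$ to uniform smallness of $|\eta(\dot\psi_{j,t})|$ before applying the displacement estimate in Step $(3)$. Your proposal omits this step entirely, and without it the contradiction does not materialize.

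A secondary point: you propose to invoke Theorem \ref{maint1-A}, but in the paper that theorem is proved by referring back to ``the proof of the non-degeneracy of the almost co-Hofer norm,'' i.e.\ to Theorem \ref{thm5A} itself; as the section is organized, this would be circular. You should instead run the displacement argument directly on a minimizing sequence, as the paper does, and supply the missing $L^1$-to-uniform control of $\eta(\dot\psi_{j,t})$.
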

  \subsubsection{Almost co-Hofer-like energies}\label{SC012A}
  For any $\phi\in  \mathcal{A}G_{\eta, \omega}(M)$, we define its  $L^{(1,\infty)}-$energy and $L^{\infty}-$energy respectively as follows:
  \begin{equation}\label{bener1A}
  e_{\mathcal{A}Co}^{(1,\infty)}(\phi) := \inf(l_{\mathcal{A}Co}^{(1,\infty)}(\Phi)),
  \end{equation}
  and,
  \begin{equation}\label{bener2A}
  e^{\infty}_{\mathcal{A}Co}(\phi) := \inf(l_{\mathcal{A}Co}^{\infty}(\Phi)),
  \end{equation}
  where each infimum is taken over the set of all almost  cosymplectic isotopies 
  $\Phi$ with time-one maps equal to $\phi$.
  
  \subsubsection{Almost co-Hofer-like norms}
  The $L^{(1,\infty)}-$version and the $L^{\infty}-$version 
  of the almost co-Hofer-like norms of $\phi\in  \mathcal{A}G_{\eta, \omega}(M)$  are respectively  defined as: 
  \begin{equation}\label{bny1A}
  \|\phi\|_{{\mathcal{A}Co}}^{(1,\infty)} 
  := (e_{\mathcal{A}Co}^{(1,\infty)}(\phi) + e_{\mathcal{A}Co}^{(1,\infty)}(\phi^{-1}))/2,
  \end{equation}
  and
  \begin{equation}\label{bny2A}
  \|\phi\|_{\mathcal{A}Co}^\infty 
  := (e^{\infty}_{\mathcal{A}Co}(\phi) + e^{\infty}_{\mathcal{A}Co}(\phi^{-1}))/2.
  \end{equation}
  
  \begin{theorem}\label{thm6A}
  	Let $(M, \eta, \omega)$ be a compact cosymplectic manifold. Then, each of the rules $\|.\|_{\mathcal{A}Co}^{(1,\infty)}$, and $\|.\|_{\mathcal{A}Co}^\infty $ induces 
  	a right-invariant norm on $ \mathcal{A}G_{\eta, \omega}(M)$. 
  \end{theorem}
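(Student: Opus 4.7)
The argument closely parallels that of Theorem \ref{thm3}. Nonnegativity is immediate from the definition as an infimum of nonnegative quantities, and the symmetry $\|\phi\|_{\mathcal{A}Co}^\infty = \|\phi^{-1}\|_{\mathcal{A}Co}^\infty$ is built into the symmetrization formulas (\ref{bny1A}) and (\ref{bny2A}). Right-invariance of the induced metric $d(\phi,\psi) := \|\phi\psi^{-1}\|_{\mathcal{A}Co}^\infty$ is automatic, since $d(\phi h,\psi h) = \|\phi h h^{-1}\psi^{-1}\|_{\mathcal{A}Co}^\infty = d(\phi,\psi)$ for any $h\in \mathcal{A}G_{\eta,\omega}(M)$; the same holds for the $(1,\infty)$-version.

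To establish the triangle inequality, I would pick two almost cosymplectic isotopies $\Phi=\{\phi_t\}$ and $\Psi=\{\psi_t\}$ with time-one maps $\phi$ and $\psi$, write $\phi_t^\ast\eta=e^{f_t}\eta$ and $\psi_t^\ast\eta=e^{q_t}\eta$, and use the composition formulas from Subsection \ref{rem1}, in particular the identity for $C_{\Phi\circ\Psi,\eta}^t$ in (\ref{rfunc}) together with $\iota(\dot{\overbrace{\phi_t\circ\psi_t}})\omega = \iota(\dot\phi_t)\omega + (\phi_t^{-1})^\ast\iota(\dot\psi_t)\omega$. Splitting each $\iota(\dot\phi_t)\omega$ according to the fixed linear section $\mathcal{S}$, the $\mathcal{S}$-parts add subadditively in the $L^2$-norm, the exact parts add subadditively in the oscillation norm (since $osc$ is invariant under pull-back by diffeomorphisms), and for the $\Theta$-contribution I would use the change of variables $\phi_t^\ast(\eta\wedge\omega^n)=e^{f_t}\eta\wedge\omega^n$ to absorb the exponential factor $e^{f_t\circ\psi_t}$ appearing in (\ref{rfunc}) when integrating $|C_{\Phi\circ\Psi,\eta}^t|$ against the volume form. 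Taking the infimum over isotopies gives $e^\infty_{\mathcal{A}Co}(\phi\psi)\leq e^\infty_{\mathcal{A}Co}(\phi)+e^\infty_{\mathcal{A}Co}(\psi)$, and the symmetrization in (\ref{bny2A}) then yields the triangle inequality.

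The main obstacle is non-degeneracy: if $\|\phi\|_{\mathcal{A}Co}^\infty=0$, I want $\phi=id_M$. Arguing by contradiction, if $\phi\neq id_M$ then $\phi$ displaces some nonempty open set $U\subset M$; by definition of the norm there exists a sequence of almost cosymplectic isotopies $\Phi_i$ with time-one map $\phi$ and $l_{\mathcal{A}Co}^\infty(\Phi_i)\to 0$. Using Proposition \ref{Trasit-1}, I would lift each $\Phi_i$ to a symplectic isotopy $\tilde\Phi_i$ of $(\tilde M,\tilde\omega)$, whose time-one map displaces a compact fiber $U\times\{\theta\}$ for a set of $\theta$ of positive measure. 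The plan is then to bound the symplectic Hofer-like length $l_{HL}^\infty(\tilde\Phi_i)$ by a constant multiple of $l_{\mathcal{A}Co}^\infty(\Phi_i)$, paralleling the estimate (\ref{Hofer-trans 5}) but now using Propositions \ref{Trasit-2} and \ref{Trasit-3} to control the extra $\dot f_t$-contributions arising in the circle coordinate $\pi_2 e^{-f_t\circ p}$; the relation $\mu_t=\dot f_t e^{-f_t}$ with $\mu_t = \xi(\eta(\dot\phi_t))$ lets one dominate those contributions by the $\Theta$-term of our norm after averaging over the displaced fibers. Invoking the symplectic energy-capacity inequality on $(\tilde M,\tilde\omega)$ as in (\ref{Ene-trans4}) would then yield a strictly positive lower bound on $l_{\mathcal{A}Co}^\infty(\Phi_i)$, contradicting the vanishing hypothesis. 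The hardest technical point is precisely this comparison step: the $\Theta$-functional is only an $L^1$-average of $|\eta(\dot\phi_t)|$ over $M$, whereas the symplectic length of the lift sees uniform-in-$x$ quantities, so one must bootstrap the averaged control to a fiberwise bound (by averaging the symplectic displacement energy of $U\times\{\theta\}$ over $\theta$ exactly as in (\ref{Ene-trans2})) or appeal directly to the almost co-energy-capacity inequality recorded in Theorem \ref{maint1-A}.
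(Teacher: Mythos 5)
Your proposal is correct in substance and relies on the same core mechanism as the paper --- lift a length-minimizing sequence of isotopies to the symplectic product $(\tilde M,\tilde\omega)$ via Proposition \ref{Trasit-1} and run a displacement-energy argument there --- but it organizes the reduction differently. The paper's proof of Theorem \ref{thm6A} is two lines: it asserts that any sequence $\Psi_j$ with $l_{\mathcal{A}Co}^{\infty}(\Psi_j)<1/j$ and fixed time-one map $\psi$ may be assumed almost co-Hamiltonian for $j$ large (implicitly because the $\mathcal{S}$-parts $\mathcal{H}_\omega^t$ live in a finite-dimensional space and are forced to $0$ in $L^2$), and then quotes the non-degeneracy of the almost co-Hofer norm (Theorem \ref{thm5A}). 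You instead keep the harmonic part and bound the Hofer-like length $l_{HL}^{\infty}(\tilde\Phi_i)$ of the lift directly, in parallel with (\ref{Hofer-trans 5}) and the comparison inequalities (\ref{sharp 1})--(\ref{sharp 2}); this avoids the paper's unjustified ``may be assumed co-Hamiltonian'' step at the cost of a slightly heavier estimate, and your fallback of invoking Theorem \ref{maint1-A} with $\Psi=\{id\}$ is in fact the cleanest closure --- it mirrors exactly how the paper derives Theorem \ref{thm3} from Theorem \ref{maint1}. The technical crux you flag, namely that $\Theta$ only controls the $L^1(M)$-average of $|\eta(\dot\phi_t)|$ while the symplectic displacement estimate needs uniform-in-$x$ control, is precisely the point the paper confronts in Step $(2)$ of the proof of Theorem \ref{thm5A}, where a partition-of-unity contradiction argument is used to upgrade $L^1$-decay to uniform decay of (the normalizations of) $|\eta(\dot\psi_{j,t})|$; if you import that step your argument is complete to the same standard as the paper's. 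Like the paper, you treat the remaining norm axioms and right-invariance as routine, which is consistent with the source.
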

  We shall first give a proof for Theorem \ref{thm5A}, and then derive that of Theorem \ref{thm6A}. First of all, note that the proofs of the 
  non-degeneracy of these norms are very technical: We shall prove this in several steps.\\

  {\it Proof of Theorem \ref{thm5A}.}  We shall proceed by contradiction. Assume that there is an almost co-Hamiltonian diffeomorphism $\psi$ with $\psi \neq id_M$ such 
  that $\|\psi\|_{\mathcal{A}H}^\infty = 0 $. 
  \begin{itemize}
  	\item {\bf Step $(1)$}: By definition of $ \|\psi\|_{\mathcal{A}H}^\infty$, there exists a 
  	sequence $\Psi_j : =\{\psi_{j, t}\}$ of almost co-Hamiltonian isotopies each of which  
  	with time-one map $\psi$ such that 
  	\begin{equation}\label{AC-1}
  	l_{\mathcal{A}H}^{\infty}(\Psi_j) < \frac{1}{j},
  	\end{equation}
  	for all positive integer $j$.  Assume that for each positive integer $j$, we have $\psi_{j, t}^\ast(\eta) = e^{f_t^j}\eta$,  and 
  	$ \iota(\dot\psi_{j, t}) \omega = dH_j^t$, for each $t$. It follows from (\ref{AC-1}) that  
  	\begin{equation}\label{AC-2}
  	\max_t \left( osc(H_j^t) + \varTheta_t(\Phi_j)\right) < \frac{1}{j},
  	\end{equation}
  	for all positive integer $j$.  That is, 
  	\begin{equation}\label{AC-3}
  	\max\left( \max_t osc(H_j^t) ,  \max_t\varTheta_t(\Phi_j)\right) < \frac{1}{j},
  	\end{equation}
  	for all positive integer $j$. On the other hand, consider the following sequence of symplectic 
  	isotopies $\tilde\Psi_j := \{\tilde\psi_t^j\}$ defined by 
  	$$ \tilde\psi_t^j: M\times \mathbb{S}^1 \longrightarrow M\times \mathbb{S}^1,$$
  	$$(x, \theta) \mapsto (\psi_{j, t}(x), \theta e^{-f_t^j(x)} ),$$
  	as in Proposition \ref{Trasit-1}. Derive from Proposition \ref{Trasit-4} that  $\tilde\Psi_j = \{\tilde\psi_t^j\}$ is a sequence of 
  	Hamiltonian isotopies of the symplectic manifold $(\tilde M, \tilde\omega)$ such that  
  	$$\iota (\dot{\tilde{\psi}}_{t}^j)\tilde{\omega} = p^\ast(d H_j^t) + d\left( \eta(\dot\psi_{j, t})\circ p\pi_2\right),$$
  	for each $j$, and for all $t$. Therefore, the $L^\infty-$ Hofer length of $\tilde\Psi_j = \{\tilde\psi_t^j\}$ satisfies 
  	\begin{equation}\label{AC-4}
  	l_H^\infty(\tilde\Psi_j) =  \max_t\left(  osc(H_j^t\circ p +  \eta(\dot\psi_{j, t})\circ p\pi_2)\right) ,
  	\end{equation}
  	$$ \leq \max_t osc(H_j^t) +  2\pi \max_t osc(\eta(\dot\psi_{j, t})),$$
  	for each $j$. 
  	\item {\bf Step $(2)$}:  Without the loss of generality, assume that $Vol_{\eta, \omega}(M) = 1$, and derive from (\ref{AC-3}) that 
  	$ \lim_{j\longrightarrow\infty} \int_M | \eta(\dot\psi_{j, t})|\eta\wedge\omega^n = 0,$ 
  	for each $t$. If there exist $s\in [0,1]$, and a nonempty open subset $U_\ast\subset M$ such that 
  	$ \left( | \eta(\dot\psi_{l, s})|_{|_{U_\ast}}\right) > \delta$, for some positive real number 
  	$\delta$, and for some integer $l$ sufficiently large, then  we could  complete $U_\ast$ with opens subsets $W_\alpha\subset M$ to obtain an open cover of $M$: 
  	By the compactness of $M$, we could consider a partition of unity $\{\phi_\alpha\}$ subordinate to the open cover 
  	$ M = U_\ast \cup\left( \cup_\alpha W_\alpha\right) $ with $\phi_{\alpha_\ast}$ supported in $U_\ast$. Therefore, 
  	we could compute 
  	$$ \int_M | \eta(\dot\psi_{l, s})|\eta\wedge\omega^n \geq \int_{\bar U_\ast}\phi_{\alpha_\ast} | \eta(\dot\psi_{l, s})|\eta\wedge\omega^n $$ 
  	$$\geq \int_{supp(\phi_{\alpha_\ast})} | \eta(\dot\psi_{l, s})|\eta\wedge\omega^n$$
  	$$ > \delta\int_{supp(\phi_{\alpha_\ast})} \eta\wedge\omega^n, $$
  	for some $l$ sufficiently large.  
  	That is, 
  	$ \frac{1}{l}
  	\geq \delta \left( \int_{supp(\phi_{\alpha_\ast})} \eta\wedge\omega^n\right) > 0 ,$
  	for $l$ sufficiently large,  which is a contradiction.
  	Hence, it follows from the above arguments that the condition $ \lim_{j\longrightarrow\infty} \int_M | \eta(\dot\psi_{j, t})|\eta\wedge\omega^n = 0,$ suggests  that for each $t$, we have 
  	$ \lim_{j\longrightarrow\infty}  | \eta(\dot\psi_{j, t})| = 0,$ uniformly. Furthermore, if 
  	$ \widetilde{| \eta(\dot\psi_{j, t})|}$ is the normalized function of  $ | \eta(\dot\psi_{j, t})|$, then we may assume that for each $t$, 
  	\begin{equation}\label{AC-44}
  	\lim_{j\longrightarrow\infty} \widetilde{| \eta(\dot\psi_{j, t})|} = 0,
  	\end{equation}
  	uniformly.  
  	\item {\bf Step $(3)$}: Since by assumption $\psi\neq id_M$, then there exists a nonempty open ball $B\subset M $ such that $\psi(\bar B)\cap \bar B = \emptyset$.
  	Therefore, the closed subset $\mathbb{B}:= \bar B\times \{0\} \subset M\times \mathbb{S}^1$ is completely displaced by 
  	$\tilde\psi_1^j:=(\psi\circ p, \pi_2e^{-f_t^j\circ p} )$ because 
  	$ \tilde\psi_1^j (\mathbb{B}) = \psi(\bar B)\times \{0\},$ 
  	implies  $ \tilde\psi_1^j (\mathbb{B})\cap \mathbb{B} = \emptyset,$ 
  	for each $j$. Hence, we derive from the energy-capacity-inequality theorem that 
  	\begin{equation}\label{AC-5}
  	0< \frac{C_W(\mathbb{B})}{2} \leq l_H^\infty(\tilde\Psi_j),
  	\end{equation}
  	for each $j$, where 
  	$C_W(\mathbb{B})$ represents the Gromov area of a ball $\mathbb{B}$ with 
  	respect to the symplectic manifold $(\tilde M, \tilde\omega)$ \cite{Lal-McD95}. From {\bf Step $(1)$} and  {\bf Step $(2)$}, it follows that 
  	\begin{eqnarray}
  	0&<& \frac{C_W(\mathbb{B})}{2} \nonumber\\
  	& \leq &  l_H^\infty(\tilde\Psi_j)
  	\nonumber\\
  	& \leq & \max_t osc(H_j^t) +  4\pi \sup_{x, t}
  	\widetilde{| \eta(\dot\psi_{j, t})(x)|},
  	\end{eqnarray}
  	for all $j$. That is, 
  	\begin{equation}\label{AC-7}
  	0< \frac{C_W(\mathbb{B})}{2} \leq  \frac{1}{j}+  4\pi \sup_{x, t} \widetilde{| \eta(\dot\psi_{j, t})(x)|},
  	\end{equation}
  	for each $j$. 
  \end{itemize} 
  Finally, (\ref{AC-44}) together with (\ref{AC-7}) implies that 
  \begin{equation}\label{AC-8}
  0< \frac{C_W(\mathbb{B})}{2} \leq \lim_{j\longrightarrow\infty}\left(  \frac{1}{j}+  4\pi \sup_{x, t} \widetilde{| \eta(\dot\psi_{j, t})(x)|}\right) = 0, 
  \end{equation}
  which is a contradiction. Thus, we must have  $\psi = id_M $.  $ \blacksquare$\\
  
  {\it Proof of Theorem \ref{thm6A}.} This proof follows from the fact that if $\psi\in \mathcal{A}G_{\eta, \omega}(M)$ 
  such 
  that $\|\psi\|_{\mathcal{A}H}^\infty = 0 $, then  any 
  sequence $\Psi_j : =\{\psi_{j, t}\}$ of almost symplectic isotopies   each of which  
  with time-one map $\psi$ such that $ l_{\mathcal{A}Co}^{\infty}(\Psi_j) < \frac{1}{j},$ 
  for all positive integer $j$, can be assumed to be an almost co-Hamiltonian isotopy for each $j$ sufficiently large. Therefore, the conclusion 
  follows as in the proof of Theorem \ref{thm5A}.   $ \blacksquare$
  
  \begin{theorem}\label{maint1-A} 
  	Let $(M,\eta, \omega)$ be a closed  
  	cosymplectic manifold.
  	Let $ \Phi = \{\phi_i^t\}$ be a sequence of almost cosymplectic isotopies, 
  	$\Psi = \{\psi_t\}$ be another almost cosymplectic isotopy, and  $\phi : M\rightarrow M$ be a map such that
  	\begin{itemize}
  		\item  $(\phi_i^1)$ converges 
  		uniformly to $\phi$, and 
  		\item $l_{\mathcal{A}Co}^{\infty}(\Psi^{-1}\circ\{\phi_i^t\})\rightarrow0,i\rightarrow\infty$.
  	\end{itemize}
  	Then we must have $\phi = \psi_1$. 
  \end{theorem}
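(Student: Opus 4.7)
The plan is to mimic the proof of Theorem \ref{maint1}, lifting the almost cosymplectic isotopies to the symplectic manifold $(\tilde M, \tilde\omega)$ via Proposition \ref{Trasit-1}. Assume for contradiction that $\phi \neq \psi_1$. Then a nonempty closed ball $\bar B \subset M$ is completely displaced by $(\psi_1)^{-1}\circ\phi$, and by the uniform convergence $\phi_i^1 \to \phi$, also by the time-one map $\Upsilon_i^1 := (\psi_1)^{-1}\circ\phi_i^1$ of the almost cosymplectic isotopy $\Upsilon_i := \Psi^{-1}\circ\{\phi_i^t\}$ for all $i$ sufficiently large. Writing $(\Upsilon_i^t)^\ast(\eta) = e^{g_i^t}\eta$, Proposition \ref{Trasit-1} supplies a symplectic isotopy $\tilde\Upsilon_i$ of $(\tilde M, \tilde\omega)$ given by $\tilde\Upsilon_i^t(x,\theta) = (\Upsilon_i^t(x),\, \theta e^{-g_i^t(x)})$, and the closed subset $\mathbb{B} := \bar B \times \{0\}$ is completely displaced by $\tilde\Upsilon_i^1$ for $i$ large, since $\tilde\Upsilon_i^1(\mathbb{B}) = \Upsilon_i^1(\bar B) \times \{0\}$.

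Applying the symplectic displacement-energy inequality on $(\tilde M, \tilde\omega)$ yields $0 < E_S(\mathbb{B}) \leq l_{HL}^\infty(\tilde\Upsilon_i)$ for every large $i$. Next, using Proposition \ref{Trasit-2} and the $\mathcal{S}$-splitting $\iota(\dot\Upsilon_i^t)\omega = \mathcal{H}_i^t + dU_i^t$, one computes
\begin{equation*}
\iota(\dot{\tilde\Upsilon}_i^t)\tilde\omega = p^\ast(\mathcal{H}_i^t) + d\bigl(U_i^t\circ p + \pi_2\,\eta(\dot\Upsilon_i^t)\circ p\bigr).
\end{equation*}
Combined with the comparison estimates \eqref{sharp 1} and \eqref{sharp 2}, this yields a bound
\begin{equation*}
l_{HL}^\infty(\tilde\Upsilon_i) \leq K\, l_{\mathcal{A}Co}^\infty(\Upsilon_i) + 2\pi \max_{t\in [0,1]}\sup_{x\in M} \bigl|\widetilde{\eta(\dot\Upsilon_i^t)}(x)\bigr|,
\end{equation*}
where $K$ depends only on the Riemannian data and $\widetilde{\,\cdot\,}$ denotes mean-normalization (which does not affect the exterior derivative and only enlarges the oscillation in the $\theta$-direction on $\tilde M$ by a bounded amount).

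By hypothesis $l_{\mathcal{A}Co}^\infty(\Upsilon_i) \to 0$, so the first summand vanishes and in particular $\varTheta_t(\Upsilon_i) = \frac{1}{\mathrm{Vol}_{\eta,\omega}(M)}\int_M |\eta(\dot\Upsilon_i^t)|\,\eta\wedge\omega^n$ tends to zero uniformly in $t$. Arguing as in Step $(2)$ of the proof of Theorem \ref{thm5A}, via a partition-of-unity argument on the compact manifold $M$, this $L^1$-smallness forces the normalized functions $\widetilde{\eta(\dot\Upsilon_i^t)}$ to tend to zero uniformly on $[0,1]\times M$. Hence the whole right-hand side vanishes in the limit, contradicting the strict positivity of $E_S(\mathbb{B})$, and therefore $\phi = \psi_1$. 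The main obstacle will be securing this uniform-in-$t$ passage from $L^1$- to $L^\infty$-smallness of $\eta(\dot\Upsilon_i^t)$, which in the proof of Theorem \ref{thm5A} was performed time-slicewise; handling it uniformly will require a modulus-of-continuity or equicontinuity estimate for $t \mapsto \dot\Upsilon_i^t$ inherited from the $L^\infty$-bound on the almost co-Hofer-like length, together with a careful check that the replacement of $\eta(\dot\Upsilon_i^t)$ by its mean-normalization does not spoil the oscillation bound on $\pi_2\,\eta(\dot\Upsilon_i^t)\circ p$.
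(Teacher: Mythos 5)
Your proposal follows essentially the same route as the paper's own proof: argue by contradiction, displace a compact set by $(\psi_1)^{-1}\circ\phi_i^1$ for $i$ large, lift the almost cosymplectic isotopies $\Psi^{-1}\circ\{\phi_i^t\}$ to symplectic isotopies of $(\tilde M,\tilde\omega)$ as in Proposition \ref{Trasit-1}, invoke the symplectic displacement--energy inequality $0<E_S(\mathbf{B}\times\{0\})\leq l_{HL}^{\infty}$, and then derive the contradiction exactly as in the non-degeneracy argument for the almost co-Hofer norm (Theorem \ref{thm5A}). The passage from $L^1$- to uniform smallness of $\eta(\dot\Upsilon_i^t)$ that you flag as the main obstacle is precisely the step the paper itself delegates to Step $(2)$ of that proof, so your treatment is, if anything, more explicit about the same point rather than a departure from it.
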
 
  \begin{proof} Let assume that $\phi \neq \psi_1,$ i.e., there exists a  compact subset $\mathbf{B}\subseteq M$ which is completely displaced by 
  	$ \psi^{-1}_1\circ \phi$, and since the convergence $ \phi_i^1\longrightarrow \phi$, is uniform, then 
  	we may assume that $ \psi_1^{-1}\circ \phi_i^1$, completely displace  $\mathbf{B}$, for all $i$ sufficiently large. 
  	Fix $i_0$ to be a sufficiently large natural number. Consider  
  	the sequence of almost cosymplectic isotopies  $\{\Psi^{-1}\circ\{\phi_j^t\}\}_{j\geqslant i_0}$ with time-one map  $ \psi_1^{-1}\circ \phi_j^1$, for all 
  	$j\geqslant i_0$, and define a sequence of symplectic isotopies $\tilde \Theta_j$ of the symplectic manifold $(\tilde M, \tilde \omega)$ by setting: 
  	\begin{equation}
  	\tilde \Theta_j := (\psi_1^{-1}\circ \phi_j^1\circ p, \pi_2e^{F_i\circ p} ),
  	\end{equation}
  	and derive that $\tilde \Theta_j^1$ completely displace  
  	$\mathbf{B}_0:= \mathbf{B}\times \{0\}$, i.e., 
  	\begin{equation}
  	0< E_S(\mathbf{B}_0) \leq l_{HL}^{\infty}( \tilde \Theta_j),
  	\end{equation}
  	for all $j$ sufficiently large, where $E_S$ is the symplectic displacement energy, and $l_{HL}^{\infty}$ is the $L^\infty-$version of the Hofer-like length. Finally, the contradiction follows similarly as in the proof of the non-degeneracy of the almost co-Hofer norm. $ \blacksquare$
  \end{proof}
  
  \section{Illustrations}\label{Illus}
  \subsection{Cartesian product of a $2-$disk and a unit circle}
  Let $\mathbb{D}^2$ denote the closed unit disk centered at the origin, equipped with polar coordinates system $(r,\theta)$, with 
  the symplectic form $\Omega := rdr\wedge d\theta$, and let   $\mathbb{S}^1$ denote a unit circle, with coordinate function $s$. 
  \subsubsection{A rotation on $\mathbb{D}^2$} Let $\rho : (0,1]\longrightarrow \mathbb{R}$ be a non-negative $C^1$ map that can be continuously extended 
  to a non-negative $C^1$ map $\bar \rho : [0,1]\longrightarrow \mathbb{R}$. Consider that map, 
  $ \psi_\rho : \mathbb{D}^2\longrightarrow \mathbb{D}^2, (r, \theta) \mapsto (r,\theta + \rho(r) ).$
  Note that $\psi_{\rho}$ is $C^1$ everywhere on $\mathbb{D}^2$, and $ \psi_{\rho}^\ast(\Omega) = \Omega$.
  Let $N$ denote the Cartesian product $ \mathbb{D}^2\times \mathbb{S}^1$ equipped with the cosymplectic structure $\omega := p^\ast(\Omega)$, and 
  $ \eta := \pi_2^\ast(ds)$, where $p:  N\rightarrow \mathbb{D}^2,$ and $\pi_2: N\rightarrow \mathbb{S}^1,$ are projection maps. Let $f : \mathbb{D}^2\longrightarrow \mathbb{R}$ be a smooth function. It is clear that $(N, \omega, \eta)$ is a cosymplectic manifold, and the following map:\\ 
  $ \phi_{\rho, f}: N\longrightarrow N, ((r, \theta), s)\mapsto (\psi_\rho(r, \theta), s e^{f(r, \theta)}),$
  is an almost co-Hamiltonian diffeomorphism of $(N, \omega, \eta)$: we have 
  $\phi_{\rho, f}^\ast(\omega) = p^\ast\left( \psi_\rho^\ast(\Omega)\right)  = p^\ast\left( \Omega\right) ,$ and 
  $$ \phi_{\rho, f}^\ast(\eta) = (\pi_2\circ \psi_\rho)^\ast(ds)  = (\pi_2e^{f\circ p})^\ast\left( ds \right) = e^{f\circ p}\eta.$$
  So, the smooth map $\Phi_{\bar\rho, f}: t\mapsto \phi_{t\bar\rho, tf}$, defines a smooth isotopy in $\mathcal{A}H_{\eta, \omega}(\mathbb{D}^2\times \mathbb{S}^1) $, with $ \Phi_{\rho, f}^0 = (id_{\mathbb{D}^2}, id_{\mathbb{S}^1})$, $ \Phi_{\rho, f}^1 = \phi_{\rho, f}$, and  
  $ \dot \Phi_{\bar \rho, f}^t = \dot \psi_{t\rho} + f\circ p\pi_2 \frac{\partial}{\partial s} 
   = \bar \rho\frac{\partial}{\partial \theta} + f\circ p\pi_2 \frac{\partial}{\partial s},$ for all $t$.  Also, 
  we can compute $\iota(\dot \Phi_{\bar \rho, f}^t )\omega = -r\bar \rho dr = d\left( \int_r^0u\bar \rho(u)du\right)$, and 
  $\eta(\dot \Phi_{\bar \rho, f}^t ) =  f\circ p\pi_2 $, for all $t$. If  $\xi$ is the Reeb vector field on $N$, then we have 
  $\mathcal{L}_{\dot \Phi_{\bar \rho, f}^t }\eta =  \mu\eta,$ 
  with $\mu := \xi(f\circ p\pi_2 )$. 
  \subsubsection{Almost co-Hofer lengths of $  \Phi_{\bar \rho, f}$}
  The $L^{(1, \infty)}-$version of the almost co-Hofer length of $  \Phi_{\bar \rho, f}$ is given by: 
  \begin{equation}\label{equ8-I}
  l_{\mathcal{A}H}^{(1,\infty)} ( \Phi_{\bar \rho, f}) = osc\left(  \int_r^0u\bar \rho(u)du\right) + 
  \frac{1}{2\pi^2}\int_{\mathbb{D}^2\times \mathbb{S}^1}| f\circ p\pi_2|\eta\wedge\omega
  \end{equation}
  $$ = \left( \int_0^1 u\bar \rho(u)du\right)  + \frac{1}{2\pi^2}\left( \int_{\mathbb{D}^2}| f|\Omega\right)
  \left( \int_{-\pi}^\pi|s| ds\right) $$
  $$ = \left( \int_0^1 u\bar \rho(u)du\right)  + \frac{1}{2}\left( \int_{\mathbb{D}^2}| f|\Omega\right).$$
  We derive from the above calculations that the $L^{\infty}-$version of the almost co-Hofer length of $ \Phi_{\bar \rho, f} $  is given by:
$l_{\mathcal{A}H}^{\infty} ( \Phi_{\bar \rho, f})  = \left( \int_0^1 u\bar \rho(u)du\right)  + \frac{1}{2}\left( \int_{\mathbb{D}^2}| f|\Omega\right). 
 $
  It follows from the above facts that, the following estimates of almost co-Hofer norms of $ \phi_{\rho, f} $  hold:\\ 
  $\Arrowvert  \phi_{\rho, f} \Arrowvert _{\mathcal{A}H}^{(1,\infty)} \leq \Arrowvert \phi_{\rho, f} \Arrowvert _{\mathcal{A}H}^{\infty} \leq  \left( \int_0^1 u\bar \rho(u)du\right)  + \frac{1}{2}\left( \int_{\mathbb{D}^2}| f|\Omega\right). $
  \subsubsection{Almost flux of $\Phi_{\bar \rho, f}$}
  By definition of the homomorphism $\mathcal{A}\widetilde{S}_{\eta, \omega}$, we have\\  
  $ \mathcal{A}\widetilde{S}_{\eta, \omega}(\Phi_{\bar \rho, f}) = \int_0^1[\iota(\dot \Phi_{\bar \rho, f}^t )\omega + \eta(\dot \Phi_{\bar \rho, f}^t )\eta]
  dt  = [f\circ p\pi_2\eta].$\\
  Hence, $\mathcal{A}\widetilde{S}_{\eta, \omega}(\Phi_{\bar \rho, f}) =  [f\circ p\pi_2\eta]\in H^1(\mathbb{D}^2\times \mathbb{S}^1, \mathbb{R})$. 
  \subsection{Cartesian product of a $2l-$torus and a unit circle}
  \subsubsection{A rotation on $\mathbb{T}^{2l}$}
  Consider the torus $\mathbb T^{2l}$ with coordinates $(\theta_1,\dots,\theta_{2l})$ and 
  equip it with the flat Riemannian metric $g_0$. 
  Note that all the $1-$forms $d\theta_i$, $i= 1,\dots, 2l$ are harmonic. 
  Take the $1-$forms $d\theta_i$ for $i= 1,\dots, 2l$ as basis for the space of harmonic 
  $1-$forms and consider the symplectic form
  $
  \Omega := \sum_{i=1}^l d\theta_i \wedge d\theta_{i +l}.
  $ 
  Given $v = (a_1,\dots, a_l, b_1,\dots, b_l) \in \mathbb R^{2l}$, 
  the translation $x \mapsto x + v$ on $ \mathbb R^{2l}$ induces a rotation $R_v$ on $\mathbb T ^{2l}$, 
  which is a symplectic diffeomorphism.
  Therefore, the smooth mapping $\{R_{v}^t\} : t \mapsto R_{tv}$ defines a symplectic isotopy 
  such that $\iota(\dot R_{v}^t )\omega = \mathcal{H}$ with $\mathcal{H} = \sum_{i=1}^l \left(a_i d\theta_{i+l} - b_i d\theta_i\right) $.\\ 
  Let $M$ denote the Cartesian product $\mathbb{T}^{2l} \times \mathbb{S}^1$ equipped with the cosymplectic structure $\omega := p^\ast(\Omega)$, and 
  $ \eta := \pi_2^\ast(ds)$, where $p:  M\rightarrow \mathbb{T}^{2l},$ and $\pi_2: M\rightarrow \mathbb{S}^1,$ are projection maps. Let $h : \mathbb{T}^{2l}\longrightarrow \mathbb{R}$ be a smooth function. We have that $(M, \omega, \eta)$ is a cosymplectic manifold, and the following map 
  $ \phi_{ R_{v}, h}: M\longrightarrow M, (\theta_1,\dots,\theta_{2l}, s)\mapsto ( R_{v}(\theta_1,\dots,\theta_{2l}), s e^{h(\theta_1,\dots,\theta_{2l} )}),$
  is an almost co-symplectic diffeomorphism of $(M, \omega, \eta)$: Compute 
  $ \phi_{R_{v}, h}^\ast(\omega) = p^\ast\left( \psi_\rho^\ast(\Omega)\right)  = \omega,$ and 
  $ \phi_{R_{v}, h}^\ast(\eta) = (\pi_2\circ \phi_{R_{v},  h})^\ast(ds)  = (\pi_2e^{h\circ p})^\ast\left( ds \right) = e^{h\circ p}\eta.$ 
  So, the smooth map\\ $\Phi_{R_{v}, h}: t\mapsto \phi_{R_{tv}, th}$, defines a smooth isotopy in 
  $\mathcal{A}G_{\eta, \omega}(\mathbb{T}^{2l}\times \mathbb{S}^1) $, with $ \Phi_{R_{v}, h}^0 = (id_{\mathbb{T}^{2l}}, id_{\mathbb{S}^1})$, 
  $ \Phi_{R_{v}, h}^1 = \phi_{R_{v}, h}$, and 
  $ \dot \Phi_{R_{v}, h}^t =   \sum_{i=1}^l\left( a_i \frac{\partial}{\partial \theta_i} + b_i\frac{\partial}{\partial \theta_{i+l}}\right)  + h\circ p\pi_2 \frac{\partial}{\partial s},$ for all $t$.  Also, 
  we can compute $\iota(\dot \Phi_{R_{v}, h}^t )\omega =  p^\ast\left( \sum_{i=1}^l \left(a_i d\theta_{i+l} - b_i d\theta_i\right) \right) $, and  
  $\eta(\dot \Phi_{R_{v}, h}^t ) =  h\circ p\pi_2 $, for all $t$. Let $\xi$ be the Reeb vector field on $M$, then 
  $\mathcal{L}_{\dot \Phi_{R_{v}, h}^t}\eta =  \nu\eta,$ 
  with $\nu := \xi(h\circ p\pi_2 )$. 
  \subsubsection{Almost co-Hofer-like lengths of $ \Phi_{R_{v}, h} $}
  Let assume that $Vol_{\eta,\omega}(\mathbb{T}^{2l}\times \mathbb{S}^1) = 1 $. 
  The $L^{(1, \infty)}-$version of the almost co-Hofer-like length of $\Phi_{R_{v}, h} $ is given by:\\ 
$
  l_{\mathcal{A}Co}^{(1,\infty)} (\Phi_{R_{v}, h} ) =  \sum_{i=1}^l \left(|a_i| + |b_i|\right)  + 
  \int_{\mathbb{T}^{2l}\times \mathbb{S}^1}| h\circ p\pi_2|\eta\wedge\omega^l
 = \sum_{i=1}^l \left(|a_i| + |b_i|\right)  +  \pi^2\int_{\mathbb{T}^{2l}}| h|\Omega^l.$ \\
 
  We derive from the above calculations that the $L^{\infty}-$version of the almost co-Hofer-like length of $\Phi_{R_{v}, h}  $  is given by:
  $
  l_{\mathcal{A}Co}^{\infty} (\Phi_{R_{v}, h} )  =  \sum_{i=1}^l \left(|a_i| + |b_i|\right)  + \pi^2 \int_{\mathbb{T}^{2l}}| h|\Omega^l.
  $
  It follows from the above facts that, the following estimates of almost co-Hofer-like energies of $ \phi_{R_{v}, h} $ hold,\\
  $e_{\mathcal{A}Co}^{(1,\infty)}(\phi_{R_{v}, h}) \leq e_{\mathcal{A}Co}^{\infty}(\phi_{R_{v}, h})\leq  
  \sum_{i=1}^l \left(|a_i| + |b_i|\right)  + \pi^2\int_{\mathbb{T}^{2l}}| h|\Omega^l.$
  \subsubsection{Almost flux of $\Phi_{R_{v}, h}$}
  By definition, we have  
  $ \mathcal{A}\widetilde{S}_{\eta, \omega}(\Phi_{R_{v}, h}) = [ p^\ast\left( \sum_{i=1}^l \left(a_i d\theta_{i+l} - b_i d\theta_i\right)\right)  +   h\circ p\pi_2\eta].$\\
  Thus, $\mathcal{A}\widetilde{S}_{\eta, \omega}(\Phi_{R_{v}, h}) =  [ \sum_{i=1}^l \left(a_i d\theta_{i+l} - b_i d\theta_i\right) +   h\circ p\pi_2\eta]\in H^1(\mathbb{T}^{2l}\times \mathbb{S}^1, \mathbb{R})$.

\end{document}